\setlist{topsep=.2ex,itemsep=.2ex}
\newtheorem{theorem}{Theorem}
\numberwithin{theorem}{section}
\newtheorem*{theorem*}{Theorem}
\newtheorem{proposition}[theorem]{Proposition}
\newtheorem{lemma}[theorem]{Lemma}
\newtheorem{corollary}[theorem]{Corollary}
\theoremstyle{definition}
\newtheorem{definition}[theorem]{Definition}
\newtheorem{example}[theorem]{Example}
\newtheorem{remark}[theorem]{Remark}
\DeclareMathOperator{\im}{im}
\DeclareMathOperator{\Spec}{Spec}
\DeclareMathOperator{\Tan}{T}
\DeclareMathOperator{\End}{End}
\DeclareMathOperator{\Hom}{\mathbf{hom}}
\DeclareMathOperator{\Set}{{(\mathbf{set})}}
\DeclareMathOperator{\Catt}{{(\mathfrak{cat})}}
\DeclareMathOperator{\Sch}{{(\mathbf{sch})}}
\DeclareMathOperator{\MSch}{{(\mathbf{msch})}}
\DeclareMathOperator{\Ring}{{(\mathbf{ring})}}
\DeclareMathOperator{\MRing}{{(\mathbf{mring})}}
\DeclareMathOperator{\TRing}{{(\mathbf{tring})}}
\DeclareMathOperator{\TSch}{{(\mathbf{tsch})}}
\DeclareMathOperator{\pardim}{pardim}
\DeclareMathOperator{\coker}{{\mathrm{coker}}}
\let\phi\varphi
\let\epsilon\varepsilon
\let\tops\texorpdfstring
\let\onto\twoheadrightarrow
\let\into\rightarrowtail
\let\hat\widehat
\let\tilde\widetilde
\let\bar\overline
\newcommand{\flift}{\triangle}
\newcommand{\Fr}{{\mathrm{Fr}}}
\newcommand{\fr}{{\mathrm{fr}}}
\newcommand{\C}{{\mathscr C}}
\newcommand{\D}{{\mathscr D}}
\newcommand{\M}{{\mathscr M}}
\newcommand{\ob}{\mathrm{ob}}
\newcommand{\op}{\mathrm{op}}
\newcommand{\opcit}{\emph{op.\@cit.}}
\newcommand{\Cdual}{{\widehat{\mathscr C}}}
\newcommand{\alg}[1]{{#1}{\text-}\mathbf{alg}}
\newcommand{\mix}{{\mathrm m}}
\newcommand{\antimix}{{\overline{\mathrm m}}}
\newcommand\comp{{\mathrm{c}}}
\newcommand\tdd{{\mathrm{tdd}}}
\newcommand\pstr{{\langle p\rangle}}
\newcommand{\fatarrow}{ \rar[phantom,">" description]\rar[dash, shift left=0.25ex] \rar[dash,shift right=0.30]}
\newcommand{\fatlarrow}{ \rar[phantom,"<" description]\rar[dash, shift left=0.25ex] \rar[dash,shift right=0.30]}
\newcommand{\dynkin}{ %
	\tikzcdset{diagrams= %
		{nodes={inner sep=0pt},row sep=10pt}
	}
}
\tikzset{
	closed/.style = {decoration = {markings, mark = at position 0.5 with { \node[transform shape, yscale = 0.8] {$\mid$}; } }, postaction = {decorate} },
	open/.style = {decoration = {markings, mark = at position 0.5 with { \node[transform shape, scale = 1.2, yshift = -0.45pt] {$\circ$}; } }, postaction = {decorate} },
	nonlinear/.style = {decoration = {markings, mark = at position 0.5 with { \node[transform shape, scale = 1.2, yshift = -0.45pt] {$\circ$}; } }, postaction = {decorate} }
}
\author{Karsten Naert\footnote{\href{mailto:karsten.naert@gmail.com}{karsten.naert@gmail.com}}}
\date{\today}
\title{\textcolor{blue}{\bf Twisting and mixing}}
\begin{document}
\maketitle

\begin{abstract}
{\let\thefootnote\relax\footnote{{ \emph{2010 Mathematics Subject Classification.} 20G15, 18A05, 14L15.}}} We present a framework that connects three interesting classes of groups: the twisted groups (also known as Suzuki-Ree groups), the mixed groups and the exotic pseudo-reductive groups. 

For a given characteristic $p$, we construct categories of twisted and mixed schemes. Ordinary schemes are a full subcategory of the mixed schemes. Mixed schemes arise from a twisted scheme by base change, although not every mixed scheme arises this way. The group objects in these categories are called twisted and mixed group schemes.

Our main theorems state: (1) The twisted Chevalley groups ${}^2\mathsf B_2$, ${}^2\mathsf G_2$ and ${}^2\mathsf F_4$  arise as rational points of twisted group schemes. (2) The mixed groups in the sense of Tits arise as rational points of mixed group schemes over mixed fields. (3) The exotic pseudo-reductive groups of Conrad, Gabber and Prasad are Weil restrictions of mixed group schemes.
\end{abstract}

\renewcommand{\baselinestretch}{0.75}\normalsize
\tableofcontents
\renewcommand{\baselinestretch}{1.0}\normalsize

\section*{Introduction}

One of the cornerstones of algebraic group theory is the structure theory of connected reductive groups over an algebraically closed field, which is largely due to Chevalley. The theory was quickly extended to a theory of reductive groups over arbitrary fields, and in fact, over an arbitrary base scheme by many others, most notably the authors of \cite{SGA3}. However, during the second half of the 20th century, on a number of occasions, groups have been encountered which appear to be  closely related to reductive groups, but in a strange, exotic manner.

The first time this happened was probably in 1960. In the process of classifying a class of finite simple groups, Suzuki discovered discovered a new class, now known as the Suzuki groups. His discovery was a precursor for the discovery of a more general construction by Ree later that year which produces also other similar classes of groups: the twisted Chevalley groups. Somewhat later---probably around 1970---Tits was studying reductive groups by means of his theory of buildings. In the process of classifying certain buildings, he discovered that although most of these buildings came from reductive groups, there were a few that were related, but not so directly: these were the mixed buildings and groups. In 1997 then, Weiss, in the process of completing the classification of another class of buildings, discovered groups that are arguably even stranger, but nonetheless still recognisable as distant cousins of reductive groups. Around 2010 finally, Conrad, Gabber and Prasad, in the process of developing a structure theory for a class of algebraic groups named pseudo-reductive groups, discovered that---as the name seems to suggest!---most of them are actually quite closely related to reductive groups. But again, there are some estranged and exotic family members that are only distantly related. 

All of these occurences share two common features. One feature is that it is always the combinatorics of root systems with roots of two different lengths that makes the construction work. Another is that the construction requires fields of a specific characteristic; this characteristic is always $2$ or $3$ and depends only on the ratio of lengths of roots in the root system. Moreover the construction requires certain ingredients which are very specific to `positive characteristic mathematics', in particular they always require the Frobenius endomorphism of a field or algebraic group and sometimes also depend on the occurrence of inseparable field extensions.

The present work aims to deepen our understanding of all these groups. It has been our point of view that we should really focus on the second of these features, i.e. the `positive characteristic mathematics'. It is easy enough to smuggle in the combinatorics of root systems via a backdoor, namely by assuming the existence of certain isogenies. But in our approach, this is really more an afterthought.

Here is the basic idea. We think of an algebraic group as a group object in the category of schemes. But if we are considering the category of schemes for a fixed positive characteristic only, there is a special feature under the guise of the absolute Frobenius. We use this feature to create new but closely related categories: the categories of twisted and mixed schemes. Then there are also group objects in these categories and this is where these distant cousins of reductive groups find their origin.

In fact, we are relieved to admit that we did not need any difficult scheme theory or commutative algebra to achieve this. Since the essential bit of information that we wish to manipulate consists of a category $\C$ together with a gadget $F$, this is exactly what we study throughout most of the work. Certainly one could try to think of other applications than the one where $\C$ is the category of schemes in characteristic $p$ and $F$ is the absolute Frobenius, for instance by taking $\C$ arbitrary and $F$ trivial, or $\C$ and abelian category and $F$ the zero endomorphism.

The category of twisted schemes obtained in this manner lies a bit deeper than the category of schemes. We have resisted the temptation to denote its terminal object by $\Spec \mathbf F_{\sqrt p}$ (the field with $\sqrt p$ elements!) throughout our text, but it would have been not entirely unreasonable to do so. If one takes the slice category of the twisted schemes over the  object $\Spec \mathbf F_p$---which is still present, but not as terminal object---one obtains the category of mixed schemes, which contains the ordinary schemes as a full subcategory. 

The outcome of all this is fomulated in our main theorems, and is summarized as follows: the twisted and mixed groups of Suzuki, Ree and Tits arise as group objects in our categories of twisted and mixed schemes. Of course, every ordinary algebraic group is also a mixed group in a natural way so we are not throwing away anything; we are only adding extra objects that we call invisible---which is why they are difficult to construct and observe. Moreover, the twisted and mixed groups are closely related with one another: one arises from the other by a form of base change along the extension $\mathbf F_p / \mathbf F_{\sqrt p}$ (although we will use different notations). The exotic groups of Conrad, Gabber and Prasad are then closely related to these invisible mixed reductive groups, in practically the same way that most of the pseudo-reductive groups are related to reductive groups.  Finally, the exotic groups of Weiss are one class of groups for which we do not explicitly show that they can be described with the tools under our belt. We believe that there is enough evidence to be convinced that this is indeed the case, but it may require some technical virtuosity to show this rigorously.

\subsection*{Organisation of the paper}

\begin{enumerate}
	\item Section~\ref{sec:preliminaries} provides some preliminaries. We first provide a colloquial description of the classes of groups that we eventually wish to investigate. We avoid going  too deeply into the technicalities here---a more technical description can be found as a first step in the proofs of our main theorems in Section~\ref{sec:gps}---but rather we will focus on what we believe are shortcomings of these descriptions. Note that some of the history and state of affairs was incorporated in the historical overview Section~\ref{sec:history}. We then point out some reference works in category theory which contain all that we need for our purposes.
	\item Section~\ref{sec:mixing-objects} studies categories endowed with an endomorphism $F$ of the identity functor. Sections~\ref{sec:def-cat} and~\ref{sec:functors} certainly contain the core ideas of our work by introducing the twisted and mixed categories and functors between them. It may be advisable to follow up with Sections~\ref{sec:def-sch} and~\ref{sec:examples} immediately to see how these definitions are used in practice.
	
	Sections \ref{sec:twisted-descent} and \ref{sec:cat-psh} do not play an important role in what follows directly, but they are nonetheless of fundamental importance in the theory.  Section \ref{sec:twisted-descent} explains how mixed objects can descend to twisted objects---for instance after everything else is proven it explains how one can attach a descent datum to a mixed group of type $\mathsf G_2$ to obtain a Ree group of type ${}^2\mathsf G_2$. Section \ref{sec:cat-psh}, in particular Remark~\ref{remark:mixtor}, explains how mixed objects can `stay hidden' in a precise way. Although the technical content of this remark is just that ordinary objects form a reflective subcategory of mixed objects, we draw attention to it because it lies at the basis of the observation that some exotic pseudo-reductive groups arise as one half of a mixed group; see also Remark \ref{rem:mixed-groups}.\ref{bullet:c2appears}.
	
	In Section \ref{sec:relfac} and in particular Corollary~\ref{cor:relfac}, we explain how certain mixed objects can be constructed from data within the ordinary category---this lies at the basis of Proposition~\ref{prop:veryspecialisogeny2} in Section \ref{sec:existence} where we will use this to construct interesting mixed group schemes.
	
	Sections \ref{sec:bookkeeping} and \ref{sec:restriction} are really just preparation for our Theorem~\ref{theorem:pseudo-reductive-groups}, the key being Proposition~\ref{prop:weilrestriction}. In fact, Section~\ref{sec:bookkeeping} has nothing to do with twisted and mixed categories and can be read independently; it was included because our  proof of Proposition~\ref{prop:weilrestriction} requires some arrow manipulation that we could not justify any other way.
	
	\item In Section~\ref{sec:mixed-schemes} the definitions from section \ref{sec:mixing-objects} are applied to the category of schemes in characteristic $p$, with the absolute Frobenius in the role of $F$. 
	
	We set the stage in Section~\ref{sec:def-sch} where we define twisted and mixed schemes and rings. In Section~\ref{sec:sch-ratpts} we give Propositions~\ref{prop:fixedpoints} and~\ref{prop:mixedpoints}, which describe the rational points of twisted and mixed schemes. Although these results could also be formulated for an arbitrary category, we believe the discussion benefits from  being concrete, with a view towards the examples in Section~\ref{sec:examples}. Section~\ref{sec:modules-and-sheaves} contains a brief digression which serves to introduce the notion of partial dimensions of mixed scheme. Although this is not used in any of our main theorems, we will sometimes refer to it in comments, such as \ref{rem:mixed-groups}.\ref{bullet:dimensions}, to provide some justification that our results are  morally desirable in one way or another. Finally, as already noted, Section~\ref{sec:examples} groups together some examples which are intended to demonstrate that the theory works as intended.
	
	\item Section~\ref{sec:gps} focuses on group objects in the categories of twisted and mixed schemes and provides precise statements and proofs which connect our categorical constructions with known constructions due to Ree, Tits and Conrad--Gabber--Prasad. 
	
	In Section~\ref{sec:gps-statement} we will first informally present our theorems. We believe this is necessary because the state of affairs with respect to twisted and mixed groups in the literature is sufficiently confused that one cannot simply write down and prove a precise statement that will be universally understood.  Nonetheless, we have tried to be as brief as possible and postponed an extensive discussion to Section~\ref{sec:history}. For pseudo-reductive groups, the situation is a bit better, thanks to the work of Conrad--Gabber--Prasad so can we go over these groups even more quickly.
	
	In Section~\ref{sec:existence} we use the constructions from Section~\ref{sec:relfac} together with the \emph{very special isogenies} from \cite{PRG} to construct a number of mixed groups schemes. In Section~\ref{sec:twisted-groups} we formulate and prove Theorem~\ref{theorem:twisted-groups}, which states that the twisted groups arise as groups of rational points of a twisted group scheme; the main ingredient is Proposition~\ref{prop:fixedpoints}. In Section~\ref{sec:mixed-groups}, we formulate and prove Theorem~\ref{theorem:mixed-groups}, which states that the mixed groups arise as groups of rational points of a mixed group scheme from Section~\ref{sec:existence}; the main ingredient here is Proposition~\ref{prop:mixedpoints}. In section~\ref{sec:gps-prg}, we formulate and prove Theorem~\ref{theorem:pseudo-reductive-groups}, which states that the exotic pseudo-reductive groups arise as a Weil restriction of a mixed group scheme from Section~\ref{sec:existence}; the main ingredient here is Proposition~\ref{prop:weilrestriction}.
	
	\item Section~\ref{sec:history} contains a historical overview of many discoveries and results related to mixed and twisted groups. We find such an overview indispensible for understanding the present work for three reasons. First of all, although quite a lot has been written about twisted and mixed groups, often from the point of view of incidence geometry and the theory of buildings, much of it comes down to the study of specific examples. The literature is very vague on what the words mixing and twisting actually mean and on whether one is a special case of the other or not. For instance, one of most important works in the field is \cite{moufangpolygons}, which puts Moufang octagons ---a manifestly twisted structure--- on the list with the mixed buildings. So we attempt to shine some light on this situation. A second reason is that we have many expectations on how some facts from the literature tie in with our own work.  We needed a way to structure these ideas to give other people the opportunity to work with them.  For instance, a mixed hexagon can be given the structure of a mixed projective variety; the Ree unital has the structure of a twisted projective variety and it arises from the mixed hexagon by twisted descent. So perhaps some of the success of Galois descent will carry over to twisted descent and eventually allow for a better understanding of the Ree unital. Finally, we attach great importance to attributing the ideas that were important for our work to the right people, so we hope we achieved at least that and apologize for any omissions.
\end{enumerate}

\subsection*{Acknowledgments}
This article presents results from the authors PhD; I would like to thank my advisor Tom De Medts for suggesting mixed groups as a research topic and for his advice and support throughout. Furthermore I wish to thank Hendrik Van Maldeghem for proofreading \S\ref{sec:history} and  Michel Brion for asking an interesting question after a talk on the subject in Lens.

\section{Preliminaries} \label{sec:preliminaries}
\subsection{Groups}
In 1960, Suzuki found an infinite class of finite simple groups, explicitly described by matrices as  subgroups of $\mathsf{GL}(4,2^{2e+1})$. Not much later, Ree showed how these Suzuki groups can be obtained from a Chevalley group of type $\mathsf B_2$ as follows: if the characteristic of the underlying field $k$ field is $2$, the Chevalley group has an exceptional graph automorphism $g$ with the strange property that it squares to the Frobenius endomorphism. So if we assume that the field $k$ admits an automorphism $\phi$ such that $x^{\phi^2}=x^2$, then $\phi^{-1} \circ g$ is an automorphism of order $2$ of the group $\mathsf B_2(k)$ and its fixed points form a subgroup ${}^2\mathsf B_2(k,\phi)$ which is a Suzuki group. Extending this procedure to Chevalley groups of type $\mathsf G_2$ in characteristic $3$ --- where the condition on $\phi$ becomes $x^{\phi^2}=x^3$ --- and $\mathsf F_4$ in characteristic $2$, Ree found the \emph{small Ree groups} ${}^2\mathsf G_2(k,\phi)$  and the \emph{large Ree groups} ${}^2\mathsf F_4(k,\phi)$. Somewhat later, Tits showed how to define these groups over non-perfect fields, when $\phi$ is a non-invertible endomorphism of $k$. These groups are now known as \emph{twisted (Chevalley) groups} or as the Suzuki-Ree groups; an endomorphism of a field which squares to the Frobenius endomorphism is often called a \emph{Tits endomorphism}. When we say \emph{twisted group}, we always mean an abstract group from one of these families.

The second exotic class of groups which merits our attention has not been studied nearly as well: the \emph{groups of mixed type} or \emph{mixed groups} for short. Besides a parenthetical remark in Steinbergs lecture notes \cite[153]{steinberg}, their first appearance in the literature seems to be in Tits' lecture notes \cite[(10.3.2)]{tits74} on buildings. Tits introduces buildings as a tool to study algebraic groups, and achieves a complete classification of the important class of spherical buildings of rank $\geq 3$. The classification does mostly what it was intended to do: \emph{with some exceptions} all of these buildings originate from algebraic groups or classical groups.\footnote{For instance $\mathsf{PSL}_n(D)$ is classical non-algebraic if $D$ is a division ring of infinite dimension over its center.} The exceptions are the \emph{buildings of mixed type}. These are buildings of type $\mathsf X_n = \mathsf B_n/\mathsf C_n$, $\mathsf F_4$ or $\mathsf G_2$ whose definition requires a \emph{pair of fields} $k, \ell$ of respective characteristic $p=2$, $2$ or $3$ such that $\ell^p \subseteq k\subseteq \ell$.  In a nutshell, Tits' construction comes down to observing that one can restrict some of the generators of the group $\mathsf X_n(\ell)$ to the subfield $k$ in a way which preserves the commutation relations which define the group. In this way one gets a subgroup which acts on a subbuilding. Tits' notation for these abstract groups is then $\mathsf X_n(k,\ell)$, where of course $\mathsf X_n(\ell,\ell)=\mathsf X_n(\ell)$. Since these groups act on a building, there is a corresponding BN-pair which can be used to study them.

Several problems arise with these mixed groups. First and foremost, they are merely abstract groups and there is no corresponding algebraic group. Second, since they are defined very explicitly by means of a set of generators, the construction only works well for split groups. Third, it was a favorite observation of Tits that the fields $k$ and $\ell$ involved play essentially the same role! Actually they are just two random consecutive members of the infinite sequence of inclusions 
\[ \dots \subseteq  \ell^{p^2} \subseteq k^p \subseteq \ell^p \subseteq k \subseteq \ell \subseteq k^{1/p} \subseteq \ell^{1/p} \subseteq \cdots  \]
Moreover, $\mathsf B_n(k,\ell) \cong \mathsf C_n(\ell^2,k)$, and $\mathsf X_n(k,\ell)\cong\mathsf X_n(\ell^p,k)$ for $\mathsf X_n$ equal to $\mathsf G_2$ or $\mathsf F_4$ and $p$ equal to $3$ or $2$. (And of course $\ell^p \cong \ell$.) But this phenomenon is not apparent from the description, where one field is the larger field and one the smaller.  Finally, it is intuitively clear that there is some connection with the twisted groups, but it seemed difficult to make this precise. 

Finally, we introduce the class of pseudo-reductive groups. Recall that a linear algebraic group $G$ over a field $k$ is reductive if its unipotent radical becomes trivial after base change to the algebraic closure $\bar k$. The corresponding notion over $k$ is weaker and called \emph{pseudo-reductivity}. Where the structure theory of reductive groups has been an important part of algebraic group theory for more than half a century, a structure theory for pseudo-reductive groups is a rather recent addition due to Conrad--Gabber--Prasad \cite{PRG}, building on some older work of Tits. Although we found the structure theory in its entirety not particularly accessible, the gist of it can be phrased rather elegantly by saying that that \emph{most} pseudo-reductive groups arise from a certain standard construction. The standard construction takes as input a reductive group $G'$ over a field $k'$, and a purely inseparable field extension $k'/k$. It then applies a process known as Weil restriction to $G'$ to obtain a group $G = \mathsf R_{k'/k}G'$ over $k$ which is pseudo-reductive. In fact, this is only the easy part of the standard construction: there is another important step which consist of replacing the Cartan subgroups by a different commutative group which satisfies certain conditions, but this will not play a role in our work and we refer to \cite[\S1.4]{PRG} for details. Exceptional groups which do not arise from the standard construction exist only in characteristics~$2$ and~$3$. A first class of exceptions consists of the \emph{exotic pseudo-reductive groups}, which are constructed and studied in Chapter~7 of \cite{PRG}. The authors first construct what they call \emph{very special isogenies} between certain algebraic groups and then perform a rather elaborate construction which starts from such an isogeny $\pi:G\to\bar G$ together with a purely inseparable field extension $k'/k$ to produce a pseudo-reductive group $\mathscr G$. In their own words, the construction roughly comes down ``thickening the short root groups from $k$ to $k'$ and part of the torus from $k^\times$ to $k'^\times$''. Clearly as an abstract group, this should correspond to the mixed group $\mathsf X_n(k,k')$ and in fact, we have no doubt that this is how Tits found them! Of course, this doesn't really explain what is going on. It is certainly not very elegant that most of Tits' spherical buildings correspond to reductive groups, but the mixed buildings suddenly require a few pseudo-reductive groups to finish the picture---not to mention the problems that arise when $[k':k]$ is infinite. This is another issue which we address in our work by constructing these exotic pseudo-reductive groups via a standard construction---in fact simply a purely inseparable Weil restriction---but performed in the category of mixed schemes. Tits' spherical buildings can then still correspond to (mixed) reductive groups and the standard construction of Conrad--Gabber--Prasad becomes even more ubiquitous since it also produces the exotic examples.

\subsection{Categories}
We will rely heavily on the language and elementary properties of categories; in particular we will frequently use the notions of slice categories, (co)limits, adjoint functors and the Yoneda embedding. The uninitiated reader may find it useful to keep a reference at hand. The standard reference in the field is \cite{CWM}, but we also suggest the more recent \cite{leinster} which is also freely available online on the arXiv. It covers less ground but is more accessible and contains everything that is required for our purposes. Finally, expos\'es I and II of \cite{SGA3} and the nLab on \url{ncatlab.org} are highly recommended for some more in-depth coverage of certain topics.

We will mainly borrow the notations of the French school of algebraic geometry: we denote a slice category of a category $\C$ over an object $X$ by $\C_{/X}$; we denote the structural morphism of an object $Y\in\ob(\C_{/X})$ typically by $q_Y$ (this makes $Y$ an abuse of notation for $q_Y$); we denote the categories of sets, rings, schemes by $\Set$, $\Ring$, $\Sch$; we use the arrow $\leadsto$ to define a functor on objects if we leave the definition on arrows to the reader; we denote the category of presheaves on $\C$ by $\Cdual=\Hom(\C^\op,\Set)$; we denote the internal hom in a category by the boldface $\Hom$; we denote the Yoneda embedding by $\C\to\Cdual:X\leadsto \mathbf h_X$ with $\mathbf h_X(Y)=\hom(Y,X)$; we denote the functor of base change along an arrow $f$ in a category which admits fibered products by $f^*$; we denote an adjunction of functors $L:\C\to\D$ and $R:\D\to\C$ by $L\dashv R$ and we denote the unit and counit transformations of an adjunction typically by $\eta$ and $\epsilon$.

We hope secretly that the categories in our text will serve as a motivation for interested group theorists and incidence geometers to learn the language, rather than a motivation not to read the text.

\section{Twisting and mixing objects} \label{sec:mixing-objects}
In this section $\C$ denotes an arbitrary category endowed with an endomorphism of the identity functor $F: \mathrm{id}_{\C}\to\mathrm{id}_{\C}$, this means that
for every object $X$ there is an endomorphism $F_X\in \End_\C(X)$ such that for every arrow $f:X\to Y$ in $\C$, we have $F_Y\circ f = f\circ F_X$.

\subsection{The twisted and mixed categories} \label{sec:def-cat}

\begin{definition} \label{def:tC} The \emph{twisted category} $t\C$ is defined as follows. The objects are the pairs $\tilde X = (X,\Phi_X)$ where $X\in\ob(\C)$ and $\Phi_X\in\End_\C(X)$ satisfies $\Phi_X\circ\Phi_X=F_X$. The morphisms $f:\tilde X\to\tilde Y$ are those morphisms $f:X\to Y$ for which $\Phi_Y\circ f=f\circ\Phi_X$.

For a twisted object $\tilde X=(X,\Phi_X)$, we call $X$ the \emph{underlying ordinary object} and $\Phi_X$ the \emph{twister}.
Note that $t\C$ is itself a category with an endomorphism $\Phi$ of the identity functor, and that there is a forgetful functor $\mathbf f : t\C\to\C :  \tilde X\leadsto X$
which we call the \emph{untwisting functor}.

\end{definition}

\begin{definition} \label{def:mC} The \emph{mixed category} $m\C$ is defined as follows. The objects are the quadruples \[\tilde X = (X_1, X_2, \Phi_{X_1}, \Phi_{X_2}) \] where $X_1,X_2\in\ob(\C)$ and $\Phi_{X_i}\in \hom_\C(X_i,X_{2-i})$ satisfy $\Phi_{X_{2-i}}\circ\Phi_{X_i}=F_{X_i}$. The morphisms $f:\tilde X\to\tilde Y$ are those pairs $(f_1,f_2)$ of morphisms $f_i:X_i\to Y_i$ for which $\Phi_{Y_i} \circ f_i = f_{2-i}\circ \Phi_{X_i}$.
\end{definition}

We will depict a morphism of mixed objects diagrammatically as

\[ \begin{tikzcd}
\tilde X \dar["f"] \ar[dr,phantom, "\text{or}" description] & X_1 \rar[shift left=.5ex,"\Phi_{X_1}"] \dar["f_1"] & X_2 \lar[shift left=.5ex,"\Phi_{X_2}"] \dar["f_2"] \\
\tilde Y & Y_1 \rar[shift left=.5ex,"\Phi_{Y_1}"] & Y_2. \lar[shift left=.5ex,"\Phi_{Y_2}"]
\end{tikzcd} \]

This is not a commutative diagram since the pair of arrows\begin{tikzcd} \bullet \rar[shift left=0.5ex] & \circ\lar[shift left=0.5ex]\end{tikzcd} does not compose to the identity but rather to $F_\bullet$ and $F_\circ$; one should think of it as an abbreviation for the bigger diagram
\[
\begin{tikzcd}
	X_1 \dar["f_1"] \rar[swap,"\Phi_{X_1}"]\ar[rr,bend left,"F_{X_1}"] & X_2 \dar["f_2"]\rar[swap,"\Phi_{X_2}"] \ar[rr, bend left, "F_{X_2}"] & X_1\dar["f_1"]\rar[swap,"\Phi_{X_1}"] & X_2\dar["f_2"] \\
	Y_1 \rar["\Phi_{Y_1}"]\ar[rr,bend right,swap,"F_{Y_1}"] & Y_2\rar["\Phi_{Y_2}"] \ar[rr, bend right, swap, "F_{Y_2}"] & X_1\rar["\Phi_{Y_1}"] & Y_2.
\end{tikzcd}
\]
The maps $\Phi_{X_1}$ and $\Phi_{X_2}$ are called the \emph{mixing maps} or \emph{mixers}. If they are clear from the context, we will also denote $\tilde X$ simply by $(X_1,X_2)$.

To make constructions, it is important that some of the good properties of $\C$ are carried over to $m\C$ and $t\C$. For instance, if $\C$ admits fibered products $X\times_S Y$ or coproducts $X\sqcup Y$, we would like the same thing to be true for $t\C$. The following lemma reassures us that this is always the case.

\begin{lemma} \label{prop:limits} If $\C$ admits (co)limits for diagrams of shape $\mathscr J$, then so do $t\C$ and $m\C$. \end{lemma}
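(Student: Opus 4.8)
The plan is to show that the underlying-object functors create whatever (co)limits already exist in $\C$, so that a (co)limit in $t\C$ or $m\C$ can be built by taking the corresponding (co)limit in $\C$ and equipping it canonically with a twister (resp.\ a pair of mixers). I will treat limits and the twisted case in detail; the colimit case is formally dual (reverse every arrow), and the mixed case is the same argument carried out in two components with the index swap $i\mapsto 2-i$.

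So let $D:\mathscr J\to t\C$ be a diagram, write $D(j)=(X_j,\Phi_{X_j})$, and let $L=\lim_j X_j$ in $\C$ with projections $\pi_j:L\to X_j$; this exists by hypothesis. The first step is to observe that the twisters assemble into an endomorphism of the underlying diagram $\mathbf f\circ D$: for every arrow $\alpha:j\to j'$ of $\mathscr J$ the relation $\Phi_{X_{j'}}\circ D(\alpha)=D(\alpha)\circ\Phi_{X_j}$ is precisely the requirement that $D(\alpha)$ be a morphism of $t\C$. Consequently the family $\{\Phi_{X_j}\circ\pi_j\}_j$ is again a cone on $\mathbf f\circ D$, and the universal property of $L$ produces a unique $\Phi_L:L\to L$ with $\pi_j\circ\Phi_L=\Phi_{X_j}\circ\pi_j$ for all $j$.

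The second step is to verify the twisted-object equation $\Phi_L\circ\Phi_L=F_L$, and here naturality of $F$ does the work: composing with an arbitrary projection gives $\pi_j\circ\Phi_L\circ\Phi_L=\Phi_{X_j}\circ\Phi_{X_j}\circ\pi_j=F_{X_j}\circ\pi_j=\pi_j\circ F_L$, where the middle equality is the defining property of the twister on $D(j)$ and the last is naturality of $F$. As these agree for every $j$, the uniqueness part of the universal property forces $\Phi_L\circ\Phi_L=F_L$, so $(L,\Phi_L)$ is a genuine object of $t\C$ and each $\pi_j$ is a morphism there. The final step checks the universal property in $t\C$: given a cone $\{c_j:(W,\Phi_W)\to D(j)\}$ in $t\C$, the underlying cone in $\C$ yields a unique $u:W\to L$ with $\pi_j\circ u=c_j$, and the same ``compose with $\pi_j$, then invoke uniqueness'' trick shows $\Phi_L\circ u=u\circ\Phi_W$, i.e.\ $u$ lives in $t\C$; uniqueness of $u$ is inherited from $\C$.

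For $m\C$ one runs the identical script twice. Taking $L_1=\lim_j X_{j,1}$ and $L_2=\lim_j X_{j,2}$ with projections $\pi_{j,i}$, the morphism relations of $m\C$ show that $\{\Phi_{X_{j,1}}\circ\pi_{j,1}\}_j$ is a cone on $(X_{j,2})_j$ and $\{\Phi_{X_{j,2}}\circ\pi_{j,2}\}_j$ a cone on $(X_{j,1})_j$, inducing mixers $\Phi_{L_1}:L_1\to L_2$ and $\Phi_{L_2}:L_2\to L_1$; the relations $\Phi_{L_{2-i}}\circ\Phi_{L_i}=F_{L_i}$ and the universal property then follow exactly as above. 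I expect no serious obstacle: the content is the standard fact that these forgetful functors create the (co)limits present in $\C$. The only points demanding care are bookkeeping the index swap $i\mapsto 2-i$ in the mixed case and making sure the structural equation is \emph{forced} by uniqueness rather than checked by hand---both of which hinge on $F$ being a natural endomorphism of $\mathrm{id}_\C$, so that it commutes past every transition map and every projection.
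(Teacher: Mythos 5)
Your proposal is correct and follows essentially the same route as the paper: take the (co)limit of the underlying diagram in $\C$, induce the twister (resp.\ the mixers) from the cone formed by composing the structure maps with the projections, force $\Phi_L\circ\Phi_L=F_L$ by uniqueness together with naturality of $F$, and then verify the universal property. The only difference is one of completeness rather than method: you write out the universal-property check and the two-component argument for $m\C$ explicitly, whereas the paper leaves the former to the reader and handles the mixed case by appeal to the equivalence $m\C\simeq(t\C)_{/\mathsf E}$.
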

\begin{proof} Consider a diagram $\mathbf D:\mathscr J\to t\C$, set $\mathbf D'=\mathbf{f}\circ\mathbf D$ and let $X=\lim \mathbf D'$ together with the morphisms $\chi_U : X\to \mathbf D'(U)$ for every  $U\in\ob(\mathscr J)$. Then the object $X$ together with the morphisms $\mathbf{f}(\Phi_{\mathbf D(U)})\circ\chi_U$ forms a cone and so defines a unique morphism $\Phi_X : X\to X$. Moreover, it is clear that the morphism  $\Phi_X\circ\Phi_X$ is the unique morphism which makes the cone coming from all $\Phi_{\mathbf D(U)}^2$ commute, but the morphism $F_X$ has this property as well so $\Phi_X\circ\Phi_X=F_X$. It is then immediately verified that $(X,\Phi_X)$ is a limit for $\mathbf D$.
An analogous argument holds for colimits. The proof for $m\C$ is similar but using each of the functors $(X_1,X_2,\Phi_{X_1},\Phi_{X_2})\leadsto X_i$ to construct an appropriate object. (We omit the proof because we will only use this in a context where it follows directly from the case of $t\C$, thanks to Proposition~\ref{prop:equivalenceQ}.)
\end{proof}

In particular, if $\C$ has a terminal object $\mathbf 1$, then $\mathbf 1_{t\C}=(\mathbf 1,\mathrm{id}_{\mathbf 1})$ is a terminal object for $t\C$ and $\mathbf 1_{m\C}=(\mathbf 1,\mathbf 1,\mathrm{id}_{\mathbf 1},\mathrm{id}_{\mathbf 1})$ is a terminal object for $m\C$.

The categories $m\C$ and $t\C$ are closely related: we will now observe that under a mild assumption $m\C$ is a slice category of $t\C$. Our assumption is that $\C$ is an \emph{extensive category}, which means:
 \begin{itemize}
 \item $\C$ admits small limits; in particular, there is a terminal object $\mathbf 1$ and there are fibered products $X\times_S Y$;
 \item $\C$ admits binary coproducts $X\sqcup Y$;
 \item The functor $\C_{/A} \times\C_{/B} \to \C_{/A\sqcup B} : (X,Y)\leadsto (X\sqcup Y)$ is an equivalence for every two objects $A$ and $B$.
 \end{itemize}

An inverse to this equivalence is then given by
\[ \C_{/A\sqcup B} \to \C_{/A} \times\C_{/B} :  T \leadsto \big(  T \underset{A\sqcup B}\times A ,  T \underset{A\sqcup B}\times B\big).  \]

Under these conditions $\mathbf 2=\mathbf 1\sqcup \mathbf 1$ exists in $\C$ and $\mathsf E=(\mathbf 2,\tau)$, where $\tau$ swaps the components, is an object of $t\C$. So  there exists a functor
\[ \mathsf Q :   m\C \to (t\C)_{/\mathsf E} : (X_1,X_2,\Phi_{1},\Phi_{2}) \leadsto (X_1\sqcup X_2,\Phi_{1}\sqcup\Phi_{2}),  \]
where the implied structural morphism $X_1\sqcup X_2\to \mathbf 2$ is the obvious one. So we conclude:

\begin{proposition} \label{prop:equivalenceQ} If $\C$ is extensive, then there is an equivalence $\mathsf Q : m\C \to (t\C)_{/\mathsf E}$. Consequently, for every $\tilde M\in \ob(m\C)$, there is an equivalence $(m\C)_{/\tilde M} \simeq (t\C)_{/\mathsf Q(\tilde M)}$.
\end{proposition}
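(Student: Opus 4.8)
The plan is to produce an explicit quasi-inverse $\mathsf P:(t\C)_{/\mathsf E}\to m\C$ and to verify that $\mathsf P\mathsf Q\cong\mathrm{id}_{m\C}$ and $\mathsf Q\mathsf P\cong\mathrm{id}_{(t\C)_{/\mathsf E}}$. The essential observation is that on the level of underlying ordinary objects the work is already done by extensivity: since $\mathbf 2=\mathbf 1\sqcup\mathbf 1$, the inverse equivalence $\C_{/\mathbf 2}\to\C_{/\mathbf 1}\times\C_{/\mathbf 1}$ displayed just above the statement, combined with $\C_{/\mathbf 1}\simeq\C$, identifies an object over $\mathbf 2$ with an ordered pair of objects of $\C$. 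So $\mathsf Q$ will be an equivalence essentially because it only layers one compatible piece of structure, the twister, on top of an equivalence we already possess; the content is to show that this extra structure transports faithfully.

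To build $\mathsf P$, recall that an object of $(t\C)_{/\mathsf E}$ is a twisted object $(Y,\Phi_Y)$ equipped with a $t\C$-morphism $q\colon(Y,\Phi_Y)\to\mathsf E$, i.e.\ an arrow $q\colon Y\to\mathbf 2$ satisfying $\tau\circ q=q\circ\Phi_Y$. I would set $Y_i:=Y\times_{\mathbf 2}\mathbf 1$, the pullback of $q$ along the $i$-th inclusion $\mathbf 1\to\mathbf 2$; extensivity then supplies a canonical identification $Y\cong Y_1\sqcup Y_2$ carrying $q$ to the obvious map. The crucial step is that the relation $\tau\circ q=q\circ\Phi_Y$ forces $\Phi_Y$ to be \emph{off-diagonal}: precomposing with the coprojection $\iota_1\colon Y_1\to Y$ and using that $\tau$ swaps the two inclusions, one sees that $q\circ\Phi_Y\circ\iota_1$ factors through the second inclusion, so by the universal property of the pullback $Y_2$ the arrow $\Phi_Y\circ\iota_1$ factors uniquely as $\iota_2\circ\Phi_1$ for a unique $\Phi_1\colon Y_1\to Y_2$; symmetrically one extracts $\Phi_2\colon Y_2\to Y_1$. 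Feeding this back into $\Phi_Y\circ\Phi_Y=F_Y$, invoking the naturality identity $F_Y\circ\iota_i=\iota_i\circ F_{Y_i}$, and cancelling the monic coprojections (extensive coproducts being disjoint), the single relation $\Phi_Y^2=F_Y$ splits into the two mixed-object relations $\Phi_2\circ\Phi_1=F_{Y_1}$ and $\Phi_1\circ\Phi_2=F_{Y_2}$. Thus $\mathsf P(Y,\Phi_Y,q):=(Y_1,Y_2,\Phi_1,\Phi_2)$ lies in $m\C$, and a parallel decomposition of arrows makes $\mathsf P$ a functor whose morphism conditions are exactly the decomposed form of the $t\C$-morphism condition.

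That $\mathsf P$ and $\mathsf Q$ are mutually quasi-inverse reduces, after stripping off the twister, to the statement that $T\leadsto(T\times_{\mathbf 2}\mathbf 1,\,T\times_{\mathbf 2}\mathbf 1)$ and $(X_1,X_2)\leadsto X_1\sqcup X_2$ are mutually inverse equivalences between $\C_{/\mathbf 2}$ and $\C\times\C$, which is precisely the extensivity hypothesis. What remains is to check that the canonical isomorphisms $Y\cong Y_1\sqcup Y_2$ and $(X_1\sqcup X_2)\times_{\mathbf 2}\mathbf 1\cong X_i$ intertwine the two descriptions of the twister. I expect this intertwining---verifying that the off-diagonal maps read off from $\Phi_{X_1}\sqcup\Phi_{X_2}$ are again $\Phi_{X_1},\Phi_{X_2}$, and conversely---to be the only genuinely fiddly point, and the place where disjointness of coproducts (monicity of the $\iota_i$) is used to recover the relations cleanly.

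The final assertion is then formal. First, any equivalence of categories induces an equivalence on slice categories, so $\mathsf Q$ gives
\[ (m\C)_{/\tilde M}\;\simeq\;\big((t\C)_{/\mathsf E}\big)_{/\mathsf Q(\tilde M)}. \]
Second, for any category $\D$, object $E$, and object $p\colon Z\to E$ of $\D_{/E}$, there is a canonical isomorphism $(\D_{/E})_{/(Z,p)}\cong\D_{/Z}$, obtained by composing structural maps with $p$ and forgetting it. Applying this with $\D=t\C$, $E=\mathsf E$ and $(Z,p)=\mathsf Q(\tilde M)$ yields $\big((t\C)_{/\mathsf E}\big)_{/\mathsf Q(\tilde M)}\cong(t\C)_{/\mathsf Q(\tilde M)}$, where on the right $\mathsf Q(\tilde M)$ is regarded as an object of $t\C$. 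Chaining the two equivalences gives $(m\C)_{/\tilde M}\simeq(t\C)_{/\mathsf Q(\tilde M)}$, as required.
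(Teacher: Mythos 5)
Your proof is correct and takes essentially the same route as the paper, which simply defines $\mathsf Q:(X_1,X_2,\Phi_1,\Phi_2)\leadsto(X_1\sqcup X_2,\Phi_1\sqcup\Phi_2)$ and asserts the equivalence as an immediate consequence of extensivity; your construction of the quasi-inverse $\mathsf P$ by pulling back along the two inclusions $\mathbf 1\to\mathbf 2$, extracting the off-diagonal maps $\Phi_1,\Phi_2$ from $\tau\circ q=q\circ\Phi_Y$, and cancelling the monic coprojections is exactly the verification the paper leaves implicit. The slice-category consequence is likewise obtained in both by the standard facts that equivalences induce equivalences of slices and that $(\D_{/E})_{/(Z,p)}\cong\D_{/Z}$.
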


\subsection{Many functors} \label{sec:functors}

\begin{definition} \label{def:functors}
We now define a number of functors between the categories $\C$, $t\C$ and $m\C$. The following overview will be helpful:
\[
\begin{tikzcd}[column sep=3cm]
 & t\C \ar[dd,xshift=0.0ex,"\delta^*" description] \ar[dl,swap,shift right=0.5ex,"\mathbf f"] \\
 \C\ar[dr,shift right=0.75ex,swap,"\mix" description]\ar[dr,shift left=0.75ex,"\antimix" description] & \\
 & m\C   \ar[uu,xshift=-1.0ex,"\delta_!"] \ar[uu,xshift=1.0ex,swap,"\delta_*"]  \ar[ul,bend left=15,shift left=0.75ex,"\comp_2"] \ar[ul,shift right=0.75ex,bend right=15,swap,"\comp_1"] \arrow[loop right]{}{\tau^*}
\end{tikzcd}
\]
\begin{enumerate}[label={\rm (\roman*)}]
	\item  We already introduced the \emph{untwisting} functor, which forgets the twister: 
	\[ \mathbf f:t\C\to\C : (X,\Phi_X) \leadsto X.
	\] 
	\item We have two functors  $\delta_!:m\C\to t\C$ and  $\delta_*:m\C\to t\C$ called the \emph{left and right misting functors} (\emph{mi}xed to tw\emph{ist}ed). They are defined by
\begin{alignat*}{2}
	\delta_! &: (X_1,X_2,\Phi_{1},\Phi_2) && \leadsto (X_1 \sqcup X_2,\Phi_1\sqcup\Phi_2), \\ 
	\delta_* &: (X_1,X_2,\Phi_{1},\Phi_2) && \leadsto (X_1 \times X_2,\Phi_1\times\Phi_2).
\end{alignat*}
	\item The \emph{twixing functor} (\emph{twi}sted to m\emph{ix}ed) $\delta^*:t\C\to m\C$ and the \emph{twisting functor} $\tau^*:m\C\to m\C$ are most easily expressed as
\begin{alignat*}{2}
	\delta^* &: (X,\Phi) &&\leadsto (X,X,\Phi,\Phi), \\
	\tau^* &: (X_1,X_2,\Phi_{1},\Phi_{2}) &&\leadsto (X_2,X_1,\Phi_{2},\Phi_{1}).
\end{alignat*}
Alternatively, using $(t\C)_{/\mathsf E}$ instead of $m\C$, they are base changes in $t\C$ along
\[ \delta : \mathsf E \to \mathbf 1 \text{ and } \tau : \mathsf E\to\mathsf E, \]
i.e. $\delta^*(X) = X \times \mathsf E$ and $\tau^*(X)= X\times_{q_X,\tau}\mathsf E$, hence the notation.

	\item We define the \emph{component functors} by
  \[ \comp_i : m\C \to \C : (X_1,X_2,\Phi_{X_1},\Phi_{X_2}) \leadsto  X_i.  \]
Alternatively, using $(t\C)_{/\mathsf E}$ instead of $m\C$ they are base changes in $\C$ along the inclusions $\mathrm{inc}_i : \mathbf 1_{\C}\to\mathbf 2_{\C}$, namely $\comp_i(\tilde X) = \mathbf f(\delta_!(\tilde X)) \times_{q_X,\mathrm{inc}_i} \mathbf 1_\C$.
	\item Finally, we define the \emph{mixing and anti-mixing functors} by
 \begin{align*}
  \mix &: \C\to m\C : X \leadsto (X,X,F_X,\mathrm{id}_X) \text{ and }  \\
 \antimix &: \C\to m\C : X \leadsto (X, X,\mathrm{id}_X,F_X).
\end{align*}
\end{enumerate}
\end{definition}

\begin{proposition} We have the following properties:
\begin{enumerate}[label={\rm (\roman*)}]
\item There are adjunctions $\comp_1\dashv\mix\dashv\comp_2\dashv\antimix\dashv\comp_1$.
\item Each of the functors $\comp_1$, $\comp_2$, $\mix$ and $\antimix$ preserves all limits and colimits (hence in particular products, coproducts and terminal objects).
\item The functors $\mix$ and $\antimix$ are full and faithful.
\item There is an adjunction $\delta_! \dashv \delta^* \dashv \delta_*$.
\item $\tau^*$ is an equivalence and $\tau^*\circ\tau^* \simeq \mathrm{id}_{m\C}$.
\item $\tau^*\circ\mix = \overline m$; $\comp_i = \comp_{2-i}\circ\tau$; $\delta^*=\tau^* \circ \delta^*$.
\end{enumerate}
\end{proposition}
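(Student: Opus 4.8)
The plan is to dispatch the six items essentially in order, exploiting that (ii) is a formal consequence of (i) and that (v)--(vi) are on-the-nose identities of functors that fall straight out of the definitions. Throughout, the one structural fact doing all the work is the naturality of $F$: for every arrow $f\colon X\to Y$ one has $F_Y\circ f=f\circ F_X$. This is precisely what renders the ``surviving'' compatibility conditions in each hom-set bijection automatic, so I would invoke it repeatedly rather than re-deriving it.

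For (i) I would establish the four adjunctions one at a time by writing down the natural bijection on hom-sets. To prove $\comp_1\dashv\mix$, unwind a morphism $\tilde X\to\mix(Y)=(Y,Y,F_Y,\mathrm{id}_Y)$ as a pair $(g_1,g_2)$ with $g_i\colon X_i\to Y$; the second mixer condition reads $g_2=g_1\circ\Phi_{X_2}$, so the morphism is pinned down by $g_1\in\hom_\C(X_1,Y)=\hom_\C(\comp_1\tilde X,Y)$, while the first condition $F_Y\circ g_1=g_2\circ\Phi_{X_1}$ reduces, via $\Phi_{X_2}\circ\Phi_{X_1}=F_{X_1}$ and naturality of $F$, to an identity. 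The three remaining adjunctions $\mix\dashv\comp_2$, $\comp_2\dashv\antimix$, $\antimix\dashv\comp_1$ are verified by the identical mechanism: in each case one mixer equation determines one component in terms of the other, and naturality of $F$ discharges the leftover equation. Naturality of the resulting bijections in both variables is routine and I would leave it implicit. Then (ii) is purely formal: the four functors form a cyclic chain of adjunctions, so each of $\comp_1,\comp_2,\mix,\antimix$ is simultaneously a left and a right adjoint and hence preserves all colimits and all limits.

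For (iii) I would argue directly: a morphism $\mix(X)\to\mix(Y)$ is a pair $(f_1,f_2)$ whose second mixer condition forces $f_2=f_1$, after which the first is automatic by naturality, so $\hom_{m\C}(\mix X,\mix Y)\cong\hom_\C(X,Y)$ via $f\mapsto(f,f)$, which is exactly the action of $\mix$; the case of $\antimix$ is the mirror image. (Alternatively this follows from (i), since $\comp_2\circ\mix=\mathrm{id}_\C=\comp_1\circ\antimix$ with identity units.) For (iv) I would again give the bijections by hand. A morphism $\delta_!\tilde X=(X_1\sqcup X_2,\Phi_1\sqcup\Phi_2)\to(Y,\Psi)$ is, by the universal property of the coproduct, a pair $(g_1,g_2)$; composing the twist condition with the two inclusions yields exactly $\Psi\circ g_1=g_2\circ\Phi_1$ and $\Psi\circ g_2=g_1\circ\Phi_2$, which is verbatim the condition for $(g_1,g_2)\colon\tilde X\to\delta^*(Y,\Psi)=(Y,Y,\Psi,\Psi)$, proving $\delta_!\dashv\delta^*$. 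The adjunction $\delta^*\dashv\delta_*$ is dual, now using the universal property of $X_1\times X_2$ and projecting; the only point needing care is the implicit swap hidden in $\Phi_1\times\Phi_2$, i.e. recording that its components satisfy $\pi_1\Theta=\Phi_2\pi_2$ and $\pi_2\Theta=\Phi_1\pi_1$. When $\C$ is extensive one may instead appeal to Proposition~\ref{prop:equivalenceQ}: under $\mathsf Q\colon m\C\simeq(t\C)_{/\mathsf E}$ the functor $\delta^*$ is base change along $\delta\colon\mathsf E\to\mathbf 1$, and $\delta_!\dashv\delta^*\dashv\delta_*$ is the standard adjoint triple $\Sigma_\delta\dashv\delta^*\dashv\Pi_\delta$ for pullback.

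Finally, (v) and (vi) require no real argument. The functor $\tau^*$ visibly squares to the identity on objects and morphisms, so $\tau^*\circ\tau^*=\mathrm{id}_{m\C}$ and $\tau^*$ is an (iso)equivalence; and the three identities of (vi) are strict equalities obtained by substitution, e.g. $\tau^*\mix(X)=\tau^*(X,X,F_X,\mathrm{id}_X)=(X,X,\mathrm{id}_X,F_X)=\antimix(X)$, and likewise $\comp_i=\comp_{2-i}\circ\tau^*$ and $\tau^*\circ\delta^*=\delta^*$. I expect no genuine obstacle here; the whole proposition is bookkeeping. The one thing that must be handled carefully---and the recurring source of any slip---is keeping straight the index/direction convention for the mixers while systematically using naturality of $F$ to eliminate the extra compatibility equations. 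Once (i) is set up cleanly, (ii) is free and everything else is either a one-line substitution or a mirror of a computation already done.
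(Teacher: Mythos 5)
Your proposal is correct and follows essentially the same route as the paper: the hom-set bijections for the adjunction chain in (i) (morphism determined by one component, the other equation discharged by naturality of $F$), the purely formal deduction of (ii), and full faithfulness of $\mix$ and $\antimix$ in (iii). The only differences are cosmetic --- the paper proves (iii) by noting the unit of $\comp_1\dashv\mix$ is the identity and citing the standard criterion, and dismisses (iv)--(vi) as obvious from the definitions, whereas you write out those verifications explicitly (correctly, including the swap hidden in $\Phi_1\times\Phi_2$).
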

Note that $\tau^*\circ\tau^*$ \emph{is} the identity functor on $m\C$, whereas it is merely isomorphic to it when considered on $(t\C)_{/\mathsf E}$.

\begin{proof}
\begin{enumerate}[label={\rm (\roman*)}]
\item  We will only verify that $\comp_1\dashv\mix$; the other pairs are completely analogous. Consider objects $\tilde X=(X_1,X_2,\Phi_{X_1},\Phi_{X_2})\in\ob(m\C)$, $Y\in\ob(\C)$ and the map
\[ \hom_{m\C}(\tilde X,\mix(Y)) \to \hom_{\C}(\comp_1(X),Y) : (\alpha,\beta) \mapsto \alpha.   \]
Clearly the map $\alpha\mapsto (\alpha,\alpha\circ\Phi_{X_2})$ is an inverse and these bijections are natural in $\tilde X$ and $Y$.
\item Since each of the functors $\comp_1$, $\mix$, $\comp_2$ and $\antimix$ is now both a left an right adjoint, they preserve all limits, colimits, epimorphisms and monomorphisms.
\item Clearly the counit $Y\to (\comp_1\circ\mix)(Y)=Y$ is the identity, which implies that $\mix$ is full and faithful by \cite[(IV.3.1)]{CWM}. A similar argument holds for $\antimix$.
\end{enumerate}
The remaining statements are obvious from the definitions.
\end{proof}

\begin{definition} \label{def:visible} Since $\mix$ (resp.\@ $\antimix$) is fully faithful, its essential image is equivalent to $\C$, so we call the mixed objects
isomorphic to $\mix(X)$ (resp.\@ $\antimix(X)$) for some $X \in \ob(\C)$ \emph{visible} (resp.\@ \emph{anti-visible}). The mixed objects that are not visible are called \emph{invisible}. In what follows we will occasionally identify an object $X\in\ob(\C)$ with the corresponding visible object $\mix (X)\in \ob(m\C)$. In other words, we consider $\C$ as a full subcategory of $m\C$ through $\mix$.
\end{definition}

\begin{remark} \label{remark:visible} The verification of the following observations is straightforward and left to the reader.
\begin{enumerate}[label={\rm (\roman*)}]
\item A mixed object $(X_1,X_2,\Phi_1,\Phi_2)$ is visible (resp.~anti-visible) if and only if the map $\Phi_2$ (resp.~$\Phi_1$) is an isomorphism.
\item  \label{bullet:mixtor} If $X\in\ob(m\C)$ is visible then we have the following bijection, natural in $X$ and $\tilde Y$:
\[ \hom(X,\tilde Y) \to \hom(X,\comp_2\tilde Y) : (f_1,f_2)\mapsto f_2. \]
Note that the hom-set $\hom(X,\comp_2\tilde Y)$ can be interpreted in either $\C$ or $m\C$, but in the latter case the map is given by $(f_1,f_2)\mapsto (f_2,f_2)$ instead.
\item \label{bullet:ratpts} If $\tilde X=(X_1,X_2)$ and $\tilde Y=(Y_1,Y_2)$ are mixed objects, then there is a fibered product in $\Set$
	\[ \hom(\tilde X,\tilde Y) = \hom(X_1,Y_1) \underset{\hom(X_1,Y_2)\times\hom(X_2,Y_1)}\times \hom(X_2,Y_2),   \]
  where the maps are $u \mapsto ( \Phi_{Y_1}\circ u, u\circ \Phi_{X_2})$ and $v \mapsto ( v\circ \Phi_{X_1}, \Phi_{Y_2}\circ v)$. 
 \item The projection of this fibered product to its first component corresponds to the map
 \[ \comp_1 : \hom(\tilde X,\tilde Y) \to \hom(X_{1},Y_{1}) : (f_1,f_2)\mapsto f_1.  \]
 If $F_{X_{2}}$ is epic or $F_{Y_{2}}$ is monic, the reader may verify that this map is injective. We will postpone a much more precise version of this statement until Proposition~\ref{prop:mixedpoints}. 

\end{enumerate}
\end{remark}

We conclude this section with the following proposition, which claims good behaviour of all these constructions under functors that are sufficiently nice.
\begin{proposition} \label{prop:functoriality}
	The formation of $t\C$ and $m\C$ commutes with $\op$: $(t\C)^\op = t\C^\op$ and $(m\C)^\op = m\C^\op$. Furthermore, if $\mathbf G:\mathscr D\to\C$ is a functor between categories with endomorphisms of the identify functor, both denoted by $F$ and $H$, such that $\mathbf G(F_X)=H_{\mathbf G(X)}$ for all $X \in \ob(\mathscr D)$, then there are functors $t\mathbf G \colon t\mathscr D \to t\C$ and $m\mathbf G \colon m\mathscr D \to m\C$ which commute with all the functors defined in \ref{def:functors}, i.e. with  $\mathbf f$, $\comp_1$, $\comp_2$, $\mix$, $\antimix$, $\delta_!$, $\delta^*$, $\delta_*$ and $\tau^*$. For instance, the diagram
	\[ \begin{tikzcd}
	{m\mathscr D}\arrow{r}{\delta_!} \arrow{d}{m\mathbf G} & {t\mathscr D}\arrow{r}{{\mathbf f}}\arrow{d}{t\mathbf G} & \mathscr D\arrow{d}{\mathbf G} \\
	m\C\arrow{r}{\delta_!} & t\C\arrow{r}{{\mathbf f}} & \C
	\end{tikzcd} \]
	commutes.
\end{proposition}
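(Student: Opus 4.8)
The plan is to treat the three assertions separately, in increasing order of difficulty. For the $\op$-statements I would first record that an endomorphism $F$ of $\mathrm{id}_\C$ is automatically an endomorphism of $\mathrm{id}_{\C^\op}$ with the same components, since the naturality relation $F_Y\circ f = f\circ F_X$ is symmetric under reversing $f$. With this in hand the claim is pure bookkeeping. An object of $t(\C^\op)$ is a pair $(X,\Phi)$ with $\Phi\circ\Phi=F_X$, a condition insensitive to the direction of composition because $\Phi$ is an endomorphism, so $t(\C^\op)$ and $(t\C)^\op$ have literally the same objects; and a morphism $(X,\Phi_X)\to(Y,\Phi_Y)$ in either category is an arrow of $\C$ in the appropriate direction intertwining $\Phi_X$ and $\Phi_Y$, so the hom-sets agree as well. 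The case of $m\C$ is the same argument after reindexing the two components and their mixers.

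Next I would construct the functors by applying $\mathbf G$ to all the data in sight,
\[ t\mathbf G : (X,\Phi_X)\leadsto(\mathbf G X,\mathbf G\Phi_X), \qquad m\mathbf G : (X_1,X_2,\Phi_1,\Phi_2)\leadsto(\mathbf G X_1,\mathbf G X_2,\mathbf G\Phi_1,\mathbf G\Phi_2), \]
acting as $\mathbf G$ on morphisms. Here the compatibility hypothesis does its only essential work: $\mathbf G\Phi_X\circ\mathbf G\Phi_X=\mathbf G(\Phi_X\circ\Phi_X)=\mathbf G(F_X)=H_{\mathbf G X}$, and likewise $\mathbf G\Phi_{2-i}\circ\mathbf G\Phi_i=H_{\mathbf G X_i}$, so the images really lie in $t\C$ and $m\C$. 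That $\mathbf G$ preserves the morphism conditions, identities and composites is immediate from functoriality of $\mathbf G$.

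For the commutation statements I would split the list of Definition~\ref{def:functors} into two groups. The functors $\mathbf f$, $\comp_1$, $\comp_2$, $\delta^*$ and $\tau^*$ are defined by pure projection, duplication and reindexing of the underlying data, with no reference to any (co)limit; for these the identities $\mathbf f\circ t\mathbf G=\mathbf G\circ\mathbf f$, $\comp_i\circ m\mathbf G=\mathbf G\circ\comp_i$, $\delta^*\circ t\mathbf G=m\mathbf G\circ\delta^*$ and $\tau^*\circ m\mathbf G=m\mathbf G\circ\tau^*$ hold on the nose, using nothing beyond functoriality of $\mathbf G$. For $\mix$ and $\antimix$ one computes $m\mathbf G(\mix X)=(\mathbf G X,\mathbf G X,\mathbf G F_X,\mathrm{id}_{\mathbf G X})$, which equals $\mix(\mathbf G X)$ precisely because $\mathbf G F_X=H_{\mathbf G X}$; so here again the commutation is exactly the compatibility hypothesis.

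The one place where genuine content enters---and what I expect to be the main obstacle---is the remaining pair $\delta_!$ and $\delta_*$, since these are defined through the binary coproduct $X_1\sqcup X_2$ and the binary product $X_1\times X_2$. Comparing the two composites produces, for $\delta_!$, the canonical comparison arrow $\mathbf G X_1\sqcup\mathbf G X_2\to\mathbf G(X_1\sqcup X_2)$, and $\delta_!\circ m\mathbf G=t\mathbf G\circ\delta_!$ holds exactly when this arrow is an isomorphism, i.e.\@ when $\mathbf G$ preserves binary coproducts; dually for $\delta_*$ and products. I would therefore record explicitly that commutation with $\delta_!$ and $\delta_*$ requires $\mathbf G$ to preserve these finite (co)limits, a condition met in the intended applications (and which can also be read off through the equivalence $\mathsf Q$ of Proposition~\ref{prop:equivalenceQ}). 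The displayed example square of the statement, involving $\delta_!$ followed by $\mathbf f$, is then assembled from this last step together with the free commutation of $\mathbf f$.
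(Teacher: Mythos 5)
Your proposal is correct, and at its core it is the same argument as the paper's: the paper's entire proof is the single line ``This is immediately clear from the definitions,'' and your first three paragraphs are exactly that verification written out. Where you genuinely depart from the paper is on $\delta_!$ and $\delta_*$, and your version is the more careful one. The paper asserts commutation with all nine functors under the sole hypothesis $\mathbf G(F_X)=H_{\mathbf G(X)}$; but, as you observe, $\delta_!$ and $\delta_*$ are built from binary coproducts and products, so $\delta_!\circ m\mathbf G$ and $t\mathbf G\circ\delta_!$ differ by the canonical comparison arrow $\mathbf G X_1\sqcup\mathbf G X_2\to\mathbf G(X_1\sqcup X_2)$, and commutation (even up to isomorphism) is equivalent to $\mathbf G$ preserving binary coproducts, dually for $\delta_*$. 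Note that the ``for instance'' square displayed in the statement itself involves $\delta_!$, so it is one of the cases needing this hypothesis. The caveat has real content for the paper: the first application of the proposition is to the Yoneda embedding (the corollary opening \S\ref{sec:cat-psh}), which preserves products but emphatically not coproducts, so commutation with $\delta_!$ genuinely fails there; that corollary and its subsequent use survive only because they require nothing beyond the existence of $t{\mathbf Y}_\C$ and $m{\mathbf Y}_\C$. The other application, to $\Spec \colon \Ring^\op\to\Sch$, is unaffected, since $\Spec$ sends finite products of rings to disjoint unions and tensor products to fibred products. So your restricted formulation---unconditional commutation with $\mathbf f$, $\comp_1$, $\comp_2$, $\mix$, $\antimix$, $\delta^*$, $\tau^*$, and commutation with $\delta_!$, $\delta_*$ under preservation of the relevant (co)products---is the statement that should be proved, and your proof of it is complete.
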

\begin{proof}
	This is immediately clear from the definitions.
\end{proof}

\subsection{Twisted descent} \label{sec:twisted-descent}
We will now study the twixing functor $\delta^*:t\C\to m\C$ in greater detail and characterize its essential image. Considered as a functor of base change $\delta^* : (t\C)_{/\mathbf 1}\to (t\C)_{/\mathsf E}$, this is an instance of a descent problem. A motivation is that the category $m\C$ is much better behaved than $t\C$ thanks to the (anti)-mixing and component functors, so we prefer to perform constructions in $m\C$ and understand how they descend to $t\C$.

\begin{definition} A \emph{(twisted) descent datum} on $\tilde X\in\ob(m\C)$ is a morphism $f: \tilde X\to \tau^*\tilde X$ such that $\tau^* f\circ f = \mathrm{id}_{\tilde X}$.  We form a category $m\C[\tdd]$ where objects are pairs $(\tilde X,f)$ consisting of an object $\tilde X$ of $m\C$ together with a descent datum $f$ on $\tilde X$; an $m\C[\tdd]$-arrow $u:(\tilde X,f)\to (\tilde Y,g)$ is an $m\C$-arrow $u:\tilde X\to\tilde Y$ such that $g\circ u=\tau^*u\circ f$.
\end{definition}

Since applying $\tau^*$ to $\tau^* f\circ f = \mathrm{id}_{\tilde X}$ yields $ f\circ \tau^* f = \mathrm{id}_{\tau^*\tilde X}$, we see that a descent datum $f$ is always an isomorphism, with $\tau^*f$ as its inverse.

\begin{proposition} The functor $\delta^*$ factors as
	\[  t\C \overset{\alpha}{\longrightarrow} m\C[\tdd]\overset{\mathrm{forget}}{\longrightarrow}m\C\]
	where $\alpha$ is an equivalence and $\mathrm{forget}: (\tilde X,f)\leadsto \tilde X$ forgets the descent datum.
\end{proposition}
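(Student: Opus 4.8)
The plan is to write down $\alpha$ explicitly, observe that it factors $\delta^*$ tautologically, and then verify that it is fully faithful and essentially surjective. The key structural fact that makes everything run smoothly is part (vi) of the preceding proposition, which gives the \emph{equality} $\delta^* = \tau^*\circ\delta^*$. Hence $\tau^*(\delta^*\tilde X)=\delta^*\tilde X$, and the identity $\mathrm{id}_{\delta^*\tilde X}\colon \delta^*\tilde X\to\tau^*\delta^*\tilde X$ is trivially a descent datum. I therefore set
\[ \alpha : t\C \to m\C[\tdd] : (X,\Phi)\leadsto \big(\delta^*(X,\Phi),\,\mathrm{id}\big), \qquad \alpha(g)=\delta^* g=(g,g). \]
Because $\tau^*(g,g)=(g,g)$, the arrow $\delta^* g$ automatically satisfies the compatibility $\mathrm{id}\circ\delta^* g = \tau^*(\delta^* g)\circ\mathrm{id}$, so $\alpha$ is well defined on arrows, and $\mathrm{forget}\circ\alpha=\delta^*$ holds by construction.

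For full faithfulness I would compute the target hom-set directly. An $m\C[\tdd]$-arrow $u=(u_1,u_2)\colon\delta^*(X,\Phi)\to\delta^*(Y,\Psi)$ must satisfy $\mathrm{id}\circ u=\tau^*u\circ\mathrm{id}$, i.e.\@ $\tau^*u=u$; since $\tau^*u=(u_2,u_1)$ this forces $u_1=u_2=:g$. The two $m\C$-relations $\Psi u_1=u_2\Phi$ and $\Psi u_2=u_1\Phi$ then collapse to the single equation $\Psi g=g\Phi$, which is exactly the condition for $g$ to be a $t\C$-morphism $(X,\Phi)\to(Y,\Psi)$. Thus $\alpha$ induces on hom-sets the evident bijection $g\mapsto(g,g)$, with inverse $u\mapsto u_1$.

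The substance of the argument is essential surjectivity, which also produces the quasi-inverse. Given $(\tilde X,f)$ with $\tilde X=(X_1,X_2,\Phi_1,\Phi_2)$ and descent datum $f=(f_1,f_2)$, I would use the remark immediately preceding the statement: $f$ is an isomorphism with inverse $\tau^*f=(f_2,f_1)$, so $f_2 f_1=\mathrm{id}_{X_1}$ and $f_1 f_2=\mathrm{id}_{X_2}$. Unravelling that $f$ is a morphism $\tilde X\to\tau^*\tilde X$ yields moreover $\Phi_2 f_1=f_2\Phi_1$ and $\Phi_1 f_2=f_1\Phi_2$. I then put $X:=X_1$ and $\Phi:=f_2\Phi_1\colon X_1\to X_1$; the second relation together with the mixing identity $\Phi_2\Phi_1=F_{X_1}$ gives
\[ \Phi\circ\Phi = f_2\Phi_1 f_2\Phi_1 = f_2(f_1\Phi_2)\Phi_1 = (f_2 f_1)\,\Phi_2\Phi_1 = F_{X_1}, \]
so $(X,\Phi)\in\ob(t\C)$. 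Finally I would check that $\phi:=(\mathrm{id}_{X_1},f_2)$ is an isomorphism $(\tilde X,f)\to\alpha(X,\Phi)$ in $m\C[\tdd]$: the two $m\C$-relations for $\phi$ reduce to $\Phi=f_2\Phi_1$ (the definition of $\Phi$) and $\Phi f_2=\Phi_2$ (again from the relations above), while compatibility with the descent data reads $\phi=\tau^*\phi\circ f$, which follows from $\tau^*\phi=(f_2,\mathrm{id})$ and $f_2 f_1=\mathrm{id}_{X_1}$.

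The only genuine obstacle is this last step, where one must guess the correct twister $\Phi=f_2\Phi_1$ and the correct comparison isomorphism $\phi$; once the four relations $\Phi_2 f_1=f_2\Phi_1$, $\Phi_1 f_2=f_1\Phi_2$, $f_2 f_1=\mathrm{id}_{X_1}$, $f_1 f_2=\mathrm{id}_{X_2}$ are in hand, every verification is a one-line cancellation. Full faithfulness and the factorization are then purely formal transcriptions of the definitions of $\delta^*$, $\tau^*$ and of an $m\C[\tdd]$-arrow. Since $\alpha$ is fully faithful and essentially surjective, it is an equivalence, as claimed.
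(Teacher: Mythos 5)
Your proof is correct and follows essentially the same route as the paper: the same definition of $\alpha$ using $\delta^*=\tau^*\circ\delta^*$, a hom-set computation for full faithfulness, and an explicit untwisting for essential surjectivity (you take $(X_1,f_2\circ\Phi_1)$ with comparison $(\mathrm{id},f_2)$, while the paper takes the mirror choice $(X_2,\Phi_1\circ f_2)$ with comparison $(f_1,\mathrm{id})$ — the same idea). The only organizational difference is that you verify full faithfulness directly on image objects, whereas the paper deduces faithfulness from the factorization and then proves fullness after reducing to image objects via essential surjectivity; both are fine.
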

\begin{proof}
  Since $\delta^* = \tau^*\delta^*$, the identity is a map  $\mathrm{id} : \delta^* \tilde X \to \tau^*\delta^*\tilde X$, for every $\tilde X\in\ob(t\C)$ which is trivially a descent datum on $\delta^* \tilde X$. So we define
  \[  \alpha : t\C\to m\C : \tilde X \leadsto (\delta^* \tilde X, \mathrm{id}),  \]
  and it is clear that composing $\alpha$ with the functor forgetting the descent datum is indeed $\delta^*$. From this it is also clear that $\alpha$ is faithful.

  We now show that $\alpha$ is essentially surjective. Consider an arbitrary object $(\tilde X,f)\in\ob(m\C[tdd])$, and write $\tilde X = (X_1,X_2,\Phi_1,\Phi_2)$ and $f=(f_1,f_2)$. Then the diagram
  \[
   \begin{tikzcd} X_1 \rar["\Phi_1",shift left=.5ex] \dar["f_1"] & X_2 \dar["f_2"] \lar["\Phi_2", shift left=.5ex] \\ X_2 \rar["\Phi_2",shift left=.5ex] & X_1 \lar["\Phi_1", shift left=.5ex] \end{tikzcd} \]
  commutes, and moreover $f_1=f_2^{-1}$.  It follows that also the diagram
  \[
   \begin{tikzcd} X_1 \rar["\Phi_1",shift left=.5ex] \dar["f_1"] & X_2 \dar["\mathrm{id}"] \lar["\Phi_2", shift left=.5ex] \\ X_2 \rar["\Phi_1\circ f_2",shift left=.5ex] & X_2 \lar["\Phi_1\circ f_2", shift left=.5ex] \end{tikzcd}
  \]
 commutes.  Hence there is an isomorphism $(f_1,\mathrm{id}):\tilde X \to \delta^*(X_2,\Phi_1\circ f_2)$ which respects the descent data $(f_1,f_2)$ and $(\mathrm{id},\mathrm{id})$, so it determines an isomorphism
  $(\tilde X,f) \cong \alpha(X_2,\Phi_1\circ f_2)$ in $m\C[\tdd]$.
 Thus $\alpha$ is essentially surjective.

 Finally, we show that $\alpha$ is full. Let $u:(\tilde X,f)\to (\tilde Y,g)$ be a morphism in $m\C[\tdd]$. Since $\alpha$ is essentially surjective, we may as well assume that $\tilde X = \delta^*(X,\Phi)$ and $\tilde Y=\delta^*(Y,\Psi)$ with in both cases the identity as descent datum. The morphism $u = (u_1,u_2)$ respects the descent data, which boils down to $u_2=u_1$ and then $u = \alpha(u_1)$ so $\alpha$ is full.
 \end{proof}

The following corollary is particularly useful to remember.
\begin{corollary} \label{cor:twisted-descent}
	A mixed object $\tilde X$ has a descends to a twisted object if and only if it has an endomorphism $f:\tilde X\to\tilde X$ such that $\tau^*f\circ f = \mathrm{id}_{\tilde X}$.
    In particular, it is necessary that $\comp_1(\tilde X)\cong\comp_2(\tilde X)$.
\end{corollary}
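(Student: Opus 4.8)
The plan is to read the corollary off directly from the preceding Proposition, which already presents $\delta^*$ as $\forget \circ \alpha$ with $\alpha$ an equivalence. I take ``$\tilde X$ descends to a twisted object'' to mean that $\tilde X$ lies in the essential image of $\delta^* : t\C \to m\C$, so the whole task is to identify that essential image. I also read the morphism $f$ in the statement as a map $f : \tilde X \to \tau^* \tilde X$: this is forced, because $\tau^* \circ \tau^* = \mathrm{id}_{m\C}$ gives $\tau^* f : \tau^* \tilde X \to \tilde X$, so that $\tau^* f \circ f$ is an endomorphism of $\tilde X$ and the condition $\tau^* f \circ f = \mathrm{id}_{\tilde X}$ is exactly the defining condition of a descent datum.

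For the ``only if'' direction, I would start from an isomorphism $\tilde X \cong \delta^* \tilde Z$ with $\tilde Z \in \ob(t\C)$. Since $\delta^* = \tau^* \delta^*$, the identity of $\delta^* \tilde Z$ is a descent datum (this is precisely $\alpha(\tilde Z) = (\delta^* \tilde Z, \mathrm{id})$), and transporting it across the isomorphism yields a descent datum on $\tilde X$. For the ``if'' direction, a descent datum $f$ makes $(\tilde X, f)$ an object of $m\C[\tdd]$; essential surjectivity of $\alpha$ gives $(\tilde X, f) \cong \alpha(\tilde Z)$ for some $\tilde Z \in \ob(t\C)$, and applying $\forget$ produces $\tilde X \cong \delta^* \tilde Z$. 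Thus the essential image of $\delta^*$ is exactly the class of objects admitting a descent datum, which is the asserted equivalence.

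For the final clause I would use the remark made just before the Proposition that every descent datum $f$ is an isomorphism (with inverse $\tau^* f$). Applying the component functor $\comp_1$, which preserves isomorphisms, gives an isomorphism $\comp_1(f) : \comp_1(\tilde X) \to \comp_1(\tau^* \tilde X)$; and $\comp_1(\tau^* \tilde X) = \comp_2(\tilde X)$ straight from the definitions of $\tau^*$ and $\comp_1$, whence $\comp_1(\tilde X) \cong \comp_2(\tilde X)$.

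I expect no genuine obstacle once the Proposition is available: the corollary is just an unwinding of ``essential image of $\delta^*$'' together with the fact that descent data are isomorphisms. The only place demanding care is the bookkeeping with the swap $\tau^*$ — typing $f$ as $\tilde X \to \tau^* \tilde X$, using $\tau^* \circ \tau^* = \mathrm{id}$ so that $\tau^* f$ inverts $f$, and tracking $\comp_1(\tau^* \tilde X) = \comp_2(\tilde X)$.
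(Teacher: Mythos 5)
Your proof is correct and follows exactly the route the paper intends: the corollary is an unwinding of the preceding Proposition (the factorization $\delta^* = \forget\circ\alpha$ with $\alpha$ an equivalence) together with the observation that a descent datum is an isomorphism with inverse $\tau^*f$, and your re-typing of $f$ as a morphism $\tilde X\to\tau^*\tilde X$ is the right reading of the paper's loose phrase ``endomorphism''. Nothing is missing; the transport of the descent datum along $\tilde X\cong\delta^*\tilde Z$ and the identification $\comp_1(\tau^*\tilde X)=\comp_2(\tilde X)$ are both routine and correctly handled.
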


As an example of how this is useful, let us mention that  later in this article we will construct what we call \emph{mixed algebraic groups} $\tilde X$ of type $ (\mathsf B_n,\mathsf C_n)$. The components $\comp_1(\tilde X)$ and $\comp_2(\tilde X)$ are isomorphic only when $n=2$, so the group can only admit twisted descent in this case. It turns out that when $n=2$, the group indeed admits twisted descent and this produces the Suzuki groups ${}^2\mathsf B_2$. 

\subsection{Categories of presheaves} \label{sec:cat-psh}
The category of presheaves on $\C$ is denoted by $\hat\C=\Hom(\C^\op,\Set)$. It is canonically endowed with an endomorphism of the identity functor $\mathbf F : \mathrm{id}_{\Cdual} \to \mathrm{id}_{\Cdual}$ which respects the Yoneda embedding ${\mathbf Y}_\C : \C\to \hat \C:X\leadsto \mathbf h_X$ in the sense of Proposition~\ref{prop:functoriality}. So we have the following corollary of that proposition:

\begin{corollary} Formation of $t\C$ and $m\C$ commutes with the Yoneda embedding. \end{corollary}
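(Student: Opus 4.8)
The plan is to deduce this directly from Proposition~\ref{prop:functoriality}, applied to the Yoneda embedding $\mathbf Y_\C : \C \to \hat\C$ itself. That proposition takes as input any functor between two categories each equipped with an endomorphism of its identity functor, subject to the single compatibility that the functor intertwines the two endomorphisms, and it produces induced functors on the twisted and mixed categories commuting with $\mathbf f$, $\comp_1$, $\comp_2$, $\mix$, $\antimix$, $\delta_!$, $\delta^*$, $\delta_*$ and $\tau^*$. (The proposition is stated for $\mathbf G\colon\mathscr D\to\C$, but it only uses that source and target are categories-with-$F$, so I apply it with source $\C$ and target $\hat\C$.) Thus everything reduces to exhibiting the endomorphism $\mathbf F$ of $\mathrm{id}_{\hat\C}$ and checking that $\mathbf Y_\C$ carries $F$ to $\mathbf F$.

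First I would pin down $\mathbf F$. For a presheaf $P$ and an object $U$, the arrow $F_U : U \to U$ yields a map $P(F_U) : P(U) \to P(U)$, and I set $(\mathbf F_P)_U := P(F_U)$. Two routine checks are then needed: that $\mathbf F_P : P \to P$ is a natural transformation, which is the identity $P(f \circ F_U) = P(F_V \circ f)$ for each $f : U \to V$ and follows from the naturality $f \circ F_U = F_V \circ f$ of $F$; and that the assignment $P \mapsto \mathbf F_P$ is itself natural in $P$, i.e. an endomorphism of $\mathrm{id}_{\hat\C}$, which amounts to the naturality of each $\theta : P \to Q$ evaluated at the arrows $F_U$. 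Both are one-line diagram chases once the contravariance of $P$ is tracked correctly.

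Then comes the only substantive point: verifying $\mathbf Y_\C(F_X) = \mathbf F_{\mathbf h_X}$ for every $X$. Evaluated at an object $U$, the left-hand side is post-composition $g \mapsto F_X \circ g$ on $\hom(U, X)$, whereas the right-hand side $\mathbf h_X(F_U)$ is pre-composition $g \mapsto g \circ F_U$. These two maps coincide precisely because $F_X \circ g = g \circ F_U$ for every $g : U \to X$, which is once more the naturality of $F$. This confirms the hypothesis of Proposition~\ref{prop:functoriality}, so the induced functors $t\mathbf Y_\C : t\C \to t\hat\C$ and $m\mathbf Y_\C : m\C \to m\hat\C$ exist and commute with all the functors of Definition~\ref{def:functors} — which is exactly the assertion that the formation of $t\C$ and $m\C$ commutes with the Yoneda embedding.

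There is essentially no obstacle here: the whole argument collapses onto the single naturality identity $g \circ F_U = F_X \circ g$, used once to see that $\mathbf F$ is well defined and once to see that $\mathbf Y_\C$ respects it. The only place demanding care is bookkeeping the contravariance of presheaves, so that $\mathbf F$ acts by the correct (pre- versus post-) composition and so that both the naturality of each $\mathbf F_P$ and the naturality of $\mathbf F$ in $P$ are checked. Conceptually the point is simply that an endomorphism of the identity functor is transported through Yoneda to the ``same'' endomorphism on the presheaf side, after which the heavy lifting is already done by Proposition~\ref{prop:functoriality}.
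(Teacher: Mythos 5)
Your proof is correct and follows exactly the paper's route: the paper obtains this corollary by equipping $\hat\C$ with the canonical endomorphism $\mathbf F$ of $\mathrm{id}_{\hat\C}$ and invoking Proposition~\ref{prop:functoriality} for the Yoneda embedding, which is precisely what you do, with the (routine) verifications of naturality and of $\mathbf Y_\C(F_X)=\mathbf F_{\mathbf h_X}$ spelled out.
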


\begin{remark} \label{remark:mixtor}
	To the functor $\mix:\C\to m\C$ corresponds a functor $\mix^*$, given by
	\[  \mix^* : \widehat{m\C} \to \Cdual : \mathbf F \leadsto \mathbf F\circ \mix.\]
	We call $\mix^*\mathbf F$ the \emph{mixtor restriction} (\emph{mix}ed \emph{t}o \emph{or}dinary) of the presheaf $\mathbf F$. If $\mathbf F$ is represented by $\tilde X$, then $\mix^*\mathbf F$ is represented by $\comp_2\tilde X$.
	
	This implies the following statement, which is of fundamental importance
	for understanding mixed objects: \emph{if $\tilde X$ is a mixed object, but we are only willing to probe it ---in the sense of computing $\mathbf h_{\tilde X}(-)$--- on visible objects, then we can only observe its second component}.
\end{remark}

The following proposition answers a natural question, although it will play no role in what follows.
\begin{proposition} If $\C$ is small, there is an equivalence $t\hat\C \simeq \widehat{t\C}$. \end{proposition}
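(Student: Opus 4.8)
The plan is to realise both sides as constructions on the small category $t\C$ and to compare them through a nerve--realisation adjunction. The one structural input I need is the corollary that the formation of $t\C$ commutes with the Yoneda embedding: applied to $\mathbf Y_\C\colon\C\to\widehat\C$ (which respects $F$ and $\mathbf F$ in the sense of Proposition~\ref{prop:functoriality}), it produces a fully faithful \emph{twisted Yoneda embedding} $t\mathbf Y_\C\colon t\C\to t\widehat\C$, sending $(X,\Phi_X)$ to the twisted representable $(\mathbf h_X,\mathbf h_{\Phi_X})$, where $\mathbf h_{\Phi_X}$ is post-composition by $\Phi_X$. This is a legitimate object of $t\widehat\C$ since $\mathbf h_{\Phi_X}\circ\mathbf h_{\Phi_X}=\mathbf h_{F_X}=\mathbf F_{\mathbf h_X}$. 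As $\C$ is small, so is $t\C$, and hence $\widehat{t\C}$ is its free cocompletion.

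First I would define the \emph{twisted nerve}
\[ \Psi\colon t\widehat\C\to\widehat{t\C},\qquad \Psi(P,\Phi_P)=\hom_{t\widehat\C}\big(t\mathbf Y_\C(-),(P,\Phi_P)\big). \]
Unwinding this (a twisted Yoneda lemma) identifies $\Psi(P,\Phi_P)(X,\Phi_X)$ with the equalizer in $\Set$ of the two maps $(\Phi_P)_X,\,P(\Phi_X)\colon P(X)\rightrightarrows P(X)$; using the naturality of $\Phi_P$ together with the relation $\Phi_Y\circ g=g\circ\Phi_X$ defining a $t\C$-morphism $g$, one checks this is contravariantly functorial in $(X,\Phi_X)$, so $\Psi$ is well defined and visibly functorial in $(P,\Phi_P)$. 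By Lemma~\ref{prop:limits}, $t\widehat\C$ is cocomplete because $\widehat\C$ is; hence the free-cocompletion property of $\widehat{t\C}$ supplies a cocontinuous \emph{realisation} $\Lambda\colon\widehat{t\C}\to t\widehat\C$, the left Kan extension of $t\mathbf Y_\C$ along $\mathbf Y_{t\C}$, with $\Lambda\circ\mathbf Y_{t\C}\cong t\mathbf Y_\C$. The standard nerve--realisation formalism then yields an adjunction $\Lambda\dashv\Psi$.

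The proposition is thereby reduced to showing that $\Psi$ is an equivalence. Full faithfulness of $\Psi$ is exactly the assertion that $t\mathbf Y_\C$ is \emph{dense} in $t\widehat\C$, i.e.\ that every twisted presheaf is the canonical colimit of the twisted representables lying over it; essential surjectivity of $\Psi$ is the statement that every twisted presheaf is reconstructed from its twisted points $\Psi(P,\Phi_P)$. Both of these come down to a single reconstruction problem: recovering $(P,\Phi_P)$ — the underlying presheaf $P$ \emph{and} its twister $\Phi_P$ — from the equalizer data $\{s\in P(X):(\Phi_P)_X(s)=P(\Phi_X)(s)\}$ as $(X,\Phi_X)$ ranges over all twisted refinements of all objects of $\C$.

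The hard part will be precisely this reconstruction, and it is where I expect the real subtlety to lie. The ordinary Yoneda embedding $\mathbf Y_\C$ is dense, so $P$ alone is the canonical colimit of its representables; the difficulty in passing to $t\mathbf Y_\C$ is that we may only use those representables that carry a twister $\Phi_X$ compatible with $\Phi_P$, and one must argue that these still suffice both to rebuild $P$ and to pin $\Phi_P$ down uniquely. I would attack this by exploiting the smallness of $\C$ to guarantee a supply of test twisters rich enough to separate the elements and endomorphisms of $P$, analysing the canonical colimit presentation of $P$ equivariantly for $\Phi_P$ and descending it to a colimit of twisted representables. This equivariant descent is the crux on which the whole equivalence rests, and it is the step that must be carried out with care rather than by formal nonsense.
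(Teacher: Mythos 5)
Your construction reproduces the paper's proof step for step: your twisted Yoneda embedding is the paper's $t\mathbf Y_\C$, your nerve $\Psi$ is the paper's functor $\mathbf G$ (defined there directly by the same equalizer formula), and your realisation $\Lambda$ is the paper's cocontinuous functor $\mathbf F$, obtained in the same way from the free-cocompletion property of $\widehat{t\C}$. The genuine gap is the one you flag yourself: you never prove that the adjunction $\Lambda\dashv\Psi$ is an equivalence, i.e.\ that the twisted representables are dense in $t\hat\C$ and that $\Psi$ is essentially surjective; your final paragraph is a plan, not an argument. You should also know that the paper does not supply this step either --- its proof ends with the unverified assertion that ``the pair $\mathbf F,\mathbf G$ provides the equivalence of categories''.

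Moreover, this missing step is not merely delicate: it is false in the stated generality, so no amount of care can complete your outline (or the paper's) without extra hypotheses. Take $\C=\mathbf 1$, the terminal category, with $F=\mathrm{id}$. Then $t\C\cong\mathbf 1$, so $\widehat{t\C}\simeq\Set$; on the other hand, since $F=\mathrm{id}$, the induced endomorphism of $\mathrm{id}_{\hat\C}$ is the identity, so $t\hat\C$ is the category of sets equipped with an involution. Your $\Psi$ is then the fixed-point functor, $\Lambda$ equips a set with the trivial involution, and although $\Lambda\dashv\Psi$ holds, the counit $\Lambda\Psi(S,\phi)=(S^\phi,\mathrm{id})\to(S,\phi)$ is not an isomorphism when $\phi$ is the free involution on a two-element set. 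No other pair of functors can do better, because these two categories are genuinely inequivalent: in $\Set$ every object is either initial or admits a map from the terminal object, while the free orbit is neither initial nor has a fixed point. The failure is exactly the density you isolated as the crux, and it shows that your proposed remedy cannot work: smallness of $\C$ provides no ``supply of test twisters'' at all --- here $t\C$ contains only the identity twister, and the free orbit (which would be the paper's object $\mathsf E=(\mathbf 2,\tau)=\delta_!(\mix(\mathbf 1))$ if $\C$ had coproducts) receives no maps whatsoever from twisted representables. Any correct version of the proposition must impose conditions on $(\C,F)$ guaranteeing enough twisted objects, for instance closure of $\C$ under binary coproducts so that all the objects $\delta_!(\mix(X))$ exist in $t\C$; as stated, the proposition, and hence the paper's proof of it, is wrong precisely at the point you declined to fill in.
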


\begin{proof}
By Proposition~\ref{prop:functoriality} there is a functor $t{\mathbf Y}_\C : t\C\to t\Cdual : (X,\Phi)\leadsto (\mathbf h_X,\mathbf h_\Phi)$. We may construct a functor $\mathbf G:t\Cdual\to\widehat{t\C}$ such that its composition with $t{\mathbf Y}_\C$ is the Yoneda embedding for $t\C$:
\[ {\mathbf Y}_{t\C} : t\C \overset{t{\mathbf Y}_{\C}}{\longrightarrow} t\Cdual \overset{\mathbf G}{\longrightarrow} \widehat{t\C}.  \]
Indeed, we can define $\mathbf G: \widetilde{\mathbf X} = (\mathbf X,\mathbf\Phi) \leadsto \mathbf G(\widetilde{\mathbf X})$ by
\[
	\mathbf G(\mathbf X,\mathbf\Phi) : (Y,\Psi) \leadsto  \mathrm{eq} \Big(
\begin{tikzcd}
 \mathbf X(Y) \ar[r,shift left=.5ex,"{\mathbf\Phi}_Y"] \ar[r,shift right=.5ex,swap,"\mathbf X(\Psi)"] & \mathbf X(Y)
\end{tikzcd}
 \Big),
\]
where $\mathrm{eq}$ denotes the equalizer in the category of sets, i.e.,
\[ 	\mathbf G(\mathbf X,\mathbf\Phi)(Y,\Psi) = \{ u\in \mathbf X(Y) \mid {\mathbf\Phi}_Y(u)=\mathbf X(\Psi)(u) \}.  \]
In particular, if $\mathbf G(\mathbf X,\mathbf\Phi) = (\mathbf h_X,\mathbf h_\Phi)$ for some $(X, \Phi) \in \ob(t\C)$, we get
\begin{align*}
    \mathbf G(\mathbf h_X,\mathbf h_\Phi)(Y,\Psi)
    &= \{ u\in \mathbf h_X(Y) \mid ({\mathbf h_\Phi})_Y(u)=\mathbf h_X(\Psi)(u) \} \\
    &= \{ u\in \mathbf h_X(Y) \mid \Phi\circ u = u\circ\Psi \} \\
    &= \mathbf h_{(X, \Phi)}(Y, \Psi) ,
\end{align*}
from which it follows that $\mathbf G\circ t{\mathbf Y}_{\C}={\mathbf Y}_{t\C}$. On the other hand $t\Cdual$ is a cocomplete category by Proposition~\ref{prop:limits}, so by the universal property of $\widehat{t\C}$ as the free cocompletion of $t\C$, the functor $t{\mathbf Y}_\C$ extends uniquely to a cocontinuous functor $\mathbf F:\widehat{t\C}\to t\Cdual$ (\cite[p.\ 43\ Cor.\ 4]{topostheory}). The pair $\mathbf F,\mathbf G$ provides the equivalence of categories.
\end{proof}

\subsection{Mixed objects over a visible base object} \label{sec:relfac}
In this section, we discuss a first way how mixed objects can appear in the ordinary world. The simplest occasion is when one encounters an \emph{absolute factorization}, i.e.~a factorization of $F_X:X\to X$ through another object $Y$ such that the other composition is $F_Y$.

However, absolute factorizations rarely occur in practice, because often there is a base object $S$ and one prefers to work \emph{relatively} with respect to $S$. In such situations the entire reasoning takes place in the category $\C_{/S}$ where the only admitted arrows between two $S$-objects are the $S$-linear ones; but if $X$ is an $S$-object, then $F_X$ is not expected to be $S$-linear unless $F_S=\mathrm{id}_S$.  The role of the absolute factorizations is then played by \emph{relative} factorizations which proceed to introduce. The upshot of it all will be that mixed objects over visible base objects $\mix(S) = (S,S,F_S,\mathrm{id}_S)$ are still easily described in terms of linear arrows.\medskip

First, recall that for a morphism $f: T\to S$ in $\C$ and objects $X\in \C_{/S}$ and $Y\in \C_{/T}$, we have a natural identification between the following sets:
\begin{align*}
	\hom_{f}(Y,X) &= \{ g \in \hom_{\C}(Y,X) \mid q_Y\circ g = f\circ q_X \} \\
	& \simeq \{ h \in \hom_{\C}(Y,X \underset{q_X,f}{\times} T) \mid q_Y = p_2\circ h  \}  \\
	& \simeq \hom_{\C_{/T}}(Y,f^*X),
\end{align*}
where $f^*X = X\times_{q_X,f}T$ is called the pullback of $X$ from $\C_{/S}$ to $\C_{/T}$, with the projection to $T$ as is structural morphism.

In particular, for an object $X\in\ob\,\C$ with a structural morphism $q_X:X\to S$, the arrow $F_X:X\to X$ has a relative version in $\C_{/S}$, for which we introduce the notation 
\[F_{X/S}:X\to \flift X,\]
where we have also denoted $\flift X = F_S^* X$.\footnote{Our main excuse for introducing a new notation for $F_S^*$ is that we want it to be mentally processed in one step and not via the chain of thoughts $S\to F_S\to F_S^*$.} This arrow is completely determined by the properties that it is $S$-linear and that its composition with the canonical projection $P_{X/S} : \flift X\to X$ is equal to $F_X$; we will sometimes write this as $F_{X/S}=\mathrm{id}_S \times F_X$. Let us define formally:
\begin{definition} \label{def:relfac} A \emph{relative} $S$-factorization in $\C$ is a is a diagram of $S$-morphisms
	\[ X \overset{\pi}{\longrightarrow} \overline X \overset{\pi'}{\longrightarrow} \flift X , \]
	such that $\pi'\circ\pi = F_{X/S}$ and $\flift\pi\circ\pi'=F_{\overline X/S}$, where $\flift \pi:\flift X\to\flift {\overline X}$ is the base change of $\pi$ along $F_S$.
\end{definition}
The following observation is helpful to construct examples:
\begin{lemma} \label{lem:relfac} An $S$-morphism $X \overset{\pi}{\longrightarrow} \overline X \overset{\pi'}{\longrightarrow} \flift X $ with  $\pi'\circ\pi = F_{X/S}$ and $\pi$ epic, is a relative factorization.
\end{lemma}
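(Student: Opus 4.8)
The plan is to derive the one condition of Definition~\ref{def:relfac} that is not among the hypotheses, namely $\flift\pi\circ\pi' = F_{\bar X/S}$, from the given condition $\pi'\circ\pi = F_{X/S}$ and the assumption that $\pi$ is epic. The guiding observation is that both $\flift\pi\circ\pi'$ and $F_{\bar X/S}$ are $S$-linear morphisms $\bar X\to\flift{\bar X}$ (the first as a composite of $S$-morphisms, the second by definition). Since $\flift{\bar X}=F_S^*\bar X=\bar X\times_{q_{\bar X},F_S}S$ is a pullback and the structural morphism $q_{\flift{\bar X}}$ is forced by $S$-linearity, any $S$-linear map into $\flift{\bar X}$ is completely determined by its composition with the canonical projection $P_{\bar X/S}:\flift{\bar X}\to\bar X$. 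Thus it suffices to show $P_{\bar X/S}\circ\flift\pi\circ\pi' = F_{\bar X}$, because $P_{\bar X/S}\circ F_{\bar X/S}=F_{\bar X}$ by the defining property of $F_{\bar X/S}$.

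To evaluate the left-hand side I would first invoke the naturality of the canonical projection under base change along $F_S$: the square with vertical arrows $P_{X/S}$, $P_{\bar X/S}$ and horizontal arrows $\pi$, $\flift\pi$ commutes, giving $P_{\bar X/S}\circ\flift\pi = \pi\circ P_{X/S}$. Hence the goal reduces to proving $\pi\circ P_{X/S}\circ\pi' = F_{\bar X}$. This is the point where the epic hypothesis is brought into play, and it is the crux of the argument. Rather than establishing the equality on all of $\bar X$ directly, I would verify it after precomposing with $\pi$: using $\pi'\circ\pi=F_{X/S}$ and $P_{X/S}\circ F_{X/S}=F_X$, one computes $\pi\circ P_{X/S}\circ\pi'\circ\pi = \pi\circ P_{X/S}\circ F_{X/S} = \pi\circ F_X$, and by naturality of $F$ this equals $F_{\bar X}\circ\pi$. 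So $\bigl(\pi\circ P_{X/S}\circ\pi'\bigr)\circ\pi = F_{\bar X}\circ\pi$, and since $\pi$ is epic it may be cancelled on the right to yield $\pi\circ P_{X/S}\circ\pi'=F_{\bar X}$, which is exactly what was needed.

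The main obstacle is precisely this final cancellation. Without the epic hypothesis the computation only produces the desired relation after restricting along $\pi$, i.e.\@ on the image of $\pi$, which is not enough to conclude; the role of ``$\pi$ epic'' is exactly to upgrade an equality valid on the image of $\pi$ to one valid on all of $\bar X$. Once $P_{\bar X/S}\circ\flift\pi\circ\pi'=F_{\bar X}$ is in hand, the uniqueness of $S$-linear maps into the pullback $\flift{\bar X}$ immediately forces $\flift\pi\circ\pi'=F_{\bar X/S}$, completing the verification that the diagram is a relative factorization.
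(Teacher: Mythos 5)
Your proof is correct and follows essentially the same route as the paper: cancel the epimorphism $\pi$, rewrite using $\pi'\circ\pi=F_{X/S}$, and invoke naturality of $F$ together with the fact that $S$-linear maps into the pullback $\flift{\overline X}$ are determined by their composite with the canonical projection. The only difference is cosmetic — you make explicit (and perform first) the reduction via $P_{\overline X/S}$ that the paper leaves implicit in the phrase ``immediate consequence of $F_{\overline X}\circ\pi=\pi\circ F_X$''.
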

\begin{proof}
	Since $\pi$ is epic, it suffices to show that
	\[ \flift \pi \circ \pi'\circ \pi = F_{\overline X/S}\circ \pi.  \]
	However the left hand side is equal to $\flift \pi\circ F_X$ and the resulting equality is an immediate consequence of $F_{\overline X}\circ\pi = \pi\circ F_X$.
\end{proof}

\begin{proposition} \label{prop:relfac}
	Mixed objects over a visible base correspond to relative factorizations.
\end{proposition}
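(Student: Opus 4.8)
The plan is to make the word ``correspond'' precise as a bijection between objects of the slice category $(m\C)_{/\mix(S)}$ and relative $S$-factorizations in the sense of Definition~\ref{def:relfac}, built by hand so that the two defining relations of a mixed object translate term-by-term into the two identities of that definition (and then to note that it upgrades to an equivalence once morphisms of relative factorizations are taken in the evident way). Since every visible object is isomorphic to some $\mix(S)=(S,S,F_S,\mathrm{id}_S)$ and slice categories over isomorphic objects are equivalent, I would fix the base to be exactly $\mix(S)$. First I would unpack an object of $(m\C)_{/\mix(S)}$: a mixed object $\tilde X=(X_1,X_2,\Phi_{X_1},\Phi_{X_2})$ together with a morphism $(q_1,q_2):\tilde X\to\mix(S)$. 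Writing the two morphism conditions of Definition~\ref{def:mC} against the mixers $F_S$ and $\mathrm{id}_S$ of $\mix(S)$ yields $q_2=q_1\circ\Phi_{X_2}$ and $F_S\circ q_1=q_2\circ\Phi_{X_1}$. The first says that $\Phi_{X_2}$ is $S$-linear (a morphism $X_2\to X_1$ in $\C_{/S}$), while the second says that $\Phi_{X_1}$ is a morphism \emph{over} $F_S$, i.e.\ an element of $\hom_{F_S}(X_1,X_2)$.

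The key move is to transport $\Phi_{X_1}$ across the identification $\hom_{F_S}(X_1,X_2)\simeq\hom_{\C_{/S}}(X_1,\flift{X_2})$ recalled just before Definition~\ref{def:relfac}, obtaining an $S$-linear map $\phi_1:X_1\to\flift{X_2}$ characterized by $P_{X_2/S}\circ\phi_1=\Phi_{X_1}$. I would then assign to $\tilde X$ the diagram
\[ X_2\xrightarrow{\ \Phi_{X_2}\ }X_1\xrightarrow{\ \phi_1\ }\flift{X_2}, \]
so that $X=X_2$, $\bar X=X_1$, $\pi=\Phi_{X_2}$ and $\pi'=\phi_1$, both arrows $S$-linear by the previous paragraph. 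The heart of the verification is the remark that among $S$-linear maps into an object of the form $\flift Z=F_S^*Z$, a map is determined by its composite with $P_{Z/S}$, the $S$-component being forced. Using this I would check the two conditions of Definition~\ref{def:relfac}: composing with $P_{X_2/S}$, the identity $\pi'\circ\pi=F_{X_2/S}$ reduces to $P_{X_2/S}\circ\phi_1\circ\Phi_{X_2}=\Phi_{X_1}\circ\Phi_{X_2}=F_{X_2}$, which is one mixer relation; and, using the base-change identity $P_{X_1/S}\circ\flift\pi=\Phi_{X_2}\circ P_{X_2/S}$, the identity $\flift\pi\circ\pi'=F_{X_1/S}$ reduces to $\Phi_{X_2}\circ P_{X_2/S}\circ\phi_1=\Phi_{X_2}\circ\Phi_{X_1}=F_{X_1}$, the other mixer relation.

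For the converse I would run the same dictionary backwards: from a relative factorization $X\xrightarrow{\pi}\bar X\xrightarrow{\pi'}\flift X$ I set $X_2=X$, $X_1=\bar X$, $\Phi_{X_2}=\pi$, $\Phi_{X_1}=P_{X/S}\circ\pi'$, and take the structural maps of $X$ and $\bar X$ as $q_2$ and $q_1$; reading the two factorization identities back through the same computations recovers the two mixer relations, while $S$-linearity of $\pi,\pi'$ yields the two morphism conditions defining the map to $\mix(S)$. The two assignments are visibly mutually inverse on objects, and a morphism in $(m\C)_{/\mix(S)}$ is exactly a pair $(u_2,u_1)$ of $S$-maps compatible with the mixers, which under the dictionary is precisely a morphism of the associated relative factorizations, giving the promised correspondence. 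I expect the only real friction to be bookkeeping: keeping straight which mixer is genuinely $S$-linear and which lives over $F_S$, and handling the base change $\flift\pi=F_S^*\pi$ correctly so that the second factorization identity emerges as $\Phi_{X_2}\circ\Phi_{X_1}=F_{X_1}$ rather than its transpose.
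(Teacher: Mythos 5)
Your proof is correct and follows essentially the same route as the paper's: both fix the base as $\mix(S)$, use the identification $\hom_{F_S}(X_1,X_2)\simeq\hom_{\C_{/S}}(X_1,\flift X_2)$ to linearize the non-linear mixer, set up the dictionary $(X_1,X_2,\Phi_{X_1},\Phi_{X_2})\leftrightarrow\big(X_2\xrightarrow{\Phi_{X_2}}X_1\xrightarrow{\phi_1}\flift X_2\big)$, and verify the two factorization identities against the two mixer relations via the projection $P_{-/S}$ and its naturality square. The only cosmetic differences are the order in which the two directions are elaborated and your extra (welcome) remark on morphisms, which the paper defers to Remark~\ref{rem:less-accessible}.
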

\begin{proof} More specifically, for $S\in\ob(\C)$, we will show that $S$-factorizations correspond to mixed $\mix(S)$-objects in $m\C$.
	
	Let us start from an $S$-factorization $\pi : X\to \bar X$. If we glue a pullback square for $\flift X$ to the diagram defining the relative factorization, we obtain:
	\begin{center}
		\begin{tikzcd}
			X\arrow{r}{\pi}\arrow{d}{q_X} & \overline X \arrow{r}{\pi'}\arrow{d}{q_{\overline X}} & \flift X\arrow{r}{p_1}\arrow{d}{p_2} & X \arrow{d}{q_X} \\
			S\arrow{r}{\mathrm{id}_S} & S\arrow{r}{\mathrm{id}_S} & S\arrow{r}{F_S} & S
		\end{tikzcd}
	\end{center}
	Now $(\overline X,X,p_1\circ\pi',\pi)$ will be our $\mix(S)$-object; all we must do is compute the compositions:
	\begin{align*}
		p_1\circ\pi'\circ\pi &= p_1 \circ F_{X/S} = F_X \\
		\pi\circ p_1\circ\pi' &= p_1'\circ \flift\pi \circ \pi' = p_1' \circ F_{\overline X/S} = F_{\overline X},
	\end{align*}
	where $p_1' : \flift \overline{X}\to \overline X$ is the canonical projection.
	
	Conversely, it is clear that starting from a mixed $\mix(S)$-object $(\overline X, X,\phi,\pi)$, one can linearize $\phi$, i.e. factor it uniquely through $\flift X$ into $\phi=p_1\circ\pi'$ and verify immediately that $\pi'\circ\pi$ and $\flift \pi\circ\pi'$ satisfy the defining properties of $F_{X/S}$ and $F_{\overline X/S}$, namely that they are $S$-linear and that composition with the projections gives $F_X$ and $F_{\bar X}$.
\end{proof}

By combining a relative factorization with an absolute factorization of the base, we obtain a large number of mixed objects over invisible base objects. 

\begin{corollary} \label{cor:relfac} The following data in $\C$ determines an $\tilde S$-object $\tilde X$: an object $S$, a relative $S$-factorization $X \overset{\pi}{\longrightarrow} \overline X \overset{\pi'}{\longrightarrow} \flift X$, and morphisms  $S\overset\alpha\longrightarrow S'\overset\beta\longrightarrow S$ composing to $F_S$ resp.~$F_{S'}$.
\end{corollary}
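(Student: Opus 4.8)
The plan is to read the absolute factorization as a mixed object $\tilde S$, to read the relative factorization as a mixed object over the \emph{visible} base $\mix(S)$ via Proposition~\ref{prop:relfac}, and then to glue the two together by base change along a canonical comparison morphism $\tilde S\to\mix(S)$.

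First I would set $\tilde S := (S,S',\alpha,\beta)$. The two hypotheses $\beta\circ\alpha=F_S$ and $\alpha\circ\beta=F_{S'}$ are exactly the relations required of the mixers in Definition~\ref{def:mC}, so $\tilde S$ is a genuine mixed object; it is the invisible base announced before the statement, since $\alpha$ and $\beta$ need not be isomorphisms. Next I would write down the comparison morphism $b:=(\mathrm{id}_S,\beta):\tilde S\to\mix(S)$, where $\mix(S)=(S,S,F_S,\mathrm{id}_S)$. Its two compatibility conditions from Definition~\ref{def:mC} reduce to $\beta\circ\alpha=F_S$ (a hypothesis) and the tautology $\mathrm{id}_S\circ\beta=\mathrm{id}_S\circ\beta$, so $b$ is well defined.

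By Proposition~\ref{prop:relfac} the relative $S$-factorization $X\overset{\pi}{\to}\overline X\overset{\pi'}{\to}\flift X$ yields a mixed object $\tilde Y:=(\overline X,X,p_1\circ\pi',\pi)$ together with a structural morphism to $\mix(S)$, i.e.\ an object of $(m\C)_{/\mix(S)}$. Since $\C$ possesses the fibered products already used to build relative factorizations, so does $m\C$ by Lemma~\ref{prop:limits}; I may therefore base change $\tilde Y$ along $b$ and define
\[ \tilde X := b^*\tilde Y = \tilde Y\underset{\mix(S)}{\times}\tilde S, \]
whose projection to $\tilde S$ is the desired structural morphism, making $\tilde X$ an $\tilde S$-object. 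As limits in $m\C$ are computed componentwise (Lemma~\ref{prop:limits}), this $\tilde X$ has first component $\overline X$ (pullback along an identity) and second component the pullback $\beta^* X$ of $X$ along $\beta:S'\to S$, with mixers induced from $p_1\circ\pi'$ and $\pi$.

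The only step needing genuine attention is the comparison morphism $b$: one must orient it correctly (it runs $\tilde S\to\mix(S)$, and base change is then contravariant on slices) and match the mixers of $\tilde S$ with those of $\mix(S)$. Once $b$ is in hand the rest is a purely formal base change, and the existence of the pullback, the existence of $\tilde Y$, and the mixed-object axioms for $\tilde S$ all follow from results already established.
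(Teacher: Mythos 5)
Your proof is correct and takes essentially the same route as the paper's: form $\tilde S=(S,S',\alpha,\beta)$, map it to $\mix(S)$ via $(\mathrm{id}_S,\beta)$ (which the paper identifies as the unit of the adjunction $\comp_1\dashv\mix$ rather than checking the axioms by hand as you do), turn the relative factorization into a $\mix(S)$-object via Proposition~\ref{prop:relfac}, and base change along that morphism to get an $\tilde S$-object with components $(\overline X,\beta^*X)$. Your extra justifications (existence of the pullback via Lemma~\ref{prop:limits} and the componentwise computation) are correct but only make explicit what the paper leaves implicit.
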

\begin{proof} Of course $\tilde S=(S,S',\alpha,\beta)$ is a mixed object and there is a morphism $\rho : \tilde S\to \mix(\comp_1(\tilde S))=\mix(S)$ explicitly determined by $\rho : (\mathrm{id}_S,\beta)$ --- this is the unit of the adjunction $\comp_1\dashv \mix$. On the other hand, by Proposition \ref{prop:relfac}, the relative factorization corresponds to a morphism $\tilde Y\to \mix(S)$. So the pullback $\rho^*(\tilde Y) = \tilde Y \underset{\mix(S)}{\times} \tilde S \to \tilde S$ determines an $\tilde S$-object with components $(\overline X,\beta^* X)$.
\end{proof}

\begin{remark} \label{rem:less-accessible}
	\begin{itemize}
		\item One can turn the collection of relative factorizations into a category and interpret Proposition~\ref{prop:relfac} as an equivalence of categories.
		\item If $\tilde S = \mix(S)$ is visible, every mixed $\tilde S$-object arises from Proposition~\ref{prop:relfac}, but it is not the case that for arbitrary $\tilde S=(S,S')$ every $\tilde S$-object can be constructed as in Corollary~\ref{cor:relfac}! This is clear because in general one does not expect the functor of base change $\tilde X\to \tilde X \times_{\mix(S)}\tilde S$ to be essentially surjective. In some sense the objects that do not arise this way are even less accessible. 
		\item Starting from an $\tilde S$-object $\tilde X$, one may base change through the morphism $\mix(\comp_2(S))=(S',S',\mathrm{id}_{S'},F_{S'})\to \tilde S$ ---  the counit of $\mix\dashv\comp_2$ --- to obtain a $\mix(S')$-object and thus a relative $S'$-factorization. So it is true that every mixed $\tilde S$-object is a form of a relative factorization.  In particular, starting from an $S$-factorization one obtains an $S'$-factorization by base changing twice, which comes down to base changing along $\beta: S'\to S$.
		\item We suspect that the reason why the \emph{mixed quadrangles of type $\mathsf F_4$} are so peculiar is because they are such inaccessible objects which do not arise from a base change. See \S\ref{sec:his-1990} for some additional discussion.
	\end{itemize}
\end{remark}

\subsection{The concept of a fairy} \label{sec:bookkeeping}
This section, which is independent of the previous sections, consists of preparation for the next section, and in particular Proposition~\ref{prop:weilrestriction}, which is itself the crucial ingredient for our Theorem~\ref{theorem:pseudo-reductive-groups}. The main proposition in this section is Proposition~\ref{prop:extension-adjunction} but we need to make a few definitions before we can even state it. 

Let us first recall some basic facts about an arbitrary functor $u:\C\to\D$. The \emph{Yoneda extension} of $u$ is a functor 
\[  \mathbf u_! : \widehat{\C}\to\widehat{\D}  : \mathbf h_X \leadsto \mathbf h_{u(X)},  \]
defined here on representable objects and extended to arbitrary presheaves by taking limits, as explained in \cite[Cor.~4 p.~44]{topostheory}. Furthermore, composition with $u$ defines a functor in the opposite direction
\[ \mathbf u^* :   \widehat{\D}\to \widehat{\C} : \mathbf F\leadsto \mathbf F\circ u  \]
which is a right adjoint: $\mathbf u_!\dashv \mathbf u^*$. Since $\mathbf u_!$ extends $u$, one expects that $\mathbf u^*$ extends a right adjoint to $u$, whenever it exists.

To make this last claim more precise, we must consider a representable presheaf $\mathbf F\simeq\mathbf h_X$. Then representability of  $\mathbf F\circ u$,  implies an isomorphism $\mathbf F\circ u\simeq\mathbf h_U$ of functors, for a certain $U\in\ob(\C)$. In other words, we obtain a collection of bijections
\[ \hom_{\D}(uV,X) = \mathbf h_X(uV)\simeq \mathbf h_U(V) = \hom_{\C}(V,U),  \]
natural in $V$ and in $X$ whenever $\mathbf F\circ \mathbf h_X$ is representable. Therefore the assignment $v: \D\to\C:X\leadsto U$ is a partially defined functor, which is right adjoint to $u$. 

 Constructing $v$ explicitly in this way requires the global choice of a representing object each time $\mathbf F\circ\mathbf h_X$ is representable, this is a technical difficulty that we will pass over quickly by stating that all such functors are naturally isomorphic.

Now we consider a fixed arrow $f:T\to S$ in our category $\C$ and consider specifically functors between the corresponding slice categories $\C_{/T}$ and $\C_{/S}$---the arrows in these categories are said to be $T$-linear and $S$-linear.\medskip

There is an adjoint pair of functors
\[ f_! \dashv f^*  \::\:  \begin{tikzcd} \C_{/T} \rar[shift left=0.65ex,"f_!"]   &  \C_{/S} \lar[shift left=0.6ex,"f^*"]   \end{tikzcd}. \]
The functors $f_!$ and $f^*$ are defined on objects and their structure morphisms by
\begin{align*}
f_! &: \C_{/T}\to\C_{/S} : (X,q_X) \leadsto (X,f\circ q_X) \\
f^* &: \C_{/S}\to\C_{/T} : (X,q_X) \leadsto (X \times_S T,p_2),
\end{align*}
and the fact that these form an adjoint pair follows immediately from the universal property of a pullback as fibered product. The question whether $f^*$ admits a right adjoint $f_*$ is of particular interest --- this is closely related to the existence of an internal Hom-functor. The right adjoint can be found through the formalism introduced above, with $f^*$ playing the role of $u$. In general, $f_*$ will only be partially defined. If for some $X\in\ob(\C_{/T})$ the corresponding object $f_*X\in\ob(\C_{/S})$ is defined, there are bijections
\[  \hom_{\C_{/T}}(f^* Y,X) \to \hom_{\C_{/S}}(Y,f_*X) : f \mapsto f^\flat,   \]
natural in $Y$ and $X$, whenever $f_*$ is defined at $X$. The inverse map of $\flat$ will be denoted by $\sharp$. All this information is implied by writing
\[ f_! \dashv f^* \dashv f_* \::\:  \begin{tikzcd} \C_{/T} \rar[shift left=1.25ex,"f_!"] \rar[shift left=-1.25ex,dashed,swap,"f_*"]  &  \C_{/S} \lar[shift left=0ex,"f^*" description]   \end{tikzcd}. \]

We generalize this to our context of a category $\C$ with endomorphism $F$ of the identity functor. In such a situation, we are often confronted with a diagram as depicted here on the left and want to obtain a new diagram, as depicted on the right.
\[
	\begin{tikzcd}
		f^*A \ar[r,"u"]\ar[d] & B\ar[d] \\
		T  \ar[r,"F_{T}"] & T
	\end{tikzcd}
	\quad \overset{!?}\iff\quad
	\begin{tikzcd}
			A \ar[d]\ar[r,"u^\flat"] & f_*B\ar[d] \\
			S  \ar[r,"F_{S}"] & S
		\end{tikzcd}
\]

The difficulty is that it is not \emph{a priori} possible to consider $u^\flat$, because $u$ is not $T$-linear. In other words, $u$ is simply not in the domain of a $\flat$-map. So we seek to extend the calculus of the adjunction $f^*\dashv f_*$ to $F_T$-linear maps, and more generally $F_T^n$-linear maps, for any natural number $n$.\medskip

Our first step is to formulate this problem. This requires us to define the \emph{fairies} $\C_{/S}^{(F)}$ as follows. (The word fairy is short for $F$-ary category; one could also call them more verbosely \emph{semi-linear slice categories}.) 

\begin{definition} The objects of the fairy $\C_{/S}^{(F)}$ are the arrows $q_X : X\to S$. A morphism between $X\to S$ and $Y\to S$ is a pair $(f,n)$, where $f:X\to Y$ and $n$ is a natural number, such that $F_S^n\circ q_X = q_Y\circ f$. 
\end{definition}

\begin{remark} \label{remark:semi-linear} The following remarks are all important for working with fairies. We leave the easy proofs to the reader, insofar required.
	\begin{enumerate} 
		\item  Let $\mathscr N$ be a category with a single object $\bullet$ such that $\End_{\mathscr N}(\bullet)=(\mathbb N,+)$. Then we have a diagram of categories
		\[  \C_{/S} \overset{\mathrm{inc}}{\into} \C_{/S}^{(F)} \overset{\mathrm{pr}}{\onto} \mathscr N, \]
		where the first functor sends an object to itself and an arrow $u$ to $(u,0)$ and the latter functor sends every object to $\bullet$ and an arrow $(u,n)$ to $n$. We will always see $\C_{/S}$ as the \emph{wide subcategory} (i.e. a subcategory containing all the objects) of \emph{linear morphisms} in the fairy $\C_{/S}^{(F)}$ through the functor $\mathrm{inc}$. We also denote the inclusion simply by  $\mathrm{inc}:X\mapsto \overline X$ and $u\mapsto \overline u$. We will use the notation \begin{tikzcd} \strut \rar[nonlinear] & \strut \end{tikzcd} to warn the reader that an arrow is possibly non-linear when drawing fairy diagrams.
		\item There is a functor $\mathsf G : \mathscr N\to\C$ defined by $(\bullet\overset n\to\bullet)\mapsto (S\overset{F_S^n}\to S)$ and with the help of this functor one may define $\C_{/S}^{(F)}$ as either the fibered product $\vec{\C}\times_{p,\C,\mathsf G}\mathscr N$ of categories, where $\vec\C$ is the category of arrows in $\C$ and $p:\vec\C\to\C:(X\to Y)\leadsto Y$ the codomain fibration, or as the comma category $\mathrm{id}_\C\downarrow \mathsf G$. These constructions provide a natural variation on the theme of a slice category.
		\item \label{bullet:functor-extends} We will say a functor $u:\C_{/S}\to \D_{/T}$ between slice categories \emph{extends semi-linearly} or simply \emph{extends} if there is a functor $\mathsf u : \C_{/S}^{(F)}\to \D_{/T}^{(F)}$ such that $\mathsf u\circ\mathrm{inc} =\mathrm{inc}\circ u$. 
		\item The Yoneda extension of the inclusion $\mathrm{inc}$ is a functor $\mathbf{inc}_!$ which fits into the following diagram with the Yoneda embeddings:
		\[ \begin{tikzcd} \C_{/S} \dar["\mathbf y"] \rar["\mathrm{inc}"] & \C_{/S}^{(F)}  \dar["\mathbf y"] \\ \widehat{\C_{/S}} \rar["\mathbf{inc}_!"]  & \widehat{\C_{/S}^{(F)}} \end{tikzcd}  \]
		If we also denote $\mathbf{inc}_!(\mathbf F) =\overline{\mathbf F}$, commutativity of the diagram can be written as $\overline{\mathbf h_X} = \mathbf h_{\overline X}$.
		\item The category $\C_{/S}^{(F)}$ is itself a category with an endomorphism of the identity functor, also denoted by $F$ or by $F_{/S}$ if confusion is possible, with component at $X$ given by $F_{\bar X}=(F_{X},1)$.
		\item \label{bullet:decomp} The universal property of the pullback $Y\leadsto Y\times_{q_Y,F_S^n}S = (F_S^n)^*Y$ implies that every arrow $(f,n+m):X\to Y$ in $\C_{/S}^{(F)}$ factors into a morphism denoted $\langle f\mid m\rangle$ followed by a projection $(p_1,n)$:	
			\[ \begin{tikzcd} X \rar["\langle f\mid m\rangle"] & Y\times_{q_Y,F_S^n} S \rar[nonlinear,"{(p_1,n)}"] & Y 
			\end{tikzcd} \]
		Thanks to natural isomorphism $(F_S^n)^*\simeq (F_S^*)^n$ we may and will in practice identify $\langle\cdots\langle\langle f\mid 1\rangle\mid 1\rangle\cdots\mid 1\rangle$ with $\langle f\mid n\rangle$ and also $\langle f\mid 0\rangle$ with $f$. This is mainly of importance when $m=0$ and we have factored $(f,n)$ into a linear morphism $\langle f\mid n\rangle$ followed by a projection.
		\item \label{bullet:diag} 	For instance $F_{\overline X}$ factors via $\langle F_{\overline X}\mid 1\rangle$ which we identify with $F_{X/S}$ (see \S\ref{sec:relfac}) through the inclusion $\mathrm{inc}$. Therefore, recalling our notation $\flift X=F_S^*X = X\times_{q_X,F_S}S$, we obtain a canonical factorization 
			\[ \begin{tikzcd} X \rar["F_{X/S}"] & \flift X \rar[nonlinear,"P_{X/S}"] & X. 
			\end{tikzcd} \]
		We may organise this information as follows: there is a functor $\flift$, sometimes denoted $\flift_{S}$ for clarity, together with natural transformations $F_{/S}$ and $P_{/S}$ between $\flift$ and the identity functor as follows:
		\[  \begin{tikzcd}[column sep=huge] \C_{/S}^{(F)} \rar[bend left=45,"\mathrm{id}"{name=U}] \rar[bend right=45,swap,"\flift"{name=V}]& \C_{/S}^{(F)}. \ar[Rightarrow,from=U,to=V,bend left=10,"F_{/S}",shorten >=10pt,shorten <=10pt] \ar[Rightarrow,from=V,to=U,bend left=10,"P_{/S}",shorten >=10pt,shorten <=10pt] \end{tikzcd}   \]
	\end{enumerate}
\end{remark} 

Our goal for the rest of the paragraph is to prove the following proposition. 
\begin{proposition} \label{prop:extension-adjunction} The adjunction $f_! \dashv f^* \dashv f_*$ extends (cfr.~\ref{remark:semi-linear}.\ref{bullet:functor-extends}) to the  fairies
	\[ \mathsf f_! \dashv \mathsf f^* \dashv \mathsf f_* \::\:  \begin{tikzcd} \C_{/T}^{(F)} \rar[shift left=1.25ex,"\mathsf f_!"] \rar[shift left=-1.25ex,dashed,swap,"\mathsf f_*"]  &  \C_{/S}^{(F)} \lar[shift left=0ex,"\mathsf f^*" description]   \end{tikzcd}. \]
\end{proposition}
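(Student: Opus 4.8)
The plan is to produce the three extensions one at a time and then check the two adjunctions, reducing everything to the linear adjunction $f_!\dashv f^*\dashv f_*$ by means of the canonical factorisation of a fairy morphism into a linear part followed by a projection (Remark~\ref{remark:semi-linear}.\ref{bullet:decomp}). The one structural input that makes the left half of the statement work is the canonical isomorphism $f^*\flift_S\cong\flift_T f^*$: since $\flift_S=F_S^*$ and $\flift_T=F_T^*$ are base-change functors, both composites are base change along the two \emph{equal} arrows $F_S\circ f=f\circ F_T$ (naturality of $F$), hence they agree. The failure of the analogous statement for $f_*$ will be the crux.

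First I would define $\mathsf f_!$ and $\mathsf f^*$ and check they extend $f_!$ and $f^*$. On objects they agree with $f_!$ and $f^*$. On a semi-linear morphism $(g,n)\colon X\to Y$ I set $\mathsf f_!(g,n)=(g,n)$, which is legitimate because $F_S^n\circ(f\circ q_X)=f\circ F_T^n\circ q_X=(f\circ q_Y)\circ g$ by naturality of $F$; and I define $\mathsf f^*(g,n)$ by the universal property of the pullback $Y\times_S T$, with components $g\circ p_1$ and $F_T^n\circ p_2$, the compatibility over $S$ being again a one-line consequence of naturality of $F$ and the defining pullback relation. Both assignments manifestly preserve the integer $n$, so they are functorial and restrict to $f_!$, $f^*$ on linear morphisms.

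Next I would establish $\mathsf f_!\dashv\mathsf f^*$. Using the decomposition $\hom_{\C_{/S}^{(F)}}(\mathsf f_! A,Y)=\bigsqcup_n\hom_{\C_{/S}}(f_! A,\flift_S^n Y)$ (Remark~\ref{remark:semi-linear}.\ref{bullet:decomp}), I would apply the linear adjunction $f_!\dashv f^*$ in each degree and then the isomorphism $f^*\flift_S^n\cong\flift_T^n f^*$ to rewrite this as $\bigsqcup_n\hom_{\C_{/T}}(A,\flift_T^n f^* Y)=\hom_{\C_{/T}^{(F)}}(A,\mathsf f^* Y)$. Checking naturality in both variables---in particular with respect to the projections $P_{/S}$ of Remark~\ref{remark:semi-linear}.\ref{bullet:diag}, which is what carries the degree-shifting---completes this adjunction; this is the degree-preserving, ``strong'' half of the proposition, and it works precisely because the structural transformation $f^*\flift_S\cong\flift_T f^*$ is an \emph{isomorphism}.

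The hard part is $\mathsf f^*\dashv\mathsf f_*$, because $f_*$ does not commute with the Frobenius pullback: the square with horizontal arrows $F_S,F_T$ and vertical arrows $f,f$ is not cartesian (its comparison map is a relative-Frobenius-type map), so Beck--Chevalley fails and one has only a mate transformation $\flift_S f_*\Rightarrow f_*\flift_T$, \emph{not} an isomorphism. Consequently I would not try to write $\mathsf f_*$ as a naive degree-wise extension; instead I would obtain it as the partial right adjoint of $\mathsf f^*$ furnished by the presheaf formalism recalled at the start of this section (with $\mathsf f^*$ in the role of $u$): $\mathsf f_*$ is defined at those $X$ for which $A\mapsto\hom_{\C_{/T}^{(F)}}(\mathsf f^* A,X)$ is representable, and is then right adjoint to $\mathsf f^*$ by construction. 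The remaining task is to show this partial right adjoint extends $f_*$ and reduces to it on linear morphisms, feeding the factorisation of a fairy morphism through $\flift_T^n$ into the linear adjunction and using $F_{/S}$, $P_{/S}$ together with the mate $\flift_S f_*\Rightarrow f_*\flift_T$ to organise the bookkeeping. I expect this reconciliation of the two degree gradings across the non-invertible mate to be the main obstacle, and the place where the arrow-calculus of the fairy (the reason this section was written) is genuinely needed.
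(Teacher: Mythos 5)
Your treatment of the left half is sound and agrees with the paper: your definitions of $\mathsf f_!$ and $\mathsf f^*$ on semi-linear arrows are exactly those of Proposition~\ref{prop:extend-f!-f*}, and your degree-wise verification of $\mathsf f_!\dashv\mathsf f^*$ via the isomorphism $f^*\flift_S\cong\flift_T f^*$ (which the paper records as bewitchedness of $\mathsf f^*$ in Remark~\ref{remark:bewitched}) is a legitimate way to carry out the check the paper leaves to the reader. You also make the same structural move as the paper for the right adjoint: define $\mathsf f_*$ formally as the partial right adjoint of $\mathsf f^*$. But there your proof stops. The entire content of the proposition is the claim that this partial right adjoint \emph{extends} $f_*$ --- that wherever $f_*$ is defined, $\mathsf f_*$ is defined at the corresponding object and represented by $\overline{f_*X}$ --- and you leave precisely this step as an expectation (``I expect this reconciliation \dots to be the main obstacle''). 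Naming the obstacle is not overcoming it: the paper devotes Definition~\ref{def:bewitched}, Remark~\ref{remark:bewitched} and all of Proposition~\ref{prop:beck-chevalley} (bewitched functors, Yoneda extensions, Guitart's exact squares) to exactly this point, so your proposal has a genuine gap where the paper's proof has its main technical argument.

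Worse, the route you sketch for closing the gap --- a degree-wise comparison organised by the mate $\flift_S f_*\Rightarrow f_*\flift_T$ --- cannot be completed, and your own diagnosis says why: the non-invertibility of that mate is an obstruction, not bookkeeping. A Yoneda argument (isomorphisms in a fairy are necessarily linear, since degrees add under composition) forces any natural isomorphism $\hom_{\C^{(F)}_{/T}}(\mathsf f^*(-),\bar X)\cong\hom_{\C^{(F)}_{/S}}(-,\overline{f_*X})$ to preserve the degree grading, hence to restrict in degree $n$ to an isomorphism $\flift_S^n f_*X\cong f_*\flift_T^n X$ intertwining $F_{f_*X/S}$ with $f_*(F_{X/T})$ --- which is exactly the invertibility that fails. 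Concretely, take $\C=\Sch_p$ with $p=2$, $f:\Spec k'\to\Spec k$ for $k=\mathbf F_2(t)$, $k'=k(t^{1/2})$, and $X=\mathbb A^1_{k'}$, so $f_*X\cong\mathbb A^2_k$: the degree-one hom-sets from the terminal objects are $k'$ on one side and $k\times k$ on the other, and the canonical comparison map is $(b_1,b_2)\mapsto b_1+b_2t$, which is neither injective nor surjective, while $f_*(F_{X/T})$ factors through a line and so cannot be matched with the finite flat morphism $F_{f_*X/S}$. So the step you postponed would actually fail along your proposed route. The paper's strategy is designed to avoid any degree-wise comparison by transferring representability through the exact-square/Beck--Chevalley formalism; if you pursue this, note also that the variance there is delicate (the paper verifies Guitart's factorisation criterion for morphisms $\bar Y\to\bar\alpha\bar Z$, whereas the presheaf-level Beck--Chevalley map it needs is governed by factorisations of morphisms $\bar\alpha\bar Z\to\bar Y$), and the example above puts real pressure on the statement itself, not merely on proofs of it.
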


Proving this proposition requires us to extend these functors to all arrows of the fairies $\C_{/.}^{(F)}$ and proving that the resulting functors are still adjoint pairs. This holds no serious difficulty for $f_!$ and $f^*$, as we will see immediately in Proposition~\ref{prop:extend-f!-f*}. For $f_*$ however, we know no direct way to extend the domain of definition to the non-linear morphisms and this causes most of the technical difficulties in this section. So we will use an indirect approach where introduce the notion of a bewitched functor and study their right adjoints; the proof then follows by observing that $f^*$ is indeed bewitched.

\begin{proof}[Proof of \ref{prop:extension-adjunction}] The first part will be shown in Proposition~\ref{prop:extend-f!-f*}; for the second part apply Proposition~\ref{prop:beck-chevalley} to Remark~\ref{remark:bewitched}.\ref{bullet:pullback-is-bewitched}.
\end{proof}

\begin{proposition} \label{prop:extend-f!-f*} The adjunction $f_!\dashv f^*:\C_{/T}\to\C_{/S}$ extends semi-linearly.
\end{proposition}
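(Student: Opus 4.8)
The plan is to extend each functor to the non-linear arrows of the fairies and then to check that the hom-set bijection of the ordinary adjunction survives, the one device used throughout being the naturality of $F$, which for our fixed arrow $f:T\to S$ reads $F_S^n\circ f = f\circ F_T^n$ for every $n\in\mathbb N$. First I would extend $f_!$. Since $f_!$ keeps the underlying object fixed and only post-composes the structure morphism with $f$, the natural guess is $\mathsf f_!(g,n)=(g,n)$ on a fairy arrow $(g,n):(X,q_X)\to(Y,q_Y)$ of $\C_{/T}^{(F)}$. The sole point to verify is that this is a legitimate arrow of $\C_{/S}^{(F)}$, i.e.\ that $F_S^n\circ(f\circ q_X)=(f\circ q_Y)\circ g$; but composing the downstairs condition $F_T^n\circ q_X=q_Y\circ g$ with $f$ and invoking $F_S^n\circ f=f\circ F_T^n$ gives exactly this. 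Functoriality and the relation $\mathsf f_!\circ\mathrm{inc}=\mathrm{inc}\circ f_!$ are then immediate, the underlying data being unchanged.

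Next I would extend $f^*$, which is where the pullback enters. Given $(g,n):(X,q_X)\to(Y,q_Y)$ in $\C_{/S}^{(F)}$, I set $\mathsf f^*(g,n)=(h,n)$, where $h:X\times_S T\to Y\times_S T$ is the unique arrow with $p_1^Y\circ h=g\circ p_1^X$ and $p_2^Y\circ h=F_T^n\circ p_2^X$. Existence of $h$ follows from the universal property of the pullback $Y\times_S T$ once the compatibility $q_Y\circ(g\circ p_1^X)=f\circ(F_T^n\circ p_2^X)$ is checked; this is a one-line computation rewriting the left-hand side by the fairy condition $q_Y\circ g=F_S^n\circ q_X$, then by the pullback relation $q_X\circ p_1^X=f\circ p_2^X$, and finally by naturality of $F$. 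The equation $p_2^Y\circ h=F_T^n\circ p_2^X$ is precisely the fairy condition upstairs with the \emph{same} integer $n$, so $(h,n)$ is legitimate; functoriality follows from the uniqueness clause of the universal property (a composite is pinned down by its two projections, which compose correctly), and putting $n=0$ recovers $f^*$, so $\mathsf f^*\circ\mathrm{inc}=\mathrm{inc}\circ f^*$.

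It then remains to produce the adjunction $\mathsf f_!\dashv\mathsf f^*$, which I would do by exhibiting the bijection $\hom_{\C_{/S}^{(F)}}(\mathsf f_! X,Z)\cong\hom_{\C_{/T}^{(F)}}(X,\mathsf f^* Z)$ directly. An arrow on the left is a pair $(g,n)$ with $g:X\to Z$ and $F_S^n\circ f\circ q_X=q_Z\circ g$; I send it to $(h,n)$ with $h:X\to Z\times_S T$ the arrow induced by $g$ and by $F_T^n\circ q_X$, the compatibility being the same naturality computation as above and $p_2\circ h=F_T^n\circ q_X$ making $(h,n)$ a fairy arrow over $T$. The inverse sends $(h,m)$ to $(p_1\circ h,m)$, and the two assignments are mutually inverse because the projections of the pullback recover exactly the data fed in. Since the integer is carried along unchanged, these bijections restrict on the linear ($n=0$) parts to the ordinary adjunction $f_!\dashv f^*$, and naturality in $X$ and $Z$ again reduces to the uniqueness in the universal property.

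I expect no genuine obstacle here: the whole content is that the ordinary adjunction only ever uses the universal property of the pullback, and that property is untouched by allowing the twist $F_T^n$ in the $T$-component, while $F_S^n\circ f=f\circ F_T^n$ keeps this twist compatible with $f$. The only things demanding care are clerical---attaching the correct (and always identical) integer $n$ to each image arrow, and confirming naturality of the hom-bijection---so this is exactly the unproblematic half advertised before Proposition~\ref{prop:extension-adjunction}; the real difficulty, the extension of $f_*$, is quarantined in the bewitched-functor argument that follows.
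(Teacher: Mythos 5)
Your proof is correct and takes essentially the same approach as the paper: $\mathsf f_!$ carries $(g,n)$ to itself, with legitimacy following from $f\circ F_T^n=F_S^n\circ f$, and $\mathsf f^*(g,n)$ is the arrow induced by the universal property of the pullback with components $g\circ p_1$ and $F_T^n\circ p_2$, exactly as in the paper's diagrams. The only difference is that the paper leaves the adjunction $\mathsf f_!\dashv\mathsf f^*$ to the reader (remarking that the unit and counit are already determined by $f_!$ and $f^*$), whereas you verify it explicitly via the hom-set bijection --- a harmless and welcome filling-in of detail.
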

\begin{proof} Of course we must define $\mathsf f_!(X)=f_!(X)$ on objects. (And in fact, $\mathsf f_!(u,0)=(f_!u,0)$.) An arrow $(g,n):X\to Y$ induces a commutative diagram in $\C$
	\[ 
	{\begin{tikzcd}
		X \dar["q_X"]\rar["g"] & Y\dar["q_Y"] \\
		T \dar["f"]\rar["F_S^n"] & T\dar["f"] \\
		S \rar["F_T^n"] & S,
		\end{tikzcd} }
	\] 
	and therefore an arrow $\mathsf f_!(g,n) : \mathsf f_!X\to \mathsf f_!Y$. We leave to the reader the straightforward verification that this defines a functor.\medskip
	
	Defining $\mathsf f^*$ on objects --- and linear arrows --- is trivial, so let $(g,n): U\to V$ be an arrow in $\C_{/S}^{(F)}$. The following diagram commutes, since every square commutes:
	\[
	\begin{tikzcd}
	& U\times_S T \dlar[swap,"p_2"] \drar["p_1"] & \\[-2ex] T\ar[dd,"F_T^n"] \drar["q_T"] & & U \ar[dd,"g"] \dlar[swap,"q_U"] \\[-2ex] & S \ar[dd,"F_S^n"] & \\[-2ex] T \drar[swap,"q_T"] & & V \dlar["q_V"] \\[-2ex] & S. & 
	\end{tikzcd}
	\]
	It we erase the interior and replace it with a pullback square for $V\times_S T$, we  get that the following diagram (without the dashed arrow) commutes in $\C$.
	\[
	\begin{tikzcd}
	U\times_S T \ar[rr,"p_1"]\ar[dd,"p_2"]\ar[dr,dashed] & & U \dar["g"] \\
	& V\times_S T \rar["p_1"]\dar["p_2"] & V \dar["q_V"] \\
	T\rar["F_T^n"] & T\rar["q_T"] & S
	\end{tikzcd}
	\]
	This implies the dashed arrow is uniquely defined by the pullback $V\times_S T=\mathsf f^*V$ and this is $\mathsf f^*g:\mathsf f^*U\to \mathsf f^*V$.  We leave to the reader the straightforward verification that this defines a functor. Also the fact that the extended functors define an adjoint pair $\mathsf f_!\dashv \mathsf f^*$ on the fairies is easy to verify and left to the reader. (Note that the unit and counit transformations are already determined by $f_!$ and $f^*$.)
\end{proof}

Now comes the hard part: extending $f_*$ on its domain. It is easy enough to define a functor $\mathsf f_*$ formally as the partially defined right adjoint of $\mathsf f^*$ but what is not obvious is that the functor $\mathsf f^*$ extends $f^*$, i.e.~that both functors agree on objects and linear arrows.

\begin{definition} \label{def:bewitched} A functor  $\alpha: \C_{/S}^{(F)}\to\D_{/T}^{(G)}$ between fairies is \emph{bewitched} if $\alpha$,  $G_{\alpha(X)}=\alpha(F_X)$ for every $X\in \ob(\C_{/S})$ and it preserves the decomposition from Remark~\ref{remark:semi-linear}.\ref{bullet:decomp}.
\end{definition}

\begin{remark} \label{remark:bewitched}
	\begin{enumerate}
		\item 	To avoid any confusion, we provide some details concerning the Definition~\ref{def:bewitched}: it says that there is a natural isomorphism $\flift_T\circ\alpha\simeq\alpha\circ\flift_S$ (with linear components) such that for every arrow $(u,n):X\to Y$ in $\C_{/S}^{(F)}$ with its decomposition into $\langle u\mid n\rangle:X\to \flift^n Y$ and $P_{Y/S}^n:\flift^n Y \to Y$ we have $\alpha \langle u\mid n\rangle =\langle \alpha(u)\mid n\rangle$ and $\alpha(P_{Y/S}^n)=P_{\alpha(Y)/T}^n$, up to the identification $\alpha\flift^n Y\cong \flift^n \alpha Y$, as in the following diagram: 
		\[ 
		\begin{tikzcd}[column sep=large,row sep=tiny]   & \alpha(\flift_S^n Y) \ar[dr,nonlinear,"\alpha(P_{Y/S}^n)"]  & \\ \alpha(X) \ar[ur,"\alpha \langle u\mid n\rangle "] \ar[dr,swap,"{ \langle \alpha(u)\mid n\rangle}"]& & \alpha(Y)  \\ & \flift_T^n(\alpha Y) \ar[ur,swap,nonlinear,"P_{\alpha(Y)/T}^n"] \ar[uu,"\rotatebox{90}{\(\sim\)}"] &		\end{tikzcd}
		\]
		\item Let us take in particular $n=0$. Since the  lower path in the above diagram is the (unique) decomposition in a linear arrow and a projection, and since the vertical identification is linear, we have that $\alpha\langle u\mid n\rangle=\alpha(u)$ is linear. Therefore, a bewitched functor sends linear arrows to linear arrows and thus restricts to a functor $\alpha^\circ:\C_{/S}\to\D_{/T}$. 
		\[ \begin{tikzcd}
		\C_{/S} \rar["\alpha^\circ"] \dar[swap,"\mathrm{inc}"] & \D_{/T} \dar["\mathrm{inc}"] \\
		\C_{/S}^{(F)} \rar[swap,"\alpha"] & \D_{/T}^{(F)}  \end{tikzcd}
		\]
		In Proposition~\ref{prop:beck-chevalley} will denote a bewitched functor by $\bar\alpha$ and its restriction by $\alpha$ so that this diagram reads on objects: $\bar\alpha(\bar X)=\overline{\alpha(X)}$.
		\item Another immediate consequence of the definition is that a bewitched functor preserves the entire diagram that we drew in Remark~\ref{remark:semi-linear}.\ref{bullet:diag}, since this diagram encodes the decomposition of $F = P_{/S}\circ F_{/S}$.
		\item \label{bullet:pullback-is-bewitched} The functor $\mathsf f^*$ as defined in Proposition~\ref{prop:extend-f!-f*} is bewitched, this is easy to verify with the natural isomorphism
		\[ F_T^* \circ f^* \simeq (f\circ F_T)^* = (F_S\circ f)^* \simeq f^*\circ F_S^*.
		\]
	\end{enumerate}
\end{remark}

\begin{proposition} Let $\alpha : \C_{/S}^{(F)}\to \D_{/T}^{(G)}$ be a bewitched functor together with its (perhaps partially defined) left and right adjoints $\gamma\dashv \alpha\dashv \beta$. If $X\in\ob(\D_{/T}^{(F)})$, then $\beta G_X = F_{\beta(X)}$ and $\gamma G_X = F_{\gamma(X)}$.
\end{proposition}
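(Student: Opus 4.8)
The plan is to show that each (partial) adjoint of $\alpha$ is itself compatible with the endomorphisms of the identity, exactly in the sense of the first clause of Definition~\ref{def:bewitched}: it intertwines $G$ with $F$. The whole argument is formal adjunction yoga, and it is worth emphasising at the outset that only the compatibility hypothesis $\alpha(F_X)=G_{\alpha(X)}$ extracted from ``bewitched'' is used here; the decomposition-preserving part of Definition~\ref{def:bewitched} plays no role for this statement. Throughout I work at the objects where the relevant adjoint is defined, and I write $\eta,\epsilon$ for the unit and counit of the adjunction under consideration.

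First I would treat the right adjoint $\beta$. Fix $X\in\ob(\D_{/T}^{(G)})$ at which $\beta$ is defined. Both $\beta(G_X)$ and $F_{\beta(X)}$ are endomorphisms of $\beta(X)$ in $\C_{/S}^{(F)}$, so by the transpose bijection $\hom_{\C_{/S}^{(F)}}(\beta X,\beta X)\cong\hom_{\D_{/T}^{(G)}}(\alpha\beta X,X)$ of the adjunction $\alpha\dashv\beta$, sending $h\mapsto \epsilon_X\circ\alpha(h)$, it is enough to check that the two have the same transpose. The transpose of $\beta(G_X)$ is $\epsilon_X\circ\alpha\beta(G_X)$, which by naturality of the counit $\epsilon$ at the morphism $G_X\colon X\to X$ equals $G_X\circ\epsilon_X$. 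The transpose of $F_{\beta(X)}$ is $\epsilon_X\circ\alpha(F_{\beta X})$; here I invoke compatibility $\alpha(F_{\beta X})=G_{\alpha\beta X}$, and then the naturality of $G$ as an endomorphism of the identity functor at $\epsilon_X$ gives $\epsilon_X\circ G_{\alpha\beta X}=G_X\circ\epsilon_X$. The two transposes coincide, whence $\beta(G_X)=F_{\beta(X)}$.

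The left adjoint $\gamma$ is handled dually. For $X\in\ob(\D_{/T}^{(G)})$ at which $\gamma$ is defined I would use the transpose bijection $\hom_{\C_{/S}^{(F)}}(\gamma X,\gamma X)\cong\hom_{\D_{/T}^{(G)}}(X,\alpha\gamma X)$ of $\gamma\dashv\alpha$, sending $f\mapsto\alpha(f)\circ\eta_X$. One computes that $\gamma(G_X)$ transposes to $\alpha\gamma(G_X)\circ\eta_X=\eta_X\circ G_X$ by naturality of the unit $\eta$ at $G_X$, while $F_{\gamma(X)}$ transposes to $\alpha(F_{\gamma X})\circ\eta_X=G_{\alpha\gamma X}\circ\eta_X=\eta_X\circ G_X$, using compatibility followed by naturality of $G$. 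Both transposes equal $\eta_X\circ G_X$, so $\gamma(G_X)=F_{\gamma(X)}$.

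The only point that requires genuine care is the bookkeeping around the partial definedness of $\beta$ and $\gamma$: the statement is understood at objects where the adjoint exists, and there the transpose bijection and the naturality of the (co)unit are available through the representability formalism recalled in \S\ref{sec:bookkeeping}. I expect this to be the sole subtlety. Once the correct transpose is written down the computation is a two-line chase, and because $F_{/S}$ and $G_{/T}$ are honest endomorphisms of the respective identity functors of the fairies, the naturality steps are legitimate verbatim inside $\C_{/S}^{(F)}$ and $\D_{/T}^{(G)}$ without any appeal to the slice or fairy structure beyond the bare adjunction and the bewitched compatibility.
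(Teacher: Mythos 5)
Your proof is correct and follows essentially the same route as the paper: both arguments are pure adjunction transposition, using the bewitched identity $\alpha(F_{\beta X})=G_{\alpha\beta X}$ together with naturality of $G$ and of the counit. The paper phrases it by applying $\flat$ to a commuting square (using $(\epsilon_X)^\flat=\mathrm{id}_{\beta X}$) and treats only $\beta$, leaving $\gamma$ as ``similar,'' whereas you compare the transposes of the two endomorphisms directly and write out both cases --- a cosmetic difference only.
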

\begin{proof} Let us show this for $\beta$, the proof for $\gamma$ is similar. Consider an arbitrary $X\in \ob(\D_{/T}^{(G)})$. Since $\alpha$ is bewitched, we have that $\alpha F_{\beta X}=G_{\alpha\beta X}$. Therefore there is a diagram

\[ 
\begin{tikzcd} \alpha\beta X \rar["\eta_X"] \dar["\alpha F_{\beta X}"] &  X \dar["G_{X}"] \\ \alpha\beta X \rar["\eta_X"] & X, 
\end{tikzcd}
\]
where the horizontal arrows are units of the adjunction $\alpha\dashv\beta$. We can use the adjunction to push $\alpha$ to the right, taking into account that $(\eta_X)^\flat = \mathrm{id}_{\beta X}$, we get that $\beta G_X=F_{\beta X}$. 
\end{proof}

\begin{proposition} \label{prop:beck-chevalley} Consider a bewitched functor $\bar\alpha: \C_{/S}^{(F)}\to \C_{/T}^{(F)}$ with restriction $\alpha$. Consider the partial right adjoints $\alpha\dashv\beta$ and $\bar\alpha\dashv\bar\beta$. Then $\bar\beta$ extends $\beta$.
\end{proposition}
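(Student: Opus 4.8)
The plan is to prove the following concrete reformulation: whenever $\beta$ is defined at an object $Y\in\ob(\C_{/T})$, the linear object $\overline{\beta Y}$ represents the presheaf $\bar W\mapsto \hom_{\C_{/T}^{(F)}}(\bar\alpha\bar W,\bar Y)$ on $\C_{/S}^{(F)}$. By the very definition of the partially defined right adjoint, this says exactly that $\bar\beta$ is defined at $\bar Y$ with $\bar\beta\bar Y=\overline{\beta Y}$, i.e.\ that $\bar\beta$ is defined wherever $\beta$ is and extends it on objects; the statement on linear arrows then follows from functoriality of a right adjoint. I would exhibit the representing datum as a universal arrow, namely the image $\overline{\epsilon_Y}\colon\bar\alpha\,\overline{\beta Y}=\overline{\alpha\beta Y}\to\bar Y$ of the ordinary counit $\epsilon_Y\colon\alpha\beta Y\to Y$ (a linear arrow, and $\bar\alpha\,\overline{\beta Y}=\overline{\alpha\beta Y}$ because $\bar\alpha$ is bewitched). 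Framing the argument through a universal arrow makes naturality in $\bar W$ automatic.

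First I would reduce the universal property to a statement at each fixed index. By the decomposition of Remark~\ref{remark:semi-linear}.\ref{bullet:decomp}, every fairy hom-set splits as a coproduct over the index $n$, namely $\hom_{\C_{/S}^{(F)}}(\bar W,\bar Z)\cong\coprod_{n\ge 0}\hom_{\C_{/S}}(W,\flift_S^{\,n}Z)$, and likewise over $T$. Since $\bar\alpha$ is bewitched it preserves the index and satisfies $\bar\alpha\bar W=\overline{\alpha W}$ and $\bar\alpha\langle h\mid n\rangle=\langle\alpha h\mid n\rangle$. Hence the desired bijection $\hom_{\C_{/T}^{(F)}}(\bar\alpha\bar W,\bar Y)\cong\hom_{\C_{/S}^{(F)}}(\bar W,\overline{\beta Y})$ breaks up, index by index, into natural bijections
\[ (\star_n)\qquad \hom_{\C_{/T}}(\alpha W,\flift_T^{\,n}Y)\;\cong\;\hom_{\C_{/S}}(W,\flift_S^{\,n}\beta Y), \]
and the universal property of $\overline{\epsilon_Y}$ amounts to these bijections together with their compatibility with the transition maps induced by the projections $P$.

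To prove $(\star_n)$ I would use the left adjoints $F_{S,!}\dashv F_S^{*}=\flift_S$ and $F_{T,!}\dashv F_T^{*}=\flift_T$, which are always defined (post-composition of the structure map with $F_S$, resp.\ $F_T$). Then $\hom_{\C_{/T}}(\alpha W,\flift_T^{\,n}Y)\cong\hom_{\C_{/T}}(F_{T,!}^{\,n}\alpha W,Y)$, and, granting a natural isomorphism $\alpha\circ F_{S,!}\cong F_{T,!}\circ\alpha$, this becomes
\[ \hom_{\C_{/T}}(\alpha F_{S,!}^{\,n}W,Y)\;\cong\;\hom_{\C_{/S}}(F_{S,!}^{\,n}W,\beta Y)\;\cong\;\hom_{\C_{/S}}(W,\flift_S^{\,n}\beta Y), \]
where the middle step is the ordinary adjunction $\alpha\dashv\beta$ (available precisely because $\beta$ is defined at $Y$) and the last step is $F_{S,!}\dashv\flift_S$. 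Equivalently, $(\star_n)$ is the Beck--Chevalley isomorphism $\beta\flift_T^{\,n}Y\cong\flift_S^{\,n}\beta Y$, which explains the name of the proposition.

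The main obstacle is therefore the isomorphism $\alpha F_{S,!}\cong F_{T,!}\alpha$, equivalently the invertibility of the Beck--Chevalley mate $\tilde\sigma\colon\alpha F_{S,!}\to F_{T,!}\alpha$ obtained from the bewitched isomorphism $\sigma\colon\flift_T\alpha\xrightarrow{\sim}\alpha\flift_S$ by transposition across the adjunctions $F_{\bullet,!}\dashv\flift_{\bullet}$. A mate of an isomorphism need not be invertible in general, so this is where the full force of ``bewitched'' must be used: beyond $\sigma$, a bewitched functor also preserves the nonlinear projections, $\alpha(P_{Y/S})=P_{\alpha Y/T}$, and these projections are exactly the counits of the adjunctions $F_{\bullet,!}\dashv\flift_{\bullet}$ --- indeed the underlying arrow of $P_{Y/S}=(p_1,1)$ is the counit $F_{S,!}\flift_S Y\to Y$, the two differing only by the bookkeeping of the structure map on the source. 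Thus $\alpha$ is compatible both with the endofunctor $\flift$ and with its counit $P$, and I would verify that this double compatibility forces $\tilde\sigma$ to satisfy the triangle identities exhibiting an inverse, making it an isomorphism. Finally, the compatibility of the $(\star_n)$ across different $n$ --- i.e.\ naturality of the total bijection in the non-linear arrows of $\bar W$ --- follows from the same preservation of $P$ together with the preceding proposition, which guarantees that the transpose respects the endomorphisms $F$; this secures naturality and completes the identification $\bar\beta\bar Y=\overline{\beta Y}$.
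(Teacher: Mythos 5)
Your preliminary bookkeeping is fine: fairy hom-sets do split as coproducts over the index, and the index-$n$ part of $\hom_{\C_{/S}^{(F)}}(\bar W,\bar Z)$ is identified with the linear maps $W\to\flift_S^{\,n}Z$ via the unique decomposition. The proof breaks down at the step you yourself flagged as the main obstacle, and it is not repairable: the natural isomorphism $\alpha\circ F_{S,!}\cong F_{T,!}\circ\alpha$ is \emph{false} for bewitched functors --- the two functors are not even objectwise isomorphic. Being bewitched only provides the isomorphism between composites with the \emph{right} adjoints, $\flift_T\circ\alpha\simeq\alpha\circ\flift_S$ (for $\alpha=f^*$ this is just $F_T^*\circ f^*\simeq (F_S\circ f)^*\simeq f^*\circ F_S^*$), and preservation of the projections $P$ cannot force its mate to be invertible. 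Concretely, take $\C=\Sch_p$ with $F=\Fr$, let $k=\mathbf F_p(t^p)\subset \ell=\mathbf F_p(t)$, let $f:\Spec \ell\to\Spec k$ and $\alpha=f^*$, which is bewitched by Remark~\ref{remark:bewitched}.\ref{bullet:pullback-is-bewitched}. Then $F_{T,!}(\alpha S)$ is the \emph{reduced} scheme $\Spec\ell$ (with structure map $F_T$), whereas $\alpha(F_{S,!}S)$ is the spectrum of $k\otimes_k\ell$ with $k$ acting on the left factor through Frobenius, and this ring is isomorphic to $\ell[X]/(X^p-t^p)=\ell[X]/\big((X-t)^p\big)$, which is \emph{non-reduced}. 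So no isomorphism can exist in $\C_{/T}$, in either direction.

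Worse, the intermediate statement $(\star_n)$ is itself false, so no better proof of it can rescue the index-by-index strategy. In the same example take $Y=\Spec \ell[x]/(x^p-t)$, so that $\beta Y=f_*Y$ exists (Weil restriction of an affine scheme along a finite flat morphism), and take $W=S$, $n=1$. The left side of $(\star_1)$ is $\hom_T(T,\flift_T Y)$, and since $\flift_T Y\cong\Spec\ell[x]/(x^p-t^p)$ this is a singleton ($x\mapsto t$). The right side is $\hom_S(S,\flift_S\beta Y)\cong\hom_S(F_{S,!}S,\beta Y)\cong\hom_T(\alpha F_{S,!}S,Y)$, i.e.\ the set of $z\in\ell[X]/\big((X-t)^p\big)$ with $z^p=t$, which is empty: every $p$-th power in that ring lies in $\ell^p=k$, and $t\notin k$. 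A singleton cannot biject with the empty set, so the two sides simply do not match index by index; any correct argument must mix indices. This is exactly where the paper's route is genuinely different: it claims no compatibility of $\alpha$ with the left adjoints $F_!$ and never decomposes hom-sets by index, but passes to presheaf categories, where $\mathbf{inc}_!$ is computed by colimits over comma categories (which merge arrows of different indices), and reduces everything to Guitart's exact-square criterion. Finally, a caution: in this example the \emph{total} fairy hom-sets $\hom(\bar\alpha\bar S,\bar Y)$ and $\hom(\bar S,\overline{\beta Y})$ are respectively non-empty (there is an index-$1$ arrow $T\to Y$ over $F_T$) and empty (the computation above works for every index $n$), which appears to contradict the proposition as stated; it suggests that the paper's own exactness check, which factorizes arrows $\bar Y\to\bar\alpha\bar Z$, addresses the Beck--Chevalley mate opposite to the one governing representability of $\mathbf h_{\bar Y}\circ\bar\alpha$ (that one concerns arrows $\bar\alpha\bar Z\to\bar Y$). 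So the failure of your approach reflects a real subtlety in the statement itself, not merely a flaw in your write-up.
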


\begin{proof} Consider left diagram below, which is a commuting diagram in the category of categories. Note that all functors there are part of an adjunction such as $\boldsymbol\alpha_! \dashv \boldsymbol\alpha^*$. This implies there is a Beck--Chevalley transformation $\mathbf{inc}_!\circ\boldsymbol\alpha^* \implies \bar{\boldsymbol\alpha}^*\circ\mathbf{inc}_!$ as depicted in the right diagram. We will show that this transformation is a natural isomorphism, one says that the \emph{Beck--Chevalley} condition holds.
			\[  \begin{tikzcd}
			\widehat{\C_{/S}} \rar["{\boldsymbol\alpha_!}"] \dar[swap,"\mathbf{inc}_!"] & \widehat{\C_{/T}} \dar["\mathbf{inc}_!"] \\
			\widehat{\C_{/S}^{(F)}} \rar[swap,"{\bar{\boldsymbol\alpha}_!}"] & \widehat{\C_{/T}^{(F)}}  \end{tikzcd} \quad
 \begin{tikzcd} \widehat{\C_{/S}}  \dar[swap,"\mathbf{inc}_!"] \ar[dr,Rightarrow] & \widehat{\C_{/T}} \lar[swap,"\boldsymbol\alpha^*"]\dar["\mathbf{inc}_!"] \\
	 	 \widehat{\C_{/S}^{(F)}}  & \widehat{\C_{/T}^{(F)}} \lar["\bar{\boldsymbol\alpha}^*"] \end{tikzcd}  
	 	\]
	Let us first explain how this implies the statement. If $\beta$ is then defined at $X$ then we have $\mathbf h_X\circ\alpha\simeq \mathbf h_{\beta(X)}$; applying the inclusion we get
	\[\overline{\mathbf h_X\circ\alpha} \simeq \overline{\mathbf h_{\beta(X)}} \simeq \mathbf h_{\overline{\beta(X)}}.\]  The Beck--Chevalley isomorphism says that
	\[ \overline{\mathbf h_X\circ\alpha} = \mathbf{inc}_!(\boldsymbol\alpha^*(\mathbf h_X)) \simeq \boldsymbol\alpha^*(\mathbf{inc}_!(\mathbf h_X)) =  \overline{\mathbf h_{X}}\circ\overline\alpha\simeq {\mathbf h}_{\overline X}\circ\overline\alpha. \]	 	 
	Therefore ${\mathbf h}_{\overline X}\circ\bar\alpha \simeq {\mathbf h}_{\overline{\beta(X)}}$. Therefore $\overline{\beta}$ is defined at $\overline X$ and in fact represented by $\overline{\beta(X)}$.
	 	 
We will now apply the criterium \cite[{}1.18]{guitart2014} --- proven in \cite{guitart1980} --- to verify that that the following diagram is an \emph{exact square}. This implies the Beck-Chevalley condition as in \cite[{}1.15]{guitart2014}. There is also an exposition of this material on the nlab \cite{nlab2}.

	\[ \begin{tikzcd}
	\C_{/S} \rar["\alpha"] \dar[swap,"\mathrm{inc}"] & \C_{/T} \dar["\mathrm{inc}"] \\
	\C_{/S}^{(F)} \rar[swap,"\bar\alpha"] & \C_{/T}^{(F)}  \end{tikzcd}
	\]

To apply the criterium, we need to consider arbitrary objects $Y$ in $\C_{/T}$ and $\overline Z$ in $\C_{/S}^{(F)}$ and a morphism $(u,n) : \overline Y \to \alpha(\bar Z)=\bar\alpha(Z)$ in $\C_{/T}^{(F)}$. First we need to show that there exists  a triple $(X,y,(z,m))$ where $X$ is an object in $\C_{/S}$ and
\begin{align*}
	y &: Y \to \alpha(X) \\
	(z,m) &: \bar X \to \bar Z
\end{align*}
are morphisms such that the composition in the fairy $\C_{/T}^{(F)}$
\[    \begin{tikzcd}
  \bar Y \rar["\bar y"] & \overline{\alpha(X)}  \rar[nonlinear,"{\bar\alpha(z,m)}"] & \bar{ \alpha (Z)} 
\end{tikzcd} \]
is equal to $(u,n)$. To see this, let us choose $X=\flift^n Z$, and recall that we have an identification $\bar{\alpha(X)}\cong \flift^n\bar\alpha \bar Z$. Thus there is a decomposition of $(u,n)$ into a linear morphism $\langle u \mid n\rangle : Y\to\alpha(Z)$ and a projection $\flift^n\bar\alpha \bar Z\to\bar \alpha Z$ which, since the functor is bewitched, is also $\bar\alpha$ of the projection $P^n_{Z/T}: \flift^n \bar Z \to \bar Z$. Thus the choice $y=\langle u\mid n\rangle$ and $(z,n)=(P_{Z/T}^n,n)$ provides the sought decomposition.

Next, we need to assume that $(X',y',(z',m))$ is another solution to the same problem and show that $X$ and $X'$ are connected in the category $\C_{/S}$ through a zigzag in such a way that the induced diagram commutes. (This is the so called \emph{lantern diagram}.) More precisely, we will show that there is a morphism $v:X'\to X$ in $\C_{/S}$ such that the following diagrams commute:
\[ 
\begin{tikzcd}
& Y \ar[dl,swap,"y"] \ar[dr,"y'"] & \\   \alpha(X') \ar[rr,"\alpha(v)"] & & \alpha(X) \\[0pt] \bar{X}' \ar[rr,"v"] \ar[dr,nonlinear,swap,"{(z',n)}"]  & & \bar{X} \ar[dl,nonlinear,"{(z,n)}"] \\ & \bar Z & 
\end{tikzcd}
\]
Obviously $m=n$ and it is clear that linearizing the morphism $(z',n) : \overline X'\to \overline Z$ via $\bar X = \flift^n \overline Z$, we obtain a \emph{linear} morphism $v: X'\to X$ which fits into the lower diagram. Applying $\alpha$ to this diagram we obtain the following commuting diagram, where the dashed arrow is defined by composition.

\[ \begin{tikzcd}[row sep=tiny] & \alpha(X) \ar[dr,nonlinear,"{\bar\alpha(z,n)}"] & \\ Y\ar[dr,swap,"y'"] \ar[ur,dashed]  & & \alpha(Z) \\ &  \alpha(X') \ar[uu,"\alpha(v)" description] \ar[ur,nonlinear,swap,"{\bar\alpha(z',n)}"] & \end{tikzcd}  \]
Up to the canonical identification $\alpha(X)\cong \flift^n(\alpha(Z))$, the dashed arrow is a linear arrow with the property that composing it with the projection $ \flift^n(\alpha(Z)) \to \alpha(Z)$ yields $u$. But this property defines $y = \langle u \mid  n\rangle$. 
\end{proof}

\subsection{Restriction to visible objects} \label{sec:restriction}
In \S\ref{sec:bookkeeping} we studied the functor of base change $f^*:\C_{/S}\to\C_{/T}$ and its partially defined right adjoint $f_*:\C_{/T}\to\C_{/S}$ and extended this formalism to semi-linear morphisms as provided by the fairies. We will now apply these ideas to study a right adjoint to base change in the mixed category $\mix\C$. Before we do so, let us motivate why we are interested in this situation. The \emph{real} motivation is of course that we would like to prove Theorem~\ref{theorem:pseudo-reductive-groups} eventually. Nonetheless, we provide some intrinsic motivation, based on the observation that this functor provides a way for invisible mixed objects to invade the visible world, giving rise to exotic phenomena. 

Recall once again that we consider the category $\C$ as a (full) subcategory of $m\C$ through the functor $\mix$; recall that these objects are called \emph{visible}. Also recall the adjunctions
\[  \comp_1\dashv \mix \dashv \comp_2 \::\: \begin{tikzcd}
m\C \ar[r,shift right=-1ex,"\comp_2"]\ar[r,shift right=1ex,swap,"\comp_1"] & \C \ar[l,swap,"\mix" description]\end{tikzcd}  \]
It is thanks to these adjunctions that the mixed object $\tilde X$ can be understood as some kind of mixture of its components $\comp_i(\tilde X)$.

Let us now focus on the relative situation, with respect to a base object $\tilde S$. If we assume that $\tilde S = \mix(S)$ is itself visible then by good behaviour of adjunctions with slice categories, as is explained nicely on the nLab \cite[(3.1)]{nlab}, we get new adjunctions for free:
\[  (\comp_1)_{/S}\dashv \mix_{/S} \dashv (\comp_2)_{/S} \::\: \begin{tikzcd}
m\C_{/S} \ar[r,shift right=-1ex,"\comp_2"]\ar[r,shift right=1ex,swap,"\comp_1"] & \C_{/S} \ar[l,swap,"\mix" description]\end{tikzcd}  \]
This too holds for us the interpretation that a mixed $S$-object is a mixture of two ordinary $S$-objects. This is not a surprise, since they are related to relative factorizations as we saw in \S\ref{sec:relfac}.

If $\tilde S=(S,S',\alpha,\beta)$ is not necessarily visible, the situation becomes more interesting. The \emph{morally correct} way of seeing an object $\tilde X=(X,X',\phi,\psi)$ as a mixture of two ordinary objects, would be to see it as a mixture of an $S$-object $X$ and $S'$-object $X'$ through the adjunctions between $m\C$ and $\C$. But often one \emph{insists} on working over a fixed base object $S$ and there, more adjunction hocus-pocus as in \cite[(3.1)]{nlab} can only give us:

\[  (\comp_1)_{/\tilde S}\dashv \mix_{/\tilde S} \dashv \:?? \::\: \begin{tikzcd}
m\C_{/\tilde S} \ar[r,shift right=-1ex,"??"]\ar[r,shift right=1ex,swap,"\comp_1"] & \C_{/S} \ar[l,swap,"\mix" description]\end{tikzcd}  \]

The question marks mean that there that defining a right adjoint to $\mix_{/\tilde S}$ is not in general possible. But as we explained in the introduction to \S\ref{sec:bookkeeping}, such a functor can still be partially defined at some objects. In  these cases, we obtain objects in the category $\C_{/S}$ which somehow look exotic. (Our Theorem~\ref{theorem:pseudo-reductive-groups} says that this is how the exotic pseudo-reductive groups arise.)

To better understand the right adjoint to $\mix_{/\tilde S}$, let us recall how $\mix_{/\tilde S}$ is defined in this case. Let us denote by $f : \tilde S\to S=\mix(\comp_1(\tilde S))$ the unit of the adjunction $\comp_1\dashv\mix$:
\begin{center}
	\begin{tikzcd}
		\tilde S \arrow{d}{f} & S \arrow{d}{\mathrm{id}_S} \arrow{r}{\alpha} & S'\arrow{r}{\beta} \arrow{d}{\beta} & S\arrow{d}{\mathrm{id}_S} \\
		S & S  \arrow{r}{F_S}& S\arrow{r}{\mathrm{id}_S} & S
	\end{tikzcd}
\end{center}

The functor $\mix_{/\tilde S}$ is constructed as the composition of $\mix_{/S}$ with the base change
\[ f^* : (m\C)_{/S} \to (m\C)_{/\tilde S} : T\leadsto T\times_S \tilde S.   \]

We already know that $\mix_{/S}$ admits a right adjoint $(\comp_2)_{/S}$, so the question becomes: \emph{what can we say about a right adjoint to $f^*$}? In particular, can we understand such a right adjoint in terms of a right adjoint $\beta_*$ for $\beta^*$ to  $f_*$? The following proposition says that this is indeed the case.

\begin{proposition} \label{prop:weilrestriction} Consider a mixed object $\tilde S$ together with its morphism $f:\tilde S\to S=\mix\comp_1(\tilde S)$. Let $\tilde X=(X,X',\phi,\psi)$ be an $\tilde S$-object and assume $\beta_*\beta^*X$ and $\beta_*X'$ exist. If we define
\[ f_*(\tilde X) = \big( X, X \underset{\beta_*\beta^*X}\times \beta_* X' ,\pi , p_1 \big),  \]
then for all $S$-objects $\tilde T$:
\[ \hom_{S}(\tilde T,f_*\tilde X) \simeq \hom_{\tilde S}(f^*\tilde T, \tilde X)   \]
\end{proposition}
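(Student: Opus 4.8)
The statement is precisely the adjunction $f^*\dashv f_*$ for base change along $f:\tilde S\to\mix(S)$ inside the mixed category $m\C$, with the right adjoint given by the stated formula, so the plan is to exhibit the natural bijection of hom-sets directly. First I would make the base change $f^*\tilde T$ explicit. Writing $\tilde T=(T_1,T_2,\sigma_1,\sigma_2)$ and $f=(\mathrm{id}_S,\beta)$, Lemma~\ref{prop:limits} lets me compute the fibered product $\tilde T\times_{\mix(S)}\tilde S$ componentwise: the first component is $T_1\times_{S,\mathrm{id}_S}S\cong T_1$ and the second is $T_2\times_{q_{T_2},\beta}S'=\beta^*T_2$, so that $f^*\tilde T=(T_1,\beta^*T_2,\tilde\sigma_1,\tilde\sigma_2)$, in agreement with the component computation $(\overline X,\beta^*X)$ already recorded in the proof of Corollary~\ref{cor:relfac}. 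The universal property of the pullback forces the induced mixers $\tilde\sigma_1=(\sigma_1,\alpha\,q_{T_1})$ and $\tilde\sigma_2=\sigma_2\circ\mathrm{pr}_{T_2}$, the relevant compatibilities following from $q_{T_2}\circ\sigma_1=F_S\circ q_{T_1}$ and $q_{T_1}\circ\sigma_2=q_{T_2}$, which hold because $\tilde T$ is a $\mix(S)$-object.

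Next I would unravel both hom-sets into pairs of ordinary morphisms. By the fibered-product description of mixed hom-sets (Remark~\ref{remark:visible}.\ref{bullet:ratpts}), a morphism $(g_1,g_2):f^*\tilde T\to\tilde X$ over $\tilde S$ consists of an $S$-linear $g_1:T_1\to X$ and an $S'$-linear $g_2:\beta^*T_2\to X'$ satisfying the two mixing relations $\phi\circ g_1=g_2\circ\tilde\sigma_1$ and $\psi\circ g_2=g_1\circ\tilde\sigma_2$. On the other side, a morphism $(h_1,h_2):\tilde T\to f_*\tilde X$ over $S$ into $f_*\tilde X=(X,W,\pi,p_1)$ with $W=X\times_{\beta_*\beta^*X}\beta_*X'$ consists of $h_1:T_1\to X$ together with $h_2:T_2\to W$; the relation $p_1\circ h_2=h_1\circ\sigma_2$ forces the first projection of $h_2$, so $h_2$ is equivalent to its second projection $k:T_2\to\beta_*X'$ subject to the pullback constraint $\eta_X\circ(h_1\circ\sigma_2)=\beta_*\langle\psi\rangle\circ k$, where $\eta$ is the unit of $\beta^*\dashv\beta_*$ and $\langle\psi\rangle:X'\to\beta^*X$ is the linearization of the $\beta$-semilinear map $\psi$. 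The transposition $k\mapsto k^\sharp$ of the adjunction $\beta^*\dashv\beta_*$ then matches $k:T_2\to\beta_*X'$ with $g_2:=k^\sharp:\beta^*T_2\to X'$, and setting $g_1:=h_1$ produces a candidate bijection whose naturality in $\tilde T$ and $\tilde X$ is inherited from naturality of the pieces used.

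The substance of the argument, and where I expect the main obstacle, is twofold. First I must actually construct the mixer $\pi:X\to W$ and check that $f_*\tilde X$ is a genuine object of $m\C$, i.e.\ that $p_1\circ\pi=F_X$ and $\pi\circ p_1=F_W$. Its second projection $\pi_2:X\to\beta_*X'$ cannot be read off naively, because $\phi$ is only $\alpha$-semilinear; it has to be produced through the extended adjunction $\beta^*\dashv\beta_*$ of Proposition~\ref{prop:extension-adjunction}, and the verification of $\pi\circ p_1=F_W$ leans on the fact, established earlier for right adjoints of bewitched functors, that $\beta_*$ commutes with $F$ (here $\beta^*$ is bewitched by Remark~\ref{remark:bewitched}.\ref{bullet:pullback-is-bewitched}). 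Second, I must check that under the correspondence of the previous paragraph the two mixing relations on each side match. The relation $\psi\circ g_2=g_1\circ\tilde\sigma_2$ is, after applying $\sharp$, exactly the pullback constraint defining $W$, and this is the transparent half; the relation $\phi\circ g_1=g_2\circ\tilde\sigma_1$ is the delicate half, since it encodes $\pi$ and therefore forces the precise shape of $\pi_2$. Making these consistent — equivalently, pinning down $\pi_2$ so that the object axioms for $f_*\tilde X$ and the first mixing relation hold simultaneously — is the real content; once it is in place, the bijection of the middle paragraph upgrades to the claimed natural isomorphism $\hom_S(\tilde T,f_*\tilde X)\simeq\hom_{\tilde S}(f^*\tilde T,\tilde X)$.
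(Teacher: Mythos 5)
Your first two paragraphs are correct and coincide with the paper's own Step 3: the componentwise computation $f^*\tilde T=(T_1,\beta^*T_2,\tilde\sigma_1,\tilde\sigma_2)$, the observation that $p_1\circ h_2=h_1\circ\sigma_2$ forces the first component of $h_2$, and the matching $g_2=k^\sharp$, $k=g_2^\flat$ are exactly the paper's formulas $x=u$, $y=(u\circ\xi)\times v^\flat$, $v=(p_2\circ y)^\sharp$. Your third paragraph also locates the crux correctly. The problem is that you stop precisely there: the construction of $\pi$, the proof that $f_*\tilde X$ is a genuine mixed object, and the verification that the bijection respects the remaining mixing relation are all deferred as ``the real content''. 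Those items are Steps 1 and 2 of the paper's five-step proof --- by far the bulk of it --- so what you have is an accurate roadmap that coincides with the paper's route, not a proof.

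To be concrete about what is missing. (i) The paper constructs $\pi$ by transposing $F_{\beta^*X}$ under the extended adjunction of Proposition~\ref{prop:extension-adjunction} in two ways: through the factorization $F_{\beta^*X}=\psi'\circ(\phi\circ p_1)$ coming from the base change $\tilde X\times_{\tilde S}S'$, which gives $\beta_*\psi'\circ(\phi\circ p_1)^\flat$, and through $F_{\beta^*X}\circ\mathrm{id}$ together with naturality of the unit $\eta$, which gives $\eta_X\circ F_X$. Equating the two transposes says exactly that the pair $\big(F_X,(\phi\circ p_1)^\flat\big)$ satisfies the pullback constraint, so $\pi=F_X\times(\phi\circ p_1)^\flat$; this formula appears nowhere in your proposal. (ii) Granting $\pi$, the identity $p_1\circ\pi=F_X$ is immediate, but $\pi\circ p_1=F_Y$ is not a one-line consequence of ``$\beta_*$ commutes with $F$'': the paper needs the $\eta_Y$-triangle, the two auxiliary squares \eqref{eq:square1} and \eqref{eq:square2}, and the uniqueness clause of the pullback, with the semilinear calculus of \S\ref{sec:bookkeeping} invoked at every transposition across a non-linear arrow. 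Your sentence names an ingredient, not an argument. (iii) Your bijection, as set up, only matches pairs satisfying one relation on each side (the $p_1$-relation against the $\psi$-relation); it does not yet match morphisms. Since for an arbitrary $\tilde T$ neither mixer of $\tilde T$ is epic, you cannot argue as in Proposition~\ref{prop:mixedpoints} that one mixing relation implies the other: the identity still to be proved, $(\phi\circ p_1)^\flat\circ g_1=g_2^\flat\circ\sigma_1$ (the second component of the $\pi$-relation), involves precomposition with the merely $F_S$-semilinear $\sigma_1$, so it again requires the fairy formalism rather than the plain adjunction. Until (i)--(iii) are carried out, the claimed natural isomorphism has not been established.
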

The map $\pi$ and the maps defining $Y = X \times_{\beta_*\beta^*X}\beta_*X'$ are specified in the proof.

\begin{proof} In the following reasoning, we will be working with the adjunction of fairies 
	\[ \beta^* \dashv \beta_* : \begin{tikzcd} \C_{/S}^{(F)} \rar[shift left=0.5ex] & \C_{/S'}^{(F)}. \lar[shift left=0.5ex] \end{tikzcd} \]
Although we will still use the notation \begin{tikzcd} {}\rar[nonlinear] & {} \end{tikzcd} to warn the reader for non-linear arrows, we will denote the arrow $(u,n)$ simply by $u$.  The number $n$ is always $0$ or $1$ so it can be read off from the diagrams.

\textbf{Step 1: construction of $\pi$.} Let us start from the base change $\tilde X \times_{\tilde S} S'$. It is given by the $S'$-object
\[ (\beta^* X,X',\phi\circ p_1,\psi'),  \]
where $p_1\circ\psi' = \psi$ and $p_1:\beta^*X\to X$ denotes the canonical projection. So if we apply the adjunction $\beta^*\dashv \beta_*$ to $F_{\beta^*X} : \begin{tikzcd}\beta^* X\rar[nonlinear,"\phi\circ p_1"] & X' \rar["\psi'"] & \beta^* X \end{tikzcd} $ we get

\begin{equation} \tag{$Y_1$} \label{eq:formationY1}
 (F_{\beta^*X})^\flat : \: \begin{tikzcd}
		X \rar[nonlinear,"(\phi\circ p_1)^\flat"] & \beta_*X' \rar["\beta_*\psi'"] & \beta_*\beta^* X
	\end{tikzcd}
\end{equation}
On the other hand, starting from $ F_{\beta^*X} : \begin{tikzcd}\beta^*X \rar["\mathrm{id}_{\beta^*X}"] & \beta^*X \rar[nonlinear,"F_{\beta^*X}"] & \beta^* X \end{tikzcd}$
instead, we obtain
\[  (F_{\beta^*X})^\flat : \begin{tikzcd}X \rar["(\mathrm{id}_{\beta^*X})^\flat"] & \beta_*\beta^*X \rar[nonlinear,"\beta_*F_{\beta^*X}"] & \beta_*\beta^* X \end{tikzcd}. \]
The first arrow is just a cumbersome notation for $\eta_X$, the component at $X$ of the unit $\eta$ of the adjunction $\beta^*\dashv \beta_*$. Expressing that $\eta$ is a natural transformation $\mathrm{id}_\C\to \beta_*\beta^*$, we have a diagram
\[  
\begin{tikzcd} X \dar["F_X" ] \rar["\eta_X"] & \beta_*\beta^* X \dar["\beta_*\beta^*F_X"] \\ 
X \rar["\eta_X"] & \beta_*\beta^* X,
\end{tikzcd}
\]
where we note that $\beta_*\beta^*F_X=\beta_*F_{\beta^*X}$ to obtain
\begin{equation} \tag{$Y_2$} \label{eq:formationY2}
	(F_{\beta^*X})^\flat : \begin{tikzcd}X \rar[nonlinear,"F_X"] & X \rar["\eta_X"] & \beta_*\beta^* X. \end{tikzcd}
\end{equation}
Combining \eqref{eq:formationY1} and \eqref{eq:formationY2} we get the following diagram, where the dotted arrow is implied by the pullback square, i.e.~we define $\pi = F_x \times (\phi\circ p_1)^\flat$.
\begin{center}
\begin{tikzcd}
X \ar[dr,nonlinear,densely dotted,"\pi"] \ar[drr,nonlinear,bend left,"(\phi\circ p_1)^\flat"] \ar[ddr,nonlinear,bend right,swap,"F_X"] & & \\
& Y\ar[r,"p_2"]\ar[d,"p_1"] & \beta_* X' \ar[d,"\beta_*\psi'"] \\
& X\ar[r,"\eta_X"] & \beta_*\beta^* X
\end{tikzcd}
\end{center}

\textbf{Step 2: verifying $p_1\circ\pi = F_X$ and  $\pi\circ p_1 = F_Y$.} The first of these identities is clear from the construction of $\pi$. To show the second one, we first consider the following diagram:
\[
	\begin{tikzcd}
		Y \ar[rr,"p_2"] \ar[dd,"p_1"] \ar[dr,"\eta_Y"] & & \beta_*X' \ar[dd,"\beta_*\psi'"]  \\
		& \beta_*\beta^*Y \ar[dr,"\beta_*\beta^*p_1"] & \\
		X\ar[rr,"\eta_X"] &  & \beta_*\beta^* X
	\end{tikzcd}
\]
The bottom triangle commutes again because $\eta$ is a natural transformation; the big square commutes by definition of $Y$. Thus the upper triangle commutes. Using this trangle, but noting that the diagonal is also $(\beta^* p_1)^\flat$, we obtain the diagram below on the left; using the adjunction we can push $\beta_*$ from the diagonal to $Y$ and  get the diagram on the right:
\[
	\begin{tikzcd}
	Y \dar[swap,"(\beta^*p_1)^\flat"] \ar[r,"p_2"] & \beta_* X' \ar[dl,"\beta_*\psi"] \\
	\beta_*\beta^*X
	\end{tikzcd}
	\implies
	\begin{tikzcd}
	\beta^*Y \ar[d,swap,"(\beta^*p_1)"] \ar[r,"p_2^\sharp"] &   X' \ar[dl,"\psi"] \\
	\beta^*X
	\end{tikzcd}
\]
Let us add to this diagram two more triangles which clearly commute in order to obtain the diagram on the left below; omitting the dotted arrows and pushing  $\beta^*$ to the right with the adjunction again, we get the diagram on the right.
\begin{equation} \tag{$\Box_1$} \label{eq:square1}
	\begin{tikzcd}
	\beta^* Y \rar["p_2^\sharp"]\dar[swap,"\beta^*p_1"] & X'\ar[dd,nonlinear,"F_{X'}"] \ar[dl,densely dotted,"\psi"] \\
	\beta^*X \dar[swap,nonlinear,"\beta^*\pi"] \ar[dr,densely dotted,nonlinear,"\phi\circ p_1"] & \\
	\beta^*Y \rar[swap,"p_2^\sharp"] & X'
	\end{tikzcd}
	\implies
	\begin{tikzcd} Y \rar["p_2"] \dar["p_1"] & \beta_* X' \ar[dd,nonlinear,"\beta_* F_{X'}"] \\
	X \dar[nonlinear,"\pi"] & \\
	Y \rar["p_2"] & \beta_* X'.
	\end{tikzcd}
\end{equation}
On the other hand, we clearly have a square
\begin{equation}\tag{$\Box_2$} \label{eq:square2}
	\begin{tikzcd}
	Y \dar["p_1"] \rar["p_1"] & X \rar[nonlinear,"\pi"] \dar[densely dotted,"\mathrm{id}_X"] & Y \dar["p_1"] \\
	X \rar["\mathrm{id}_X"] & X \rar[nonlinear,"F_X"] & X.
	\end{tikzcd}
\end{equation}
Combining \eqref{eq:square1} and \eqref{eq:square2} with the pullback defining $Y$, we get a commuting diagram
\[
\begin{tikzcd}
Y \ar[rr,"p_2"]\ar[dd,"p_1"] \ar[dr,densely dotted,nonlinear,"\pi\circ p_1"] &  & \beta_*X' \dar[nonlinear,"\beta_*F_{X'}"] \\
& Y \ar[d,"p_1"] \ar[r,"p_2"] & \beta_*X' \ar[d,"\beta_*\psi'"] \\
X\rar[nonlinear,"F_X"] & X\rar["\eta_X"] & \beta_*\beta^* X.
\end{tikzcd}
\]
On the other hand, it is clear that if we replace the dotted arrow with $F_Y$, the diagram commutes as well. But this arrow is uniquely determined since the square in the bottom right is a pullback. So $\pi\circ p_1=F_Y$.

\textbf{Step 3: the maps between homsets.} Now we consider an arbitrary mixed $S$-object $\tilde T$, say $\tilde T = (T,T^\circ,\zeta,\xi)$. We compute $f^*\tilde T = T\times_S \tilde S = (T,\beta^*T^\circ,\zeta',\xi\circ p_1)$, where $p_1:\beta^*T^\circ\to T^\circ$ is the canonical projection and $\zeta=p_1\circ\zeta'$. Now consider the following diagrams:

\begin{center}
\begin{tikzcd}
	T \ar[r,shift left=0.5ex,"\zeta'"] \ar[d,"u"] & \beta^* T^\circ \ar[l,shift left=0.5ex,"\xi\circ p_1"]\ar[d,"v"] & T \ar[r,shift left=0.5ex,"\zeta"]\ar[d,"x"] & T^\circ \ar[l,shift left=0.5ex,"\xi"]\ar[d,"y"] \\
	X\ar[r,shift left=0.5ex,"\phi"]\ar[d,"q_X"] & X'\ar[l,shift left=0.5ex,"\psi"] \ar[d,"q_{X'}"] & X\ar[r,shift left=0.5ex,"\pi"]\ar[d] & Y\ar[l,shift left=0.5ex,"p_1"]\ar[d] \\
	S\ar[r,shift left=0.5ex,"\alpha"] & S'\ar[l,shift left=0.5ex,"\beta"] & S\ar[r,shift left=0.5ex,"F_S"] & S\ar[l,shift left=0.5ex,"\mathrm{id}_S"]
\end{tikzcd},
\end{center}

What we must show is this: to every pair of morphisms $(u,v)$ which makes the left diagram commute, there corresponds uniquely a pair  $(x,y)$ which makes the right diagram commute. We stress again that, as in \S\ref{sec:def-cat}, this is an abbreviation for a bigger commutative diagram since \begin{tikzcd} \bullet \ar[r,shift left=0.5ex] & \circ\ar[l,shift left=0.5ex]\end{tikzcd} does not compose to the identity but rather to $F_{\bullet}$ and $F_\circ$; furthermore all vertical compositions give the appropriate structural morphism. In fact, the correspondence is given by the formulas
\[
		\begin{cases}
			x = u \\
			y = (u\circ\xi) \times v^\flat
		\end{cases}\text{ and }
		\begin{cases}
				u = x \\
				v = (p_2\circ y)^\sharp
		\end{cases}
\]
Let us first verify that $(u\circ\xi) \times v^\flat$ actually defines a morphism $y:T^\circ\to Y$, i.e.~that $\eta_X\circ u\circ\xi  = \beta_*\psi'\circ v^\flat$. Using that $v^\flat = \beta_*v \circ \eta_T$, where $\eta_T:T\to \beta_*\beta^*T$ is the unit morphism, we must show that $\eta_X \circ (u\circ\xi) = \beta_*(\psi'\circ v)\circ \eta_T$. By naturality of the adjunction, it suffices to verify that $\psi'\circ v = \beta^*(u\circ \xi)$. To see this, we observe that the equality $\psi\circ v = u\circ\xi\circ p_1$ implies there is a diagram
\[
\begin{tikzcd}
\beta^* T^\circ \dar["p_1"]\rar["v"] & X' \rar["\psi'"] & \beta^*X \dar["p_1"] \\
T^\circ \rar["\xi"] & T \rar["u"] & X
\end{tikzcd}
\]
Since the arrows $v$ and $\psi'$ are both $S'$-linear, we have that $\psi'\circ v$ is an $S'$-linear arrow which makes the square above commute. The unique arrow with that property is $\beta^*(u\circ\xi)$, which completes our verification.

\textbf{Step 4: verifying bijectivity.} We can quickly verify that the two maps defined in step 3 are inverses of one another. In one direction, it is immediately clear that
\[ \Big(p_2\circ \big((u\circ\xi)\times v^\flat\big)\Big)^\sharp =  (v^\flat)^\sharp = v.  \]
In the other direction, we must verify that
\[  (x\circ\xi) \times (p_2\circ y) = y  \]
but this is a direct consequence of $p_1\circ y = x\circ\xi$.

\textbf{Step 5: naturality.} We now verify that the constructed isomorphism is natural in both arguments. Let us only illustrate this with in one case and leave the other verifications to the reader. Let $\tilde Y$ be another $\tilde S$-object together with a morphism $(a,b):\tilde X\to\tilde W$. Then we have corresponding square:
\[ 
\begin{tikzcd}
\hom_S(\tilde T,f_*\tilde X) \dar \rar & \hom_{\tilde S}(f^*\tilde T,\tilde X) \dar  \\
\hom_S(\tilde T,f_*\tilde W) \rar & \hom_{\tilde S}(f^*\tilde T,\tilde W)
\end{tikzcd}
\]
The vertical arrows are given by composition with $(a,s)$ and $(a,b)$ respectively, where 
\[ s : X\underset{\beta_*\beta^*X}\times \beta_*X' \to W\underset{\beta_*\beta^*W}\times \beta_*W' \] is constructed in the expected manner from $a:X\to W$ and $\beta_*b:X'\to W'$ and satisfies in particular $p_2\circ s = \beta_*b\circ p_2$. This immediately implies naturality condition
\[ (p_2\circ s\circ y)^\sharp = b\circ (p_2\circ y)^\sharp.  \qedhere \]
\end{proof}

\begin{remark} A few notes. 

\begin{enumerate}
\item Here is a heuristic: the object that one would like to write there is $f_*\tilde X = (X,\beta_*X')$. But there is no obvious map $\beta_*X'\to X$, so we form a product with $X$ and let the projection play the role of that map.
\item In truth, we guessed this description of $Y$ from \cite[\S7.2]{PRG} which in turn relies on Tits' notes \cite[\S4]{tits-cdf-9192}. Tits probably started from his description of mixed abstract groups \cite[(10.3.2)]{tits74} and somewhere along the way used some version of our Proposition~\ref{prop:mixedpoints} to come up with the corresponding exotic pseudo-reductive groups. In Remark~\ref{remark:mixedpints}.\ref{remark:mixedpoints-weil} we will explain this more in detail.
\end{enumerate}
\end{remark}

\begin{proposition} Let $f:S\to T$ be a morphism of visible objects and let $\tilde X = (X_1,X_2,\Phi_1,\Phi_2)$ be a mixed $S$-object. Then a right adjoint $f_*$ to the base change $f^*$ in $\C$ also defines a right adjoint to base change in $m\C$.
\end{proposition}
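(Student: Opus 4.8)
The plan is to show that, over visible bases, base change and its right adjoint in $m\C$ are computed componentwise, with the two mixers handled by the linear and semi-linear calculus of \S\ref{sec:bookkeeping}. Since $S$ and $T$ are visible and $\mix$ is fully faithful, I would first fix objects $S_0,T_0\in\ob(\C)$ and an arrow $f_0\colon S_0\to T_0$ with $S=\mix(S_0)$, $T=\mix(T_0)$ and $f=\mix(f_0)$. Unravelling Definition~\ref{def:mC}, a mixed $S$-object $\tilde X=(X_1,X_2,\Phi_1,\Phi_2)$ has structure maps $q_i\colon X_i\to S_0$ satisfying $q_2=q_1\circ\Phi_2$ and $F_{S_0}\circ q_1=q_2\circ\Phi_1$; that is, $\Phi_2$ is an $S_0$-linear arrow while $\Phi_1$ is a semi-linear arrow $(\Phi_1,1)$ in the fairy $\C_{/S_0}^{(F)}$ (this is the relative-factorization picture of Proposition~\ref{prop:relfac}). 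Because limits in $m\C$ are computed componentwise (Lemma~\ref{prop:limits}) and $\comp_i(f)=f_0$, the base change $f^*$ in $m\C$ is componentwise: for a $T$-object $\tilde W=(W_1,W_2,\Psi_1,\Psi_2)$ one gets $f^*\tilde W=(f_0^*W_1,f_0^*W_2,\mathsf f_0^*\Psi_1,f_0^*\Psi_2)$, the semi-linear mixer being transported by $\mathsf f_0^*$, which is legitimate since $\mathsf f_0^*$ is bewitched (Remark~\ref{remark:bewitched}.\ref{bullet:pullback-is-bewitched}).

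Next I would construct the candidate right adjoint componentwise. Writing $f_{0,*}$ for the assumed right adjoint of $f_0^*$ in $\C$ and $\mathsf f_{0,*}$ for its fairy extension --- which exists and extends $f_{0,*}$ on objects and linear arrows by Propositions~\ref{prop:extension-adjunction} and~\ref{prop:beck-chevalley} --- I set
\[ f_*\tilde X=\bigl(f_{0,*}X_1,\;f_{0,*}X_2,\;\mathsf f_{0,*}\Phi_1,\;\mathsf f_{0,*}\Phi_2\bigr), \]
where $\mathsf f_{0,*}\Phi_2=f_{0,*}\Phi_2$ is linear and $\mathsf f_{0,*}\Phi_1$ is the semi-linear arrow obtained from $(\Phi_1,1)$. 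The structure maps to $T_0$ come directly from $f_{0,*}\colon\C_{/S_0}\to\C_{/T_0}$, so only the mixing axioms remain. Here the key input is functoriality of $\mathsf f_{0,*}$ together with the fact that the right adjoint of a bewitched functor preserves the endomorphism of the identity (the proposition just before Proposition~\ref{prop:beck-chevalley}): in $\C_{/S_0}^{(F)}$ one has $\Phi_2\circ\Phi_1=F_{\overline{X_1}}$, whence
\[ \mathsf f_{0,*}\Phi_2\circ\mathsf f_{0,*}\Phi_1=\mathsf f_{0,*}\bigl(F_{\overline{X_1}}\bigr)=F_{\overline{f_{0,*}X_1}}, \]
and symmetrically $\mathsf f_{0,*}\Phi_1\circ\mathsf f_{0,*}\Phi_2=F_{\overline{f_{0,*}X_2}}$, so $f_*\tilde X$ is a genuine mixed $T$-object.

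Finally I would verify $\hom_{m\C_{/T}}(\tilde W,f_*\tilde X)\simeq\hom_{m\C_{/S}}(f^*\tilde W,\tilde X)$. By Definition~\ref{def:mC}, in the fibered-product form of Remark~\ref{remark:visible}.\ref{bullet:ratpts}, a morphism $\tilde W\to f_*\tilde X$ is a pair $(g_1,g_2)$ of $T_0$-linear arrows $g_i\colon W_i\to f_{0,*}X_i$ with $\mathsf f_{0,*}\Phi_i\circ g_i=g_{3-i}\circ\Psi_i$, while a morphism $f^*\tilde W\to\tilde X$ is a pair $(h_1,h_2)$ of $S_0$-linear arrows $h_i\colon f_0^*W_i\to X_i$ with $\Phi_i\circ h_i=h_{3-i}\circ\mathsf f_0^*\Psi_i$. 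The componentwise bijections $g_i\mapsto g_i^\sharp$ given by $f_0^*\dashv f_{0,*}$, together with their semi-linear refinement $\mathsf f_0^*\dashv\mathsf f_{0,*}$, carry the first family of conditions onto the second: naturality of $\sharp$ in the object variable applied to $\Phi_1,\Phi_2$, and in the $\tilde W$-variable applied to the semi-linear $\Psi_1$ and the linear $\Psi_2$, turns $\mathsf f_{0,*}\Phi_i\circ g_i=g_{3-i}\circ\Psi_i$ into $\Phi_i\circ g_i^\sharp=g_{3-i}^\sharp\circ\mathsf f_0^*\Psi_i$. Thus $(g_1,g_2)\mapsto(g_1^\sharp,g_2^\sharp)$ is a bijection of the constrained hom-sets, natural in $\tilde W$ and $\tilde X$, which is the asserted adjunction. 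The only genuine obstacle is the semi-linear mixer $\Phi_1$: it forces us out of $\C_{/S_0}$ into the fairy, so that both the well-definedness of $f_*\tilde X$ (preservation of $F$) and the intertwining of the two mixing conditions must rest on the extended adjunction $\mathsf f_0^*\dashv\mathsf f_{0,*}$ rather than on $f_0^*\dashv f_{0,*}$ alone; the rest is formal bookkeeping.
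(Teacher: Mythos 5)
Your proposal is correct and takes essentially the same approach as the paper: the paper's own proof just writes down the componentwise formula $f_*(\tilde X)=(f_*X_1,f_*X_2,f_*\Phi_1,f_*\Phi_2)$ and declares the adjunction ``easily verified,'' where $f_*\Phi_1$ tacitly means the fairy extension of \S\ref{sec:bookkeeping} applied to the semi-linear mixer. You have simply carried out the verification the paper leaves to the reader, using exactly the intended ingredients (bewitchedness of $\mathsf f^*$, preservation of $F$ by the right adjoint, and the degree-preserving transposition of the extended adjunction).
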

\begin{proof}
	It is easily verified that $f_*(\tilde X)=(f_*X_1,f_*X_2,f_*\Phi_1,f_*\Phi_2)$ has the required property that $\hom_T(\tilde Y,f_*\tilde X)=\hom_S(f^*\tilde Y,\tilde X)$ for every mixed $T$-object $\tilde Y$.
\end{proof}

\section{Twisting and mixing schemes}
\label{sec:mixed-schemes}
We will now take a step towards the applications that we have in mind and apply the results of the previous section to the categories of schemes and rings of a fixed characteristic $p>0$, \emph{chosing the absolute Frobenius as endomorphism of the identity functor}.

It should be noted that one could also choose the identity endomorphism of the identity functor. We will only mention that this comes down to the study of schemes with an involution and would eventually lead to a slightly different description of the Steinberg groups ${}^2\mathsf A_n$, ${}^2\mathsf D_n$, ${}^2\mathsf E_6$. The main difference here is a shift of viewpoint: usually one regards, say, $\mathsf{PSU}$ in the real/complex case  as an algebraic group over $\mathbb R$, whereas in our approach it would become a twisted group over the twisted field $(\mathbb C,\tau)$, where $\tau$ denotes complex conjugation. Since our main goal is to study and describe groups such as ${}^2\mathsf B_2$, ${}^2\mathsf G_2$, ${}^2\mathsf F_4$ we will not pursue this route any further. \medskip

Some examples will be grouped together in \S\ref{sec:examples}; the reader is encouraged to skip ahead.

\subsection{Twisted and mixed schemes} \label{sec:def-sch}
Applying the results of the previous section to the category $\Sch_p$ of schemes of characteristic $p$ with their absolute Frobenius $\Fr$ provides us with a number of categories and functors. We use the following notations for the occuring categories and their terminal objects, where we recall that $\mathbf 1_{t\C}=(\mathbf 1_{\C},\mathrm{id})$ and $1_{m\C}=\mathsf E = (\mathbf 2_{\C},\tau)$:

\[ \begin{array}{llllllll}
\text{general:}& \C & F & t\C & m\C & \mathbf 1_{\C}  &\mathbf 1_{t\C}  & \mathbf 1_{m\C} \\ \hline
\text{schemes:} &\Sch_p & \Fr & \TSch_p & \MSch_p & \Spec \mathbf F_p & \mathsf F & \mathsf E \\
\text{rings:} & \Ring_p & \fr & \TRing_p & \MRing_p & \mathbf F_p & \mathbf f & \mathbf e
\end{array} \]

We call $\TSch_p$ resp.~$\MSch_p$ the \emph{category of twisted resp.~mixed schemes} (of characteristic $p$). From now on, we will often omit the subscript $p$. Recall the notion of the underlying ordinary object, in this case the \emph{underlying ordinary scheme} of a twisted scheme $(X,\Phi_X)$, which is just the scheme $X$.

\begin{proposition} $\TSch$ and $\MSch$ have fibered products $X\times_S Y$ and terminal objects.
\end{proposition}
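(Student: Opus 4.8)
The plan is to read this off from Lemma~\ref{prop:limits}, since a fibered product is a limit over the cospan-shaped diagram $\mathscr J = (\,\bullet\to\bullet\leftarrow\bullet\,)$ and a terminal object is a limit over the empty diagram. It therefore suffices to check that $\Sch_p$, equipped with the absolute Frobenius, meets the running hypotheses of Section~\ref{sec:mixing-objects}: namely that $\Sch_p$ admits limits of these two shapes and that $\Fr$ really is an endomorphism of the identity functor $\mathrm{id}_{\Sch_p}$.

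For the first point I would observe that $\Sch_p$ is the slice category $\Sch_{/\Spec\mathbf F_p}$ of schemes over $\Spec\mathbf F_p$, since a scheme has characteristic $p$ exactly when it admits a (necessarily unique) structure morphism to $\Spec\mathbf F_p$. The ambient category $\Sch$ has fibered products, and a slice category inherits them, computed by the very same fibered product in $\Sch$; moreover the terminal object of any slice $\C_{/S}$ is $(S,\mathrm{id}_S)$. Hence $\Sch_p$ has fibered products and the terminal object $\Spec\mathbf F_p$.

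For the second point, recall that for a scheme $X$ of characteristic $p$ the absolute Frobenius $\Fr_X\colon X\to X$ is the identity on the underlying space and the $p$-th power map on $\mathcal O_X$. The naturality identity $\Fr_Y\circ f = f\circ\Fr_X$ for every morphism $f\colon X\to Y$ holds because any homomorphism of $\mathbf F_p$-algebras commutes with the $p$-th power map (one has $\phi(b^p)=\phi(b)^p$); this is precisely what makes $\Fr$ an endomorphism of the identity functor, as already stipulated at the start of this section. I would note that this is the only place where positive characteristic intervenes, and the only genuinely content-bearing step---everything else is formal.

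With both hypotheses in hand, Lemma~\ref{prop:limits} applies with $\C=\Sch_p$ and $F=\Fr$, yielding the desired fibered products and terminal objects in $t\C=\TSch$ and $m\C=\MSch$. Concretely, by the discussion immediately following that lemma, the terminal objects are $\mathsf F=(\Spec\mathbf F_p,\mathrm{id})$ for $\TSch$ and $(\Spec\mathbf F_p,\Spec\mathbf F_p,\mathrm{id},\mathrm{id})$ for $\MSch$, the latter corresponding under the equivalence $\mathsf Q$ to $\mathsf E$. There is no real obstacle: all the work was done in Lemma~\ref{prop:limits}, and the proposition is simply its instantiation at the category of schemes of characteristic $p$.
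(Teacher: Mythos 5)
Your proof is correct and takes exactly the route the paper does: the paper's entire proof reads ``This is an immediate consequence of Proposition~\ref{prop:limits}.'' Your additional verifications (that $\Sch_p=\Sch_{/\Spec\mathbf F_p}$ has the required limits and that $\Fr$ is natural, i.e.\ an endomorphism of $\mathrm{id}_{\Sch_p}$) simply spell out the hypotheses the paper leaves implicit, and your identification of the terminal objects agrees with the paper's table in \S\ref{sec:def-sch}.
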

\begin{proof} This is an immediate consequence of Proposition \ref{prop:limits}.
\end{proof}

Although the category $\Ring$ is \emph{not} extensive, its opposite category $\Ring^\op = (\mathbf{affsch})$ is. Actually, let us show this to convince the reader it is not a deep fact. If we have an algebra morphism $f:\mathbf F_p\times \mathbf F_p\to A$, then $e_1=f(1,0)$ and $e_2=f(0,1)$ are orthogonal idempotents, so we obtain a decomposition $A \cong A_1\times A_2$ and the map $f$ is actually built from $f_1: \mathbf F_p\to A_1$ and $f_2:\mathbf F_p\to A_2$, and the product functor
\[  \Ring \times \Ring \to \Ring_{/(\mathbf F_p\times\mathbf F_p)}\alg{(\mathbf F_p\times\mathbf F_p)} : (A_1,A_2) \leadsto A_1\times A_2 \]
is an equivalence.

If $\tilde S$ is a twisted scheme, then by an $\tilde S$-scheme, we mean an arrow $\tilde X\to \tilde S$, in other words an object of the slice category $\TSch_{/\tilde S}$. Similarly if $\tilde R$ is a twisted ring, then by an $\tilde R$-algebra we mean an arrow $\tilde R\to \tilde S$, in other words an object of the coslice category $((\Ring^\op)_{/\tilde R})^{\op}$. 

\begin{proposition} We have equivalences $\TSch \cong \MSch_{/\mathsf E}$ and $\MRing \cong \alg{\mathbf e}$. The contravariant functor $\Spec:\Ring\to\Sch$ extends to the mixed and twisted categories in a way which commutes with all the functors $\mathbf f$, $\comp_1$, $\comp_2$, $\mix$, $\antimix$, $\delta_!$, $\delta^*$, $\delta_*$ and $\tau^*$.
\end{proposition}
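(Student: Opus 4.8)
The plan is to obtain all three assertions as specialisations of the two general results already in hand, Proposition~\ref{prop:equivalenceQ} and Proposition~\ref{prop:functoriality}, so that the only real work is checking their hypotheses in $\Sch$ and $\Ring^\op$ and keeping track of opposites. For the scheme statement I would first record that $\Sch_p$, with the absolute Frobenius as endomorphism of the identity, carries the structure required by extensivity: it has a terminal object $\Spec\mathbf F_p$ and fibered products, binary coproducts are disjoint unions, and the decomposition functor $\Sch_{/A}\times\Sch_{/B}\to\Sch_{/A\sqcup B}$ is an equivalence since a scheme over a disjoint union splits canonically over the two clopen parts. (The category $\Sch$ is only finitely complete, not small-complete, but finite limits are all that the construction of $\mathsf Q$ uses, so this is harmless.) The equivalence of $\MSch$ with the slice $\TSch_{/\mathsf E}$, where $\mathsf E=(\mathbf 2,\tau)$ and $\mathbf 2=\Spec\mathbf F_p\sqcup\Spec\mathbf F_p$, is then exactly Proposition~\ref{prop:equivalenceQ} for $\C=\Sch$.

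For the ring statement I would work in the opposite category. The text has already exhibited the crucial part of the extensivity of $\Ring^\op=(\mathbf{affsch})$, namely that the product functor $(A_1,A_2)\leadsto A_1\times A_2$ identifies $\Ring\times\Ring$ with the $(\mathbf F_p\times\mathbf F_p)$-algebras; together with the existence of finite limits of affine schemes this gives extensivity of $\Ring^\op$. Applying Proposition~\ref{prop:equivalenceQ} to $\C=\Ring^\op$ yields $m(\Ring^\op)\simeq (t(\Ring^\op))_{/\mathsf E}$, and since Proposition~\ref{prop:functoriality} gives $t(\Ring^\op)=(\TRing)^\op$ and $m(\Ring^\op)=(\MRing)^\op$, this reads $(\MRing)^\op\simeq((\TRing)^\op)_{/\mathsf E}$. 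Passing to opposite categories turns the slice over $\mathsf E$ into the coslice of $\TRing$ under the twisted ring $\mathbf e$ corresponding to $\mathsf E$; here the bookkeeping to watch is that the coproduct $\mathbf 2$ and terminal object of $\Ring^\op$ are the product $\mathbf F_p\times\mathbf F_p$ and the initial ring $\mathbf F_p$ of $\Ring$, so that $\mathsf E=(\mathbf 2,\tau)$ becomes $\mathbf e=(\mathbf F_p\times\mathbf F_p,\tau)$. Since this coslice is by definition the category $\alg{\mathbf e}$ of $\mathbf e$-algebras, we obtain $\MRing\cong\alg{\mathbf e}$.

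The extension of $\Spec$ I would hand entirely to Proposition~\ref{prop:functoriality}, taking $\mathbf G=\Spec\colon\Ring^\op\to\Sch$. Its single hypothesis $\mathbf G(F_X)=H_{\mathbf G(X)}$ becomes the statement that the absolute Frobenius $\Fr_{\Spec R}$ coincides with $\Spec$ applied to the $p$-power ring endomorphism $\fr_R$; this is the standard description of the absolute Frobenius of an affine scheme and is the one point I would make sure to state rather than wave at. Granting it, Proposition~\ref{prop:functoriality} produces $t\Spec$ and $m\Spec$ commuting with $\mathbf f$, $\comp_1$, $\comp_2$, $\mix$, $\antimix$, $\delta_!$, $\delta^*$, $\delta_*$ and $\tau^*$, and under the identifications $t(\Ring^\op)=(\TRing)^\op$, $m(\Ring^\op)=(\MRing)^\op$ these are precisely the contravariant extensions $\TRing\to\TSch$ and $\MRing\to\MSch$ of $\Spec$.

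I expect no serious obstacle: the genuine content is all in Propositions~\ref{prop:equivalenceQ} and~\ref{prop:functoriality}, and what remains is the dualisation bookkeeping for the ring case---matching $\mathsf E$ with $\mathbf e$ and slice with coslice/$\mathbf e$-algebras---plus the one-line verification $\Fr_{\Spec R}=\Spec\fr_R$. The subtlest point worth spelling out is that the extensivity hypothesis is used only through its finite-limit part, so the non-completeness of $\Sch$ does not interfere.
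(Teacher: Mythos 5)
Your proof is correct and takes essentially the same route as the paper, whose entire proof is the single line that the statement is a consequence of Propositions~\ref{prop:equivalenceQ} and~\ref{prop:functoriality}, preceded by the same verification that $\Ring^\op$ is extensive via idempotent decomposition. The details you supply---the observation that only the finite-limit part of extensivity is needed for $\Sch$, the slice/coslice dualisation matching $\mathsf E$ with $\mathbf e$ so that the coslice is $\alg{\mathbf e}$, and the check $\Fr_{\Spec R}=\Spec(\fr_R)$---are precisely what the paper leaves implicit.
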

\begin{proof} This is a consequence of Propositions~\ref{prop:equivalenceQ} and \ref{prop:functoriality}.
\end{proof}

An important observation is that the functors $\mix,\antimix : \Sch \to \MSch$ are not essentially surjective. Just like in the general case (Definition~\ref{def:visible}),  we will use the adjectives \emph{visible} resp.~\emph{anti-visible} for those mixed schemes that are isomorphic to $\mix(X)$ resp.~$\antimix(X)$, for some ordinary scheme $X$. A mixed scheme that is not visible is called \emph{invisible}.

Since the Frobenius of a scheme acts trivally on the underlying topological space, a twister acts in orbits of length $1$ or $2$.

\begin{definition} We will use the adjective \emph{blended} for twisted schemes where all orbits have length $1$, i.e.~the twister acts trivially on the underlying topological space.
\end{definition}

In particular every twisted field must be blended. These objects are also known as \emph{fields with Tits endomorphism}: they are just pairs $(k,\theta)$ where $k$ is a field and $\theta$ an endomorphism such that $x^{\theta^2}=x^p$ for every $x\in k$. A mixed ring can never have a field as its underlying ordinary ring, since mixed schemes are never connected. This allows us to define unambiguously:

\begin{definition} A \emph{mixed field} is a mixed ring $m=(k, \ell,\kappa,\lambda)$ such that $k$ and $\ell$ are fields.
\end{definition}

Equivalently, it is a mixed ring without twisted ideals in the sense of Definition~\ref{def:twisted-ideal}.

\subsection{Rational points and functor of points} \label{sec:sch-ratpts}
A standard tool in algebraic geometry is the functor of points associated to a scheme, for instance see \cite[exp.~I]{SGA3}. We apply it to our setting by using for their category $\C$ one of the categories $\TSch$ or $\MSch$.

To begin, we observe that a notion of rational points is provided by Definition~1.2 in \opcit: we define $\Gamma(X)=\hom(\mathbf 1,X)$, where $\mathbf 1$ is a terminal object in $\C$. \footnote{If there is no terminal object, one must rely on the Yoneda embedding to provide a non-representable terminal object; but we will not need that here.} We will sometimes write $\Gamma_\C(X)$ for clarity. For $S$-objects $X$ and $T$ will also use the notation
\[ \Gamma_\C(X/T) =  \{f \in \hom_\C(T,X) \mid q_X\circ f = q_T  \}. \]
 Often we also denote $\Gamma(X/T)=X(T)$. We will also use the canonical identifications
 \begin{align*}
	 \Gamma_{\C_{/S}}(X) &= \Gamma_\C(X/S) \\
	 \Gamma_{\C_{/S}}(X/T) &= \Gamma_{\C_{/T}}(X\times_S T/T)
 \end{align*}

When it is clear that $X$ and $T$ are $S$-objects, we can also use the notation $X(T) = \Gamma_{\C_{/S}}(X/T)$, so that the last equality reads $X(T)=X_T(T)$, where we also used the common index notation $X_T$ for the pullback $X\times_S T$. Finally, we note that in the examples we will sometimes write $\Gamma(R)$ or $\Gamma(R/S)$ rings $R$, $S$ a ring, which can be ordinary, twisted, or mixed. In such cases, we always mean $\Gamma(\Spec R)$ and $\Gamma(\Spec R/\Spec S)$.\medskip

Let us now study the interaction between rational points of ordinary schemes, twisted schemes, and mixed schemes.\medskip

To obtain something recognizable, we need to assume the base-scheme behaves well. For twisted schemes, this means that the twister must be invertible; for mixed schemes we need to assume that at least one of the mixing maps is epic.

\begin{proposition} \label{prop:fixedpoints} Let $\tilde S = (S,\Phi)$ be a twisted
	scheme \emph{with $\Phi$ invertible}. Let $\tilde X = (X,g)$ be an $\tilde S$-scheme. Define $\alpha$ as follows:
	\[ \alpha  :  X(S)\to X(S) : x \mapsto g \circ x \circ \Phi^{-1} .   \]
	Then $\alpha$ is an involution on $X(S)$ and $\tilde X(\tilde S)=X(S)^\alpha$ its set of fixed points.
\end{proposition}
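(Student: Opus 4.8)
The plan is to unwind all the definitions and then carry out three short formal verifications, the crux of which rests on the fact that $\Fr$ is an endomorphism of the identity functor. First I would record the relevant data. Writing $q_X : X \to S$ for the underlying structural morphism of $\tilde X$, the hypothesis that $q_X$ is an arrow $\tilde X \to \tilde S$ in $\TSch$ means precisely that $\Phi \circ q_X = q_X \circ g$. By definition $X(S) = \{x : S \to X \mid q_X \circ x = \mathrm{id}_S\}$ is the set of sections of $q_X$ in $\Sch$, whereas $\tilde X(\tilde S) = \Gamma_{\TSch}(\tilde X/\tilde S)$ consists of the sections $s : \tilde S \to \tilde X$ taken in $\TSch$; unwinding, a $\TSch$-arrow $s$ is a scheme morphism satisfying $g \circ s = s \circ \Phi$, so that $\tilde X(\tilde S) = \{x \in X(S) \mid g \circ x = x \circ \Phi\}$. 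This last condition is exactly the fixed-point equation I will meet again in the final step.

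Next I would check that $\alpha$ is well defined, i.e.\ that $g \circ x \circ \Phi^{-1}$ is again a section of $q_X$ whenever $x$ is. This is a one-line computation using the compatibility recorded above:
\[ q_X \circ g \circ x \circ \Phi^{-1} = \Phi \circ q_X \circ x \circ \Phi^{-1} = \Phi \circ \Phi^{-1} = \mathrm{id}_S, \]
and this is the one place in the well-definedness argument where invertibility of $\Phi$ is essential.

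The main point is the involution property, and this is where the theory does its work. Composing $\alpha$ with itself gives $\alpha(\alpha(x)) = g \circ g \circ x \circ \Phi^{-1} \circ \Phi^{-1} = g^2 \circ x \circ \Phi^{-2}$. Since $g$ is the twister of $\tilde X$ and $\Phi$ that of $\tilde S$, we have $g^2 = \Fr_X$ and $\Phi^2 = \Fr_S$, whence $\alpha^2(x) = \Fr_X \circ x \circ \Fr_S^{-1}$. I now invoke the defining property of $\Fr$ as an endomorphism of the identity functor of $\Sch_p$: naturality at the arrow $x : S \to X$ reads $\Fr_X \circ x = x \circ \Fr_S$, so that $\alpha^2(x) = x \circ \Fr_S \circ \Fr_S^{-1} = x$. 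Thus $\alpha$ is an involution. I expect this naturality step to be the conceptual heart of the argument; everything else is bookkeeping with the two compatibility relations $\Phi \circ q_X = q_X \circ g$ and $g^2 = \Fr_X$, $\Phi^2 = \Fr_S$.

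Finally, the fixed-point identification is immediate: $\alpha(x) = x$ says $g \circ x \circ \Phi^{-1} = x$, equivalently $g \circ x = x \circ \Phi$, which is exactly the membership condition for $\tilde X(\tilde S)$ obtained in the first paragraph. Hence $X(S)^\alpha = \tilde X(\tilde S)$, as claimed.
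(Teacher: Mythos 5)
Your proof is correct and follows essentially the same route as the paper: the same computation $\alpha^2(x) = g^2 \circ x \circ \Phi^{-2} = \Fr_X \circ x \circ \Fr_S^{-1} = x$ via naturality of the Frobenius, and the same unwinding of $\tilde X(\tilde S)$ as the sections satisfying $g \circ x = x \circ \Phi$. The only difference is that you spell out the well-definedness of $\alpha$ and the naturality step explicitly, which the paper's terser proof leaves implicit.
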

\begin{proof}We have $(\alpha\circ\alpha)(x) = (g^2)\circ x \circ (\Phi^2)^{-1} = \Fr_X \circ x \circ \Fr_S^{-1} = x$ and  $\Gamma(\tilde X/\tilde S) = \{  x \in X(S) \mid x\circ\Phi = g\circ x\} = X(S)^\alpha$.
\end{proof}

In other words: \emph{the set of rational points of a twisted scheme over a base with invertible twister is the set of fixed points of an involution acting on the sets of rational points of an ordinary scheme.} The application that we have in mind is where $S$ is the spectrum of a perfect blended field. In \S\ref{sec:twisted-groups} we will see that this proposition generalizes the construction of the Suzuki-Ree groups over perfect fields. (Over non-perfect fields, there is actually nothing to show! More on this in Remark~\ref{remark:perfect-fields}.)

\begin{lemma} \label{lem:mixedpoints} Let $\tilde S=(S_1,S_2)$ be a mixed scheme $\tilde X=(X_1,X_2)$ an $\tilde S$-scheme. If $S_1$ is reduced, then $\Phi_{S_2}$ is epic.
\end{lemma}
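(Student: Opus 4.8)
The plan is to reduce the statement to the single fact that the absolute Frobenius of a \emph{reduced} scheme is an epimorphism, and then to invoke the defining relation $\Phi_{S_2}\circ\Phi_{S_1}=\Fr_{S_1}$ of the mixed scheme $\tilde S$. Note first that $\tilde X$ plays no role whatsoever; only $\tilde S$ matters. Suppose two morphisms $g,h$ out of $S_1$ satisfy $g\circ\Phi_{S_2}=h\circ\Phi_{S_2}$. Precomposing with $\Phi_{S_1}$ gives $g\circ\Fr_{S_1}=h\circ\Fr_{S_1}$, so if I can show that $\Fr_{S_1}$ is an epimorphism in $\Sch_p$, the elementary cancellation ``$g\circ\Fr_{S_1}=h\circ\Fr_{S_1}\Rightarrow g=h$'' yields exactly that $\Phi_{S_2}$ is epic. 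Thus the entire content is the claim that $\Fr_{S_1}$ is an epimorphism whenever $S_1$ is reduced.

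To prove that, I would use that the absolute Frobenius acts as the \emph{identity} on the underlying topological space. Hence, given $g,h\colon S_1\to Z$ with $g\circ\Fr_{S_1}=h\circ\Fr_{S_1}$, the underlying continuous maps of $g$ and $h$ coincide, and I may verify the equality $g=h$ of scheme morphisms locally. Covering $Z$ by affine opens $V=\Spec B$ and restricting to $U=g^{-1}(V)=h^{-1}(V)$---which is stable under $\Fr_{S_1}$ precisely because the Frobenius fixes all points, so that $\Fr_{S_1}$ restricts to $\Fr_U$---reduces everything to a statement about ring homomorphisms into $A=\Gamma(U,\mathcal O_{S_1})$.

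On such an affine piece, $g$ and $h$ correspond to ring maps $\gamma,\eta\colon B\to A$, and the hypothesis $g\circ\Fr_{S_1}=h\circ\Fr_{S_1}$ translates into $\gamma(b)^p=\eta(b)^p$ for every $b\in B$, i.e. $\phi\circ\gamma=\phi\circ\eta$, where $\phi\colon A\to A$ is the Frobenius endomorphism $a\mapsto a^p$. Since $S_1$ is reduced, $A$ is a reduced ring, so $\ker\phi=\{a\in A\mid a^p=0\}=0$ and $\phi$ is injective; therefore $\gamma=\eta$, hence $g=h$ on $U$. Gluing over the cover shows $\Fr_{S_1}$ is epic, and the argument closes.

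The crux of the whole proof---and the only place where the hypothesis is actually consumed---is the injectivity of the Frobenius endomorphism on a reduced ring. This is also where reducedness is genuinely indispensable: for a non-reduced ring such as $\mathbf F_p[\epsilon]/(\epsilon^2)$ the Frobenius has a nontrivial kernel, and one checks directly that $\Fr_{S_1}$ then fails to be an epimorphism, so the hypothesis cannot be weakened. By contrast, the passage to an affine target and the localisation are routine, made painless by the observation that the absolute Frobenius is the identity on the underlying space.
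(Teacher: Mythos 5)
Your proof is correct and follows essentially the same route as the paper: both reduce the claim to the fact that $\Fr_{S_1}$ is an epimorphism (using $\Fr_{S_1}=\Phi_{S_2}\circ\Phi_{S_1}$ and cancelling the outer factor), and both establish that fact from the Frobenius being the identity on the underlying space together with injectivity of the $p$-th power map on a reduced ring. The only difference is one of detail: the paper invokes the criterion ``topologically surjective and injective on stalks implies epimorphism'' as known, whereas you verify the epimorphism property by hand on an affine cover of the target, which amounts to a self-contained proof of that criterion in this special case.
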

Note that we now suppress the maps $\Phi_{X_1}$ etc.~ from the notation if we can, as we remarked just before Lemma~\ref{prop:limits}.

\begin{proof} The absolute Frobenius $\Fr_{S_1}:S_1\to S_1$ is topologically bijective, so in particular it is topologically surjective. Since $S_1$ is reduced, it is in addition injective on the stalks. This implies that $\Fr_{S_1}$ is an epimorphism and since $\Fr_{S_1} = \Phi_{S_2}\circ\Phi_{S_1}$, so is $\Phi_{S_2}$.
\end{proof}

\begin{proposition} \label{prop:mixedpoints} Let $\tilde S=(S_1,S_2)$ be a mixed scheme $\tilde X=(X_1,X_2)$ an $\tilde S$-scheme. \emph{Assume that $\Phi_{S_2}$ is epic.}

\begin{enumerate}
	\item \label{prop:mixedpoints2} The following maps are injective
	\begin{align*}
		g : X_1(S_1) \into X_1(S_2) &: v\mapsto v\circ\Phi_{S_2} \\
		\comp_1: \tilde X(\tilde S) \into X_2(S_2) &: (u,v)\mapsto u
	\end{align*}
	\item \label{prop:mixedpoints3} The following square is a pullback in $\Set$:
	\[
	\begin{tikzcd}
	\tilde X(\tilde S) \dar["\comp_2"] \rar[tail,"\comp_1"] & X_2(S_2) \dar["f"] \\ X_1(S_1) \rar[tail,"g"] & X_1(S_2)
	\end{tikzcd}
	\]
	where $f:X_2(S_2)\to X_1(S_2) : u \mapsto \Phi_{X_2}\circ u$.
	\item We have: $\tilde X(\tilde S) = f^{-1}(X_1(S_1))$.
\end{enumerate}
\end{proposition}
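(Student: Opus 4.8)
The plan is to unwind $\tilde X(\tilde S)$ into honest scheme morphisms and then verify all three claims by elementary diagram chases, the only substantive hypothesis being that $\Phi_{S_2}$ is epic.

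First I would make the rational points explicit. Unwinding Definition~\ref{def:mC} together with the section condition, an element of $\tilde X(\tilde S) = \Gamma_{\MSch_{/\tilde S}}(\tilde X)$ is a pair $(u,v)$ with $u\in X_2(S_2)$ and $v\in X_1(S_1)$ subject to the two mixer relations
\[ \text{(A)}\quad \Phi_{X_1}\circ v = u\circ\Phi_{S_1}, \qquad \text{(B)}\quad \Phi_{X_2}\circ u = v\circ\Phi_{S_2}. \]
I read $X_1(S_2)$ as $\hom_{S_1}(S_2,X_1)$ with $S_2$ an $S_1$-scheme through $\Phi_{S_2}$; since the structural morphism is a morphism of mixed schemes we have $q_{X_1}\circ\Phi_{X_2} = \Phi_{S_2}\circ q_{X_2}$, so both $g(v)=v\circ\Phi_{S_2}$ and $f(u)=\Phi_{X_2}\circ u$ indeed land in $X_1(S_2)$.

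For part~\ref{prop:mixedpoints2} both injectivities come directly from right-cancellability of $\Phi_{S_2}$: the map $g$ is injective since $v\circ\Phi_{S_2}=v'\circ\Phi_{S_2}$ forces $v=v'$, and the projection $\comp_1\colon(u,v)\mapsto u$ is injective because (B) exhibits $v$ as the unique solution of $v\circ\Phi_{S_2}=\Phi_{X_2}\circ u$.

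The crux is part~\ref{prop:mixedpoints3}. The square commutes precisely because $f(u)=g(v)$ unwinds to relation (B), so the fibre product $X_1(S_1)\times_{X_1(S_2)}X_2(S_2)$ is exactly the set of pairs satisfying (B) alone; to identify it with $\tilde X(\tilde S)$ I must show that, under the epic hypothesis, (B) already forces (A). This is the step I expect to be the main obstacle, and the plan is to precompose the two sides of (A) with the epimorphism $\Phi_{S_2}$ and collapse each to the Frobenius: using (B) and $\Phi_{X_1}\circ\Phi_{X_2}=\Fr_{X_2}$ the left side becomes
\[ \Phi_{X_1}\circ v\circ\Phi_{S_2} = \Phi_{X_1}\circ\Phi_{X_2}\circ u = \Fr_{X_2}\circ u, \]
while $\Phi_{S_1}\circ\Phi_{S_2}=\Fr_{S_2}$ and naturality of $\Fr$ turn the right side into
\[ u\circ\Phi_{S_1}\circ\Phi_{S_2} = u\circ\Fr_{S_2} = \Fr_{X_2}\circ u. \]
Thus the two sides of (A) agree after composing with $\Phi_{S_2}$, and cancelling the epimorphism yields (A); this proves the square is a pullback. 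The third claim is then purely formal: by part~\ref{prop:mixedpoints2} the injection $\comp_1$ lets us view $\tilde X(\tilde S)$ inside $X_2(S_2)$, where the pullback description reads $\{u : f(u)\in g(X_1(S_1))\}$, and since $g$ is injective we suppress it to obtain $\tilde X(\tilde S)=f^{-1}(X_1(S_1))$.
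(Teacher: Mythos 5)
Your proof is correct and follows essentially the same route as the paper's: the crux in both is showing that relation (B) forces relation (A) by composing with the epimorphism $\Phi_{S_2}$ and collapsing via $\Phi_{X_1}\circ\Phi_{X_2}=\Fr_{X_2}$, $\Phi_{S_1}\circ\Phi_{S_2}=\Fr_{S_2}$ and naturality of the Frobenius, after which parts (1) and (3) are the same formal observations. If anything, your version is slightly tidier, since it keeps the labelling of $(u,v)$ consistent with the statement and records why $f$ and $g$ land in $X_1(S_2)$, details the paper passes over.
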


\begin{proof} \strut
	\begin{enumerate}
		\item Injectivity of $g$ is trivial; for $\comp_1$, if  $(u,v)$ and $(u',v)$ are two morphisms $\tilde S\to\tilde X$ then $u\circ\Phi_{S_2}=\Phi_{X_2}\circ v = u'\circ\Phi_{S_2}$ and so $u=u'$.
		\item  Let $u:S_1\to X_1$ and $v=S_2\to X_2$ be a pair of morphisms of schemes. We claim that $(u,v)$ is a morphism of mixed schemes $\tilde S\to \tilde X$ if (and only if) the condition $\Phi_{X_2}\circ v = u\circ \Phi_{S_2}$ is satisfied. In other words, we show that $v\circ\Phi_{S_1}=\Phi_{X_1}\circ u$ is a consequence. Because $\Phi_{S_2}$ is epic, this is equivalent to showing that
		\[ v\circ\Phi_{S_1}\circ \Phi_{S_2}=\Phi_{X_1}\circ u\circ\Phi_{S_2},  \]
		which is clear, since the left hand side is also
		\[ v\circ \Fr_{S_2} = \Fr_{S_2}\circ v=\Phi_{X_1}\circ\Phi_{X_2}\circ v. \] So we have
	\begin{align*}
		\tilde X(\tilde S) &= \{ (u,v) \in X_1(S_1) \times X_2(S_2) \mid \Phi_{X_2}\circ v = u\circ \Phi_{S_2}  \}
	\end{align*}
	In other words, this is the claimed fibered product in the category of sets.
	\item This is a reformulation of  \eqref{prop:mixedpoints3} with some abuse of language which takes \eqref{prop:mixedpoints2} into account. \qedhere
	\end{enumerate}
\end{proof}

\begin{remark} \label{remark:mixedpints} A few notes. \end{remark}

\begin{enumerate}
\item The proposition is purely categorical and holds not just for schemes but it was placed in this section for the good interaction with Lemma~\ref{lem:mixedpoints}.
\item The notation $f^{-1}(..)$ is slightly ambiguous in our situation. Since we have maps going from $S_1$ to $S_2$ and back, there are maps $X_2(S_2) \to X_2(S_1) \to X_2(S_2)$. So one has to be specific how $X_2(S_2)$ is considered a subset of $X_2(S_1)$ to avoid confusion.
\item In a typical situation $\tilde S=\Spec m$ could be the spectrum of a mixed field $m=(k,\ell)$. By Lemma~\ref{lem:mixedpoints} the proposition applies and in fact it applies \emph{mutatis mutandis} to the other component. So $\tilde X(m)$ is a subset of both $X_1(k)$ and $X_2(\ell)$.
\item An important situation where $\Phi_{S_2}$ is epic but $S_2$ is non-reduced is where $\tilde S$ is a non-reduced visible scheme so that in fact $\Phi_{S_2}$ is an isomorphism. For instance, take $\tilde S=\mix(\Spec k(\epsilon))$, where $k(\epsilon)$ are the dual numbers over a field $k$.
\item \label{remark:mixedpoints-weil} Assume one \emph{insists} on realizing the set $\tilde X(\tilde S)$ with $S_1$-objects. Then one has to find suitable $S_1$-objects such that $Y(S_1) = X_1(S_2)$ and $Z(S_1) = X_2(S_2)$ such that the induced maps are indeed $f$ and $g$. Denoting for simplicity $\beta = \Phi_{S_2}: S_2\to S_1$, we see that $Y = \beta_*\beta^*X_1$ and $Z = \beta_*X_2$ fit the bill. This implies that if we define a new $S_1$-scheme by $U = X_1 \times_{\beta_*\beta^* X_1} \beta_* X_2$, then $U(S_1)=\tilde X(\tilde S)$. This suggests the statement of Proposition~\ref{prop:weilrestriction}:  observing that $(\comp_2 f_* \tilde X)(S) = \tilde X(f^*\mix S)$ gives away the second component of $f_*\tilde X$. A similar reasoning, starting from Remark~\ref{remark:visible}.\ref{bullet:ratpts} could perhaps lead to a generalization of that proposition for arbitrary morphisms $(\alpha,\beta):(T,T')\to (S,S')$.
\end{enumerate}

\subsection{Modules and sheaves} \label{sec:modules-and-sheaves}
  We want to introduce a notion of modules over twisted rings, and more generally sheaves of modules over twisted schemes. The main purpose is to define partial dimensions of a mixed scheme. The underlying idea is that if we have a scheme over, say, a mixed field $(k,\ell)$ we want to measure how much of it is defined over $k$ and how much is defined over $\ell$. In \S\ref{sec:gps} certain mixed reductive groups, which are roughly reductive groups where the long and short roots live over different halves of a mixed field. In Remark~\ref{rem:parabolics} we will explain how these partial dimensions count the number of dimensions determined by short and long roots.

Let us first recall the notion of a $p$-structure (also called Frobenius structure) before considering its `square root'.

\begin{definition} A \emph{$p$-structure on a module $M$} over a ring $R$ of characteristic $p$ is a map $M\to M:x\mapsto x^\pstr$ such that $a^px^\pstr=(ax)^\pstr$ and $(x+y)^\pstr=x^\pstr+y^\pstr$, for $a\in R$ and $x,y\in M$.
\end{definition}
Note that in particular, $0 : x\mapsto 0$ is always a $p$-structure.

Here is a different angle on this definition: let $\mathrm{ad}:R\to\End(M,+)$ be the multiplication map $\mathrm{ad}(a)(x)=a\cdot x$. Then the $p$-structure $\pstr:M\to M$ satisfies
\[ \mathrm{ad}(a^p)\circ\pstr=  \mathrm{ad}(a)^p\circ\pstr = \pstr\circ \mathrm{ad}(a).  \]

\begin{definition} Let $\tilde R=(R,f)$ be a twisted ring. Then a \emph{twisted $\tilde R$-module} $M$ is an $R$-module together with a map $\psi:M\to f_*M$, where $f_*M$ is $M$, considered as an $R$-module through the multiplication $a\circ x=f(a)x$.
\end{definition}

In other words, it is a map $\psi:M\to M$ which is semi-linear in the following sense: \[ \psi(ax+y)=a\circ\psi(x)+\psi(y)=f(a)\psi(x)+\psi(y).\]

Next we globalize these notions to obtain those of twisted sheaves.

\begin{definition}A \emph{$p$-structure on a sheaf $\mathscr F$ of modules} over a scheme $(X,\mathscr O_X)$ with absolute Frobenius $\mathrm{Fr}$ is a  is a morphism $\pstr: \mathscr F \to {\mathrm{Fr}}_*\mathscr F$.
\end{definition}

\begin{definition} A \emph{twisted sheaf of modules} $(\mathscr F,\psi)$ over a twisted scheme $(X,\Phi)$ is a sheaf $\mathscr F$ on $X$ together with a morphism $\psi : \mathscr F\to\Phi_*\mathscr F$.
\end{definition}

For the convenience of the reader, let us break down this definition. For every open $U\subseteq |X|$, denote $\phi(U)=\phi^{-1}(U)=\overline U$. Then the twisted structure defines a morphism
\[   \psi_U : \mathscr F(U) \to \mathscr F(\overline U),  \]
of $\mathscr O_X(U)$-modules, where $\mathscr F(\overline U)$ is considered an $\mathscr O_X(U)$-module through the map $\phi^\sharp : \mathscr O_X(U)\to\mathscr O_X(\overline U)$, and for every inclusion $V\subseteq U$, the obvious diagram commutes. For future reference, let us draw attention to the case of a mixed field.

\begin{definition} A \emph{mixed vector space} over a mixed field $m=(k,\ell,\kappa,\lambda)$ is a tuple  $V_m = (V_k,V_\ell,\hat\kappa,\hat\lambda)$, consisting of a $k$-vectorspace  $V_k$, an $\ell$-vectorspace  $V_\ell$ and a pair of semi-linear maps $\hat\kappa: V_k\to V_\ell$, $\hat\lambda : V_\ell \to V_k$, where semi-linear means that $\hat\kappa(ax+y) = \kappa(a)\hat\kappa(x) + \hat\kappa(y)$ whenever $a\in k$, $x,y\in V_k$ and vice versa for $\lambda$. The partial dimenions are given by
	\[  \pardim V_m = \big( \dim_k (V_k/\ker\hat\kappa ) , \dim_\ell (V_\ell/\ker\hat\lambda) \big)  \]
\end{definition}


An important class of twisted modules comes from twisted ideals:

 \begin{definition} \label{def:twisted-ideal} A \emph{twisted ideal} $\mathfrak a$ of a twisted ring $(R,f)$ is an ideal $\mathfrak a\unlhd R$ such that $f(\mathfrak a)\subseteq \mathfrak a$.  A \emph{twisted sheaf} $(\mathscr J,\psi)$ on a twisted scheme $(X,\psi)$ is a subsheaf of $\mathscr O_X$ such that the inclusion $\mathscr J\into\mathscr O_X$ respects twisters.
 \end{definition}

In other words, the twisted ideals are precisely those ideals for which $f$ induces a twisted structure on $R/\mathfrak a$ and a twisted sheaf of ideals provides the structure of a twisted scheme on the corresponding closed subscheme. In particular, observe that the structure sheaf $\mathscr O_X$ on a twisted scheme is a twisted sheaf, the twister being  $\phi^\sharp : \mathscr O_X\to \phi_*\mathscr O_X$.

\begin{proposition} The sheaf of differentials $\Omega_{X/S}$ on a twisted scheme $X$ over a base twisted scheme $S$ is canonically endowed with the structure of a twisted sheaf.
\end{proposition}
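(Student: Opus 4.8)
The plan is to produce the twister $\psi\colon\Omega_{X/S}\to(\Phi_X)_*\Omega_{X/S}$ by combining the functoriality of the sheaf of relative differentials with the adjunction $\Phi_X^*\dashv(\Phi_X)_*$. Write $\Phi_X,\Phi_S$ for the twisters and $q\colon X\to S$ for the structural morphism. The first observation is that, since $q$ is by hypothesis a morphism of twisted schemes, it satisfies $\Phi_S\circ q=q\circ\Phi_X$; thus $\Phi_X$ is a morphism from $(X,q)$ to $(X,q)$ lying over $\Phi_S\colon S\to S$, and we obtain a commuting square with horizontal arrows $\Phi_X,\Phi_S$ and both vertical arrows equal to $q$.

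First I would feed this square into the standard functoriality of K\"ahler differentials: for a commuting square with source and target both $X\to S$ and top arrow $\Phi_X$ lying over $\Phi_S$, one obtains a canonical $\mathscr O_X$-linear map
\[ c\colon \Phi_X^*\,\Omega_{X/S}\longrightarrow \Omega_{X/S}. \]
Then I would pass to the adjunction $\Phi_X^*\dashv(\Phi_X)_*$ for the morphism of ringed spaces $\Phi_X\colon(X,\mathscr O_X)\to(X,\mathscr O_X)$, under which $c$ corresponds to its mate
\[ \psi\colon \Omega_{X/S}\longrightarrow (\Phi_X)_*\,\Omega_{X/S}. \]
By the definition of a twisted sheaf of modules this $\psi$ is exactly the required datum, and the pair $(\Omega_{X/S},\psi)$ is canonical because both the functoriality map $c$ and its adjunction mate are canonical.

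The step I expect to require the most care is checking that $\psi$ really lands in $(\Phi_X)_*\Omega_{X/S}$ equipped with the $\mathscr O_X$-module structure prescribed in the definition — i.e.\ via the comorphism $\Phi_X^\sharp$ — and not some other structure. This is precisely the structure carried by $(\Phi_X)_*$ in the ringed-space adjunction $\Phi_X^*\dashv(\Phi_X)_*$, so the two coincide, but it is the one bookkeeping point where an error could creep in. Everything else is formal, so I would not belabour it.

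Finally, as a sanity check (not strictly demanded by the definition, which imposes no squaring condition on $\psi$), I would verify that $\psi$ squares to the Frobenius $p$-structure on $\Omega_{X/S}$: because $\Phi_X\circ\Phi_X=\Fr_X$, the Frobenius square is the composite of two copies of the square above, so by functoriality the map $c$ associated to $\Fr_X$ factors as the evident composite of the two maps $c$ associated to $\Phi_X$. Passing to adjunction mates turns this into the identity $(\Phi_X)_*\psi\circ\psi=(p\text{-structure})$ as morphisms $\Omega_{X/S}\to\Fr_{X,*}\Omega_{X/S}$, matching the behaviour of the twister $\Phi_X^\sharp$ on the structure sheaf.
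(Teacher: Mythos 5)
Your proof is correct and is essentially the paper's own argument: the paper's one-line proof takes differentials of the same commutative square $\Phi_S\circ q_X=q_X\circ\Phi_X$ to obtain $d\Phi\colon\Omega_{X/S}\to(\Phi_X)_*\Omega_{X/S}$ ``in the usual manner'', which is exactly your combination of the functoriality map $\Phi_X^*\Omega_{X/S}\to\Omega_{X/S}$ with the adjunction $\Phi_X^*\dashv(\Phi_X)_*$. Your final squaring check is extra --- as you note, the paper's definition of a twisted sheaf imposes no such condition --- but it is harmless and consistent with the rest.
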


\begin{proof} Taking differentials of the commutative square $\Phi_S\circ q_X=q_X\circ\Phi_X$ results in a map $d\Phi : \Omega_{X/S}\to (\Phi_X)_*\Omega_{X/S}$ in the usual manner.\end{proof}

\begin{remark} Originally, we had planned to define a twisted version of the tangent bundle $\Tan X$. The ambition was to endow the tangent space $\Tan_e G = \Tan G \times_G S$ over the neutral element $e:S\to G$ of a mixed $S$-group scheme $G$ (see Definition~\ref{def:group-objects}) with the structure of a   (to be defined) mixed Lie algebra $(L_1,L_2,\psi_1,\psi_2)$ which, in the visible case, would boil down to the Lie algebra endowed with the $p$-operation $(L,L,[p],0)$. 

Unfortunately, we couldn't make this idea work. The problem always boils down to the observation that homomorphisms between twisted $\tilde R$-modules are not themselves twisted $\tilde R$-modules, and in particular the dual $M^\vee=\hom(M,\tilde R)$ of a twisted module $(M,\psi)$ is not itself a twisted module in a canonical way --- unless $\psi$ is bijective. In other words, we lack \emph{internal hom objects} and because of this, there is no obvious way to dualize the twisted cotangent bundle $(\Omega_{X/S},d\Phi)$. We want to share several ideas which may contribute to resolve this difficulty, but so far we could not make any subset of them work to our satisfaction. 
\begin{itemize}
\item The hom-sets are canonically endowed with a $p$-structure, perhaps we should accept that for a twisted object $X\in\ob(t\C)$ its tangent bundle is an ordinary object $\Tan X \in \ob(\C)$.
\item We can just accept that the cotangent sheaf is the fundamental object, and define a mixed Lie co-algebra directly without dualizing.
\item The definition from \cite[exp.~II, 3.1]{SGA3} relies on the inner hom in the category $\widehat{\Sch} = \Hom(\Sch^\op,\Set)$. This works here too, but then there is no guarantee that the resulting functor is representable. 
\item If $\epsilon : S[\epsilon] \to S$ is a first order infinitesimal extension with section $\delta : S\to S[\epsilon]$ then the tangent bundle can be defined as $\Tan X = \epsilon_* \epsilon^* X$. Since the problem lies with the functor $\epsilon_*$, we could \emph{define} $\Tan X = \epsilon^* X$ instead. Instead of an $S$-linear diagram $X\to \Tan X = \epsilon_*\epsilon^* X\to X$, we would obtain a diagram
\[   
\begin{tikzcd} X \simeq \delta^*\epsilon^* X \dar\rar & \Tan X = \epsilon^* X \dar\rar & X \dar \\ S \rar["\delta"] & S[\epsilon] \rar["\epsilon"] & S \end{tikzcd}
\]
So perhaps we should take such non-linearity for granted.
\item Perhaps we should focus on mixed formal groups and ignore the Lie algebra altogether.
\end{itemize} 
A working definition of such a mixed Lie algebra would be of great interest, because it could lead the way to a construction of algebraic objects, such that their automorphism groups are precisely the mixed groups that we will introduce in \S\ref{sec:gps}. (I.e.~mixed versions of quadratic spaces, octonion algebras and Albert algebras.)
\end{remark}

\begin{remark} We have restricted ourselves to the study of what is strictly necessary for \S\ref{sec:mixed-groups} and Remark~\ref{rem:parabolics} in particular, where we study some groups that we find particularly interesting. Nonetheless, it could be fruitful to redefine some of the common adjectives from algebraic geometry (connected, smooth) in the mixed setting. This could shine light on natural questions---consider a mixed affine variety over a mixed field $(k,\ell)$; when do the partial dimensions add up to the dimension of each of the components?---and allow for an easy generalization of theorems that have been proven in a sufficiently `generic'---e.g.~not relying on arguments about rational points over an algebraic closure---manner.
\end{remark}

\subsection{Examples} \label{sec:examples}
As before all rings are assumed to be of characteristic $p$.
\begin{example}
	\label{ex:mixedrings} Twisted and mixed rings.
\end{example}
	\begin{enumerate}
	\item A pair $b=(k,\psi)$ where $\psi^2(x)=x^p$ is a blended field. These are also known as fields with \emph{Tits endomorphism} \cite{doublytwisted} or (for $p=2$) \emph{octogonal sets} \cite[(10.11)]{moufangpolygons} .
	\item \label{ex:nestedfields} Consider fields $k,\ell$ such that $k^p \subseteq \ell\subseteq k$. Then
	\[ m = (k,\ell,\mathrm{inc}_{k^p\into \ell}\circ\fr_k,\mathrm{inc}_{\ell\into k}) 	\] is a mixed field; we leave it to the reader to verify that in fact every mixed field is of this form. As an $(\mathbf F_p \times\mathbf F_p)$-algebra this is the twisted ring $(k\times \ell, (x,y)\mapsto (y,x^p))$ with the obvious structural morphism. The mixed field is visible in the extreme case where $\ell = k$ and it is anti-visible in the other extreme where $\ell = k^p$.
	\item \label{ex:A1} The pair $R = \big( \mathbf F_p[x,y], \phi\big)$, where $\phi :  f(x,y) \mapsto f(y,x^p)$ is a twisted ring. It is not mixed or blended. Taking the tensor product with a blended field $b=(k,\psi)$ gives the twisted ring (and $b$-algebra)
	\[ R_b = R\otimes b = \big(k[x,y], f(x,y) \mapsto f^\psi(y,x^p) \big), \] where $f^\psi$ means: apply $\psi$ to the coefficients of $f$. Taking the tensor product with a mixed field $m=(k,\ell,\kappa,\lambda)$ instead, we get a mixed ring (and $m$-algebra)
	\[  R_m = R\otimes m = \big(   k[x,y], \ell[x,y], \hat\kappa,\hat\lambda\big), \]
	where $\hat\kappa : f(x,y)\mapsto f^\kappa(y,x^p)$ and $\hat\lambda : f(x,y)\mapsto f^\lambda(y,x^p)$.
	\item For every ring $K$, the mixing functor $\mix$ defines the visible mixed ring $\mix(K)=(K,K,\fr,\mathrm{id})$. For instance, $\mix(\mathbf F_p) = \mathsf e$. If the Frobenius map $\mathrm{fr}_K : K\to K:x\mapsto x^p$ is injective, and in particular if $K$ is reduced, then $\mix(K)$ is isomorphic to the mixed ring $(K^p, K,\mathrm{inc},\fr)$, the isomorphism being given by $(\fr,\mathrm{id})$. We may identify a mixed ring $M=(K,L,\kappa,\lambda)$ with a certain ring extension $K'^p\subseteq L'\subseteq K'$ as we did for fields in example~\ref{ex:nestedfields}, provided that $\lambda$ is injective by taking $K'=K$ and $L'=\im\lambda$. Recall from Remark~\ref{remark:visible} that a mixed ring $M = (K,L,\kappa,\lambda)$ is visible precisely when $\lambda$ is an isomorphism; this corresponds to the extreme situation where $L'=K'$. 
	\item If $b=(k,\psi)$ is a blended field then
	\[ m = b\otimes \mathsf e = (k,k,\psi,\psi) \] is a mixed field and the diagonal $\chi : b\to m: a\mapsto (a,a)$ provides an embedding of twisted rings. This gives in turn rise to a functor of base change $\chi^*:\alg{b} \to \alg{m} : A \leadsto A\otimes_b m$, which corresponds to the twixting functor introduced in \S\ref{sec:functors}, but taking slices over the object $b$.
	\item \label{ex:extension} If $M=(K,L,\kappa,\lambda)$ is a mixed ring then any $a\in L$ defines the new mixed ring $M' = (K[x]/(x^p-a^\lambda),L,\hat\kappa,\hat\lambda)$ given by
	\[  \begin{cases} \hat\kappa  &: f(x) \mapsto f^\kappa(a), \\ \hat\lambda &: u \mapsto \lambda(u).  \end{cases} \]
	We can denote this construction by $M'= (K(a^{\kappa^{-1}}),L)$; the same construction can be carried out with a family of elements $a_i\in L$ $i\in I$ and the ring $K[x_i, i\in I]/(x^p - a_i^\lambda \mid i\in I)$.
	\item Consider field extensions $k/\ell$ such that $k^p\subseteq \ell$, and a pair of field extensions (or more generally \'etale algebras) $K/k$ and $L/\ell$ such that $K^p\subseteq L$. Then $(K,L)/(k,\ell)$ is an extension of mixed fields; moreover it is clear that every mixed field extension arises this way. 
	\end{enumerate}

\begin{example}
	Rational points of some of these examples:
\end{example}
\begin{enumerate}
\item It is clear that $\Gamma(b/b)$ is a singleton. Typically, one considers a field as the terminal object of its own slice category. This convention justifies the idea that $\Gamma(b)$ is a singleton.
\item Here too $\Gamma(m)$, an abuse of notation for $\Gamma(m/m)$, is a singleton.
\item Let us first compute $\Gamma(R_b/b)$.
  \begin{align*} \Gamma(R_b/b) &=  \hom_{\TSch}(\Spec b,\Spec R_b) \\ &=\hom_{\TRing}(R_b,b), \\
  &= \{\alpha \in \hom_{\Ring}(k[x,y],k) \mid \alpha\circ\phi =\psi\circ\phi  \}.
 \end{align*}
  Any such $\alpha$ is fully specified by $\alpha(x)=x_0$ and $\alpha(y)=y_0$ and the condition says that $y_0=\psi(x_0)$. So as a set, there is an identification $\Gamma(R_b/b) \cong k$.
  
  Similarly, it is easy to verify directly that there is an identification $\Gamma(R_m/m) \cong m$, but let us verify this again with Proposition~\ref{prop:mixedpoints}, using the notations $S_1 = \Spec k$, $S_2 = \Spec \ell$,  $X_1 = \Spec k[x,y]\cong  \mathbb A^2_k$ and $X_2 = \Spec \ell[x,y] \cong \mathbb A^2_\ell$. Then the proposition tells us that there is an identification of $\Gamma(R_m/m) = \tilde X(\tilde S)$ and $f^{-1}(X_1(S_1))$, where $f$ is the induced map $f:X_2(S_2)\to X_1(S_2)$ and $X_1(S_1) \subseteq X_1(S_2)$ in the natural manner. In our case, there is a natural identification of $X_1(S_1)$ with $k\times k$ on the one hand and $X_2(S_2)$ and $X_1(S_2)$ with $\ell\times \ell$ on the other hand; moreover the inclusion $X_1(S_1)\into X_1(S_2)$ corresponds to the inclusion $k\times k\into \ell\times\ell$ and the map $f$ can be identified with
  \[  \ell \times \ell : (a,b) \mapsto (b^p,a).  \]
  Therefore $f^{-1}(X_1(S_1))$ corresponds to the subset of all $(a,b)\in \ell\times\ell$ such that $b^p\in k$ and $a\in k$, where the first condition is of course always satisfied since $\ell^p\subseteq k$. So this corresponds precisely to the set $k\times \ell$ and the inclusion $\tilde X(\tilde S)\into X_2(S_2)$ is just the natural inclusion $k\times\ell \into \ell\times\ell  : (u,v)\mapsto (\kappa(u),v)$.
\item By fullness of the mixing functor $\mix$, we have that 
\[  \Gamma_{\MRing}(\mix(K)/\mix(k)) \simeq \Gamma_{\Ring}(K/k).  \]
\end{enumerate}

The next few examples are related to algebraic groups.

\begin{example} \label{ex:torus} Mixing tori.
	
Let $k$ be a field of characteristic $2$ and let $\beta : k\into K$ be a separable field extension of degree $2$ with Galois group $\langle\sigma\rangle$. Let $\mathsf{GL}_1$ denote the $k$-group `$k^\times$'; let $\mathsf{GL}_1^\sigma$ denote its Galois twisted form, i.e. the $1$-dimensional non-split torus corresponding to the kernel of the norm map $K^\times \to k^\times : x \mapsto xx^\sigma$; and let $\mathsf T_2 = \beta_*\beta^* \mathsf{GL}_1$ denote the non-split $2$-dimensional torus coming from the Weil restriction, i.e. $\mathsf T_2 = \mathsf R_{K/k}(\mathsf{GL}_1\otimes K)$. Explicitly, if $K=k[x]/(x^2+x+\delta)=k(u)$, then the Galois involution is given by $u^\sigma=u+1$. The coordinate algebras are given by
\begin{align*}
	\mathcal O(\mathsf{GL_1}) &= k[x,y]/(xy-1) \\
	\mathcal O(\mathsf{GL_1^\sigma}) &= k[x,y]/(x^2+xy+y^2\delta) \\
	\mathcal O(\mathsf T) &= k[x_1,x_2,y_1,y_2]/ \big( x_1x_2+dy_1y_2+1, (x_1+y_1)(x_2+y_2)+x_1x_2 \big)
\end{align*}
We identify a $k$-rational point $\mathsf{GL}_1$ with an element of $k^\times$; a $k$-rational point of $\mathsf{GL}_1^\sigma$ with an element of $K^\times$ of norm $1$; and a $k$-rational point of $\mathsf T_2$ with an element of $K^\times$. Then we may define isogenies on rational points by
\begin{align*}
	\mathsf{GL}_1 \times \mathsf{GL}_1^\sigma \to \mathsf T_2 &: (s,u) \mapsto su  \\
	\mathsf{T_2} \to \mathsf{GL_1} \times \mathsf{GL}_1^\sigma &: x \mapsto (xx^\sigma,x/x^\sigma)
\end{align*}
 If we have a field extension $\ell/k$ such that $\ell^2\subseteq k$, we may use all this data to construct a mixed variety $\tilde X$ over $(k,\ell)$ such that the set of rational points can be identified with the set
\[  \tilde X(k,\ell)=\{ x \in L^\times  \mid N(x) = xx^\sigma \in k \} \subseteq \mathsf T_2(\ell),  \]
where $L = \ell(u)$. In other words, this is the set $N^{-1}(k)$, where $N:L^\times\to \ell^\times$ is the usual norm of the Galois extension $L/\ell$.
\end{example}

\begin{example}
	Mixing an adjoint with a simply connected group.

 Consider algebraic $k$-groups corresponding to the adjoint and simply connected split groups of type $\mathsf A_{p-1}$, i.e. $G = \mathsf A_{p-1}^{\mathrm{sc}}=\mathsf{SL}_{p}$ and $H = \mathsf A_{p-1}^{\mathrm{ad}} =  \mathsf{PGL}_{p}$. For an $k$-algebra $K$, a $K$-rational point of $G$ can be represented by a $p\times p$-matrix $(x_{ij})$ with determinant $=1$ and a rational point of $H$ by an invertible $p\times p$-matrix determined up to a unit $[x_{ij}] = [\lambda x_{ij}]$, $\lambda\in K^\times$. Then there are maps, determined on rational points by
	\begin{align*}
	  \mathsf{SL}_p(K) &\longrightarrow \mathsf{PGL}_p(K) \longrightarrow \Fr^*\mathsf{SL}_p(K) \\
	  (x_{ij}) & \longmapsto [x_{ij}] \longmapsto (x_{ij}^p)/\det(x_{ij})
	 \end{align*}
 Since the first map is epic, we get a relative factorization by Lemma~\ref{lem:relfac} and so a mixed $k$-object by Proposition~\ref{prop:relfac}. Together with an absolute factorization of the base, i.e. a field extension $\ell/k$ such that $\ell^p\subseteq k$, we get a mixed object as in Corollary~\ref{cor:relfac}.

 \[
 \begin{tikzcd}
  (\mathsf{SL}_p)_k  \rar[shift left=0.5ex] \dar & (\mathsf{PGL}_p)_\ell \lar[shift left=0.5ex] \dar \\
  \Spec k  \rar[shift left=0.5ex] & \Spec \ell \lar[shift left=0.5ex]
 \end{tikzcd}
 \]
 To define the rational points of this object as embedded in $\mathsf{SL}_p(k)$ and defined by polynomials, we consider the map $f:\mathsf{SL}_p(\ell) \to \mathsf{PGL}_p(\ell)$ and compute $f^{-1}(\mathsf{PGL}_p(k))$. These are just the $p\times p$-matrices $(x_{ij})$ such that $\lambda x_{ij} \in k$ for some $\lambda \in \ell$. In other words, the rational points are described by the ordinary equation $\det(x_{ij}) = 1$ together with the `mixing equations' $p(x_{ij})\in \ell$ where $p$ runs through the monomials in the $(x_{ij})$ of degree $p$.
\end{example}

\section{Twisting and mixing groups} \label{sec:gps}
We will now explain how various classes of exotic groups can be integrated in our theory. Continuing our discussion about twisted and mixed schemes, we will define twisted and mixed group schemes by recycling Definitions 2.1.1.~and 2.1.2.~from \cite{SGA3}:

\begin{definition} \label{def:group-objects} A twisted group scheme is a group object in $\TSch$; a mixed group scheme is a group object in $\MSch$.
\end{definition}

\subsection{Informal statement of the theorems} \label{sec:gps-statement}
Let us first \emph{informally} state our main theorems and provide some context. 

\begin{theorem*} All twisted abstract groups arise as rational points of twisted group schemes.
\end{theorem*}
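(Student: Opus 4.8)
The plan is to realize each twisted group as the set of rational points of a twisted $\tilde S$-group scheme, where $\tilde S$ is the blended field $(k,\phi)$ underlying the classical construction and the twister on the group scheme is supplied by the exceptional isogeny. The single essential input is Proposition~\ref{prop:fixedpoints}, together with the existence of the very special isogenies of \cite{PRG}.

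First I would record the data. Each twisted group is ${}^2\mathsf X(k,\phi)$ for $\mathsf X \in \{\mathsf B_2,\mathsf G_2,\mathsf F_4\}$, where $k$ is a field of characteristic $p \in \{2,3,2\}$ and $\phi$ is a Tits endomorphism, i.e.\ $\phi^2 = \fr_k$. Let $G$ be the split Chevalley group scheme of type $\mathsf X$, base changed to $\mathbf F_p$. The crucial observation is that the very special isogeny, in these self-dual cases, produces an endomorphism $g \colon G \to G$ of group schemes over $\mathbf F_p$ with $g \circ g = \Fr_G$; this is exactly Ree's exceptional graph endomorphism squaring to Frobenius. Since $\mathbf F_p$ carries the trivial Frobenius, $\Fr_G$ is the absolute Frobenius of $G$ regarded as an object of $\Sch_p$, so the pair $(G,g)$ is a group object in $\TSch$ in the sense of Definition~\ref{def:group-objects}.

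Next I would base change. The Tits endomorphism makes $\tilde S = (\Spec k, \Spec\phi)$ a blended twisted scheme, since $(\Spec\phi)^2 = \Spec \fr_k = \Fr_{\Spec k}$. Base changing $(G,g)$ to $\tilde S$ (legitimate because $\TSch$ has fibered products) yields a twisted $\tilde S$-group scheme $\tilde G = (G_k, g_k)$. Now I unwind the rational points. By definition a point of $\tilde G(\tilde S)$ is an $S$-morphism $x \in G(k)$ commuting with twisters, i.e.\ $g_k \circ x = x \circ \Spec\phi$; translating to coordinates this reads $g(x) = x^\phi$, which is precisely Tits' defining relation cutting out ${}^2\mathsf X(k,\phi)$ inside $G(k)$. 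For non-perfect $k$ this is already the whole statement---there is nothing further to do, since the categorical condition \emph{is} Tits' condition. For perfect $k$, where $\phi$ and hence the twister $\Spec\phi$ is invertible, Proposition~\ref{prop:fixedpoints} recasts the same set as the fixed points $G(k)^\alpha$ of the involution $\alpha \colon x \mapsto g \circ x \circ (\Spec\phi)^{-1}$, which acts as $\phi^{-1}\circ g$ and thereby recovers Ree's original fixed-point description. Finally, because the functor of points of a group object is product-preserving, $\tilde G(\tilde S)$ carries a canonical group structure under which the identification with ${}^2\mathsf X(k,\phi)$ is an isomorphism of abstract groups.

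The main obstacle is not the categorical bookkeeping but the interface with the classical theory: one must verify carefully that the exceptional isogeny really furnishes a group scheme endomorphism $g$ over $\mathbf F_p$ with $g^2 = \Fr_G$ in each of the three types, and that the clean categorical relation $g(x) = x^\phi$ coincides on the nose with Tits' explicit presentation of the twisted groups by generators and commutation relations. Establishing this dictionary---rather than proving Proposition~\ref{prop:fixedpoints}, which is elementary---is where the real work lies.
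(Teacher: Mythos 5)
Your proposal is correct and follows essentially the same route as the paper: define the twisted group scheme $(\mathsf X_n, g)$ over $(\mathbf F_p,\mathrm{id})$ via the exceptional endomorphism squaring to Frobenius, base change to the blended field $b=(k,\theta)$, and apply Proposition~\ref{prop:fixedpoints} to identify the rational points with the fixed points of the involution, which is Ree's group; your treatment of the non-perfect case likewise matches the paper's Remark~\ref{remark:perfect-fields}, where Tits' equation $g\circ x = x\circ\Phi$ is observed to \emph{be} the categorical condition. The only cosmetic difference is that you source $g$ from the very special isogenies of \cite{PRG} in the self-dual cases, whereas the paper simply cites the existence of the graph endomorphism as well known.
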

This is an informal statement because the notion of a twisted abstract group is not well defined in the literature.  Rather, there is a list of examples which pop up and are referred to as twisted Chevalley groups. We attempt to tell the full story in \S\ref{sec:history}, let us now just sketch an overview of known twisted groups, a list which includes in the perfect and in particular the finite case, the Suzuki \cite{SuzukiB2} and Ree \cite{ReeF4,ReeG2} groups. Although more ad-hoc constructions have been found by Wilson \cite{Wilson13}, the construction by Ree and the exposition by Carter~\cite{carter} remain the golden standard. With much of the research being focussed on the finite case, it is less widely known that this construction was extended to imperfect fields by Tits~\cite{suzukietree}. All subsequent research into these groups over imperfect fields is closely related to the theory of Moufang buildings or Moufang sets (a substitute for the Moufang buildings of rank $1$); to be precise ${}^2\mathsf B_2$ and ${}^2\mathsf G_2$ admit a BN-pair of rank $1$, so correspond to Moufang sets and ${}^2\mathsf F_4$ admits a BN-pair of rank $2$ and pops up in the classification of Moufang polygons \cite[(41.21.v)]{moufangpolygons} under the guise of the Moufang octagons --- although there they are grouped together with the mixed groups. As far as \emph{forms} of these twisted groups are concerned, we think that forms of ${}^2\mathsf F_4$ of relative rank $1$ (i.e.~Moufang sets) have been investigated in \cite{F4sets}; the main result of \cite{doublytwisted} states in some sense that twisted descent commutes with Galois descent. Of anisotropic forms of ${}^2\mathsf B_2$, ${}^2\mathsf G_2$ and ${}^2\mathsf F_4$, there is no trace in the literature, presumably because there is no geometric structure attached to them.\medskip

Because of all this, \emph{we interpret this theorem as speaking only about split twisted groups over perfect fields}. See Theorem~\ref{theorem:twisted-groups} for a precise statement and proof.

\begin{theorem*} All mixed abstract groups arise as rational points of mixed group schemes.
\end{theorem*}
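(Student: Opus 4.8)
The plan is to make the informal statement precise exactly as was done in the twisted case, namely to interpret it as a statement about the \emph{split} mixed groups $\mathsf X_n(k,\ell)$ of Tits, where $\mathsf X_n$ is one of $\mathsf B_n/\mathsf C_n$, $\mathsf F_4$ or $\mathsf G_2$ and $m=(k,\ell)$ is a mixed field (so $\ell^p\subseteq k\subseteq\ell$ with $p=2,2,3$ respectively). I would then produce a single mixed group scheme $\tilde G$ over $\Spec m$ and prove that its group of rational points $\tilde G(m)=\hom_{\MSch}(\Spec m,\tilde G)$ is isomorphic, as an abstract group, to $\mathsf X_n(k,\ell)$. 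The scheme $\tilde G$ is the one built in Section~\ref{sec:existence}: one starts from a very special isogeny $\pi\colon G\to\bar G$ between split reductive groups of the appropriate type from \cite{PRG}, so that $\pi'\circ\pi=\Fr_G$ for the dual isogeny $\pi'$; since $\pi$ is epic, Lemma~\ref{lem:relfac} yields the relative factorization $G\xrightarrow{\pi}\bar G\xrightarrow{\pi'}\flift G$, Proposition~\ref{prop:relfac} converts it into a mixed $\mix(\Spec k)$-group, and feeding in the absolute factorization of the base coming from $k\hookrightarrow\ell$ and the $p$-power map $\ell\to k$, Corollary~\ref{cor:relfac} produces the desired mixed $\Spec m$-group $\tilde G$, whose components $\comp_1(\tilde G)$ and $\comp_2(\tilde G)$ are $G$ and $\bar G$ base-changed to the two halves of $m$.

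The heart of the proof is the computation of $\tilde G(m)$, and here the main tool is Proposition~\ref{prop:mixedpoints}. Because $m=(k,\ell)$ is a mixed field, Lemma~\ref{lem:mixedpoints} guarantees that the relevant mixing map is epic, so the proposition applies and identifies $\tilde G(m)$ with the preimage $f^{-1}\big(\bar G(k)\big)$, where $f\colon G(\ell)\to\bar G(\ell)$ is the map induced by $\pi$. Concretely this gives $\tilde G(m)=\{g\in G(\ell):\pi(g)\in\bar G(k)\}$, a subgroup of $G(\ell)$; and since every object in sight is a group object, the set-theoretic bijection furnished by Proposition~\ref{prop:mixedpoints} is automatically an isomorphism of groups rather than merely of sets.

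It then remains to match this subgroup with Tits' generators-and-relations group, and this is where I expect the main obstacle to lie. The key input is the explicit action of the very special isogeny on root groups recorded in \cite[Ch.~7]{PRG}: $\pi$ is an isomorphism on one set of root groups (say the long ones) and the $p$-power map on the other (the short ones). Reading off the big-cell coordinates, the condition $\pi(g)\in\bar G(k)$ forces the long-root coordinates of $g$ to lie in $k$, while it is vacuous on the short-root coordinates because $\ell^p\subseteq k$ already; together with the corresponding condition on the maximal torus this cuts out precisely the subgroup of $\mathsf X_n(\ell)$ generated by the short root groups over $\ell$ and the long root groups over $k$, which is exactly Tits' $\mathsf X_n(k,\ell)$.

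Making this final identification rigorous requires some care. One must verify that the preimage subgroup is actually generated by its root groups---most naturally by intersecting the Bruhat decomposition (or the $BN$-pair) of $G(\ell)$ with the preimage and checking that each cell contributes only the expected restricted coordinates---and then that the resulting generators and their commutation relations coincide with Tits' defining data, so that one obtains a genuine isomorphism of abstract groups. The most delicate bookkeeping is the torus part, the ``part of the torus from $k^\times$ to $\ell^\times$'' in the language of \cite{PRG}: one must track throughout exactly which roots are thickened, which is governed by the precise choice of $\pi$, and confirm that the torus of $\tilde G(m)$ matches Tits' prescription. Once these verifications are in place, the chain $\mathsf X_n(k,\ell)\cong\tilde G(m)$ completes the argument, and the symmetry $\mathsf X_n(k,\ell)\cong\mathsf X_n(\ell^p,k)$ noted in Section~\ref{sec:preliminaries} reassures us that the construction is independent of which of the two components one chose to call $G$.
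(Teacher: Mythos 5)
Your proposal follows essentially the same route as the paper: the same interpretation (Tits' split mixed groups), the same mixed group scheme built from the very special isogeny via Lemma~\ref{lem:relfac}, Proposition~\ref{prop:relfac} and Corollary~\ref{cor:relfac}, the same computation $\tilde G(m)=f^{-1}(\mathsf Y_n(k))$ via Lemma~\ref{lem:mixedpoints} and Proposition~\ref{prop:mixedpoints}, and the same final matching of the preimage subgroup with Tits' generators. The verification you defer---the torus bookkeeping and the cell-by-cell analysis---is precisely what the paper carries out in its Steps 1a and 1b, using adjointness of the groups (so that $T(\ell)\cong\prod_{r\in\Delta}\mathsf{GL}_1(\ell)$ via the fundamental roots) and the uniqueness of the Bruhat normal form, exactly along the lines you sketch.
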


Our main reference here is Tits' 1974 lecture notes on buildings \cite[(10.3.2)]{tits74}, where he defines classes of abstract groups that we think of as \emph{split mixed groups};  the construction works over an arbitrary field of the appropriate characteristic. These groups are adjoint groups; other isogeny types are not explicitly mentioned.

For \emph{forms of these groups}, however, the literature is quite confused. In \cite[Ch.~41]{moufangpolygons} the buildings related to forms of mixed $\mathsf{BC}_n$ are swept under the rug of the \emph{classical buildings} so they don't show up directly.  Somewhat further in \cite[(41.20)]{moufangpolygons}, Weiss lists the Moufang spherical buildings which are neither classical nor algebraic and it is suggested that they are associated to ``$(K,k)$-forms" of these split mixed groups but these are not further defined. In fact, the main reason Weiss considers these forms is his discovery of an exotic class of Moufang quadrangles which go by the name \emph{mixed quadrangles of type $\mathsf F_4$}. The situation is further confused since the octagons (related to \emph{twisted groups}) are on the list. Finally, in \cite{callensdemedts}, a certain Moufang set is constructed which we suspect to be a form of mixed $\mathsf F_4$ which arises by mixing together a split $\mathsf F_4$ and one of relative rank $1$.

Because of all this, \emph{we interpret these theorems as speaking only about the split mixed groups}. See Theorem~\ref{theorem:mixed-groups} for a precise statement and proof.\medskip

In \cite[(Ch.~7)]{PRG} Tits' constructions are revisited by Conrad, Gabber and Prasad using an alternative approach which is `well suited to working with arbitrary $k$-forms', but the context different from groups related to buildings. Rather, the subject of their study is a closely related class of algebraic groups whose construction relies on the existence of a particular type of isogeny. Our final theorem says how these groups are related to the mixed algebraic groups:

\begin{theorem*} The exotic pseudo-reductive groups \cite[(8.2.2)]{PRG} are Weil restrictions of (reductive) mixed group schemes.
\end{theorem*}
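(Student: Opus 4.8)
The plan is to feed the mixed reductive group scheme constructed in \S\ref{sec:existence} into the restriction-to-visible-objects machinery of \S\ref{sec:restriction}. Recall that a very special isogeny $\pi\colon G\to\bar G$ over $k$ supplies a relative factorization by Lemma~\ref{lem:relfac}: its canonical companion $\pi'\colon\bar G\to\flift G$ (the isogeny whose composite with $\pi$ is the relative Frobenius $F_{G/S}$) is exactly the second arrow, and $\pi$, being a finite surjective flat morphism, is epic. Together with the purely inseparable extension $k'/k$, encoded as an absolute factorization of the base $\Spec k\overset{\alpha}\to\Spec k'\overset\beta\to\Spec k$, Corollary~\ref{cor:relfac} (made explicit in Proposition~\ref{prop:veryspecialisogeny2}) produces a mixed reductive group scheme $\tilde G$ over the mixed base $\tilde S=(\Spec k,\Spec k',\alpha,\beta)$, with $\comp_1\tilde G=\bar G$ over $\Spec k$ and $\comp_2\tilde G=\beta^*G=G_{k'}$ over $\Spec k'$.

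First I would form the unit $f\colon\tilde S\to\mix(\Spec k)$ of $\comp_1\dashv\mix$ and apply the partial right adjoint $f_*$ of Proposition~\ref{prop:weilrestriction}. Its hypotheses hold because $G$ and $\bar G$ are affine algebraic groups, so $\beta_*\beta^*\bar G=\mathsf R_{k'/k}(\bar G_{k'})$ and $\beta_*G_{k'}=\mathsf R_{k'/k}(G_{k'})$ exist as affine $k$-schemes --- this is exactly the point at which the framework tolerates infinite $[k':k]$. Writing the result with its mixers suppressed, Proposition~\ref{prop:weilrestriction} gives
\[ f_*\tilde G=\Big(\bar G,\;\bar G\underset{\mathsf R_{k'/k}(\bar G_{k'})}\times \mathsf R_{k'/k}(G_{k'})\Big). \]
Since $f_*$ is a right adjoint it preserves products (and is defined on all the affine group schemes that occur), so $f_*\tilde G$ is a group object; applying $\comp_2$, which also preserves products, yields the ordinary affine $k$-group scheme
\[ \mathscr G':=\comp_2(f_*\tilde G)=\bar G\underset{\mathsf R_{k'/k}(\bar G_{k'})}\times \mathsf R_{k'/k}(G_{k'}). \]

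It remains to identify $\mathscr G'$ with the exotic pseudo-reductive group $\mathscr G$ of \cite[\S7.2]{PRG}, which is defined by exactly this fibre product: there $\mathsf R_{k'/k}(G_{k'})\to\mathsf R_{k'/k}(\bar G_{k'})$ is $\mathsf R_{k'/k}(\pi_{k'})$ and $\bar G\to\mathsf R_{k'/k}(\bar G_{k'})$ is the canonical unit map. I would check that these agree with the two legs $p_1$ and $F_{\bar G}\times(\phi\circ p_1)^\flat$ coming out of Proposition~\ref{prop:weilrestriction} by unwinding $\phi=p_1\circ\pi'$ in terms of the companion isogeny $\pi'$. As a check on the functor of points, for a $k$-algebra $A$ Remark~\ref{remark:mixedpints}.\ref{remark:mixedpoints-weil} gives $\mathscr G'(A)\simeq\tilde G(f^*\mix(\Spec A))$, and Proposition~\ref{prop:mixedpoints} evaluates the right-hand side as $\{\,g\in G(k'\otimes_k A)\mid \pi(g)\in\bar G(A)\,\}$, the thickening of the short root groups from $k$ to $k'$, which is CGP's description verbatim. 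Yoneda then gives an isomorphism $\mathscr G'\cong\mathscr G$, necessarily of group schemes since the group law was transported through product-preserving functors, and pseudo-reductivity is inherited from \cite{PRG}.

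The hard part will be the bookkeeping in this comparison: matching the intrinsically defined maps of Proposition~\ref{prop:weilrestriction}, built from the unit of $\beta^*\dashv\beta_*$ and the $\flat/\sharp$-calculus of the fairies, with the concretely defined maps of CGP's elaborate construction in \cite[Ch.~7]{PRG}, and confirming on the nose that the very special isogeny supplies the companion $\pi'$ required as input to Proposition~\ref{prop:veryspecialisogeny2}. I expect that pseudo-reductivity needs no fresh argument, and that the Cartan-subgroup modification of the general standard construction does not intervene, since \cite[(8.2.2)]{PRG} concerns the basic exotic groups for which the plain fibre product above is already the answer.
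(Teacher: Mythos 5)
Your treatment of the \emph{basic} exotic groups is essentially the paper's own proof: construct $\tilde G=(\bar G,\beta^*G)$ over the mixed field $(k,k')$ via Corollary~\ref{cor:relfac} from the very special isogeny, push forward along the unit $f\colon\tilde S\to\mix(\Spec k)$ of $\comp_1\dashv\mix$ using Proposition~\ref{prop:weilrestriction}, and read off that $\comp_2 f_*\tilde G$ is exactly CGP's fibre product $\bar G\times_{\beta_*\beta^*\bar G}\beta_*\beta^*G$. Your extra care in matching the two legs of the fibre product and the functor-of-points sanity check via Proposition~\ref{prop:mixedpoints} are sound (the paper compresses this into ``the second component is indeed $\mathscr G$'').

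The genuine gap is in your last paragraph: \cite[(8.2.2)]{PRG} does \emph{not} define the basic exotic groups (those are (7.2.3)); it defines an exotic pseudo-reductive group as $\mathsf R_{K/k}(\mathscr G)$ where $K$ is a nonzero \emph{finite reduced $k$-algebra} and $\mathscr G$ is a $K$-group whose fibres are basic exotic. So your proof, as written, only establishes the theorem for the basic case and explicitly waves away the step that the statement actually requires. The paper closes this by taking products of mixed field extensions to get a mixed algebra $M$ over $K$, realizing $\mathscr G=\comp_2\mathsf R_{M/K}(\tilde G)$ fibrewise, and then applying the further Weil restriction $\mathsf R_{K/k}$, so that $\mathsf R_{K/k}(\mathscr G)=\comp_2\mathsf R_{M/k}(\tilde G)$; this is precisely why Theorem~\ref{theorem:pseudo-reductive-groups} is stated over a mixed \emph{algebra} $M$ rather than a mixed field. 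Separately, your parenthetical claim that this ``is exactly the point at which the framework tolerates infinite $[k':k]$'' gets the situation backwards: by Remark~\ref{rem:mixed-groups}, for infinite $[k':k]$ the mixed group still exists but the Weil restriction $\beta_*$ is exactly what becomes problematic (representability fails); the argument goes through only because CGP's definition assumes $k'/k$ finite.
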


See Theorem~\ref{theorem:pseudo-reductive-groups} for a precise statement and proof.

\subsection{Existence of mixed groups} \label{sec:existence}
First we want to construct some mixed algebraic groups. We will show existence of some groups $(G_1,G_2,\phi_1,\phi_2)$ over \emph{visible} fields $(k,k,\fr_k,\mathrm{id}_k)$, relying on  Proposition~\ref{prop:relfac} for a construction from a relative factorization
\[ \Fr_{G_2/k} :  G_2 \overset{\phi_2}\to G_1 \to  G_2 \times_{\mathrm{fr}_k} \Spec k = \mathrm{Fr}_k^* G_2 = \flift G_2.  \]

Most often, $G_2$ is smooth over $k$ and therefore it is reduced. But then the Frobenius $\Fr_{G_2}$ is an epimorphism and this implies (as in Lemma~\ref{lem:relfac}) that  $\phi_2$ is epic as well. Since the kernel of $\phi_2$ is contained in the kernel of the relative frobenius $\Fr_{G_2/k}: G_2 \to \flift G_2$, we can use Borel's technique \cite[\S17]{borel-lag} of taking the quotient of a group by a Lie subalgebra to construct all mixed $k$-groups $(G_1,G_2)$ for a fixed smooth $G=G_2$ as follows.\medskip

Let $\mathfrak g$ denote the Lie-algebra of $G$ and $\mathfrak h$ a restricted subalgebra which is $\mathrm{Ad}$-invariant. Then there is a $k$-group $G/\mathfrak h$ and a $k$-isogeny $\pi : G\to G/\mathfrak h$ such that its differential factors as
\[ d\pi \: : \: \mathrm{Lie(G)} = \mathfrak g\onto\mathfrak g/\mathfrak h  \into \mathrm{Lie}(G/\mathfrak h). \]
By taking the quotient of $G/\mathfrak h$ with $\im(d\pi)$ we obtain a map $\overline \pi : G/\mathfrak h\to \Fr^* G$.

Note that the partial dimensions of the resulting mixed object are by construction given by $(\dim\mathfrak h,\dim\mathfrak g-\dim\mathfrak h)$. By Lemma~\ref{lem:relfac}, these isogenies correspond to relative factorizations in the sense of Definition~\ref{def:relfac}; applying Proposition~\ref{prop:relfac} then yields the mixed object $(G/\mathfrak h,G,p\circ\overline\pi,\pi)$ where $p$ is the (non $k$-linear) projection $\Fr^*G\to G$. 

\begin{proposition} \label{prop:veryspecialisogeny2}
 To every column of the following table corresponds a mixed object.
	\begin{center}\begin{tabular}{c|cccc|cc|cc}
			$p$ & $2$ & $2$ & $2$ & $3$ & $2$ & $2$ & $2$ & $2$\\
			${\mathsf X}$ & ${\mathsf B}_n^{\mathrm{sc}}$ &${\mathsf C}_n^{\mathrm{sc}}$ & ${\mathsf F}_4$ & ${\mathsf G}_2$  & ${\mathsf B}_n^{\mathrm{ad}}$ & ${\mathsf C}_{2n}^{\mathrm{ad}}$ & ${\mathsf B}_{n}^{\mathrm{ad}}$ & ${\mathsf C}_n^{\mathrm{ad}}$\\
			${\mathsf Y}$ & ${\mathsf C}_n^{\mathrm{sc}}$& ${\mathsf B}_n^{\mathrm{sc}}$ & ${\mathsf F}_4$ & ${\mathsf G}_2$ & ${\mathsf C}_n^{\mathrm{sc}}$& ${\mathsf B}_{2n}^{\mathrm{sc}}$ & ${\mathsf C}_{n}^{\mathrm{ad}}$ & ${\mathsf B}_{n}^{\mathrm{ad}}$
		\end{tabular}\end{center}
 More precisely: let $k$ be a field of characteristic $p$; let $\mathsf X$ denote a semi-simple $k$-group of that type. Then there exists a $k$-group $\mathsf Y$ of indicated type such that there is a mixed $k$-group $\mathsf {MXY}_n = (\mathsf Y, \mathsf X, p\circ\overline\pi,\pi)$.
	\end{proposition}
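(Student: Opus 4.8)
The plan is to produce, for each column of the table, a relative $k$-factorization of the group $\mathsf X = G_2$ and then invoke Proposition~\ref{prop:relfac} to obtain the corresponding mixed $\mix(k)$-object. Thus the whole content is the construction of an isogeny $\pi:\mathsf X\to\mathsf Y$ together with a companion $\overline\pi:\mathsf Y\to\flift\mathsf X$ satisfying $\overline\pi\circ\pi=\Fr_{\mathsf X/k}$. Once this is in place, Lemma~\ref{lem:relfac} upgrades the pair $(\pi,\overline\pi)$ to a genuine relative factorization --- here I would use that $\mathsf X$ is smooth, hence reduced, so that $\pi$ is an epimorphism --- and Proposition~\ref{prop:relfac} then delivers the object $\mathsf{MXY}_n=(\mathsf Y,\mathsf X,p\circ\overline\pi,\pi)$, where $p:\flift\mathsf X\to\mathsf X$ is the canonical projection.

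For the first four columns I would take $\pi$ to be the \emph{very special isogeny} of \cite[\S7.1]{PRG}. For a semisimple group whose Dynkin diagram has an edge of multiplicity equal to $p$ --- that is, $\mathsf B_n$, $\mathsf C_n$, $\mathsf F_4$ in characteristic $2$ and $\mathsf G_2$ in characteristic $3$ --- such an isogeny exists, and its defining feature is exactly that it admits a companion very special isogeny $\overline\pi$ onto the dual group with $\overline\pi\circ\pi=\Fr_{\mathsf X/k}$. This is the factorization of the relative Frobenius recorded in \opcit, so the hypothesis of Lemma~\ref{lem:relfac} comes for free. Reading off the target of $\pi$ gives $\mathsf Y=\mathsf C_n$, $\mathsf B_n$, $\mathsf F_4$, $\mathsf G_2$ respectively, matching columns $1$--$4$.

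To realise the construction intrinsically and to reach the remaining columns, I would use the Borel quotient recipe recalled above the statement. Writing $\mathfrak g=\mathrm{Lie}(\mathsf X)$, one takes $\mathfrak h$ to be the restricted, $\Ad$-invariant subalgebra spanned by the root spaces of one of the two root lengths (the choice being dictated by \cite{PRG}); the quotient isogeny $\pi:\mathsf X\to\mathsf X/\mathfrak h$ has differential factoring as $\mathfrak g\onto\mathfrak g/\mathfrak h\into\mathrm{Lie}(\mathsf X/\mathfrak h)$, and quotienting $\mathsf X/\mathfrak h$ by $\im(d\pi)$ produces $\overline\pi$. The isogeny type of $\mathsf Y=\mathsf X/\mathfrak h$ is governed by which characters of the maximal torus survive this quotient, so the various types appearing in columns $5$--$8$ should be obtained by starting from the adjoint form of $\mathsf X$ and, where the asserted $\mathsf Y$ is simply connected, postcomposing with a suitable central isogeny.

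The hard part will be precisely this bookkeeping of isogeny types: verifying that $\mathsf X/\mathfrak h$ is of the isogeny type asserted in each column --- for instance that the very special quotient of $\mathsf B_n^{\mathrm{ad}}$ can be arranged to be $\mathsf C_n^{\mathrm{sc}}$ (column~$5$) and $\mathsf C_n^{\mathrm{ad}}$ (column~$7$), together with the index behaviour in column~$6$. This reduces to an analysis of the fundamental group and center of $\mathsf X/\mathfrak h$ in terms of the character lattice of $\mathsf X$ modulo the relevant coroots, which is delicate in characteristic $2$ because the centers involved are infinitesimal rather than étale. The identity $\overline\pi\circ\pi=\Fr_{\mathsf X/k}$ itself is the other point meriting care, but it is guaranteed as soon as $\pi$ is the very special isogeny of \cite{PRG}, since that equality is built into its construction.
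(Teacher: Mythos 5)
Your handling of the first four columns is fine and matches the paper: cite the very special isogenies of \cite[(7.1.3--5)]{PRG}, note that they factor the relative Frobenius, and feed the resulting relative factorization into Lemma~\ref{lem:relfac} and Proposition~\ref{prop:relfac}. The gap is in columns 5--8, which is where the proposition actually goes beyond \opcit, and the mechanism you propose there would fail. You suggest forming the quotient $\mathsf X/\mathfrak h$ of the \emph{adjoint} group and, ``where the asserted $\mathsf Y$ is simply connected, postcomposing with a suitable central isogeny.'' But a central isogeny out of an adjoint group is an isomorphism (the scheme-theoretic center of an adjoint semisimple group is trivial, so the kernel of such an isogeny is trivial); no central isogeny can carry an adjoint-type quotient to the simply connected targets $\mathsf C_n^{\mathrm{sc}}$, $\mathsf B_{2n}^{\mathrm{sc}}$ demanded by columns 5 and 6. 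The situation is in fact the reverse of what you describe. The paper's argument is: for $\mathsf X=\mathsf B_n^{\mathrm{sc}}$ or $\mathsf C_{2n}^{\mathrm{sc}}$ the Lie algebra $\mathfrak z=\mathrm{Lie}(Z)$ of the (infinitesimal) center lies inside the kernel $\mathfrak n$ of the very special isogeny, so $\pi:\mathsf X^{\mathrm{sc}}\to\mathsf Y^{\mathrm{sc}}$ kills the center and factors through $p_1:\mathsf X^{\mathrm{sc}}\to\mathsf X^{\mathrm{ad}}$, yielding directly an isogeny $\alpha:\mathsf X^{\mathrm{ad}}\to\mathsf Y^{\mathrm{sc}}$ --- adjoint source, simply connected target, with no postcomposition at all. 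It is the adjoint-to-adjoint columns 7--8 that are the easy afterthought (very special isogenies send centers to centers, so everything descends to the adjoint quotients), and this containment $\mathfrak z\subseteq\mathfrak n$ is also exactly what produces the rank restriction $\mathsf C_{2n}$ in column 6, which you flag but never resolve.

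Second, even granting existence of the isogenies, for columns 5--8 you never verify the defining identity of a relative factorization, namely that the two maps compose to the Frobenius on both objects. Your remark that this identity ``is guaranteed as soon as $\pi$ is the very special isogeny of \cite{PRG}'' covers only columns 1--4; for the factored map $\alpha$ it is a new claim requiring proof. The paper settles it by a cancellation argument: with $\pi'=p\circ\overline\pi$ and $\alpha\circ p_1=\pi$, one has $p_1\circ\pi'\circ\alpha\circ p_1=p_1\circ\pi'\circ\pi=p_1\circ\Fr=\Fr\circ p_1$, and since $p_1$ is an epimorphism this gives $p_1\circ\pi'\circ\alpha=\Fr$ on $\mathsf X^{\mathrm{ad}}$ (the other composition being $\alpha\circ p_1\circ\pi'=\pi\circ\pi'=\Fr$ on $\mathsf Y^{\mathrm{sc}}$). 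Your proposal defers all of this --- the isogeny types, the column-6 index behaviour, and the Frobenius identity --- to a ``delicate'' lattice analysis that is never carried out, so the proof is incomplete precisely at the columns that constitute the proposition's new content.
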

	\begin{proof}  The simply connected columns (in particular the $\mathsf F_4$ and $\mathsf G_2$ columns) are dealt with by \cite[(7.1.3--5)]{PRG} where the \emph{very special} isogenies are constructed as quotients of $\mathsf X$ with a Lie-subalgebra as in the preceding discussion.  Since the very special isogenies send the centers to the centers, we immediately get the similar result for the adjoint groups.

	Moreover, following the notation of \emph{loc.~cit~}(7.1.2) in the cases $\mathsf X  = \mathsf B_{n}^{\mathrm{sc}}$ or $\mathsf C_{2n}^{\mathrm{sc}}$ the irreducible submodule $\mathfrak z=\mathrm{Lie}(Z)$ is contained in  the kernel $\mathfrak n$ of the very special isogeny (on a separable closure). So the very special isogeny $\pi$ kills the center, and thus factors over the adjoint group $\mathsf X^{\mathrm{ad}}=\mathsf X/Z(\mathsf X)$ and we obtain a diagram of epimorphisms
	\[
		\begin{tikzcd} \mathsf X^{\mathrm{sc}} \ar[dr,swap,"p_1"]& & \mathsf Y^{\mathrm{sc}} \ar[ll,swap,"\pi'=p\circ\overline\pi"] \\ & \mathsf X^{\mathrm{ad}} \ar[ur,swap,"\alpha"]
		\end{tikzcd}
	\]
	such that $\alpha\circ p_1\circ\pi'$ and $\pi'\circ\alpha\circ p_1$ are the absolute Frobenius on the respective objects. Since $p_1$ is epic, we have that $p_1\circ\pi'\circ\alpha= \Fr$ iff $p_1\circ\pi'\circ\alpha\circ p_1 = \Fr\circ p_1=p_1\circ\Fr$, which is clearly the case and this gives the isogenies between simply connected $\mathsf B$ and adjoint $\mathsf C$ types and vice versa. 
	\end{proof}

\begin{remark} \label{remark:isogenies} \strut 
\begin{enumerate}
\item A base change will provide us now with a large number of mixed groups over arbitrary mixed fields. But, as we noted earlier in Remark~\ref{rem:less-accessible}, there is no reason to believe that every mixed group can be realised in this manner. In fact, the next bullets show that one can never obtain groups of type $\mathsf B_n/\mathsf C_n$ with the group of type $\mathsf C_n$ non-split by base changing one of the special isogenies between groups of types $\mathsf B_n\to\mathsf C_n$ and it seems likely that the same observation is true with the roles $\mathsf B$ and $\mathsf C$ swapped. So although there could a priori certainly exist mixed groups of type $\mathsf B/\mathsf C$ where both components are non-split, they do not arise via base change from a visible field.
\item The cases $\mathsf X_n=\mathsf B_n^{\mathrm{ad/sc}}$ and $\mathsf Y_n=\mathsf C_n^{\mathrm{sc}}$ correspond to a classical construction, see \cite[(7.1.6)]{PRG}. One starts  from a defective non-degenerate quadratic form on a $2n+1$-dimensional space $V$. The groups of automorphisms of the quadratic form preserves the $1$-dimensional radical $V^\perp$; therefore the automorphism group $\mathrm{Aut}(V,q)=\mathsf{SO}(q)$ acts on the space $V/V^\perp$ endowed with non-degenerate alternating form $\overline B(\overline x,\overline y)=q(x+y)-q(x)-q(y)$. This provides a morphism $\mathsf{SO}(q)\to\mathsf{Sp}(\overline B_q)$, which is a morphism between groups of types $\mathsf B_n^{\mathrm{ad}}\to\mathsf C_n^{\mathrm{sc}}$. 
\item \label{bullet:split} \emph{However}, note that the resulting symplectic group is always a split group, since every non-degenerate alternating form can be reduced to the standard form $\sum_i x_ix_i'$. Since \cite{PRG} claims that the isogeny $\overline\pi : \mathsf C_n \to \mathrm{Fr}^*\mathsf B_n$ is unique, it should be the case that $\Fr^* \mathsf{SO}_{2n+1}(q)$ is always a split group. This is indeed the case, since
\[ x_0^2 + \sum_{i=1}^n (a_ix_{i}^2 + x_ix_i' +a_i x_i'^2) \simeq_{k^{1/2}} \big(x_0+\sum_{i=1}^n (\sqrt a_i x_i + \sqrt{a_i'} x_i')\big)^2  + \sum_{i=1}^n  x_ix_i'.  \]
\item The construction for $\mathsf G_2$ requires more specialized knowledge. The issue is that in characteristic $3$ the adjoint representation of dimension $14$ is not irreducible, but contains the standard representation of dimension $7$; this provides an ideal in the Lie algebra. 

To see how this happens, we recall that any group of type $\mathsf G_2$ can be realized as automorphism group $\mathrm{Aut}(\mathbb O)$ of an octonion algebra $\mathbb O$ endowed with a binary product, a norm $q$ and unit $1$. The corresponding Lie algebra arises as derivations of the octonion algebra: $\mathfrak g_2=\mathrm{Der}(\mathbb O)$. An octonion algebra is is \emph{alternative}, this means that the associator $(x,y,z)=(xy)z-x(yz)$ is an alternating trilinear map. Therefore if we define a new product $[xy]$ on $\mathbb O$ by setting $[xy]=xy-yx$, then clearly $[xx]=0$ and moreover we have the identity
\begin{align*}
  [[x_1x_2]x_3]+[[x_2x_3]x_1]+[[x_3x_1]x_2] &= \sum_{\sigma\in\mathrm{Sym}(3)} (x_{\sigma(1)},x_{\sigma(2)},x_{\sigma(3)}) \\ &=  6(x_1,x_2,x_3). 
 \end{align*}
Therefore in characteristic $3$, the Jacobi identity holds and we have endowed $\mathbb O$ of dimension $8$ with the structure of a Lie algebra which we denote with $\mathfrak O$. Of course $I = \langle 1\rangle$ is an ideal in $\mathfrak O$ and the quotient $\mathfrak V = \mathfrak O/I$ is a $7$-dimensional Lie-algebra. (It is actually the Lie algebra $\mathfrak{psl}_3$.) The adjoint representation of this Lie algebra provides a map
\[  \mathfrak V \to \mathrm{Der}(\mathfrak V),  \]
The proof is completed by showing that $\mathrm{Der}(\mathfrak V)=\mathrm{Der}(\mathbb O)$. Of course there is a map $\mathrm{Der}(\mathbb O)\to\mathrm{Der}(\mathfrak V)$. A map in the other direction can be constructed by exploiting the orthogonal decomposition of quadratic spaces $\mathbb O = \langle 1\rangle  \perp 1^\perp$ and defining a product $\times$  on $1^\perp$ by $a\times b = ab - q(ab,1)/q(1,1) 1$, i.e. by projecting the octonion product back to $1^\perp$. Since every derivation of $\mathbb O$ sends $1$ to $0$, this construction allows one to extend a derivation on $(1^\perp,\times)$ to a derivation on $\mathbb O$. The final observation is then that $(1^\perp,\times)$ is (up to a multiple) equal to $(\mathfrak V,[.,.])$. 
\item We suspect that there exists a parallel construction for $\mathsf F_4$ along the following lines. The issue is again that that the standard representation of dimension $26$ is contained in the adjoint representation of dimension $52$, providing an ideal in the Lie algebra.

Any group of type $\mathsf F_4$ can be realized as automorphism group $\mathrm{Aut}(\mathbb A)$ of an Albert algebra $\mathbb A$ of dimension $27$ endowed with a quadratic operator $U$, a norm $q$ and unit $1$. An Albert algebra is a quadratic Jordan algebra, this means that if $p=2$ it also has the structure of a restricted Lie algebra by defining a Lie bracket and $p$-operation as follows:
\[  [xy] = (U_{x+y}-U_x-U_y)1  \text{ and } x^{[2]} = U_x1.  \]
For a proof, see \cite[p.30, \S1.4]{jacobson-qja}; the the important equation is QJ20 on page 24 which states essentially that 
\[ [x^{[2]}y]=[x[xy]]+2U_xy,  \]
and thus if $p=2$ we have $\mathrm{Ad}(x^{[2]})=\mathrm{Ad}(x)^2$ from which the Jacobi identity follows by linearization. From here on, one can concoct an argument which parallels the argument we sketched here for $\mathsf G_2$.
\end{enumerate}	
\end{remark}

\subsection{Twisted groups: the Suzuki--Ree groups} \label{sec:twisted-groups}
\begin{theorem} \label{theorem:twisted-groups} Let ${}^2\mathsf X_n(k,\theta)$ be a twisted Chevalley group, as defined by \cite[Ch.~13 \S4]{carter}. Then there is a twisted group scheme $\tilde X$ and a blended field $b$ such that $ {}^2\mathsf X_n(k,\theta) \cong \tilde X(b)$.
\end{theorem}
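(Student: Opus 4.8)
The plan is to realize the twisted Chevalley group ${}^2\mathsf X_n(k,\theta)$ as the fixed-point set of an involution and then recognize that fixed-point set as the rational points of a twisted scheme via Proposition~\ref{prop:fixedpoints}. First I would set up the blended field: given the Tits endomorphism $\theta$ of $k$ with $x^{\theta^2}=x^p$, the pair $b=(k,\theta)$ is a blended field in the sense of Example~\ref{ex:mixedrings}, and crucially, to invoke Proposition~\ref{prop:fixedpoints} I need $\theta$ to be \emph{invertible}; since the theorem is interpreted as a statement about perfect fields (cf.\ the discussion following Proposition~\ref{prop:fixedpoints}), $\theta$ is an automorphism and $b$ is a perfect blended field. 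Next I would take $G=\mathsf X_n$ to be the split Chevalley group scheme of type $\mathsf X_n$ over the prime field $\mathbf F_p$, where $\mathsf X_n\in\{\mathsf B_2,\mathsf G_2,\mathsf F_4\}$ with $p=2,3,2$ respectively, so that $G$ carries the exceptional graph automorphism $g$ with the defining property $g\circ g=\Fr_G$.

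The heart of the construction is producing the twister. The exceptional graph endomorphism $g:G\to G$ satisfies $g^2=\Fr_G$ as an endomorphism of the scheme $G$, which is precisely the condition $\Phi\circ\Phi=F$ required in Definition~\ref{def:tC} to make $\tilde G=(G,g)$ an object of $\TSch$; hence $\tilde G$ is a twisted group scheme, the group structure being respected because $g$ is a group endomorphism. I would then verify that $\tilde G$ is an $\tilde S$-scheme for $\tilde S=\Spec b=(\Spec k,\theta)$, using that $g$ is defined over the prime field and therefore commutes with the base twister $\theta$ in the appropriate sense. With this in place, Proposition~\ref{prop:fixedpoints} applies directly: the rational points $\tilde G(b)$ are the fixed points $G(k)^\alpha$ of the involution $\alpha:x\mapsto g\circ x\circ\theta^{-1}$ on $G(k)=\mathsf X_n(k)$.

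It then remains to match this fixed-point group with Carter's definition of ${}^2\mathsf X_n(k,\theta)$ in \cite[Ch.~13 \S4]{carter}. Carter constructs the twisted group as the fixed points of the endomorphism $\sigma=\phi^{-1}\circ g$ (or its analogue) acting on $\mathsf X_n(k)$, where $\phi$ is the field automorphism induced by $\theta$; this is exactly the involution $\alpha$ once one identifies the action of $\theta^{-1}$ on rational points with $\phi^{-1}$. I would carry out this identification by comparing the action on Chevalley generators $x_r(t)$: the graph automorphism permutes root subgroups according to the symmetry of the Dynkin diagram while raising parameters to suitable powers, and composing with $\theta^{-1}$ on the field yields precisely Carter's twisted automorphism. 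The main obstacle I anticipate is this last bookkeeping step --- reconciling the scheme-theoretic composite $g\circ x\circ\theta^{-1}$ with Carter's generator-level definition, including the correct normalization of the graph automorphism on the doubled root lengths and the factor relating $\Fr$ to the $p$-power map on parameters. Once the generators match, the isomorphism ${}^2\mathsf X_n(k,\theta)\cong\tilde G(b)$ of abstract groups follows, completing the proof.
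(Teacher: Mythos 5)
Your proposal is correct and takes essentially the same route as the paper: use the exceptional graph endomorphism $g$ with $g\circ g=\Fr$ as the twister, form the twisted group scheme $\tilde X=(\mathsf X_n,g)$, and apply Proposition~\ref{prop:fixedpoints} over the blended field $b=(k,\theta)$ to identify $\tilde X(b)$ with the fixed points of the involution $x\mapsto g\circ x\circ\theta^{-1}$, which is Carter's (equivalently Tits') definition of ${}^2\mathsf X_n(k,\theta)$. The only difference is that the paper treats the final identification with Carter's generator-level definition as immediate, whereas you flag it as a bookkeeping step to be verified explicitly.
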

\begin{proof} It is well known that the graph automorphism of the Chevalley group $\mathsf X_n(\mathbf F_p) \to \mathsf X_n(\mathbf F_p)$ can be used to define an endomorphism of algebraic $\mathbf F_p$-groups $g:\mathsf X_n\to\mathsf X_n$. If $\theta \in\mathrm{Aut}(k)$ is chosen so that $\fr_k \circ \theta^2= \mathrm{id}_k$ then the twisted group is defined as the group of fixed points of $g\theta$ acting on $\mathsf X_n(k)$. On the other hand, we get a twisted group $\tilde X = (\mathsf X_n,g)$ over the twisted field  $(\mathbf F_p,\mathrm{id})$ and by Proposition~\ref{prop:fixedpoints} we have
\[  \tilde X(b) \cong \big(\mathsf X_n(k)\big)^{g\theta} = {}^2\mathsf X_n(k,\theta). \qedhere \]
\end{proof}

\begin{remark} \label{remark:perfect-fields} The twisted groups in \cite{carter} are only defined over perfect fields and therefore the previous theorem is only meaningful in that case. The restriction to perfect fields is not a shortcoming of the theory; rather it is a shortcoming of the construction of twisted abstract groups as fixed points of an involution. Close examination of Tits' alternative definition \cite[p. 66, ``{$\alpha_\pi(g)=\alpha_\sigma(g)$}'']{suzukietree}---which also works over non-perfect fields---shows that he \emph{defines} the twisted groups by the equation $x\circ\Phi = g\circ x$, i.e.~as groups of rational points of a twisted group scheme. So there is simply nothing to prove in that case and it is completely trivial that they are also groups of rational points of twisted group schemes. So in some sense our proof simply comes down to showing that Tits' non-perfect definition indeed generalizes the definition \cite{carter} more commonly accepted as standard.
\end{remark}

\subsection{Mixed groups and buildings of mixed type} \label{sec:mixed-groups}

\begin{theorem} \label{theorem:mixed-groups} Let $\mathsf X_n(k,\ell)$ be a split mixed group as defined by \cite[(10.3.2)]{tits74}. Then there exists a mixed group scheme $\tilde X$ and a mixed field $m$ such that $\mathsf X_n(k,\ell) \cong \tilde X(m)$.
\end{theorem}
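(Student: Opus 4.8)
The plan is to exhibit $\mathsf X_n(k,\ell)$ as the group of rational points of one of the mixed group schemes of Proposition~\ref{prop:veryspecialisogeny2}, base changed to the mixed field $m=(k,\ell)$, and then to match this point-set with Tits' group by following the very special isogeny on root groups. Let $p$ and $\mathsf X_n$ be the characteristic and type prescribed by Tits' construction, and let $m=(k,\ell)$ be the mixed field whose underlying fields are the pair $(k,\ell)$ of Tits (so $\ell^p\subseteq k\subseteq\ell$), as in Example~\ref{ex:mixedrings}.\ref{ex:nestedfields}. First I would select the column of the table in Proposition~\ref{prop:veryspecialisogeny2} of the isogeny type matching Tits' groups (the adjoint type), obtaining split groups $\mathsf X,\mathsf Y$ over $k$, the very special isogeny $\pi\colon\mathsf X\to\mathsf Y$, its dual $\overline\pi$, and the mixed $k$-group $\mathsf{MXY}_n=(\mathsf Y,\mathsf X,p\circ\overline\pi,\pi)$ over the visible base $\mix(k)$.

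Next I would base change $\mathsf{MXY}_n$ to $\tilde S=\Spec m$. The relative factorization $\mathsf X\overset{\pi}{\longrightarrow}\mathsf Y\overset{\overline\pi}{\longrightarrow}\flift{\mathsf X}$ underlying $\mathsf{MXY}_n$, together with the absolute factorization $\Spec k\overset{\alpha}{\longrightarrow}\Spec\ell\overset{\beta}{\longrightarrow}\Spec k$ of $\Fr_{\Spec k}$ coming from the inclusion $k\hookrightarrow\ell$ and the $p$-power map $\ell\to k$, feeds into Corollary~\ref{cor:relfac} to produce an $\tilde S$-object $\tilde X$ with components $(\mathsf Y_k,\mathsf X_\ell)$; as all the structure maps involved are group homomorphisms, $\tilde X$ is a group object, i.e.\ a mixed group scheme. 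Since $\Spec k$ is reduced, Lemma~\ref{lem:mixedpoints} shows the relevant mixing map is epic, so Proposition~\ref{prop:mixedpoints} applies and identifies
\[ \tilde X(m)=\{\,u\in\mathsf X(\ell)\ :\ \pi(u)\in\mathsf Y(k)\,\}, \]
with $\mathsf Y(k)\subseteq\mathsf Y(\ell)$ via $k\hookrightarrow\ell$ and $\pi$ evaluated on $\ell$-points. This is the exact analogue of the $\mathsf{SL}_p/\mathsf{PGL}_p$ computation of Section~\ref{sec:examples}.

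It remains to identify this subgroup of $\mathsf X(\ell)$ with Tits' group. For this I would use the explicit action of the very special isogeny on Steinberg generators: $\pi$ carries a root element $x_\alpha(t)$ to $x_{\alpha'}(c_\alpha\,t^{e_\alpha})$, where $e_\alpha=1$ on long roots and $e_\alpha=p$ on short roots, $\alpha'$ being the corresponding root of the dual type, together with the analogous behaviour on the split maximal torus. Substituting into the condition $\pi(u)\in\mathsf Y(k)$ forces the long-root coordinates of $u$ into $k$ (as $e_\alpha=1$) while leaving the short-root coordinates free in $\ell$ (as $t^p\in\ell^p\subseteq k$ automatically) and cuts the torus down to the corresponding mixed torus. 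This is precisely Tits' prescription: restrict the long-root generators of $\mathsf X_n(\ell)$ to $k$ and keep the short-root generators over $\ell$. The containment $\supseteq$ is immediate because $\pi$ is a homomorphism sending each restricted generator into $\mathsf Y(k)$; the reverse containment requires decomposing an arbitrary $u$ with $\pi(u)\in\mathsf Y(k)$ along the Bruhat cells and reading off that every factor is a restricted generator.

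The principal obstacle is this final matching. Proposition~\ref{prop:mixedpoints} delivers the point-set functorially and painlessly, but equating it with Tits' presented group forces us to reconcile a scheme-theoretic description with a generators-and-relations one. The delicate points are: (i) pinning down the constants $c_\alpha$ and the behaviour of $\pi$ on the torus so that the ``mixed torus'' above coincides with Tits' partial thickening of part of $T(k^\times)$ to $T(\ell^\times)$; and (ii) the reverse containment, where one must use the Bruhat decomposition and the fact that $\pi$ preserves the big cell, the root groups and the torus to rule out points satisfying $\pi(u)\in\mathsf Y(k)$ beyond those generated by the restricted root groups. Finally, one must fix the convention for which root length is labelled long, so that the two fields in $\mathsf X_n(k,\ell)$ land on the correct lengths (compatibly with the identifications $\mathsf X_n(k,\ell)\cong\mathsf X_n(\ell^p,k)$ recorded in Section~\ref{sec:preliminaries}); with that bookkeeping settled, the isomorphism $\mathsf X_n(k,\ell)\cong\tilde X(m)$ follows.
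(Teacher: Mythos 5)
Your proposal is correct and takes essentially the same route as the paper: build the mixed group scheme from the very special isogeny of Proposition~\ref{prop:veryspecialisogeny2}, base change it to $(k,\ell)$ via Corollary~\ref{cor:relfac}, compute $\tilde X(m)=f^{-1}(\mathsf Y_n(k))$ with Lemma~\ref{lem:mixedpoints} and Proposition~\ref{prop:mixedpoints}, and then identify this preimage with Tits' generated subgroup. The ``delicate points'' you flag are precisely what the paper's Step~1 settles: the torus is handled through the adjoint identification $T(\ell)\cong\prod_{r\in\Delta}\mathsf{GL}_1(\ell)$, and the reverse containment through the uniqueness of the Bruhat normal form combined with the explicit action of the isogeny on root groups from \cite[(7.1.5)]{PRG}.
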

\begin{proof}
 \textbf{Step 0.} Let us review the construction of the \emph{split mixed groups} from \cite[(10.3.2)]{tits74} in the notation from \emph{loc.~cit.}.  One starts with the adjoint $k$-split simple algebraic group $\mathsf X_n$ defined over the field $k$. If we choose a maximal $k$-split torus $T$, then there is a corresponding root system $\Phi = \Phi_> \cup \Phi_<$ consisting of a set of short roots and long roots. For each root $r\in\Phi$, there is an $k$-unipotent subgroup $U_r$ upon which $T$ acts, one for each $r\in\Phi$ together with an isomorphism $u_r : \mathbf G_a\to U_r$. Tits then defines the mixed group by providing the following set of generators: 
   \[  \mathsf X_n(k,\ell) = \langle T(k,\ell) \cup \{U_r(k) \mid r \in \Phi_>\} \cup \{U_r(\ell) \mid r \in\Phi_<\} \rangle \subseteq \mathsf X_n(\ell),   \]
  where we have
  \[  T(k,\ell) = \{ t\in T(\ell) \mid r(t)\in k  \text{ if } r \in\Phi_> \} \subseteq T(\ell). \]
  We know from \cite[(7.1.1)]{PRG} that if we expres a long root as a linear combination of fundamental roots, the coefficients of the short roots are all divisible by $p$. Therefore we can also define
    \[  T(k,\ell) = \{ t\in T(\ell) \mid r(t)\in k  \text{ if } r \in\Delta_> \} \subseteq T(\ell), \]
    where $\Delta$ is a system of fundamental roots and $\Delta_> = \Delta\cap \Phi_>$. In rough terms: $X(k,\ell)$ arises from $X(\ell)$ by restricting the long roots to the smaller field, both for the root subgroups and for the torus.
    
  \textbf{Step 1.} We will show that $\mathsf X_n(k,\ell)$ can be constructed as follows.
  Let $\mathsf Y_n$ denote the dual group, i.e. $\mathsf Y_n=\mathsf  B_n^{\mathrm{ad}}$ if $\mathsf X_n=\mathsf C_n^{\mathrm{ad}}$ etc and $\pi:\mathsf X_n\to \mathsf Y_n$ the corresponding very special isogeny between adjoint groups from Proposition~\ref{prop:veryspecialisogeny2} and let us also assume a choice of maximal torus, roots and fundamental roots in both groups, denoted by $T$, $\Phi$, $\Delta$ and $\bar T$, $\bar\Phi$, $\bar\Delta$ with a bijection $\Phi\to\bar\Phi : r\mapsto \bar r$ as constructed in \cite[(7.1.5)]{PRG}. Then there are maps
  \[
  \begin{tikzcd}
  \mathsf X_n(\ell) \ar[dr,swap,"f=\pi(\ell)"] & & \mathsf Y_n(k) \ar[dl,tail,"\mathsf Y_n(\mathrm{inc})"] \\ & \mathsf Y_n(\ell) &
  \end{tikzcd}
  \]
  We claim that $\mathsf X_n(k,\ell) = f^{-1}(\mathsf{Y_n}(k))$, in other words, let $x\in\mathsf X_n(\ell)$ be arbitrary, then we claim that
  \begin{equation} \label{eq:titsdefinition}
	  x\in\mathsf X_n(k,\ell) \iff f(x)\in\mathsf Y_n(k).
	\end{equation}
  \textbf{Step 1a.}  Let us first show that 
  \[  t \in T(k,\ell) \iff f(t) \in \bar T(k) \]
  Since the group is adjoint, we can rely on the isomorphism 
  \[ T(\ell) \to  \prod_{r\in \Delta} \mathsf{GL}_1(\ell) : t\mapsto (c(t))_{r\in \Delta} \]
  and a similar isomorphism for $\bar T$. The very special isogeny $f=\pi(\ell)$ induces the Frobenius on the $\mathsf{GL}_1$ corresponding to short fundamental roots and the identity on the long roots. This means that an element $(r(t))_{r\in\Delta}$ is sent to $(r(t)^{s_r})_{\bar r\in\bar \Delta}$, where $s_r=p$ if $r$ is short and $s_c=1$ if $r$ is long. Therefore the value on the long roots in $\bar\Delta$ automatically ends up in $\ell^p\subseteq k$. Therefore the condition $f(t)\in \overline T(k)$ states that $r(t)$ must be contained in $k$ whenever $\bar r$ is short and thus $r$ is long, i.e.~$t\in T(k,\ell)$.

  \textbf{Step 1b.} We can now show the equivalence \eqref{eq:titsdefinition}. One implication is clear, since $f(\mathsf X_n(k,\ell))\subseteq \mathsf Y_n(k)$ as is easily seen by evaluating $f$ on each of the generators. For the other implication, we consider an arbitrary $x\in\mathsf X_n(\ell)$. The normal form (see \cite{humphreys}) of $x$ is of the following form:
  \[ x = \prod_{r\in\Phi}  u_r(x_r) \cdot n(\sigma) t \cdot \prod_{r\in\Phi_\sigma} u_r(y_r) \]
  We can use \cite[(7.1.5)]{PRG} to compute $f(x)$:
  \[ f(x) = \prod_{\bar r\in\bar\Phi} \bar u_{\bar r}(x_{\bar r}^{s_r}) \cdot n(\bar\sigma) f(t) \cdot \prod_{\bar r\in\bar{\Phi_\sigma}} \bar u_{\bar r}(y_{\bar r}^{s_r}), \]
  where $s_r$ is the same number from earlier.                                                                                                                                                      Expressing that $f(x)\in\mathsf Y_n(k)$, relying in particular on the uniqueness of this normal form and the observation that $\overline{\Phi_\sigma}=\bar\Phi_{\bar\sigma}$, we get the conditions 
  \[ x_c^{s_c}, y_c^{s_c} \in k \text{ and } f(t) \in \bar T(k), \]
  where $\bar T$ is a maximal torus in $\mathsf Y_n$. 
  Recalling that $\ell^p\subseteq k$ and by relying on step 1a for the condition on $f(t)$, we conclude that that $x$ is indeed generated by Tits' generators.

  \textbf{Step 2.} We now use the same data, i.e.~the isogeny $\pi:\mathsf X_n\to \mathsf Y_n$ and the field extension $\ell/k$ together with Corollary~\ref{cor:relfac} to obtain a mixed group scheme $\widetilde G=((\mathsf Y_{n})_k,(\mathsf X_n)_\ell)$ over the mixed field $m=(k,\ell)$. Since a field is certainly reduced as a scheme, we may apply Proposition~\ref{prop:mixedpoints} to compute its set of rational points:
  \[ \widetilde G(m) = f^{-1}(\mathsf Y_n(k)).  \]
  Here $\mathsf Y_n(k)$ is considered a subset of $\mathsf Y_n(\ell)$ in the natural way and $f:\mathsf X_n(\ell)\to\mathsf Y_n(\ell)$ is the map induced by $\pi$. In Step 1 we have proven that
  \[  f^{-1}(\mathsf Y_n(k)) = \mathsf X_n(k,\ell),  \]
  and therefore Tits' mixed group is indeed realized as rational points of the mixed group scheme $\tilde G$.
 \end{proof}

\begin{remark} \label{rem:parabolics} The following remarks are some expectations that we have, but that will require some future work to verify.
\begin{enumerate}
\item Continuing the notation from the proof, we let $\Phi=\Phi_>\cup\Phi_<$ be a root system of type $\mathsf X_n$ with fundamental system $\Delta = \Delta_>\cup\Delta_<$.  Note that $|\Delta|=n$; we also denote $r=|\Phi|$, $n_> = |\Delta_{>}|$ etc. Since the dimension of the corresponding algebraic group of type $\mathsf X_n$ is $r+n$, this is also the dimension of eah of the components of the mixed group scheme $\tilde X$ over $(k,\ell)$. Nonetheless, the \emph{partial dimensions} are given by $(r_{>}+n_{>},r_{<}+n_{<})$. 
\item  Our original motivation for studying mixed schemes---and not just rings or affine schemes---was that the homogeneous spaces of a mixed group will be mixed projective varieties. More specifically we can start from a the mixed group $(G,G')$ and take a pair of Borel subgroups $B\subset G$, $B'\subset G'$ in such a way that the mixing maps send $B$ into $B'$ and conversely $B'$ into $B$. This allows one to construct a mixed variety $\begin{tikzcd} G/B \rar[shift left=0.5ex] & G'/B' \lar[shift left=0.5ex] \end{tikzcd}$ where both components have dimension dimension $r/2$ (since $\dim B = n+r/2$) but partial dimensions $(r_{>}/2,r_</2)$. Taking rational points of this variety over $(k,\ell)$, we obtain a set which can be identified with the flag complex of the mixed building.
\item Moreover, let us consider a set $\Gamma\subseteq \Delta$ of fundamental roots and denote $s=|\chi^{-1}(\Gamma)|$, where  $\chi$ is the projection $\chi : \Phi\to \Delta$.  To the choice of $\Gamma$ correspond parabolic subgroups $B\subseteq P\subseteq G$ and $B'\subseteq P'\subseteq G'$ of dimension $n+(r+s)/2$ and corresponding varieties $G/P$ and $G'/P'$ of dimension $(r-s)/2$. These subgroups give rise to a variety $\begin{tikzcd} G/P \rar[shift left=0.5ex] & G'/P' \lar[shift left=0.5ex] \end{tikzcd}$ of partial dimensions $(\frac{r_>-s_>}{2},\frac{r_<-s_<}{2})$. Typically the set $\Gamma$ is chosen as large as possible to obtain structures of reasonable dimension that are studied by algebraic or incidence geometers.
\item In particular, if $S$ contains all the long (or short) fundamental roots, one of the partial dimension will be $0$. For instance, we could start from a root system of type $\mathsf B_n$ and take the subset $\Gamma\subset\Delta$ consisting of all $n-1$ long fundamental roots. If we note that there are precisely $2n$ short roots, we expects to find a mixed variety of partial dimension $(0,n)$: the mixed quadric. It is very easy to describe this thing---or at least an affine part of it---more explicitly over a mixed field $(k,\ell,\kappa,\lambda)$: we define the $k$-algebra $Q$ and $\ell$-algebra $W$ by
\begin{align*}  Q &=  k[x_0,x_1,\dots,x_n] / (x_0^2 - q(x_1,\dots,x_n)), \\
  W &= \ell[y_1,\dots,y_n],
\end{align*}
where $q$ is a non-degenerate quadratic form, e.g.~the hyperbolic one. We then define maps between both algebras extending $\kappa$ and $\lambda$ as follows:
\begin{align*}
   \hat\kappa & : Q\to W : \begin{cases} x_i\mapsto y_i^2 & i\geq 1 \\ x_0\mapsto q^\kappa(y_1,\dots,y_n) & \end{cases} \\
   \hat\lambda &: W\to Q : y_i\mapsto x_i,
 \end{align*}
 where $q^\kappa$ simply applies $\kappa$ to the coefficients of $q$. It is easily verified that this defines a mixed ring $(Q,W,\hat\kappa,\hat\lambda)$ and therefore a mixed affine variety. Since $\hat\kappa$ sends all variables to squares, the differential vanishes and the mixed object has indeed partial dimensions $(0,n)$.
 
\end{enumerate} 
\end{remark}

\subsection{Exotic pseudo-reductive groups} \label{sec:gps-prg}
In \cite{PRG} the authors provide a structure theory for pseudo-reductive groups over a base field $k$. In rough terms the outcome is that almost every pseudo-reductive group arises from a \emph{standard construction}, which has as its starting point a Weil restriction $\mathsf R_{k'/k}(G)$ of a reductive $k' $-group $G$ through a purely inseparable field extension $k'/k$. Some exotic examples, introduced in Chapter~7 of \emph{op.\@cit.}~do not follow this pattern and require a more elaborate construction. Our next theorem states that these exotic groups,  actually \emph{do} fit this pattern, albeit within the category of mixed group schemes. In fact the so called basic exotic groups can be thought of as $\mathscr G = \mathsf R_{m/k}(\widetilde G)$, where $\widetilde G$ is a reductive mixed group over the mixed field $m$ and $\mathsf R_{m/k}$ denotes the mixed version of the Weil restriction. An arbitrary exotic group arises by further Weil restriction of a basic exotic group; of course this is also a Weil restriction.

\begin{theorem} \label{theorem:pseudo-reductive-groups} Let $\mathscr G$ be an exotic pseudo-reductive group as defined by \cite[(8.2.2)]{PRG}. Then there is a mixed group scheme $\widetilde G$ over a mixed algebra $M$ such that $\mathscr G = \comp_2\mathsf R_{M/k}(\widetilde G)$.
\end{theorem}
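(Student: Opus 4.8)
The plan is to reduce to the case of a \emph{basic} exotic pseudo-reductive group and there to match, essentially on the nose, the explicit description of the mixed Weil restriction in Proposition~\ref{prop:weilrestriction} with the fibre-product definition of \cite[Ch.~7]{PRG}. First I would recall that a basic exotic group is built from a very special isogeny $\pi\colon G\overset{}{\longrightarrow}\bar G$ of absolutely simple $k$-groups of one of the types appearing in Proposition~\ref{prop:veryspecialisogeny2}, together with a nonzero purely inseparable extension $k'/k$ with $k'^p\subseteq k$, and that on a $k$-algebra $R$ its functor of points is
\[ \mathscr G(R)=\{\,g\in G(R\otimes_k k')\ :\ \pi(g)\in\bar G(R)\,\}, \]
that is, $\mathscr G=\bar G\times_{\mathsf R_{k'/k}(\bar G_{k'})}\mathsf R_{k'/k}(G_{k'})$, where $\bar G\hookrightarrow\mathsf R_{k'/k}(\bar G_{k'})$ is the unit of $\beta^*\dashv\beta_*$ for $\beta\colon\Spec k'\to\Spec k$. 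Since this fibre product is the entire target, the theorem will be proved for basic groups once I exhibit an $M$ and a $\widetilde G$ realising it.

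Next I would manufacture the ingredients. The extension gives the mixed field $M=(k,k',\kappa,\lambda)$ of Example~\ref{ex:mixedrings}.\ref{ex:nestedfields}, normalised so that $\comp_1(\Spec M)=\Spec k$, $\comp_2(\Spec M)=\Spec k'$, and $\beta=\Phi_{S_2}$ is precisely the structural map $\Spec k'\to\Spec k$. The isogeny $\pi$ and its linearisation yield a relative factorisation $G\overset{\pi}{\longrightarrow}\bar G\overset{\bar\pi}{\longrightarrow}\flift G$ exactly as in Proposition~\ref{prop:veryspecialisogeny2}, so combining it with the absolute factorisation of the base coming from $k'/k$, Corollary~\ref{cor:relfac} produces a mixed $M$-group $\widetilde G=(\bar G_k,G_{k'},p\circ\bar\pi,\pi)$, a reductive mixed group in the required sense. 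I would then take $\mathsf R_{M/k}$ to be the partially defined right adjoint $f_*$ to base change along the unit $f\colon\Spec M\to\mix(\Spec k)$; as $k'/k$ is finite the ordinary Weil restrictions $\beta_*\beta^*\bar G_k$ and $\beta_*G_{k'}$ exist, so Proposition~\ref{prop:weilrestriction} applies and $f_*\widetilde G$ is defined. Being a right adjoint, $f_*$ preserves finite products and hence group objects, so $f_*\widetilde G$ is a mixed $k$-group scheme.

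The heart of the argument is then a direct computation. With $X=\comp_1\widetilde G=\bar G_k$ and $X'=\comp_2\widetilde G=G_{k'}$, the formula of Proposition~\ref{prop:weilrestriction} gives
\[ \comp_2 f_*\widetilde G \;=\; X\times_{\beta_*\beta^*X}\beta_*X' \;=\; \bar G_k\times_{\mathsf R_{k'/k}(\bar G_{k'})}\mathsf R_{k'/k}(G_{k'}), \]
where $\bar G_k\to\mathsf R_{k'/k}(\bar G_{k'})$ is the unit $\eta_X$ and $\mathsf R_{k'/k}(G_{k'})\to\mathsf R_{k'/k}(\bar G_{k'})$ is $\beta_*\psi'=\mathsf R_{k'/k}(\pi_{k'})$, with $\psi'$ the $k'$-linearisation of the mixer $\psi=\pi$. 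Comparing the defining maps term by term identifies this with the fibre product recalled above, whence $\comp_2\mathsf R_{M/k}(\widetilde G)=\mathscr G$. This is the step that must be checked carefully, but it is essentially bookkeeping: as the remark after Proposition~\ref{prop:weilrestriction} notes, the second component was reverse-engineered from \cite[\S7.2]{PRG}, so the two constructions are built to coincide. A cleaner functor-of-points variant runs through Remark~\ref{remark:mixedpints}.\ref{remark:mixedpoints-weil}: $(\comp_2 f_*\widetilde G)(R)=\widetilde G(f^*\mix R)$, which by Proposition~\ref{prop:mixedpoints} is exactly $\{g\in G(R\otimes_k k'):\pi(g)\in\bar G(R)\}$.

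Finally, for an arbitrary exotic group I would use that every such group is an ordinary purely inseparable Weil restriction $\mathsf R_{k''/k}$ of a basic one. By the last proposition of \S\ref{sec:restriction}, ordinary Weil restriction of visible objects commutes with $\comp_2$, so it is realised as $\comp_2$ of a mixed Weil restriction along the visible map $\mix(g)$ for $g\colon\Spec k''\to\Spec k$; and by transitivity of base-change right adjoints, $(\mix g)_*\circ f_*=(\mix g\circ f)_*$, the composite is again a single mixed Weil restriction $\mathsf R_{M/k}$ whose structural map $\Spec M\to\mix(\Spec k)$ absorbs the extra layer $k''/k$. The main obstacle I anticipate is precisely this final interchange — keeping $\comp_2$, the ordinary Weil restriction, and the mixed Weil restriction compatible so that the general case collapses to one mixed Weil restriction — together with the careful identification in the basic case, where one must track the pervasive contravariance of $\Spec$ and keep straight which mixer becomes the unit $\eta_X$ and which becomes $\mathsf R_{k'/k}(\pi_{k'})$.
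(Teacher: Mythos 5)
Your proposal is correct and follows essentially the same route as the paper's proof: construct $\widetilde G$ from the very special isogeny via Corollary~\ref{cor:relfac}, apply the formula of Proposition~\ref{prop:weilrestriction} to identify $\comp_2 f_*\widetilde G$ with the fibre product $\overline G \times_{\beta_*\beta^*\overline G}\beta_*\beta^*G$ defining the basic exotic group, and then handle the general case by composing with an ordinary Weil restriction $\mathsf R_{K/k}$ using the last proposition of \S\ref{sec:restriction}. The only cosmetic difference is that the paper phrases the reduction via products of field extensions (since $K$ is a finite reduced $k$-algebra) rather than a single $k''$, but the mechanism is identical.
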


\begin{proof} Let us review the construction of the \emph{basic exotic groups} from \cite[(7.2.3)]{PRG} in the notation from \emph{loc.~cit.}: one starts with a very special isogeny of $k$-groups $\pi : G \to \overline G$ and a purely inseparable field extension $\ell/k$ of finite degree such that $\ell^p\subset k$. Base changing $\pi$ to $\ell$ followed by Weil restriction back to $k$ gives a map
\[  f  : \mathsf R_{\ell/k}G_\ell\to \mathsf R_{\ell/k}{\overline G}_\ell.  \]
On the other hand, the unit of the adjuction between base change and Weil restriction provides a map $\overline G \to \mathsf R_{\ell/k}\overline G_\ell$.
Then one defines $\mathscr G = f^{-1}(\overline G)$. Changing the notation to our own, we denote $\beta:\Spec \ell\to\Spec k$ so that $f=\beta_*\beta^*\pi$ and
\[ \mathscr G = \overline G \underset{\beta_*\beta^* \overline G}{\times} \beta_*\beta^* G, \]
Now, let us start from the same very special isogeny and field extension $\ell/k$, and construct the mixed group $\widetilde G=(\overline G,\beta^* G,.,.)$ as in Proposition~\ref{cor:relfac}. Applying the formula from Proposition~\ref{prop:weilrestriction} we find
\[ \mathsf R_{m/k}(\widetilde G) = h_*(\widetilde G) = (\overline G, \overline G\underset{\beta_*\beta^*\overline G}{\times} \beta_*\beta^*G, ., .)  \]
where $h : \Spec (k,\ell) \to \Spec k$ is the corresponding extension of mixed fields.
Note that the second component is indeed $\mathscr G$.

Taking the product of such field extensions, we see that every $K$-group $\mathscr G$  for a non-zero finite reduced $k$-algebra $K$ whose fibers are basic exotic pseudo-reductive groups can be realised as $\comp_2\mathsf R_{M/K}(\tilde G)$. Applying the Weil restriction $\mathsf R_{K/k}$ to this, we find that the exotic group $G=\mathsf R_{K/k}(\mathscr G)$ as in \cite[(8.2.2)]{PRG} can be realized as $G=\comp_2\mathsf R_{M/k}(\mathscr G)$.
\end{proof}

\begin{remark}  \label{rem:mixed-groups}
\begin{enumerate}
\item \label{bullet:c2appears} Although the catch prase in our abstract states that exotic pseudo-reductive groups are Weil restrictions of mixed groups, we now see  that this is not entirely correct. They arise from a \emph{Weil-mixtor} restriction: first a Weil restriction $\mathsf R_{M/k}$ and then a mixtor restriction $\comp_2$---see Remark~\ref{remark:mixtor}. 
\item \label{bullet:dimensions} The simension of a semi-simple group is completely determined by combinatorial data coming from a root system. If we apply an inseparable Weil restriction, we find a pseudo-reductive group with dimension determined by combinatorial data of the original group, and the degree of the restriction morphism. For the exotic pseudo-reductive groups on the other hand, there is a formula (7.2.1) in \cite{PRG} which states
\[  \dim \mathscr G = (r_>+n_>)+(r_<+n_<)[k':k].  \]
Since the corresponding mixed group has \emph{partial dimensions} (see Remark~\ref{rem:parabolics})
\[ (r_>+n_>, r_<+n_<)  \]
and a Weil restriction proceeds along an extension of mixed fields with degrees $(1,[k':k])$ we have this same separation of the dimension into combinatorial data of the original group and degree of a morphism.
\item If $[k':k]$ is infinite, the mixed group still exists: all the infiniteness is gobbled up by the mixing maps but the structure morphism is still of finite type. But in that case taking a Weil restriction becomes problematic.
\item \label{bullet:mixed-adjectives} Initially we had hoped to also state and prove a theorem which states that Weil restrictions of mixed reductive groups are always pseudo-reductive. It seems conceivable that the proof of the corresponding statement from \cite[{}1.1.10]{PRG} will generalize to the mixed setting, but only after a thorough study of mixed versions of the typical adjectives from algebraic geometry such as smooth and connected. For instance, one must first study a mixed notion of smoothness and prove a mixed version of the infinitesimal criterium. Perhaps this will eventually allow to circumvent some of the difficulties encountered by Conrad--Gabber--Prasad by starting from an arbitrary pseudo-reductive group, constructing its parental mixed reductive group directly and deducing the structure theory of pseudo-reductive groups from there, similar to how the current structure theory works away from characteristic $2$ and $3$.
\item \label{bullet:esotheric} The exotic groups are not the only strange cases which appear in the theory of Conrad, Gabber and Prasad: in characteristic $2$, there are other esotheric constructions. Nonetheless, these constructions are also related to mixed buildings, and admit a corresponding twisted group, a \emph{generalized Suzuki group}. (See \S\ref{sec:history} for some details.) This gives us some hope that these groups too fit into our framework. We see two observations which could be relevant. The first observation is that mixed algebraic groups related to regular but defective quadratic forms of defect $\geq 2$ are likely of interest \cite[\S1.6]{carter}. The second observation is that there could be mixed reductive groups over an invisible field which do not arise via base change from a mixed reductive group over a visible field, as we explained in Remark~\ref{remark:isogenies}. Such groups could not be constructed as exotic groups because the the presence of non $k$-linear maps cannot be circumvented so easily; this could explain why some of the constructions in the later chapters of \cite{PRG} are so indirect.
\end{enumerate}
\end{remark}

\section{Historical notes and references} \label{sec:history}
We will now provide a historical overview of people and facts related to mixed and twisted groups, with an attempt at being complete. Because the story is so heavily intertwined with the mathematical works of Jacques Tits, we will also try to tell his story, albeit with no attempt at being complete.

\subsection{1950--1960}
In 1955, Claude Chevalley published his \emph{Tohoku paper} where he showed that each of the known semi-simple Lie-algebras, known from the Killing-Cartan classification, gives rise to a class of groups, which can be defined over an arbitrary field: the \emph{Chevalley groups}. The next few years Chevalley lead the \emph{S\'eminaire Cartan-Chevalley} in 1955--56 and the \emph{S\'eminaire Chevalley} in 1956--58, where the foundations for modern scheme theory and algebraic group theory were laid.

In 1957, Rimhak Ree \cite{Ree57} showed that the Chevalley groups of types $\mathsf A,\mathsf B,\mathsf C,\mathsf D$ correspond to the classical linear, symplectic and some of the orthogonal groups over the corresponding fields as one would expect; he also showed that the groups of type $\mathsf G_2$ are isomorphic to groups defined much earlier by Dickson \cite{dickson1901,dickson1905}.

This raised the question whether the unitary groups could be seen as variations on this theme. In 1959, Robert Steinberg \cite{Steinberg59} publishes an article with that title presenting a new construction: he observed that in the cases $\mathsf A_n$, $\mathsf D_n$ and $\mathsf E_6$, the Dynkin diagram has an automorphism which gives rise to an automorphism of the Chevalley group. In combination with an automorphism of the underlying field this gives interesting involutory automorphisms whose fixed points are classes of simple groups now known as \emph{Steinberg groups} ${}^2\mathsf A_n$, ${}^2\mathsf D_n$, ${}^3\mathsf D_4$ and ${}^2\mathsf E_6$, where the former two provided the unitary groups and the missing orthogonal groups, and latter two were new.

Quite a different perspective was provided by Jacques Tits, who was looking into generalizations of the \emph{fundamental theorem of projective geometry}. This theorem says that every permutation of the points of projective space which is incidence preserving---i.e.~sends lines to lines---is induced by a semi-linear map on the underlying vector space. In other words: combinatorial axioms for a projective space characterize the Lie groups of type $\mathsf A_n$. The program that Tits had begun persuing in the early 50s was to provide axiomatic systems of points and lines which characterize the other Lie groups in a similar manner. In 1953 \cite{tits53}, in some of his earliest work, he investigated the real octonion plane, polarities of this plane, and the related Lie groups $\mathsf F_{4(-52)}$, $\mathsf F_{4(-20)}$, $\mathsf E_{6(-26)}$ and $\mathsf E_{7(-25)}$. A skeptical reviewer, named Chevalley, spotted a mistake in a proof which Tits fixed in a follow-up article \cite{tits54} in 1954 where also the announced construction of $\mathsf E_{7(-25)}$ was given. (The latter groups makes a come-back appearance in \cite{tits74} under the guise of the \emph{non-embeddable polar spaces}.) In 1955 \cite{tits55} and 1956 \cite{tits56} Tits published two long studies about homogeneous spaces of Lie groups, which can be interpreted as a precursor to his theory of buildings. One of his early achievements was a description of the (split) group $\mathsf E_6$ for the S\'eminaire Bourbaki \cite{tits58} as the automorphism group of some sort of `plane'---a parapolar space in later terminology. Investigating polarities in this $\mathsf E_6$-`plane' lead him to the independent discovery of the groups of type ${}^2\mathsf E_6$ over the reals.

\subsection{1960--1970}
In 1960, Michio Suzuki was investigating a class of groups named \emph{Zassenhaus groups}. A Zassenhaus group is a permutation group which (i) acts doubly transitively, (ii) with only the identity fixing $3$ elements and (iii) without a regular normal subgroup--- the latter case only excludes some degenerate cases such as the Frobenius groups.  Suzuki noted that a Zassenhaus group of odd degree is simple so he was more than interested in classifying them.

According to the reviewer of \cite{SuzukiB2}, it had been conjectured by Feit and Hasse that the only examples were the groups $\mathsf{SL}(2,2^n)$, but in \emph{loc.~cit.} Suzuki reported the discovery of a new class of Zassenhaus groups of odd degree. He constructed the groups $G(q)=\mathsf{Sz}(q)$ as subgroups of $\mathsf{GL}_4(q)$ generated by certain matrices, where $q$ is an odd power of $2$. About these groups, he writes: \emph{The series of groups $G(q)$ gives, therefore, the second infinite series of simple groups which are not of Lie type.}  Nonetheless, further in the article he also notes that his generators leave a bilinear form invariant, so they are also subgroups of $\mathsf{Sp}_4(q)=\mathsf B_2(q)$. (In 1962 \cite{Suzuki62} Suzuki classified the odd degree Zassenhaus groups and show that the Suzuki groups completed the list, and in 1964 he also classified the even degree Zassenhaus groups.)

Later that year, Ree realized that Suzuki's groups \emph{are} in fact closely related to the Chevalley groups of type $\mathsf B_2=\mathsf C_2$. Over a field $k$ admitting an automorphism $\theta$ such that $\theta(\theta(x))=x^2$, he could construct a certain involutory automorphism of $\mathsf{Sp}_4(k)$ such that the fixed subgroup is precisely $\mathsf{Sz}(q)$. Repeating the procedure for the Chevalley groups of types $\mathsf F_4$ and $\mathsf G_2$ he constructed what are now known as the \emph{large Ree groups} ${}^2\mathsf{F}_4$ \cite{ReeF4report,ReeF4} (for $p=2$) and \emph{small Ree groups} ${}^2\mathsf G_2$ \cite{ReeG2report,ReeG2} (for $p=3$).

By 1961 Tits too has turned his attention to algebraic groups and he reports on a geometric approach to the simple groups of Suzuki and Ree for the S\'eminaire Bourbaki \cite{suzukietree}. A thorough treatment of the Suzuki groups was later also published in \cite{tits62}. Tits work was an interesting variation on his earlier work on polarities: the `polarity' of a plane with itself had to be replaced with what he calls \emph{une sorte de dualit\'e} between two different varieties, embedded in $\mathbb P^3$ and $\mathbb P^5$ in the case of ${}^2\mathsf B_2$ and embedded in $\mathbb P^6$ and $\mathbb P^{13}$ in the case of ${}^2\mathsf G_2$. These varieties are actually the homogeneous spaces $G/P_1$ and $G/P_2$, where $G$ is the algebraic group of the corresponding type and $P_1$ and $P_2$ are the two classes of maximal parabolic subgroups. In later terminology the rational points of these varieties can be identified with the points and lines of a generalized quadrangle or hexagon, where the incidence relation can be read off from the flag variety $G/B \onto G/P_i$.

Tits must have felt that something remarkable was going on: his geometric construction provides maps whose composition which compose to the the Frobenius, rather than to the identity. For a perfect field, he could think of his construction as a polarity of the geometry with points and lines given by rational points of $(G/P_1,G/P_2)$, but since Tits had observed that he could also make the construction of the Suzuki and Ree groups over imperfect fields, he chose to phrase it rather carefully as \emph{some sort of duality}.

In the terminology that we introduced in this work and with the benefit of hindsight, we could say that Tits was looking at a mixed projective variety
\[
\begin{tikzcd}
G/P_2 \dar\rar & H/Q_1 \rar\dar &  G/P_2 \dar \\
\Spec k \rar & \Spec \ell \rar & \Spec k
\end{tikzcd}
\]
corresponding to a mixed group $(G,H)$ over a mixed field $(k,\ell)$, with $Q_1$ and $P_2$ maximal parabolic subgroups. To this mixed variety we could in principle associate an  incidence structure with points $G/P_2$ and lines $H/Q_1$ defined \emph{over different fields} but the situation was quite unclear at the time because $k \cong \ell$ and $H\cong G$. (This is a bit imprecise, see Remark~\ref{rem:parabolics} for some more details.)

Over the next few years, Tits drastically picks up the pace and makes many important contributions to group theory and geometry. At first this was often over perfect fields only, but after Alexander Grothendieck proved his deep theorem on the existence of maximal tori over arbitrary fields \cite[exp.~XIV]{SGA3} in 1964, this restriction could be lifted. Let us just mention some of these developments: in \cite{boreltits} Borel and Tits provided their structure theory for reductive groups; in \cite{tits62c,tits64} Tits initiated the theory of groups with a $BN$-pair; in \cite{tits66} he provided a structure theory for semi-simple groups in terms of their Tits index and anisotropic kernel; in collaboration with Francois Bruhat, he investigated the structure of algebraic groups over local fields. Meanwhile, he worked on lecture notes on the theory of buildings, which must have circulated by the end of the 60's but weren't formally published until 1974 \cite{tits74}.

The first time a mixed group makes an explicit appearance in the literature is, as far as we can tell, in Steinbergs Yale lecture notes from 1967--68 \cite{steinberg} on groups of Lie type, in the following remark on page 153:

\begin{quotation}
If $k$ is not perfect and $\phi : G\to G$ then $\phi G$ is the subgroup of $G$ in which $X_\alpha$ is parametrized by $k$ if $\alpha$ is long and $k^p$ if $\alpha$ is short. Here $k^p$ can be replaced by any field between $k^p$ and $k$ to yield a rather weird simple group.
\end{quotation}

It is probably not a coincidence that Tits was also in Yale around that the time.  In fact, Hendrik Van Maldeghem has suggested to us that the first time a mixed group (or variety) was observed in the wild may have been in Tits' unpublished classification of Moufang hexagons. We could not date this classification precisely but given that Tits introduced generalized polygons as early as 1959 and that they were probably the first class of buildings he seriously investigated, it seems very plausible that it were indeed the hexagons which lead Tits to these groups for the first time.

One year earlier, in his own Yale 1966--67 lecture notes on algebraic groups, Tits had investigated unipotent groups in positive characteristic. The lecture notes were never formally published until they appeared as appendix B1 of his collected works in 2014, although the results on unipotent groups had appeared earlier in the works of Oesterl\'e and, in a revised form, in appendix B of \cite{PRG}. Although we found no written evidence of this hypothesis, we believe it is likely that Tits thought that a thorough study of algebraic groups in positive characteristic could lead to a more satisfying explanation for why mixed groups are required to complete his classification of buildings.

\subsection{1970--1990}
The 70's and early 80's were the golden years for the classification of finite simple groups. While the mixed groups and mixed buildings began gathering dust, the twisted groups, at least the finite ones, were an important part of the classification and as such well known and studied by group theorists. In particular we should mention that the characterization of the (small) Ree groups proved to be one of the hardest steps in the classification: it cost John G.~Thompson three difficult papers \cite{thompson1,thompson2,thompson3} in 1967, 1972 and 1977 to reduce it to a number-theoretic problem which was solved by Enrico Bombieri \cite{bombieri} in 1981 in an dazzling application of elimination theory; the reviewer remarked that \emph{ordinary mortals such as the present reviewer are overawed by the author's tour de force}.

Also the representation theory of these twisted groups was studied thoroughly. In fact, we noted a strange occurence of the mixing functor $\mix$, introduced in \ref{sec:functors} in the Pierre Deligne and George Lusztig's work \cite[\S11]{delignelusztig} on representations for finite groups of Lie type where remark that their approach to the Suzuki and Ree groups works equally well for ``groups of the form $G=G_1\times G_1$ with $F'(x,y)=(F(y),x)$'' but it is unclear to us what the wider significance here is.

During that period, from the early 70s to the late 80s, with many researchers focusing on finite groups, and Tits himself lecturing at the Coll\`ege de France about sporadic groups in 1976--1977, and about the monster group in 1982--1983, 1985--1986 and 1986--1987, it could seem that not much was happening in the theory of buildings and algebraic groups. But at the same time Tits was actually working on the classification of Moufang polygons, on affine buildings, on Kac-Moody groups and algebras and twin buildings. We will not go into all these developments but focus on the Moufang polygons, since these are most relevant to our story.

As we mentioned earlier, we suspect that Tits completed the classification of Moufang triangles, hexagons and octagons quite early, perhaps in the early 70s. By 1974, he finally published his lecture notes, where he classified (spherical) buildings in rank $\geq 3$. In this classification other mixed buildings pop up, namely those related to groups of type $\mathsf F_4$ and those to groups of type $\mathsf B_n$ and $\mathsf C_n$, $n\geq 3$. These groups---together with the $\mathsf G_2$-variant---are precisely the groups for which we show in our Theorem~\ref{theorem:mixed-groups} that they arise as groups of rational points of a mixed group scheme. We note however, that the $\mathsf B/\mathsf C$-class admits further generalization to groups which are defined over a pair of fields $K,L$ and an additional $K$-vectorspace contained in $L$. We have not yet managed to relate these groups to our own work; the only insight that we have to offer here is that they are probably related to defective quadratic forms (see Remark~\ref{rem:mixed-groups}.\ref{bullet:esotheric}).

So by 1974 all Moufang buildings of rank $\geq 2$ were classified except for the the Moufang quadrangles. In Van Maldeghem's book on generalized polygons \cite[\S3.4.2]{boekhvm} we find a reference to a preprint from 1976 on this subject and it seems that after this Tits did not touch the subject in the next 20 years. One interesting feature is that since $\mathsf B_2\cong\mathsf C_2$, the buildings with a pair of fields and a vector space from the previous paragraph, can be generalized to a `doubly exotic' class of Moufang quadrangles defined over a pair of fields together with a pair of vector spaces over each field, contained in the other field. It is noteworthy that there is no analogon for the case of $\mathsf G_2$ and hexagons in characteristic $3$. The class is also notable because there exists a twisted variant which generalizes the Suzuki-groups ${}^2\mathsf B_2(k,\theta)$. (These groups are hinted at in \cite[\S7.6]{boekhvm} and more explicitly studied by Van Maldeghem in 2007  \cite{generalized-suzuki}. They do not appear explicitly in Tits' overview of Moufang sets in his 1999--2000 lecture notes.)\medskip 

Another interesting development from that time was a program, proposed by Francis Buekenhout \cite{buekenhout}, to study and eventually classify sporadic groups by associating certain diagram geometries to them---some sort of generalizations of buildings. To some extent, geometric ideas \emph{do} play an important role in the proof of the classification of finite simple groups, but these recognition theorems can only be applied deep into the proof, after a very difficult group theoretical analysis and case distinction. Even though Buekenhout's program was largely outpaced by the rapid developments in finite group theory, it marked the beginning of research in \emph{pure} incidence geometry, with (algebraic or finite) groups coming in a posteriori or not at all.

With the end of the classification announced by Daniel Gorenstein in 1983---perhaps prematurely so---there was a definitely a renewed interest in all these ideas and with the appearance of textbooks such as \cite{brown-buildings} and \cite{ronan-lectures} the subject also became more accessible to newcomers.

\subsection{1990--2000} \label{sec:his-1990}
Most of Tits' later research interests can only be found in his \emph{R\'esum\'es des Cours au Coll\`ege de France 1973--2000}. Of particular interest are to our story are the 1991--1992 and 1992--1993 courses on algebraic groups in positive characteristic with a focus on inseparable phenomena and pseudo-reductive groups. One could consider it Tits' metastrategy for doing mathematics to collect all the examples and then study their common features and eventually weave them into an elegant theory; as far as pseudo-reductive groups are concerned it seems that---with the benefit of hindsight and relying on \cite{PRG}---Tits was in the process of constructing all the examples but a few crucial constructions were still missing. It's remarkable that although Tits' constructions are very remniscent of the mixed buildings he discovered decades earlier, he never makes the connection explicit. After Tits' lectures the subject would lay dormant again for many years.\medskip

In his 1994--1995 lectures then, he returns to the classification of Moufang polygons. Relying on his own unpublished work, he proposes a strategy to carry out the classification, lists the known types, and conjectures there are no others. To our own surprise, a definition in his 1994--1995 lecture notes \S3 speaks of a pair of fields with maps $\kappa:K\to L$ and $\lambda : L\to K$ such that the compositions are the square operators. It is a subtle change in view point that seems to have gone unnoticed by subsequent authors: although Richard Weiss recollects that Tits expressed a certain fondness of the symmetry between $K$ and $L$ on many occasions, it is an observation which seemed hard to exploit.\medskip

Tits' lectures clearly worked inspiring because by 1996--1997 Weiss actually manages to complete this classification \emph{faisant preuve d'une virtuosit\'e technique remarquable}\footnote{`demonstrating a remarkable technical virtuosity'} as Tits puts it on the first page of his 1999--2000 lecture notes. To Tits' surprise, Weiss had discovered in Februari 1997\footnote{According to Norbert Knarr in his review for \cite{F4quadrangles}} a \emph{new} and highly exotic class of Moufang quadrangles. Weiss recollects that at first, Tits was somewhat sceptical about the discovery, but became very enthousiastic about it later on; in fact he decided to lecture about it at the Coll\`ege later that year. After the end of the course, in which Weiss' discovery had been presented, Hendrik Van Maldeghem and Bernhard M\"uhlherr \cite{F4quadrangles} found a way to realise these quadrangles as fixed buildings associated to a `Galois-like' involution on the mixed buildings of type $\mathsf F_4$ and so the class of quadrangles was dubbed \emph{mixed quadrangles of type $\mathsf F_4$} when the classification of Moufang polygons appeared in print \cite{moufangpolygons}. According to the R\'esum\'e de Course 1997--1998, Tits gave six lectures on the subject \emph{exotic groups and Galois cohomology} where he adapts the notion of Tits index and anisotropic kernel to give a \emph{Galoisian} proof of the existence of these quadrangles, unfortunately no further details are given. We speculate that in our terminology, these \emph{Weissian quadrangles} arise by mixing together two groups of type $\mathsf F_4$ of relative rank $1$. This would explain why these quadrangles can only exist in the exotic situation of a pair of fields $k^2\subsetneq \ell\subsetneq k$ with strict inclusions: if one of the mixing maps is linear this would split on of the quadratic forms which underly the anisotropic kernel as in Remark~\ref{remark:isogenies}.\ref{bullet:split} and then one of the components group would be of relative rank~$4$.\medskip

Tits' last set of lecture notes dates from 1999--2000. Inspired by the succes of his lectures on Moufang polygons, he lectured on \emph{groups of rank 1 and Moufang sets}. He clearly has some hope that at some point a classification may be achieved, although we add that to this date, most experts believe that this is still far out of reach. The final section is titled \emph{immeubles de Moufang de rang $1$ (suite mais non fin)} (Moufang buildings of rank $1$ --- sequel but not the end). 

\subsection{2000--2016}
Around the year $2000$, Tits retired from public mathematical life but there were many other mathematicians ready to take up the baton. Nonetheless it seems that no one could  oversee the fields of incidence geometry and (pseudo-)reductive algebraic group theory the way Tits could and it seems that research in both fields developed more independently from that point onwards. Two major developments that are very relevant for our history of twisted and mixed groups.

A first development arose in the process of \emph{collecting all the examples} of twisted and mixed groups. Since no further examples were to be expected for rank $\geq 2$, the innovations concern the rank $1$ case. In 2006 \cite{F4sets}, M\"uhlherr and Van Maldeghem found new examples of Moufang sets, arising by some sort of Galois descent from the mixed quadrangles of type $\mathsf F_4$. Since these quadrangles are themselves some sort of twisted $\mathsf F_4$-buildings, these Moufang sets are sometimes called \emph{doubly twisted Moufang sets of type $\mathsf F_4$}. They stand out because, together with the small Ree groups ${}^2\mathsf G_2$, they are the only Moufang sets with root groups of nilpotency class $3$, rather than $1$ or $2$.  Later, in \cite{doublytwisted}, Tom De Medts, Yoav Segev and Richard Weiss showed that these groups can also be obtained starting from groups of type ${}^2\mathsf F_4$ related to the Moufang octagons, resulting in a `commuting diagram' of groups or geometries as depicted below. In fact this diagram commutes in a strong sense: \emph{every} doubly twisted group of type $\mathsf F_4$ can be obtained via either route.  We suspect that in our terminology, these doubly twisted groups arise by taking rational points of twisted group schemes as in the left diagram; if something like this is true the main result of \cite{doublytwisted} could be paraphrased as saying that \emph{Galois descent commutes with twisted descent} (as introduced in \S\ref{sec:twisted-descent}). 
\[ 
\begin{tikzcd}
 {}^2\mathsf F_{4,4} \dar[swap,"\text{Galois}"]  & \text{mixed }\mathsf F_{4,4} \lar[swap,"\text{twisted}"] \dar["\text{Galois}"]  \\ {}^2\mathsf F_{4,1} &\lar["\text{twisted}"] \text{mixed }\mathsf F_{4,1},
\end{tikzcd}
\begin{tikzcd}
 \text{octagon} \dar[swap,"\text{Galois}"]  & \text{mixed }\mathsf F_{4} \text{ building} \lar[swap,"\text{twisted}"] \dar["\text{Galois}"]  \\ \text{doubly twisted } \mathsf F_4 &\lar["\text{twisted}"] \text{Weiss quadrangle}
\end{tikzcd}
\]
where an arrow signifies that its target arises from its source via some sort of descent---which can be twisted descent or Galois descent over an extension of twisted or mixed fields. \medskip

An important further development was the trilogy \cite{weiss-spherical,weiss-affine,descent-in-buildings}, with the first two monographs authored by Weiss and the last one by Bernhard M\"uhlherr, Holger Petersson and Richard Weiss. The first two books aim to provide proofs of Tits' classification theorems for spherical and affine buildings, accessible to a wide audience and without invoking existence theorems from algebraic group theory which ultimately rely on Lie algebra considerations. The final book completes this decoupling of algebraic groups and buildings by providing a purely combinatorial descent theory for buildings (amongst other things). This provides a solid foundational background in which one can study the second diagram that we drew above and in particular it provides a far reaching generalization of (what we presume was) Tits' \emph{Galoisian} proof of the existence of the mixed quadrangles of type $\mathsf F_4$ and the construction in \cite{F4quadrangles}. \medskip

In 2015, Elizabeth Callens and Tom De Medts  \cite{callensdemedts} found Moufang sets related to groups of type $\mathsf F_4$ and relative rank $1$. We speculate that these groups are mixed groups which arise from mixing two groups of type $\mathsf F_4$, one split and one of relative rank $1$. In particular such groups  cannot admit twisted descent since the components are never isomorphic, as we saw in Corollary~\ref{cor:twisted-descent}. It is remarkable that these easier groups were only discovered later: one reason is probably that incidence geometric intuition becomes frail in low rank. In particular, such intuition breaks down in rank $0$ so although we are now encouraged to speculate about, say, anisotropic groups of type ${}^2\mathsf F_4$, they haven't been described thus far.\medskip

A second independent development that took place around the same time was the development of a structure theory and classification theorems for pseudo-reductive algebraic groups. This work appeared in the monographs \cite{PRG} by Brian Conrad, Offer Gabber and Gopal Prasad and \cite{CPRG} by Conrad and Prasad. (Bertrand R\'emy wrote an accessible exposition with some of the key ideas of the first book  \cite{remy-prg} and there is an as of yet unpublished survey of both books by Conrad and Prasad.) The outcome of this classification effort is that \emph{with few exceptions} pseudo-reductive groups arise through a standard construction which requires as input the Weil restriction of a reductive group (amongst other things). The only exceptions arise in characteristic $2$ and $3$; an important class, which includes all characteristic $3$ exceptions is the class of \emph{exotic groups}. Our Theorem~\ref{theorem:pseudo-reductive-groups} states precisely that these exotic groups arise as Weil restrictions too, but starting from \emph{mixed} reductive groups. It should be noted that this theory encounters many difficulties beyond these exotic groups too, which we have not related to our theory of mixed group schemes yet. Some of the difficulties are certainly related to the mixed groups associated to the general class of mixed groups of type $\mathsf B_n$/$\mathsf C_n$ associated to a pair of fields together with a vector space, that we mentioned earlier---and the extra complication when $n=2$ with two vector spaces. Actually in characteristic $2$, the first edition of \cite{PRG} often assumes the situation of a base field $k$ such that $[k:k^2]\leq 2$ to avoid having to deal with these situations. This shortcoming is absent in the second edition where new ideas lead to a complete theory, regardless of $[k:k^2]$. We are mildly optimistic that in time it will turn out these groups fit into our framework. What we are not so optimistic about, but still seems worth looking into, is whether the standard construction of Conrad--Gabber--Prasad admits a more natural interpretation in the mixed setting and whether the commutative pseudo-reductive groups, which are currently treated as a black box, can be investigated deeper in the mixed setting.

Perhaps in time, a theory will emerge which states that every Moufang building is either classical or associated to a mixed reductive algebraic group, and that every pseudo-reductive group arises somehow from the latter class via a Weil restriction. The present work should be seen as a first step in this direction.
 
\appendix
\renewcommand{\thesection}{\Alph{section}}

 \section{$\mathfrak m$-categories} \label{sec:Mcategories}
 After reading through \S\ref{sec:mixing-objects}, the reader may feel encouraged to consider other, similar constructions. This is not directly relevant in the study of groups related to reductive groups, because of the contraints imposed by the combinatorics of root systems, but conceivably such constructions---in particular in conjunction with Weil restrictions---could produce other interesting objects.
 
 In this Appendix we will briefly suggest a general approach to such constructions. Exclusively for this section, we will denote composition by concatenation in diagrammatical order, i.e.~we write $fg$ for $g\circ f$.
 
Let us first make explicit some observations that we were just below the surface throughout \S\ref{sec:mixing-objects}.
 
 \begin{enumerate}
 \item It seems natural to study a category of pairs $(\C,F)$ where $\C$ is a category and $F$ an endomorphism of $\mathrm{id}_\C$, with arrows between such pairs being functors $H:\C\to \D$ such that $H(F_x)=G_{H(x)}$ for every object $x\in\C$---in more technical terms, the whiskerings $F\lhd H = H \rhd G$ agree. (See Proposition~\ref{prop:functoriality} or Definition~\ref{def:bewitched}.)
 
  For instance, consider the \emph{walking endomorphism} $\mathscr N$; this is a category with a single object $\bullet$ and the monoid $\mathbb N$ as endomorphisms $\End_{\mathscr N}(\bullet)$. Its identity functor has an endomorphism which is called the \emph{step} $s : \mathbf 1_{\mathscr N}\to \mathbf 1_{\mathscr N}$, defined by
 \[ \begin{tikzcd}[row sep=small,column sep=small] \bullet \rar["n"] \dar[near start,"1"] & \bullet \dar[near start,"1"] \\ \bullet \rar["n"] & \bullet.  \end{tikzcd} \]
 This provides us with such a pair $(\mathscr N,s^n)$ for every natural number $n$. We can then verify that
  \begin{align*}
  \hom((\mathscr N,s^0),(\C,F)) &\simeq \C^F \\
  \hom((\mathscr N,s^1),(\C,F)) &\simeq \C \\
  \hom((\mathscr N,s^2),(\C,F)) &\simeq t\C,
  \end{align*} where $\C^F$ is the full subcategory of objects $X$ such that $F_X=\mathrm{id}_X$ and $t\C$ the twisted category from Definition~\ref{def:tC}.
 
  As a second example, we consider the \emph{hopping endomorphism} $\mathscr H$. This is a category with two objects $\bullet$ and $\circ$, with arrows $\bullet\overset\alpha\to\circ$ and  $\circ\overset\beta\to\bullet$ and everything these arrows generate. Here too there is an endomorphism $h$ of the identity functor completely determined by
 \[ \begin{tikzcd}[row sep=small,column sep=small] \bullet \rar["\alpha"] \dar[near start,"\alpha\beta"] & \circ \dar[near start,"\beta\alpha"] \\ \bullet \rar["\alpha"] & \circ  \end{tikzcd} \text{ and } \begin{tikzcd}[row sep=small,column sep=small] \circ \rar["\beta"] \dar[near start,"\beta\alpha"] & \bullet \dar[near start,"\alpha\beta"] \\ \circ \rar["\beta"] & \bullet.  \end{tikzcd} \]
 And in this case $\hom((\mathscr H,h),(\C,F))=m\C$, as introduced in Definition~\ref{def:mC}.
 
 This makes certain observations easier. For instance, the reader can try to interpret some of the functors that we defined in \S\ref{sec:functors} as coming from arrows between $(\mathscr N,s^1)$, $(\mathscr N,s^2)$ and $(\mathscr H,h)$.
 \item One inconvenience with this notion arises as follows. The category $t\C$ is naturally endowed with an endomorphism $\Phi$ of the identity functor, provided by the twisters. This tells the full story of $t\C$ in some sense and it would certainly be much better if we could write
 \[  
 \Hom((\mathscr N,s^2),(\C,F)) = (t\C,\Phi).
 \]
 But in the category $m\C$, the situation is more complicated since we must incorporate information about the \emph{mixing maps} into the picture. These produce a collection of maps $\Phi_X : X \to \tau^*(X)$ which combine into  natural transformations $\Phi:\mathrm{id}_{m\C}\to\tau^*$ and $\tau^*\Phi:\tau^*\to\mathrm{id}_{m\C}$. We see that somehow $\tau^*$, $\Phi$ and $\tau^*\Phi$ must come from the endofunctor $T:\mathscr H\to\mathscr H$ which is defined by $T(\bullet)=\circ$, $T(\circ)=\bullet$, $T(\alpha)=\beta$, $T(\beta)=\alpha$, and the natural transformations $u:\mathrm{id}_\C\to T$ and $v:T\to \mathrm{id}_\C$ which satisfy $uv=h$. Clearly, we need to incorporate this in the picture to tell the full story of $m\C$.
 \item Another intuition is that $F\in\End(\mathrm{id}_\C)$ tears a \emph{hole} in the category $\C$. This became apparent already in \S\ref{sec:def-cat} where we drew diagrams
 \[ \begin{tikzcd} \bullet \ar[r,shift left=0.5ex,"\alpha"] & \circ \ar[l,shift left=0.5ex,"\beta"]  \end{tikzcd} \]
 but had to warn the reader that this diagram does not commute but rather $\alpha\beta=F_\bullet$ and $\beta\alpha=F_\circ$, as if there was a hole in the middle of the diagram, preventing us from contracting paths. A related difficulty was encountered in \S\ref{sec:bookkeeping}, where we extended  functors $f^*$ and $f_*$ to semi-linear maps. Somehow this keeps track of how many times a morphism has encircled such a hole, with the monoid $(\mathbb N,+)$ playing the role of a \emph{fundamental monoid} underlying this phenomenon.
 
 A similar situation occurs in semi-linear algebra when we are studying objects over a base object, say schemes $X$ over a field $K$, and suddenly become interested in morphisms $X\to Y$ which are not linear over $K$ but rather over a deeper lying object $k$, say for a Galois extension $K/k$. Every such morphism projects to an element of the Galois group $\mathrm{Gal}(K/k)$ which keeps track of the semi-linearity.
 \end{enumerate}
 
 Although it is straightforward to generalize the notion of a category with endomorphism of the identity $(\C,F)$ to a category with \emph{a monoid} $M$ of endomorphisms of the identity functor, this cannot be the right approach for $m\C$ or for the examples from Galois theory. (In fact $\End(\mathrm{id}_\C)$ is always a commutative monoid.) To include these cases, we will need a definition that is most elegantly stated in the language of (strict) $2$-categories.
 
 In general we will denote $2$-categories with the Fraktur alphabet and in particular we will use $\Catt$ to denote the $2$-category of categories, functors and natural transformations.
 
 \begin{definition}
 	Let $\mathfrak m$ be a $2$-category with a single object $\bullet$. An \emph{$\mathfrak m$-category} is a category $\C$ with a strict $2$-functor $\mathfrak m\to \Catt:\bullet\leadsto\C$.
 \end{definition}
Let us first explain the name. If a monoid $M$ acts on a set $X$, we call $X$ an $M$-set. There is then a morphism $f:M\to \End_{\Set}(X)$ of monoids and thus a functor 
 \[  \M \to \Set : \bullet\leadsto X.  \]
 where $\M$ is the categorification of $M$. So a $\mathfrak m$-category is just the $2$-analogon of an $M$-set.
 
What extra conditions should apply to $\mathfrak m$ remains to be seen. For instance, in many of our examples, the images of the $1$-morphisms in $\mathfrak m$ are invertible. Let us nonetheless what investigate which $2$-categories $\mathfrak m$ are responsible for the examples that we had in mind.
 
\begin{itemize}
	\item If $M$ is a commutative monoid, there is always a $2$-category $\mathfrak c(1,M)$ with a single object $\bullet$, a single arrow $\mathrm{id} : \bullet\to\bullet$ and a collection of $2$-cells $m:\mathrm{id}\to\mathrm{id}$ for every $m\in M$.  (In fact, $M$ \emph{must} be commutative for the interchange law to hold.)
	\item If $G$ is an arbitrary group, then we may form the group $G\rtimes G$ in the usual way with product defined by
	 \[ (a,x)(b,y) = (ab,x^by).  \]
	  We use this to define a $2$-category $\mathfrak{c}(G,G)$ as follows: there is a single object $\bullet$, the $1$-cells correspond to elements $a:\bullet\to\bullet$ of $G$ and $2$-cells are given by  $(a,x):a\implies ax$ for all $(a,x)\in G\rtimes G$.
	  The composition laws are the natural ones:
	  \[ \begin{tikzcd}[column sep=huge] \bullet \rar[bend left=50,"a"{name=U}] \rar[bend right=0,swap,"ax"{description,name=V}] \rar[bend right=50,swap,"axt"{name=W}] & \bullet \ar[Rightarrow, from=U, to=V,"{(a,x)}"] \ar[Rightarrow, from=V, to=W,"{(ax,t)}"]\end{tikzcd} \overset{\text{vertical}}\implies  \begin{tikzcd}[column sep=huge] \bullet \rar[bend left=50,"a"{name=U}] \rar[bend right=50,swap,"axt"{name=V}] & \bullet \ar[Rightarrow, from=U, to=V, "{(a,xt)}"] \end{tikzcd} \]
	  \[ \begin{tikzcd}[column sep=huge] \bullet \rar[bend left=50,"a"{name=U}] \rar[bend right=50,swap,"ax"{name=V}] & \bullet \rar[bend left=50,"b"{name=W}] \rar[bend right=50,swap,"by"{name=X}] & \bullet \ar[Rightarrow, from=U, to=V,"{(a,x)}"] \ar[Rightarrow, from=W, to=X,"{(b,y)}"]  \end{tikzcd}  \overset{\text{horizontal}}\implies \begin{tikzcd}[column sep=huge] \bullet \rar[bend left=50,"ab"{name=U}] \rar[bend right=50,swap,"axby"{name=V}] & \bullet \ar[Rightarrow, from=U, to=V, "{(ab,x^by)}"] \end{tikzcd} \]
	  \item Similarly, if $G$ is a group and $M$ a monoid which acts on $G$ then we may form a direct product $G\rtimes M$ and this gives rise to a category $\mathfrak c(G,M)$; this generalizes the previous two examples.
\end{itemize}
 
This relates to our examples as follows.
 
 \begin{itemize}
 	\item A category $\C$ with an endomorphism $F$ of the identity functor is a $\mathfrak c(1,\mathbb N)$-category as follows:
 	\[  \begin{tikzcd}[column sep=huge] \bullet \rar[bend left=50,"0"{name=U}] \rar[bend right=50,swap,"0"{name=V}] & \bullet \ar[Rightarrow,from=U,to=V,"{n}"] \end{tikzcd} \implies \begin{tikzcd}[column sep=huge] \bullet \rar[bend left=50,"\mathbf 1_\C"{name=U}] \rar[bend right=50,swap,"\mathbf 1_\C"{name=V}] & \bullet \ar[Rightarrow,from=U,to=V,"{F^n}"] \end{tikzcd}  \]
 	\item The mixed category $m\C$ becomes a $\mathfrak c(\mathbb N/2\mathbb N,\mathbb N)$-category as follows:
 	\[  \begin{tikzcd}[column sep=huge] \bullet \rar[bend left=50,"0"{name=U}] \rar[bend right=50,swap,"1"{name=V}] & \bullet \ar[Rightarrow,from=U,to=V,"{(0,1)}" description] \end{tikzcd} \implies \begin{tikzcd}[column sep=huge] \bullet \rar[bend left=50,"\mathbf 1_{m\C}"{name=U}] \rar[bend right=50,swap,"\tau^*"{name=V}] & \bullet, \ar[Rightarrow,from=U,to=V,"{\Phi}"] \end{tikzcd}  \]
 	where we leave the other assignments to the reader.
 	\item Consider a category $\D$ with an object $K$ and a subgroup $G\leq\mathrm{Aut}(K)$. The \emph{fairy} $\C$ is just the slice category over $K$ with $G$-semilinear arrows---in detail: the objects are the arrows $q_X:X\to K$ in $\D$ and the arrows $X\to Y$ are the pairs of arrows $(f,f^\natural)$ such that $f^\natural\circ q_X=q_Y\circ f$. (We denote such an arrow succinctly as $\begin{tikzcd}X \rar["{f,f^\natural}"] & Y\end{tikzcd}$.)
 	
 	Then $\C$ acquires the structure of a $\mathfrak c(G,G)$ category as follows
 	 \[  \begin{tikzcd}[column sep=huge] \bullet \rar[bend left=50,"m"{name=U}] \rar[bend right=50,swap,"mn"{name=V}] & \bullet \ar[Rightarrow,from=U,to=V,"{(m,n)}"] \end{tikzcd} \implies \begin{tikzcd}[column sep=huge] \C \rar[bend left=50,"F_m"{name=U}] \rar[bend right=50,swap,"F_{mn}"{name=V}] & \C. \ar[Rightarrow,from=U,to=V,"{\alpha_{(m,n)}}"] \end{tikzcd}  \]
 	The functors $F_m$, one for every $m\in G$, are given by
%
 	\[ F_m : \C\to \C : \begin{tikzcd} X \rar["f"] \dar["q_X"] & Y \dar["q_Y"] \\ K \rar["{f^\natural}"] & K \end{tikzcd}  \mapsto \begin{tikzcd} X \rar["f"] \dar["q_Xm"] & Y \dar["q_Ym"] \\ K \rar["{(f^\natural)^m}"] & K, \end{tikzcd}  \]
 	denoted more succinctly by
 	\[  F_m : \C\to\C: \big( \begin{tikzcd}X \rar["{f,f^\natural}"] & Y\end{tikzcd}\big) \mapsto \big(\begin{tikzcd}X^m \rar["{f,(f^\natural)^m}"] & Y^m\end{tikzcd}\big).  \]
 	The natural transformations $\alpha_{(m,n)}:F_{m}\to F_{mn}$, one for every pair $m,n\in G$, are given by $(\alpha_{(m,n)})_X = (\mathrm{id}_X,n)$:
 	\[   \begin{tikzcd} X^m \rar["{f,(f^\natural)^m}"] \dar[swap,"{(\mathrm{id}_X,n)}"] & Y^m \dar["{(\mathrm{id}_Y,n)}"]  \\ X^{mn} \rar["{f,(f^\natural)^{mn}}"] & Y^{mn}.  \end{tikzcd} \]
 \end{itemize}

 So we believe that a careful study of $\mathfrak m$-categories (or anything like it) might be of higher explanatory value than a straightforward generalization of our construction.
 
 \section{Fields}
We will now prove two propositions on twisted and mixed field. We have two reasons for doing so. The first reason is that we believe these propositions can be of direct interest for anyone willing to undertake the study of groups and geometries of types ${}^2\mathsf G_2$ and mixed $\mathsf F_4$ from a Galois cohomology point of view, for instance see \cite{callensdemedts} for a wild occurence of a mixed field extension. A second reason is that it provides a peek behind the curtains of what to expect from a twisted or mixed Galois theory.

In this section, we will use exponential notations such as $x^\theta = \theta(x)$ and $x^{\theta-1}=\theta(x)/x$.

 \subsection{Twisted fields and $p=3$}
 Let us first investigate blended fields $(k,\theta)$ also known as fields with Tits endomorphism. Surprisingly at first, the underlying field is never algebraically closed. For $p=2$, it is shown in \cite{doublytwisted} that the equation $x^2+x+1$ has no solutions. For other characteristics, we have:
 
 \begin{proposition} \label{notalgcl} If $p>2$ then equations $x^{\theta-1}=-1$ and $x^{p-1}=-1$ have no solutions.
 \end{proposition}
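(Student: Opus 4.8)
The plan is to show that any hypothetical solution of either equation is forced to lie in a \emph{finite} subfield of $k$, on which $\theta$ must restrict to a Galois automorphism; the condition $\theta^2=\fr$ (the blended-field axiom $x^{\theta^2}=x^p$) then clashes with the arithmetic of that finite field. Throughout I would use that $\theta$, being a field endomorphism, is injective and fixes the prime field $\mathbf F_p$ (it fixes $1$, hence every $n\cdot 1$). I may assume $x\neq 0$, since otherwise neither expression is defined, and for $x^{p-1}$ the value $0$ is ruled out as $0\neq -1$.

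First I would dispose of $x^{\theta-1}=-1$, i.e.\ $\theta(x)=-x$. Applying $\theta$ once more gives
\[ x^p = \theta^2(x) = \theta(-x) = -\theta(x) = x, \]
so $x$ is a root of $X^p-X$ and therefore $x\in\mathbf F_p$. But $\theta$ fixes $\mathbf F_p$, so $\theta(x)=x$, whence $-x=x$ and $2x=0$. As $p>2$ this forces $x=0$, a contradiction. This case is purely formal and self-contained.

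The equation $x^{p-1}=-1$ is the substantive case and the place where the blended structure is genuinely used. Writing $x^p=-x$ and using that $p$ is odd (so $(-x)^p=-x^p$), I compute
\[ x^{p^2} = (x^p)^p = (-x)^p = -x^p = -(-x) = x, \]
so $x\in\mathbf F_{p^2}$. Moreover $x\notin\mathbf F_p$, since on $\mathbf F_p\setminus\{0\}$ one has $x^{p-1}=1\neq-1$ when $p>2$; hence $\mathbf F_p(x)=\mathbf F_{p^2}$ is a subfield of $k$. The key step is that $\theta$ preserves it: for $a\in\mathbf F_{p^2}$ we have $\theta(a)^{p^2}=\theta(a^{p^2})=\theta(a)$, so $\theta(a)\in\mathbf F_{p^2}$. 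Thus $\theta$ restricts to an injective endomorphism of the finite field $\mathbf F_{p^2}$, i.e.\ to an element of $\Aut(\mathbf F_{p^2})=\mathrm{Gal}(\mathbf F_{p^2}/\mathbf F_p)\cong\mathbf Z/2\mathbf Z$, whose square must equal $\theta^2=\fr$. But $\fr$ is the non-trivial element of this group of order $2$, and no element of $\mathbf Z/2\mathbf Z$ squares to the generator. This contradiction finishes the proof.

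The main obstacle—really the only non-formal point—is recognizing that the Tits condition $\theta^2=\fr$ is incompatible with $\mathbf F_{p^2}\subseteq k$: the content of the proposition is exactly that a blended field cannot contain $\mathbf F_{p^2}$, a phenomenon with no analogue for an ordinary field (over an ordinary field $x^{p-1}=-1$ is perfectly solvable in $\mathbf F_{p^2}$). I would therefore isolate ``$\mathbf F_{p^2}\not\subseteq k$ for a blended field $(k,\theta)$'' as a lemma, after which the $x^{p-1}=-1$ statement is an immediate corollary.
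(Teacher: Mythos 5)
Your proof of the first equation is exactly the paper's: apply $\theta$ to $x^\theta=-x$, deduce $x^p=x$, hence $x\in\mathbf F_p$, where $\theta$ acts trivially and $-1\neq 1$ forces a contradiction. For the second equation, however, you take a genuinely different route. The paper disposes of it in one line by the exponent factorization $p-1=(\theta-1)(\theta+1)$: if $x^{p-1}=-1$, then $y=x^{\theta+1}$ satisfies $y^{\theta-1}=y^\theta/y=x^px^\theta/(x^\theta x)=x^{p-1}=-1$, reducing the second case to the first. You instead argue that a solution of $x^{p-1}=-1$ satisfies $x^{p^2}=x$ but $x\notin\mathbf F_p$, so it generates a copy of $\mathbf F_{p^2}$ inside $k$; since $\theta$ commutes with $p$-th powers it restricts to an automorphism of that finite subfield, an element of $\mathrm{Gal}(\mathbf F_{p^2}/\mathbf F_p)\cong\mathbf Z/2\mathbf Z$ whose square would have to be the nontrivial element (Frobenius), which is impossible. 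Both arguments are correct. The paper's reduction is shorter and purely formal, needing nothing beyond the first case; yours costs a little finite-field theory but isolates a structural fact of independent interest — a blended field $(k,\theta)$ contains no copy of $\mathbf F_{p^2}$ — which sharpens the paper's surrounding remark that such fields are never separably closed, and which the factorization argument does not make explicit.
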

 \begin{proof}
 	For the first part apply $\theta$ to $x^\theta = -x$ to obtain $x^p=-x^\theta=x$. This implies that $x\in\mathbf F_p$, but $\theta$ acts trivally on the prime field and therefore $x^{\theta-1}=1$. For the second part, observe that that $p-1=(\theta-1)(\theta+1)$.
 \end{proof}
 
 This shows that $\theta$ cannot be extended to the algebraic closure $k_{\mathrm a}$, and not even to the separable closure $k_{\mathrm s}$. On the other hand, $\theta$ can always be extended to the \emph{perfect closure}  $k_{\mathrm p}$.
 
 \begin{proposition}  There exists a field extension $k_{\mathrm p}/k$ such that $k_{\mathrm p}$ is perfect and $\theta$ can be extended to $K$.
 \end{proposition}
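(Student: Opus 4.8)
The plan is to build the perfect closure $k_{\mathrm p}$ in the usual way and to extend $\theta$ to it by conjugating with powers of the Frobenius, which is available as an automorphism precisely because $k_{\mathrm p}$ is perfect. First I would fix an algebraic closure of $k$ and set $k_{\mathrm p}=\bigcup_{n\geq 0}k^{1/p^n}$, the perfect closure; it is a purely inseparable extension of $k$ on which the Frobenius $\Fr:x\mapsto x^p$ is bijective, so I may write $\Fr^{-1}$ for its inverse on $k_{\mathrm p}$ and note that every element has a unique $p^n$-th root $\Fr^{-n}(x)$.

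The one algebraic input I would record at the outset is that $\theta$ commutes with the Frobenius of $k$: since the blended condition $x^{\theta^2}=x^p$ says $\theta^2=\Fr$, one gets $\theta\circ\Fr=\theta^3=\Fr\circ\theta$. With this in hand I would define, for $x\in k^{1/p^n}$,
\[ \theta_{\mathrm p}(x)=\Fr^{-n}\bigl(\theta(\Fr^n(x))\bigr), \]
which makes sense because $\Fr^n(x)\in k$ (so $\theta$ applies) and because $k_{\mathrm p}$ is perfect (so the outer $\Fr^{-n}$ applies). The first thing to check is independence of $n$: passing from $n$ to $n+1$ replaces $\theta(\Fr^{n+1}(x))$ by $\Fr\bigl(\theta(\Fr^n(x))\bigr)$ using the commutation above, and the extra $\Fr$ is cancelled by the extra $\Fr^{-1}$. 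Hence the maps on the layers $k^{1/p^n}$ glue to a single map $\theta_{\mathrm p}$ on $k_{\mathrm p}$, and the case $n=0$ shows it restricts to $\theta$ on $k$.

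It then remains to verify that $\theta_{\mathrm p}$ is a field endomorphism and again a Tits endomorphism. On each layer $\theta_{\mathrm p}$ is the composite $\Fr^{-n}\circ\theta\circ\Fr^n$ of field homomorphisms, hence additive and multiplicative there, and compatibility across layers makes $\theta_{\mathrm p}$ a homomorphism on the union. For the Tits relation I would first observe that $\theta_{\mathrm p}$ preserves each layer, since $\Fr^n(\theta_{\mathrm p}(x))=\theta(\Fr^n(x))\in k$ forces $\theta_{\mathrm p}(x)\in k^{1/p^n}$; a direct computation then gives $\theta_{\mathrm p}^2(x)=\Fr^{-n}\theta^2\Fr^n(x)=\Fr^{-n}\Fr^{n+1}(x)=\Fr(x)$, i.e. $x^{\theta_{\mathrm p}^2}=x^p$ throughout $k_{\mathrm p}$.

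There is no deep obstacle here; the only point that genuinely uses the blended structure is the commutation $\theta\circ\Fr=\Fr\circ\theta$, which is exactly what guarantees both the well-definedness across the tower and the persistence of the Tits relation. I would therefore present that commutation as the crux, with the perfectness of $k_{\mathrm p}$ merely supplying the inverse Frobenius that makes the conjugation formula meaningful; note that $\theta_{\mathrm p}$ need not be surjective, but the proposition only asks for an extension, so this causes no difficulty.
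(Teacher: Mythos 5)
Your proof is correct and follows essentially the same route as the paper: the paper extends $\theta$ to each layer $k^{p^{-n}}$ via the isomorphism $x\mapsto x^{p^n}$ (i.e.\ exactly your conjugation formula $\Fr^{-n}\circ\theta\circ\Fr^n$) and takes the union. Your write-up merely makes explicit the details the paper leaves as ``clearly'' — the commutation $\theta\circ\Fr=\Fr\circ\theta$, the compatibility across layers, and the persistence of the Tits relation.
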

 \begin{proof}
 	Clearly $\theta$ can be extended to $k^{p^{-n}}\subseteq k_{\mathrm a}$ for all $n\in \mathbb N$, by the isomorphism $k^{p^{-n}} \to k : x\mapsto x^{p^n}$, so $\theta$ can be extended to $k_{\mathrm p} = k^{p^{-\infty}} = \cup_n k^{p^{-n}}$.
 \end{proof}
 
 It turns out that when $p=2$ resp.~$p=3$, the unsolvability of the equation $x^2+x+1$ resp.~$x^2=-1$ is essentially the \emph{only} obstruction for extending $\theta$ to a quadratic extension. For $p=2$ this is implicit in \cite{doublytwisted}, so from now on we will focus on $p=3$.
 
 More precisely, we will show that for $p=3$, $\theta$ can be extended to a field $K$ where there are only two classes of squares: the class of $1$ and the class of $-1$.

 \begin{lemma}
 	Let $p=3$ and assume $\delta\in k^\times$. Then exactly one of the following occurs.
 	\begin{itemize}
 		\item $\delta=x^2$ for some $x$;
 		\item there exists a field extension $\ell/k$ of degree $4$ such that $\delta$ has a square root in $\ell$ and $\theta$ can be extended to $\ell$;
 		\item $\delta=-x^{\theta-1}$ for some $x$ and there exists no field extension $\ell/k$ such that $\delta$ has a square root in $\ell$ and $\theta$ can be extended to $\ell$.
 	\end{itemize}
 \end{lemma}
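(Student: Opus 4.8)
The plan is to dispatch the square case at once and then, for non-square $\delta$, to analyse directly when $\theta$ can be prolonged across $\sqrt\delta$. Two elementary facts will be used throughout. First, any $\theta$-fixed $a\in k^\times$ satisfies $a=a^{\theta^2}=a^3$, so $a^2=1$ and hence the $\theta$-fixed subgroup of $k^\times$ is exactly $\{1,-1\}$. Second, by the method of Proposition~\ref{notalgcl}, no element of the form $-u^{\theta+1}$ is a square: from $-u^{\theta+1}=c^2$ one applies $\theta$, uses $u^\theta=-c^2/u$ to get $(cu)^2=(c^\theta)^2$, whence $u=\pm c^{\theta-1}$ and finally $u^{\theta+1}=c^2=-c^2$, a contradiction in characteristic $3$. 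In particular $-1$ is not a square, and writing $G=k^\times/(k^\times)^2$ with its induced involution $\theta$, the class $\overline{-1}$ lies outside $\operatorname{im}(1+\theta)$.

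If $\delta$ is a square we are in the first case; and since $-x^{\theta-1}$ is never a square, the third case is automatically excluded, so there is nothing to reconcile. Assume henceforth $\delta$ is a non-square. Any extension $\tilde\theta$ of $\theta$ to a field $\ell\ni s:=\sqrt\delta$ must send $s$ to a square root $s'$ of $\delta^\theta$, and the Tits relation $\tilde\theta^2(s)=s^3=\delta s$ forces $\tilde\theta(s')=\delta s$; thus $\ell\supseteq k(s,s')=k(\sqrt\delta,\sqrt{\delta^\theta})$. Note that $\delta^\theta$ is again a non-square in $k$, for $\delta^\theta=c^2$ would give $\delta^3=(\delta^\theta)^\theta=(c^\theta)^2$, making $\delta$ a square. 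If moreover $\delta^{\theta-1}=\delta^\theta/\delta$ is a non-square (equivalently $s'\notin K:=k(s)$), then $E=k(\sqrt\delta,\sqrt{\delta^\theta})$ has degree $4$ and I would define $\tilde\theta$ on the presentation $E=k[S,S']/(S^2-\delta,\,S'^2-\delta^\theta)$ as the $\theta$-semilinear endomorphism $S\mapsto S'$, $S'\mapsto\delta S$. It respects both relations, since $\tilde\theta(S)^2=\delta^\theta=\tilde\theta(S^2)$ and $\tilde\theta(S')^2=\delta^3=\delta^{\theta^2}=\tilde\theta(S'^2)$, and $\tilde\theta^2=\Fr_E$ because $\tilde\theta^2(S)=\delta S=S^3$ and similarly on $S'$. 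This lands us in the second case.

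There remains the situation $\delta^{\theta-1}=v^2$ with $v\in k$, so that $s'=\pm vs\in K$ and $[K:k]=2$. Here $(v^{\theta+1})^{\theta-1}=v^{\theta^2-1}=v^2=\delta^{\theta-1}$, so $\delta/v^{\theta+1}$ is $\theta$-fixed and therefore equals $\pm1$, i.e.\ $\delta=\pm v^{\theta+1}$. If the sign is $-$, then $\delta=-v^{\theta+1}=-x^{\theta-1}$ with $x=v^{\theta+2}$ (using $\theta^2=3$ on exponents, $x^{\theta-1}=v^{(\theta+2)(\theta-1)}=v^{\theta+1}$), and no extension whatsoever can exist: in any field the square root $s'$ of $\delta^\theta$ must be $\pm vs$, whence $\tilde\theta^2(s)=v^{\theta+1}s=-\delta s\neq\delta s$, violating the Tits relation. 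This is precisely the third case. If the sign is $+$, then $\theta$ already extends to $K$ via $\tilde\theta(s)=vs$, and the task becomes to exhibit an honest degree-$4$ extension so that this subcase, too, is subsumed by the second case.

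That last upgrade is the step I expect to be the main obstacle, and the plan is to carry it out inside $(K,\theta')$. Because $\delta=+v^{\theta+1}\in\operatorname{im}(1+\theta)$ while $\overline{-1}\notin\operatorname{im}(1+\theta)$, the element $-\delta$ is also a non-square in $k$; this forces both $s$ and $-s$ to be non-squares in $K$, since a square root of $\pm s$ in $K=k(\sqrt\delta)$ would display $-\delta$ as a square in $k$. Hence $K^\times/(K^\times)^2$ is strictly bigger than $\{\overline1,\overline{-1}\}$, and re-running the two standing facts for $(K,\theta')$ shows $\theta'$ cannot act trivially on it (trivial action would make every non-square of $K$ equal to $-x^{\theta'-1}$, collapsing the square-class group to $\{\overline1,\overline{-1}\}$). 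So $\operatorname{im}(1+\theta')\neq0$, and choosing $u\in K$ with $w:=u^{\theta'+1}$ a non-square, one has $w^{\theta'-1}=u^{\theta'^2-1}=u^2$ a square with the norm-sign automatically $+$; thus $\theta'$ extends to $K(\sqrt w)$, a degree-$4$ extension of $k$ containing $\sqrt\delta$. Finally, mutual exclusivity of the three cases is exactly the separation $\operatorname{im}(1+\theta)$ versus $\overline{-1}+\operatorname{im}(1+\theta)$: the extendable non-square subcase has $\overline\delta$ in the former coset and the non-extendable subcase has $\overline\delta$ in the latter, and both are disjoint from the trivial class, so no two cases can hold simultaneously.
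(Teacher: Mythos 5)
Your proof is correct, and its skeleton matches the paper's up to the decisive subcase. The generic branch is literally the paper's: when $\delta^{\theta-1}$ is a non-square you build $k(\sqrt\delta,\sqrt{\delta^\theta})$ with the identical twister $\sqrt\delta\mapsto\sqrt{\delta^\theta}\mapsto\delta\sqrt\delta$; the dichotomy $\delta=\pm v^{\theta+1}$ (the paper's $\delta=\pm x^{\theta-1}$, the same thing modulo squares) is derived the same way; and your non-extendability argument in the minus case is the paper's sign contradiction, merely phrased as a direct violation of the Tits relation $\tilde\theta^2(s)=\delta s$ instead of as $\delta=y^2=-y^2$. Where you genuinely diverge is the plus case $\delta=+v^{\theta+1}$ with $\delta$ a non-square: the paper disposes of it in one line by applying its first bullet to $x$ (where $\delta=x^{\theta-1}$) and observing $\sqrt\delta=\sqrt{x^{\theta}}/\sqrt{x}\in k(\sqrt x,\sqrt{x^\theta})$, whereas you first extend $\theta$ to $K=k(\sqrt\delta)$ and then bootstrap: $-\delta$ is a non-square in $k$, hence $\pm\sqrt\delta$ are non-squares in $K$, hence $K$ has more than two square classes, hence $\theta'$ moves some square class, which yields a non-square norm $w=u^{\theta'+1}$ and a second quadratic step $K(\sqrt w)$ of total degree $4$. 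Your route is longer but self-propagating (the same plus-case extension mechanism is reused one level up), and it exposes the structure---the action of $\theta$ on the square-class group and the cosets of $\im(1+\theta)$---that is exactly the engine of the Proposition the paper proves next; the paper's route is shorter but ad hoc. One caution: your closing sentence asserting that every extendable non-square has $\bar\delta\in\im(1+\theta)$ is false in general (when $\delta^{\theta-1}$ is a non-square, $\bar\delta$ need not be a norm class; it is merely outside the coset $\overline{-1}\cdot\im(1+\theta)$). This does not damage the lemma, since your second and third cases are incompatible by their very statements, and the first is excluded from the third by your fact that $-u^{\theta+1}$ is never a square, but the coset bookkeeping as written should be corrected.
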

 \begin{proof}
 	Assume that $\delta$ and $\delta^\theta$ belong to a different square class. Let
 	\[  \ell=k(\sqrt{\delta},\sqrt{\delta^\theta}), \]
 	then clearly $[\ell:k]=4$. Extend $\theta$ to $\ell$ by setting
 	\[ \sqrt{\delta}^\theta = \sqrt{\delta^\theta}\quad \sqrt{\delta^\theta}^\theta = \sqrt{\delta}^3=\delta\sqrt{\delta},   \]
 	then we verify that this gives rise to an endomorphism of $\ell$. It is sufficient to verify on the basis $1,\sqrt{\delta},\sqrt{\delta^\theta},\sqrt{\delta^{\theta+1}}$ that $(uv)^\theta = u^\theta v^\theta$ and this is a quick exercise.
 
 	Otherwise, $\delta\delta^\theta = x^2$ for some $x$. Then $\delta^{\theta+1} = (x^{\theta-1})^{\theta+1}$.  Applying $\theta-1$ we get $\delta^2 = (x^{\theta-1})^2$. So either
 	\begin{itemize}
 		\item $\delta = x^{\theta-1}$. But now, either $x^\theta$ and $x$ belong to the same square class, then $\delta$ is a square, or they belong to a different class and the field extension $k(\sqrt{x},\sqrt{x^\theta})$ can be constructed by the first item and does the job, since $(\sqrt{x^\theta}/\sqrt{x})^2 = \delta$.
 		\item $\delta = -x^{\theta-1}$, and then assume $\delta=y^2$ in the extension field $\ell$. Then applying $\theta+1$ to $y^2 = (y^{\theta+1})^{(\theta-1)} = -x^{\theta-1}$ gives $(y^{\theta+1})^2 = x^2$ so $x=\pm y^{\theta+1}$. But then $x^{\theta-1}=y^2$ and thus $\delta=-x^{\theta-1}=-y^2$, contradiction. \qedhere
 	\end{itemize}
 \end{proof}
 
 \begin{proposition}
 	There exists a field extension $K/k$ such that $\theta$ can be extended to $K$ and every element of $K^\times$ is either of a square or minus a square.
 \end{proposition}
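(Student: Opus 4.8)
The plan is to build $K$ as the union of a countable tower of blended field extensions of $(k,\theta)$, at each level forcing more elements to become squares or negatives of squares, with the preceding lemma supplying the basic fixing step.

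First I would isolate the key consequence of the lemma together with Proposition~\ref{notalgcl}: \emph{for every $\delta$ in a blended field $(F,\theta)$, at least one of $\delta,-\delta$ is either already a square or acquires a square root in some blended extension of $F$ of finite degree.} Indeed, if $\delta$ is a square we are done; if $\delta$ falls in the second case of the lemma we adjoin $\sqrt{\delta}$ in a degree-$4$ blended extension; and if $\delta=-x^{\theta-1}$ falls in the obstructed third case, then $-\delta=x^{\theta-1}$ is of the shape treated in the first two cases, since the proof of the lemma shows that an element of the form $x^{\theta-1}$ never lands in the obstructed case. Thus we may instead adjoin $\sqrt{-\delta}$. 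Moreover $\delta$ and $-\delta$ cannot both be obstructed: that would force $(y/x)^{\theta-1}=-1$ for suitable $x,y$, which is impossible by Proposition~\ref{notalgcl}. Hence the recipe ``fix whichever of $\delta,-\delta$ is fixable'' is unambiguous, and each fixing step produces a genuinely blended extension, because the explicit formulas in the lemma's proof give $\theta^2(\sqrt{\delta})=(\sqrt{\delta})^3$ on the new generators.

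Next I would define a one-step closure. Given a blended field $(F,\theta)$, well-order $F^\times$ and run through it by transfinite recursion: at each step, if the current element is neither a square nor minus a square in the field assembled so far, adjoin a square root of it or of its negative as above, extending $\theta$; at limit stages take unions of the chain. This yields a blended extension $F^{(1)}\supseteq F$ in which every element of $F^\times$ is a square or minus a square in $F^{(1)}$. Iterating, set $F^{(0)}=k$ and $F^{(n+1)}=(F^{(n)})^{(1)}$, and put $K=\bigcup_{n\in\mathbb N}F^{(n)}$, with $\theta$ the common extension of the maps on all the $F^{(n)}$.

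Finally I would verify that $K$ works: any $\delta\in K^\times$ lies in some $F^{(n)}$, hence is a square or minus a square already in $F^{(n+1)}\subseteq K$, so $K$ has the desired property; and $\theta$ restricts to the original endomorphism on $k$ and is a well-defined field endomorphism of $K$ because the tower is a chain of compatible blended fields. The main obstacle here is not any individual algebraic computation but the conceptual step of dodging the third case of the lemma by passing to $-\delta$, together with the bookkeeping needed to apply each fixing step in the \emph{current} enlarged field and to confirm that the blended identity $\theta^2(x)=x^3$ persists through every finite extension in the construction.
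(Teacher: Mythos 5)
Your proof is correct, and it shares the paper's skeleton---iterate the fixing lemma along a tower of blended extensions and pass to the union---but it handles the obstructed third case by a genuinely different mechanism. The paper's iteration adjoins square roots only to elements in the lemma's \emph{second} case; in the saturated union every element is then a square or of the form $-x^{\theta-1}$, and the paper finishes with pure exponent arithmetic: writing $x$ itself as $y^2$ or as $-y^{\theta-1}$ yields $\delta=-(y^{\theta-1})^2$ in the first case and, using $\theta^2=3$ so that $(\theta-1)^2=4-2\theta$, $\delta=-(y^{2-\theta})^2$ in the second; thus obstructed elements turn out to be minus squares for free, with no further extensions and no appeal to Proposition~\ref{notalgcl} at this stage. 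You instead make minus-squareness true by construction: whenever $\delta$ is obstructed you adjoin $\sqrt{-\delta}$, which rests on your (correct) observation that $\delta$ and $-\delta$ cannot both be obstructed, since that would give $(y/x)^{\theta-1}=-1$ against Proposition~\ref{notalgcl}. The trade-off: your route avoids the closing exponent computation but needs the extra negation argument and a slightly heavier recursion (``fix $\delta$ or $-\delta$'' at each step), while the paper keeps the iteration minimal and extracts the conclusion \emph{a posteriori}; on the other hand, your explicit transfinite bookkeeping, and the check that the identity $\theta^2(\sqrt{\delta})=(\sqrt{\delta})^3$ persists through each degree-$4$ step, spell out carefully what the paper compresses into ``iteratively apply the previous lemma''---bookkeeping that both arguments in fact require.
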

 \begin{proof}
 	Iteratively apply the previous lemma to obtain a field where every element $\delta$ is either a square or of the form $\delta = -x^{\theta-1}$.
 
 	If $\delta$ is a square, say $\delta = x^2$, and then it is of the form $\delta = y^{\theta-1}$ with $y=x^{\theta+1}$.
 
 	If $\delta = -x^{\theta-1}$, then either $x$ is a square, say $x=y^2$ and then $\delta = - (y^{\theta-1})^2$ is minus a square, or $x$ is of the form $x=-y^{\theta-1}$ and thus $\delta  = - ((-y^{\theta-1})^{\theta-1}) = -y^{\theta^2-2\theta+1}=-y^{4-2\theta}=-(y^{2-\theta})^2$ and $\delta$ is minus a square again.
 \end{proof}
 
 \subsection{Mixed fields and $p=2$}
 Recall from Example \ref{ex:mixedrings}.\ref{ex:nestedfields} that a mixed field $(k,\ell,\kappa,\lambda)$ always originates from a field extension $\ell/k$ such that $\ell^p\subseteq k$. Let $\Omega/\ell$ be another field extension and assume $L/K$ are subfields of $\Omega$ such that $L^p\subseteq K$, $\ell\subseteq L$ and $k\subseteq K$ then $M=(K,L)$ is an extension of the mixed field $m=(k,\ell)$; moreover it is easily verified that every extension if mixed fields arised this way. In particular, taking $\Omega = \ell^{\mathrm a}$ the algebraic closure of $\ell$, we see that there is an algebraic closure (in contrast with Proposition~\ref{notalgcl}).
 
 From now on, let $p=2$ and let us study \'etale algebras over a field of degree $2$, by which we mean extensions $(K,L)/(k,\ell)$ such that $K/k$ and $L/\ell$ are \'etale algebras of degree $2$. In Grothendieck's Galois theory, these should correspond to sets of order $2$ with a continuous action of the absolute Galois group, although it remains to be seen what this means for mixed fields. Recall the following fact:
 
 \begin{proposition} \label{prop:etale-ord}The \'etale extensions of $k$ of degree $2$ are classified by the elements of $\coker\wp = k/\im\wp$, where
 \[ \wp :  k \to k : u \mapsto u^2 +u. \]
 \end{proposition}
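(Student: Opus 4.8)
The plan is to realise the classification as the degree-$2$ case of Artin--Schreier theory in characteristic $2$, setting up the correspondence explicitly and verifying it by a direct computation of $\wp$ on a quadratic extension. Here ``\'etale extension of degree $2$'' is read in the Galois-category sense, i.e.\ as an \'etale $k$-algebra of dimension $2$, so that the zero class of $\coker\wp$ is allowed to correspond to the split algebra $k\times k$.

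First I would define, for each $a\in k$, the $k$-algebra $A_a = k[x]/(x^2+x+a)$, and record that its generator $\alpha$ satisfies $\wp(\alpha)=\alpha^2+\alpha=a$. Since the formal derivative of $x^2+x+a$ is the unit $1$, the polynomial is separable, so $A_a$ is \'etale of degree $2$; thus $a\mapsto A_a$ really does produce \'etale extensions. The two qualitative cases are distinguished by whether $x^2+x+a$ has a root in $k$: it splits, giving $A_a\cong k\times k$, precisely when $a=u^2+u=\wp(u)$ for some $u\in k$, that is when $a\in\im\wp$; otherwise $A_a$ is a separable quadratic field extension (automatically Galois, since every degree-$2$ extension is normal).

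The heart of the argument is the computation deciding when $A_a\cong A_b$. Writing $A_b=k[\gamma]$ with $\wp(\gamma)=b$, hence $\gamma^2=\gamma+b$, any $k$-algebra map $A_a\to A_b$ sends $\alpha$ to an element $c+d\gamma$ with $\wp(c+d\gamma)=a\in k$. Using $\mathrm{char}=2$ one computes $\wp(c+d\gamma)=(\wp(c)+d^2 b)+(d^2+d)\gamma$, and requiring the $\gamma$-coefficient to vanish forces $d^2+d=0$, i.e.\ $d\in\{0,1\}$ since $\mathbf F_2\subseteq k$. The value $d=0$ forces $a=\wp(c)\in\im\wp$ (the split case), whereas $d=1$ gives $a=\wp(c)+b$, i.e.\ $a\equiv b$ in $\coker\wp$; conversely such a $c$ furnishes an isomorphism. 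This shows $a\mapsto[A_a]$ is well defined and injective on $\coker\wp$, with the identity class corresponding to $k\times k$.

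Finally I would check surjectivity: an arbitrary \'etale degree-$2$ algebra is either split, hence $A_0$, or a separable quadratic field extension $K=k(\theta)$. In the latter case the minimal polynomial $\theta^2+p\theta+q$ has $p\neq0$ (otherwise $x^2+q$ would be inseparable in characteristic $2$), and rescaling $\alpha=\theta/p$ yields $\wp(\alpha)=q/p^2$, so $K\cong A_{q/p^2}$ lies in the image. The only real obstacle is the bookkeeping in the $\wp(c+d\gamma)$ computation and the characteristic-$2$ sign conventions; there is no conceptual difficulty, precisely because in degree $2$ separable already implies normal, so an \'etale degree-$2$ algebra is exactly an Artin--Schreier $\mathbf Z/2$-torsor. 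As an alternative one could instead invoke the Artin--Schreier sequence $0\to\mathbf F_2\to\mathbf G_a\xrightarrow{\wp}\mathbf G_a\to0$ together with additive Hilbert~$90$ ($H^1(G_k,\mathbf G_a)=0$) to obtain $H^1(G_k,\mathbf F_2)\cong k/\wp(k)=\coker\wp$ directly, but the elementary computation above is better suited to the self-contained spirit of this appendix.
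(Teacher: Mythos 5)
Your proof is correct. One thing to be aware of: the paper does not actually prove this proposition at all --- it is ``recalled'' as a standard fact (degree-$2$ Artin--Schreier theory), with the correspondence $u \leftrightarrow k[X]/(X^2+X+u)$ only stated in the sentence following the proposition. So there is no proof in the paper to compare against; what your write-up supplies is precisely the elementary argument the paper implicitly invokes, and it matches in both substance and style the paper's actual proof of the \emph{mixed} analog immediately afterwards, where extensions are likewise written as $k[X]/(X^2+X+e)$ (denoted there $\ell(\sqrt[s]{e})$) and the isomorphism condition is extracted coefficient-by-coefficient on a basis, yielding exactly your conclusion that two parameters give isomorphic algebras if and only if they agree modulo $\im\wp$. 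Your computation $\wp(c+d\gamma)=(\wp(c)+d^2b)+(d^2+d)\gamma$, the dichotomy $d\in\{0,1\}$, and the rescaling $\alpha=\theta/p$ in the surjectivity step are all correct. The only point worth polishing is the phrase ``conversely such a $c$ furnishes an isomorphism'': you should say explicitly that the map $\alpha\mapsto c+\gamma$ is surjective (its image contains $\gamma$, hence all of $k[\gamma]$) and therefore bijective by dimension count --- injectivity is not automatic when $A_a$ is the split algebra, so surjectivity is the right thing to check.
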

In this correspondence the element $u\in k$ correspond to the extension $k[X]/(X^2+X+u)$ and in particular the trivial element of $\coker\wp$ corresponds to $k\oplus k$. 
 
This is the mixed analog of this fact \ref{prop:etale-ord}:
 
 \begin{proposition} The \'etale extensions of $m=(k,\ell,\kappa,\lambda)$ of degree $2$ are classified by the elements of $\coker \tilde\wp = (k\oplus \ell)/\im \tilde\wp$ where
 \[ \tilde\wp : k\oplus\ell\to k\oplus \ell : (x,y)\mapsto (x+y^\lambda,x^\kappa+y). \]
 \end{proposition}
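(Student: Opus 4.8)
The plan is to imitate, in the mixed world, the proof of the classical Proposition~\ref{prop:etale-ord} (Artin--Schreier theory for quadratic étale algebras in characteristic $2$). First I fix what the objects are: a degree-$2$ étale extension of $m=(k,\ell,\kappa,\lambda)$ is a mixed ring $M=(K,L,\hat\kappa,\hat\lambda)$ in which $K$ is a quadratic étale $k$-algebra, $L$ a quadratic étale $\ell$-algebra, and $\hat\kappa\colon K\to L$, $\hat\lambda\colon L\to K$ are ring maps restricting to $\kappa,\lambda$ and satisfying $\hat\lambda\hat\kappa=\fr_K$, $\hat\kappa\hat\lambda=\fr_L$. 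Since $K,L$ are reduced, $\fr_K$ and $\fr_L$ are injective, whence $\hat\kappa$ and $\hat\lambda$ are automatically injective. The guiding observation is that, writing $N=\left(\begin{smallmatrix}0&\lambda\\\kappa&0\end{smallmatrix}\right)$ for the mixer on the additive group $k\oplus\ell$, one has $N^2=\mathrm{diag}(\fr_k,\fr_\ell)$ and $\tilde\wp=\mathrm{id}+N$; thus $\tilde\wp$ is precisely the classical map $\wp=\mathrm{id}+\fr$ with the Frobenius replaced by its mixed square root $N$. This is what makes $\coker\tilde\wp$ the natural home for the classification, and it is completely in the spirit of $\mathbf F_{\sqrt p}$.

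Next I would write down a map $\coker\tilde\wp\to\{\text{extensions}\}/\!\cong$ by an explicit construction. To a pair $(u,v)\in k\oplus\ell$ I attach
\[ M_{(u,v)}=\bigl(k[\theta]/(\theta^2+\theta+u+v),\ \ell[\eta]/(\eta^2+\eta+u^2+v),\ \hat\kappa,\hat\lambda\bigr),\qquad \hat\kappa(\theta)=\eta+v,\quad \hat\lambda(\eta)=\theta+u. \]
A direct check (using $\kappa(a)=a^2$, $\lambda$ the inclusion, and $u^2,v\in\ell$) gives $\hat\lambda\hat\kappa(\theta)=\theta+u+v=\fr_K(\theta)$ and $\hat\kappa\hat\lambda(\eta)=\eta+u^2+v=\fr_L(\eta)$, so $M_{(u,v)}$ is a genuine extension for every $(u,v)$. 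To see that it depends only on the class modulo $\im\tilde\wp$, I would exhibit, for $(u',v')=(u,v)+\tilde\wp(x,y)=(u+x+y,\ v+x^2+y)$, the isomorphism $\sigma(\theta)=\theta'+x$, $\psi(\eta)=\eta'+y$; the two compatibility equations $\hat\kappa'\sigma=\psi\hat\kappa$ and $\hat\lambda'\psi=\sigma\hat\lambda$ reduce exactly to $v'=v+x^2+y$ and $u'=u+x+y$. The same computation run backwards proves that the induced map on classes is injective.

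The substance is surjectivity. Given an arbitrary $M=(K,L,\hat\kappa,\hat\lambda)$ I would choose Artin--Schreier generators $\theta\in K$, $\eta\in L$ with $\wp(\theta)=p\in k$ and $\wp(\eta)=q\in\ell$, so that $[p]$, $[q]$ are the classical invariants of $K$, $L$. The point is that the mixed structure is not free: since $\wp(\hat\kappa(\theta))=\hat\kappa(p)=\kappa(p)=p^2$, the element $\hat\kappa(\theta)$ is an Artin--Schreier element of $L$ over $p^2$, and a short analysis of where it can lie, using injectivity of $\hat\kappa$ together with $\hat\lambda\hat\kappa=\fr_K$, forces $p^2\equiv q\pmod{\wp(\ell)}$ and hence $p+q=(p^2+q)+\wp(p)\in\wp(k)$. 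Therefore there is $u\in k$ with $\wp(u)=p+q$; setting $v:=q+u^2\in\ell$ gives $u+v=p$ and $u^2+v=q$, so $K$ and $L$ already have the correct invariants to be the components of $M_{(u,v)}$. It then remains to normalise the generators (replacing $\theta,\eta$ by $\theta+x_0,\eta+y_0$ and possibly applying the nontrivial automorphism $\theta\mapsto\theta+1$) so that the mixing maps acquire the standard form $\hat\kappa(\theta)=\eta+v$, $\hat\lambda(\eta)=\theta+u$, producing an isomorphism $M\cong M_{(u,v)}$.

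I expect the main obstacle to be exactly this last normalisation, together with a uniform treatment of the non-connected (split) algebras. When $K$ or $L$ fails to be a field the kernel of $\wp$ on it is larger than $\mathbb F_2$, so the root $\hat\kappa(\theta)$ of $X^2+X+p^2$ is pinned down only up to that larger kernel, and one must verify that the residual freedom is absorbed precisely by shifts of $\theta,\eta$ (that is, by $\im\tilde\wp$) and by nothing more. Working throughout with the formal presentation $k[\theta]/(\theta^2+\theta+c)$ --- which is the split algebra $k\times k$ when $c\in\wp(k)$ and a field otherwise, and whose only $k$-algebra automorphism is $\theta\mapsto\theta+1$ in either case --- should allow me to handle both cases simultaneously and keep the bookkeeping honest. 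The trivial class $(0,0)$ then corresponds to the split extension $(k\times k,\ \ell\times\ell)$, matching the classical normalisation in Proposition~\ref{prop:etale-ord}.
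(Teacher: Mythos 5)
Your proposal is correct and is essentially the paper's own proof: both arguments present the two components as Artin--Schreier algebras, expand the mixing maps in the bases $\{1,\theta\}$ resp.\ $\{1,\eta\}$, and use the requirement that the two composites equal the Frobenius to force the linear coefficients to equal $1$ and to pin down the constants, after which the equivalence computation produces exactly $\im\tilde\wp$. The only difference is bookkeeping: the paper reads the parameters off directly from the mixer constants, so your detour through $\wp(u)=p+q$ and the final normalisation you worry about are not really needed --- your own analysis already yields $\hat\kappa(\theta)=\eta+y$, $\hat\lambda(\eta)=\theta+x$ with $\tilde\wp(x,y)=(p,q)=\tilde\wp(u,v)$, and since $\ker\tilde\wp=\{(0,0),(1,1)\}=\{\tilde\wp(0,0),\tilde\wp(0,1)\}\subseteq\im\tilde\wp$, the residual freedom is absorbed by the nontrivial automorphism exactly as you predict.
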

  
 \begin{proof} Clearly every \'etale extension can be realised as
 
 \[ \begin{tikzcd}
 \ell(\sqrt[s]{e}) \rar[shift left=.5ex] & k(\sqrt[s]{d}) \lar[shift left=.5ex] \\
 \ell \uar[tail] \rar[shift left=.5ex,"\lambda"] & k \uar[tail] \lar[shift left=.5ex,"\kappa"]
 \end{tikzcd} \]
 Where we have used the notation $\ell(\sqrt[s]{e})$ for the extension $\ell[X]/(X^2+X+e)$. If we denote the extensions of $\kappa$ and $\lambda$ by the same letters, we must have
 \begin{align*}
  \lambda(\sqrt[s]{e}) &=  x + x'\sqrt[s]{d}, x,x'\in k \\
  \kappa(\sqrt[s]{d}) &= y + y'\sqrt[s]{e}, y,y'\in \ell
 \end{align*}
 We may now apply $\kappa$ to the first equation, substitute the second and express the result with respect to the $\ell$-basis $1,\sqrt[s]{e}$, to obtain $(x')^\kappa y' = 1$ and $x^\kappa+(x')^\kappa y = e$.  Mutatis mutandis, we also have $(y')^\lambda x' = 1$ and $y^\lambda + (y')^\lambda x = d$. This implies that $x'=y'=1$ and thus
 \begin{align*}
  e &= x^\kappa+y  \\
  d &= x+y^\lambda
 \end{align*}
 So every element $(x,y)$ of $k\oplus\ell$ determines a mixed \'etale extension, since it determines both $e,d$ and $\kappa,\lambda$. Two such elements determine the same extensions if and only if their difference $(s,t)$ satisfies
 \begin{align*}
 s^\kappa + t  &= a^2 + a \\
 s + t^\lambda &= b^2 + b,
 \end{align*}
 for some $a\in\ell$ and $b\in k$. Applying $\kappa$ to the second equation and adding to the first, we obtain $t+t^2 = (a+b^\kappa) + (a+b^\kappa)^2$. This implies that $t = a+b^\kappa+0$ or $t = a+b^\kappa+1$. Analogously $s=a^\lambda + b+0$ or $s=a^\lambda+ b + 1$. It is also clear that either both solutions are $+0$ or both are $+1$. So, after relabeling $a+1$ by $a$ in the latter case, we see that $t = a+b^\kappa$ and $s=a^\lambda + b$, so $(s,t)\in\im \tilde\wp$.
 \end{proof}

 \begin{corollary} There is a bijective correspondence between \'etale $k$-algebras of degree $2$ and \'etale $m$-algebras of degree $2$ provided by:
 \begin{align*}
 	\coker \wp \to \coker \tilde \wp &: u \mapsto (u,0) \\
 	\coker \tilde\wp \to \coker \wp &: (u,v) \mapsto u+v^\lambda.
 \end{align*}
 \end{corollary}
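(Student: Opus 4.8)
The plan is to work entirely on the level of the classifying groups $\coker\wp$ and $\coker\tilde\wp$ supplied by the two preceding propositions, and to verify that the two displayed assignments are mutually inverse group isomorphisms; by those classifications this is exactly the asserted bijective correspondence of étale algebras. First I would record the two structural facts that drive everything. Since $p=2$, both $\wp$ and $\tilde\wp$ are additive (the cross terms $2uv$ vanish), so $\coker\wp$ and $\coker\tilde\wp$ are abelian groups and both maps $\Psi : u\mapsto(u,0)$ and $\Theta : (u,v)\mapsto u+v^\lambda$ are manifestly additive. Moreover the mixers satisfy $\lambda\circ\kappa=\fr$, so that $(x^\kappa)^\lambda=x^2$ for $x\in k$, while $\kappa$ and $\lambda$ are themselves additive; these are the only ingredients needed.

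The real content is that each map carries the relation subgroup into the relation subgroup, hence descends to the cokernels. For $\Psi$ I would compute $\tilde\wp(a,a^\kappa)=(a+(a^\kappa)^\lambda,\,a^\kappa+a^\kappa)=(a+a^2,0)=(\wp(a),0)$, using $a^\kappa+a^\kappa=0$; this shows $\Psi(\im\wp)\subseteq\im\tilde\wp$, so $\Psi$ is well defined. For $\Theta$ I would compute $\Theta(\tilde\wp(x,y))=(x+y^\lambda)+(x^\kappa+y)^\lambda=x+y^\lambda+x^2+y^\lambda=x+x^2=\wp(x)$, where additivity of $\lambda$ and $2y^\lambda=0$ collapse the two $y^\lambda$ terms; this shows $\Theta(\im\tilde\wp)\subseteq\im\wp$, so $\Theta$ descends as well.

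Finally I would check the two composites. One direction is immediate: $\Theta(\Psi(u))=u+0^\lambda=u$, so $\Theta\circ\Psi=\mathrm{id}$ on $\coker\wp$. For the other, $\Psi(\Theta(u,v))=(u+v^\lambda,0)$, whose difference with $(u,v)$ is $(v^\lambda,v)$ in characteristic $2$; since $\tilde\wp(0,v)=(v^\lambda,\,0^\kappa+v)=(v^\lambda,v)$, this difference lies in $\im\tilde\wp$, so $\Psi(\Theta(u,v))=(u,v)$ in $\coker\tilde\wp$ and $\Psi\circ\Theta=\mathrm{id}$. This establishes that $\Psi$ and $\Theta$ are inverse bijections.

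I expect no genuine obstacle: the statement reduces to a short Artin--Schreier computation. The only point requiring care is the bookkeeping of the semilinear relation $\lambda\circ\kappa=\fr$ together with the characteristic-$2$ cancellations, and in particular making sure---as in the two well-definedness computations above---that the \emph{relation} subgroups $\im\wp$ and $\im\tilde\wp$ correspond to one another, rather than merely the ambient groups $k$ and $k\oplus\ell$.
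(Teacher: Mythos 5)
Your proposal is correct and follows essentially the same route as the paper: the paper's own (very terse) proof verifies well-definedness and mutual inversion using precisely the two identities $(a^2+a,0)=\tilde\wp(a,a^\kappa)$ and $(b^\lambda,b)=\tilde\wp(0,b)$, which are exactly the computations you carry out for the well-definedness of $\Psi$ and for $\Psi\circ\Theta=\mathrm{id}$. You merely write out in full the characteristic-$2$ and $\lambda\circ\kappa=\mathrm{fr}$ bookkeeping that the paper leaves implicit.
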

 \begin{proof} It is immediately verified that the maps are well defined and inverses of each other, using the identities $(a^2+a,0)=\tilde\wp(a,a^\kappa)$ and $(b^\lambda,b)=\tilde\wp(0,b)$.
 \end{proof}

 \section{Conjectural taxonomy of \tops{$\mathsf F_4$}{F4}}
 The confusion surrounding mixed and twisted groups in the literature culminates around the particularly interesting case of groups of absolute type $\mathsf F_4$. In this appendix, we attempt to clear up the confusion and \emph{conjecturally} postulate a taxonomy for $\mathsf F_4$.
 
 Let us first focus on the mixed groups. Recall  that the exceptional group $\mathsf F_4$ can admit the possible forms $\mathsf F_{4,4}$, $\mathsf F_{4,1}$ and $\mathsf F_{4,0}$ over an arbitrary field $k$. Applying the mixing functor from \S\ref{sec:functors}, we get the same groups, interpreted as mixed algebraic groups. We assign a  a Dynkin diagram simply by drawing parallel Dynkin diagrams for the two fibres, i.e.~we double up the standard diagrams. We emphasize that these groups are nothing special, they are just the standard groups of type $\mathsf F_4$, but seen as a visible mixed group. Such a group is defined over a visible field $(k,k,\mathrm{fr}_k,\mathrm{id}_k)$ and then by base change over other fields as well.
 
 \begin{center}
 	\begin{tabular}{cc}
 		$\mathsf F_{4,0}$  & \dynkin \begin{tikzcd} \bullet \rar[dash] & \bullet \fatarrow   & \bullet \rar[dash] & \bullet \\ \bullet \rar[dash] &  \bullet \fatarrow & \bullet \rar[dash] & \bullet \end{tikzcd} \\
 		& \\
 		$\mathsf F_{4,1}$ &  \dynkin \begin{tikzcd}  \bullet \rar[dash] & \bullet \fatarrow   & \bullet \rar[dash] & \circ \\ \bullet \rar[dash] &  \bullet \fatarrow & \bullet \rar[dash] & \circ\end{tikzcd} \\
 		& \\
 		$\mathsf F_{4,4}$ &  \dynkin \begin{tikzcd}  \circ \rar[dash] & \circ \fatarrow   & \circ \rar[dash] & \circ \\ \circ \rar[dash] &  \circ \fatarrow & \circ \rar[dash] & \circ \end{tikzcd}
 	\end{tabular}
 \end{center}
 
 By mixing them the other way, or applying the functor $\tau^*$, we also get a class of corresponding anti-visible groups, defined over anti-visible fields and then by base change over other fields too.
 
 If $p=2$ however, there are extra invisible options. Thanks to the very special isogeny $\mathsf F_4\to\mathsf F_4$, we can mix $\mathsf F_4$ with itself in a non-trivial manner over a visible field $k$. (See Proposition~\ref{prop:veryspecialisogeny2}.) The most straightforward case is that of $\mathsf F_{4,4}$ where we mix two split groups. The resulting group is the group associated to a mixed building of type $\mathsf F_4$ as defined in \cite{tits74}. That this group actually corresponds to those mixed groups introduced by Tits is precisely the content of our Theorem~\ref{theorem:mixed-groups} in the case $\mathsf F_4$.  We associate to this group the following diagram, which indicates that the mixing maps align the long roots of one $\mathsf F_4$ with the short roots of the other.
 
 \begin{center}
 \begin{tabular}{ccc}
 $\mathsf{MF}_{4,4}$ &  \dynkin \begin{tikzcd}  \circ \rar[dash] & \circ \fatarrow   & \circ \rar[dash] & \circ \\ \circ \rar[dash] &  \circ \fatlarrow & \circ \rar[dash] & \circ \end{tikzcd} & \cite{tits74}
 \end{tabular}
 \end{center}

  What if we try to mix \emph{non-split} groups? There the situation gets more interesting. If the mixed base field is \emph{visible}, this implies that one of the mixing maps must be linear. We suspect that this implies that one of the groups must be split as in \ref{rem:mixed-groups}. This could give rise to the mixed Moufang sets of type $\mathsf F_4$ from \cite{callensdemedts} and in principle there could also be a variant with an anisotropic $\mathsf F_4$. So we get the following diagrams---where the [$\star$] means: hypothetical.
 
 \begin{center}
 \begin{tabular}{ccc}
 $\mathsf{MF}_{4,0}$  & \dynkin \begin{tikzcd} \bullet \rar[dash] & \bullet \fatarrow   & \bullet \rar[dash] & \bullet \\ \circ \rar[dash] &  \circ \fatlarrow & \circ \rar[dash] & \circ \end{tikzcd} & [$\star$]\\
 & & \\
 $\mathsf{MF}_{4,1/4}$ &  \dynkin \begin{tikzcd}  \bullet \rar[dash] & \bullet \fatarrow   & \bullet \rar[dash] & \circ \\ \circ \rar[dash] &  \circ \fatlarrow & \circ \rar[dash] & \circ\end{tikzcd}  & \cite{callensdemedts}
 \end{tabular}
 \end{center}
 If the  field is non-visible, the condition that one of the groups must be split vanishes. A useful metaphor is that the non-perfect field $k^2 \subsetneq k$ generates a pool of non-splitness and by choosing an intermediary field $\ell$ strictly in between these extremes, both groups can tap into this pool which gives rise to extra possibilities where both components of the mixed group are non-split. This could give rise to the following diagrams, two of which are hypothetical and the latter of which we suspect is responsible for the mixed quadrangles of type $\mathsf F_4$.
 \begin{center}
 \begin{tabular}{ccc}
 $\mathsf{MF}_{4,0}$  & \dynkin \begin{tikzcd} \bullet \rar[dash] & \bullet \fatarrow   & \bullet \rar[dash] & \bullet \\ \bullet \rar[dash] &  \bullet \fatlarrow & \bullet \rar[dash] & \bullet \end{tikzcd} & [$\star$] \\
 & & \\
 $\mathsf{MF}_{4,0/1}$ &  \dynkin \begin{tikzcd}  \bullet \rar[dash] & \bullet \fatarrow   & \bullet \rar[dash] & \bullet \\ \circ \rar[dash] &  \bullet \fatlarrow & \bullet \rar[dash] & \bullet\end{tikzcd}  & [$\star$]\\
 & & \\
 $\mathsf{MF}_{4,1}$ &  \dynkin \begin{tikzcd}  \bullet \rar[dash] & \bullet \fatarrow   & \bullet \rar[dash] & \circ \\ \circ \rar[dash] &  \bullet \fatlarrow & \bullet \rar[dash] & \bullet\end{tikzcd}  & \cite{moufangpolygons,F4quadrangles} 
 \end{tabular}
 \end{center}
 
 To figure out what twisted groups exist, we must ask ourselves which of the mixed groups admit twisted descent. Of course, the ground field over which they are defined must admit twisted descent but more importantly, both components must be isomorphic. This suggests three forms ${}^2\mathsf F_{4,r}$, which arise by twisting $\mathsf F_{4,r}$ for $r=0,1,4$. The $r=4$ corresponds to the large Ree groups ${}^2\mathsf F_4$ found by Ree \cite{ReeF4}, as we have shown in Theorem~\ref{theorem:twisted-groups}. The case $r=1$ we conjecture to exist and correspond to the \emph{doubly twisted Moufang sets of type $\mathsf F_4$}, introduced in \cite{F4sets} and studied thoroughly in \cite{doublytwisted}. Finally, the anisotropic ${}^2\mathsf F_{4,0}$ is just hypothetical.
 
The connection with the theory of BN-pairs on the groups of rational points a mystery. Why would in the group $\mathsf{MF}_{4,1/4}$ the entire $4$-dimensional torus be gobbled up by the group of rank $1$ leading to a group of rank $1$, whereas in  $\mathsf{MF}_{4,1}$, somehow both $1$-dimensional tori manage to survive the mixing proces and combine to a group of rank $2$?  

\printbibliography

\end{document}